\DeclareOldFontCommand{\bf}{\normalfont\bfseries}{\mathbf} % sonst Fehlermeldung
\setlist{  
  listparindent=\parindent,
  parsep=0pt,
}
\newtheorem{prop}{Proposition}
\newtheorem{theorem}{Theorem}
\newtheorem{lemma}[theorem]{Lemma}
\newtheorem{corollary}[theorem]{Corollary}
\newtheorem{mdef}[theorem]{Definition}
\theoremstyle{definition}
\newtheorem{ass}[theorem]{Assumption}
\newtheorem{remark}[theorem]{Remark}
\newtheorem{thmx}{Question}[section]
\newcommand{\rr}[0]{\mathbb{R}}
\newcommand{\R}[0]{\mathbb{R}}
\newcommand{\nn}[0]{\mathbb{N}}
\newcommand{\zz}[0]{\mathbb{Z}}
\newcommand{\E}[0]{\mathbb{E}}
\newcommand{\ad}{\mathfrak{a}} 
\newcommand{\bd}{\mathfrak{b}} 
\newcommand{\pino}[2]{%
\ensuremath e_{#1 ,#2}}
\newcommand{\sigf}[1]{%
\ensuremath \mathcal{F}_{#1}}
\newcommand{\sigfdouble}[2]{%
\ensuremath \mathcal{F}_{#2}^{#1}}
\DeclareMathOperator{\argmin}{armin}
\newcommand{\inPlong}{\overset{\mathbb{P}}{\longrightarrow}}
\newcommand{\inLplong}[1]{\overset{L^{#1}}{\longrightarrow}}
\newcommand{\weakly}{\overset{w}{\longrightarrow}}
\newcommand{\disteq}{%
  \ensuremath \overset{d}{=}
}
\newcommand{\proofof}[1]{Proof (#1)}
\newcommand{\I}[4]{%
\ensuremath E_{#1,#2}^{#3,#4}}
\newcommand{\II}[3]{%
\ensuremath e_{#1}^{#2,#3}}
\renewcommand{\P}{\mathbb{P}}
\newcommand{\MJ}{\color{blue}}
\newcommand{\GK}{\color{red}}
\newcommand{\cformat}[3]{%
  \ifthenelse{\boolean{#1}}{#2}{#3}%
}
\newcommand{\defbool}[2]{%
  \newboolean{#1}%
\setboolean{#1}{#2}}
\title{Sharp oracle inequalities and universality of the AIC and FPE}
\author{
  Jirak, Moritz\\
  \texttt{University of Vienna}
  \and
  Köstenberger, Georg\\
  \texttt{University of Vienna}
}
\begin{document}
\frenchspacing
\raggedbottom

\maketitle
\begin{abstract}
In two landmark papers, Akaike introduced the AIC and FPE, demonstrating their significant usefulness for prediction. In subsequent seminal works, Shibata developed a notion of asymptotic efficiency and showed that both AIC and FPE are optimal, setting the stage for decades-long developments and research in this area and beyond. Conceptually, the theory of efficiency is universal in the sense that it (formally) only relies on second-order properties of the underlying process $(X_t)_{t \in \zz}$, but, so far, almost all (efficiency) results require the much stronger assumption of a linear process with independent innovations. In this work, we establish sharp oracle inequalities subject only to a very general notion of weak dependence, establishing a universal property of the AIC and FPE. A direct corollary of our inequalities is asymptotic efficiency of these criteria. Our framework contains many prominent dynamical systems such as random walks on the regular group, functionals of iterated random systems, functionals of (augmented) Garch models of any order, functionals of (Banach space valued) linear processes, possibly infinite memory Markov chains, dynamical systems arising from SDEs, and many more. 

%The generality or our approach also allows us to go beyond the original concepts and deal with more modern view points such as the same-realization setting of Ing and Wei.

\end{abstract}

\section{Introduction}

Consider a time series $X_1, \ldots, X_n \in \rr$. How do we predict the future $X_{n+1}$, based on past observations? This fundamental problem has seen decades of research, see for instance \cite{An:hannan:1982:Aos}, \cite{bhansali:1986}, \cite{hannan-deistler-book}, \cite{fuller:1981:jasa}, \cite{parzen:book:1977}, \cite{davis:brockwell:book}, \cite{brillinger:book:1981}, \cite{davisson:1965}, \cite{hurvich1989regression}, \cite{hannan:quinn:1979:JrssB}, \cite{shibata:1981:biometrika} for earlier accounts, and \cite{lim2021time}, \cite{ing-wei-main}, \cite{ing:2007:APE:AOS}, \cite{ing:2020:Aos}, \cite{wu:EJS:2016}, \cite{gosh:michailidis:AoS:2021}, \cite{kreiss:aos:2011}, \cite{zhou:AoS:2023}, \cite{Politis:2021} for some more recent advances. In landmark papers, Akaike introduced the FPE (finite prediction error) in \cite{akaike1969fitting}, and his Information Criterion in \cite{akaike:1974}, now famously known as AIC, to address this problem in the context of linear prediction. As Akaike pointed out himself in \cite{akaike1969fitting}, related concepts to the FPE were also obtained in \cite{davisson:1966} at roughly the same time. Other highly influential ideas were offered by Schwarz \cite{schwarz:1978} and Rissanen \cite{RISSANEN1978}, which are, however, less relevant to our aims here.

Already shortly after their introduction, the AIC and FPE were reported to perform exceptionally well in practice, and have had a huge impact on both theory and practice ever since. In a subsequent seminal work, Shibata \cite{shibata} introduced a notion of asymptotic efficiency, which informally reads as

\begin{align}\label{defn:asymp:eff}
\frac{\text{Prediction based on Modelselection}}{\text{Optimal Oracle-based Prediction}} \xrightarrow[n \to \infty]{\mathbb{P}} \text{Efficiency} \in [0,1].
\end{align}
He showed that both AIC and FPE (among others) fulfill the criterion of optimality (asymptotic efficiency equal to one), offering an explanation for their huge success. Since then, many more adjustments, extensions and further properties were obtained, see for instance \cite{hannan:quinn:1979:JrssB}, \cite{hannan-deistler-book}, \cite{ing-wei-main}, \cite{ing:2007:APE:AOS}, \cite{bhansali:1986}, \cite{bhansali1996asymptotically}, \cite{karagrigoriou2001}, \cite{karagrigoriou1995}, \cite{Politis:2021} for a small fraction of the vast literature in this area. While \eqref{defn:asymp:eff} has its merits, we take a more modern viewpoint in the sequel and present our main results in terms of (sharp) finite sample oracle inequalities (cf. \cite{barron:1999ptrf}, \cite{Candes:2006}). More precisely, we will show that with high probability, 
\begin{align}\nonumber
\text{Prediction based} \,&\, \text{on Modelselection} \\&\leq \text{Optimal Oracle-based Prediction} \times \big(1 + o(1) \big). \label{defn:oracle:intro}
\end{align}
The desired property of asymptotic efficiency is then an immediate consequence of \eqref{defn:oracle:intro}.
\\

A fundamental drawback of all the results regarding and related to (asymptotic) efficiency, including in particular those of Akaike and Shibata, are the underlying assumptions. It appears that in the literature\footnote{We are not aware of any substantial exceptions here.}, the core condition is always that the (stationary) process $(X_t)_{t \in \zz}$ is a (weakly dependent) AR($\infty$)-process with independent innovations, that is, we have the (formal) recurrence equation
\begin{align}\label{eq:intro:ar}
    X_t = a_1 X_{t-1} + a_2 X_{t-2} + \ldots + \varepsilon_t = \sum_{j = 1}^{\infty} a_j X_{t-j} + \varepsilon_t,
\end{align}
where $(\varepsilon_t)_{t \in \zz}$ is a sequence of i.i.d. random variables with zero mean\footnote{The general case $\E (X_t) \neq 0$ can be readily reduced to the zero mean case, see for instance \cite{karagrigoriou1995}.} and $\sum_{j \geq 1} j^{\alpha}|a_j| < \infty$ for some $\alpha \geq 0$, reflecting the weak dependence. These assumptions are justified from a technical point of view, since the derivation of (efficiency) results such as those in \cite{shibata}, \cite{ing-wei-main}, \cite{ing:2007:APE:AOS} is highly non-trivial, a barrier being the unavoidable, rather complex underlying dependence structures arising in the proofs. 

%It appears that only more recently, advancements to more general setups have been made in a related context for instance in \cite{ing:averaging} and \cite{negative_moments}.
%, the latter allowing for certain martingale differences $(\varepsilon_t)$. {\MJ Wo anders zitieren}

On the other hand, from a more conceptual point of view, a (very) strong restriction like $(\varepsilon_t)_{t \in \zz}$ being i.i.d. seems to be rather unnecessary. Indeed, all the basic quantities are defined in terms of (empirical) autocovariance functions
\begin{align}
  \hat{\gamma}_n(h) = \frac{1}{n-h} \sum_{t = h+1}^{n} X_t X_{t-h}, \quad h \in \zz,
\end{align}
which, by the ergodic theorem, are consistent estimates (for fixed $h$) if $(X_t)_{t \in \zz}$ is ergodic, stationary, and $\E (X_t^2) < \infty$. This indicates that the concepts and ideas laid down by Akaike \cite{akaike1969fitting} and Shibata \cite{shibata} are somewhat universal and should work in a (much) more general framework. In addition, in practice one 'just' applies the AIC/FPE (and related criteria) anyway, assuming said 'universality', since it is essentially impossible to decide whether a linear process with i.i.d. innovations are present or not. It is clear that ergodicity and moment conditions alone will not be sufficient, rather, some notion of weak dependence will be required for more quantitative results. We are thus led to the following question.

\begin{thmx}\label{QA}
\textit{Is there a general notion of weak dependence such that Shibata's asymptotic efficiency results regarding AIC and FPE in \cite{shibata} persist, that is, do we have a 'universality' property?}
\end{thmx}

Answering this question and thus establishing 'universality' is precisely the aim of this work. Subject only to a very general notion of weak dependence, we will show that the asymptotic efficiency results of Shibata \cite{shibata} still hold, including in particular Akaike's AIC and FPE. Our framework contains a large and diverse number of prominent dynamical systems and time series models used in econometrics, finance, physics, and statistics. Among others, this includes random walks on the general linear group, functionals of (augmented) Garch models of any order, functionals of dynamical systems arising from stochastic differential equations (SDEs), functionals of infinite order Markov chains, linear processes, iterated random systems, and many more, see Sections \ref{sec:prelim} and \ref{sec:examples} for more details and examples.\\

The tools we develop can be used to generalize many more results from the linear process setting to our much more general setup. To exemplify this further, we also establish 
asymptotic normality for our underlying estimators and discuss some further applications.

\begin{comment}
As mentioned above, our primary objective here is to establish a universality property of Akaike's and Shibata's concepts. However, the generality of our approach allows us to also incorporate more recent progressions in this area. To demonstrate this, we extend our results from Shibata's original \textit{independent-realization-predictio} to the more recent \textit{same-realization-prediction} of Ing and Wei, see \cite{ing-wei-main}. As the work of XXX shows, even subject to the assumption of a linear process such as in \eqref{eq:intro:ar}, this is highly none trivial.\\
\end{comment}

This note is structured as follows. In Section \ref{sec:prelim}, we introduce some notation and our basic model assumptions. Here, we also briefly discuss our notion of weak dependence and how it relates to (weakly) dependent AR$(\infty)$ processes. In Section \ref{sec:indep}, we show that AIC and its variants are asymptotically efficient model selection criteria for predicting arbitrary, weakly dependent, stationary processes, and some prominent examples are discussed in Section \ref{sec:examples}. Section \ref{sec:sim} contains a small simulation study,
and the proofs are relegated to Section \ref{sec:proofs}.

\section{Notation and main results}\label{sec:main}

To begin with, let us introduce some general notation. All random variables are defined on a common, rich enough probability space $(\Omega,\mathcal{A},\mathbb{P})$. 
For a real-valued random variable $X$, we write $\|X\|_{p} = \mathbb{E}(|X|^{p})^{1/p}$ for its $L^{p}$-norm, and denote with $L^{p}$ the corresponding space. %of real-valued measurable functions with finite $L^{p}$ norm upto almost sure equivalence. 
We write $X \disteq Y$ if two random variables $X$ and $Y$ have the same distribution. For a sequence $x=(x_{n}) \in \mathbb{R}^{\infty}$ and $p\geq 1$, we write $\|x\|_{\ell^{p}} = \bigl(\sum_{n=1}^{\infty}|x_{n}|^{p}\bigr)^{1/p}$ for its $\ell^{p}$ norm, and denote with $\ell^{p} = \{x \in \mathbb{R}^{\infty}\mid \|x\|_{\ell^{p}}< \infty\}$.
For two functions $f,g:\mathbb{N}\to \mathbb{R}$, we write $f(n) \lesssim g(n)$ if there is an absolute constant $C > 0$, such that $f(n) \leq Cg(n)$ for all $n\in \mathbb{N}$. If $f(n) \lesssim g(n)$ and $g(n) \lesssim f(n)$, we write $f(n) \asymp g(n)$. If a constant $A$ is \textit{swallowed} by $\lesssim$, that is, if we use $Ag(n) \lesssim g(n)$, we sometimes write $\lesssim_{A}$ to highlight the overall dependence on $A$. Additionally, we set $x \wedge y = \min(x,y)$ and $x\vee y = \max(x,y)$. Given a stationary process $(X_t)_{t \in \zz}$, we denote with 
\begin{align*}%\label{defn:autocov:matrix}%Nicht verwendet
\gamma(h) = \E (X_t X_{t+h}), \quad   R(k) = \big(\gamma(i-j)\big)_{1 \leq i,j,\leq k}, \quad k \in \nn, 
\end{align*}
the autocovariance function and the covariance matrix of $(X_{t},\dots, X_{t-k+1})^{\top}$.

\subsection{Preliminaries}\label{sec:prelim}

According to classical Yule-Walker theory, the orthogonal projection of $X_{t}$ onto the linear subspace of $L^{2}$ spanned by the last $k$ observations $X_{t-1}, \dots, X_{t-k}$, is given by 
\begin{align} \nonumber
&P_k(X_{t}) = -\sum_{i=1}^k a_i(k) X_{t-i}, \quad \text{where} \\ 
&a(k) = - R(k)^{-1}r(k), \quad 
r(k) = (\gamma(1), \gamma(2),\dots ,\gamma(k)).    
\end{align}
Thus, the candidate model we try to mimic  %\eqref{{eq:prediction:independent:realization}} 
for prediction is the $\text{AR}(k)$ model
\begin{equation}\label{eq:fit-k}
  X_{t} + a_1(k) X_{t-1} + \cdots + a_k(k) X_{t-k} = \pino{t}{k},
\end{equation}
with residuals $\pino{t}{k}$ that we may also view as innovations. The optimal oracle model for comparison is the case $k = \infty$, where we use the abbreviations
\begin{align}
 a =  (a_i)_{i \in \nn} = \big(a_i(\infty)\big)_{i \in \nn}, \quad (e_{t})_{t \in \zz} =  (\pino{t}{\infty})_{t \in \zz}, \quad  \text{and} \quad \sigma^2 = \E(e_{t}^{2}). 
\end{align}

Let us note here that if $\sum_{h \geq 0} |\gamma(h)| < \infty$ and the spectral density $f_X $ of the process $(X_t)_{t \in \zz}$ satisfies $f_X  \neq 0$, then all the above quantities are well-defined, see for instance \cite{brillinger:book:1981}. 
In particular, $\sup_{k\geq1} \|R(k)\|, \sup_{k\geq 1}\|R(k)^{-1}\| < \infty$.
Now, given only a finite number of observations, we cannot fit $\text{AR}(k)$ models of arbitrary order $k$. Thus, for any given $n$, we restrict ourselves to candidate models of order at most $K_n < n$. 
Based on the set of observations $X_1, \dots, X_n$, we obtain estimators
\begin{align}\label{defn:hat:R:and:hat:a}
  \hat{R}(k) = \frac{1}{N}\sum_{t=K_n}^{n-1} X_t(k)X_t(k)^T, \quad \text{and} \quad \hat{a}(k) = -\hat{R}(k)^{-1}\frac{1}{N}\sum_{t=K_n}^{n-1}X_t(k)X_{t+1}
\end{align}
of $R(k)$ and $a(k)$ respectively, where $X_t(k) = (X_t, \dots, X_{t-k+1})$ and $N=n-K_n$.
Note that $\hat{a}(k)$ is not the least-squares estimator of $a(k)$, but the difference between $\hat{a}(k)$ and the least squares estimator of $a(k)$ will be asymptotically negligible subject to our assumptions, see below. %mild assumptions imposed on the process $X$ and $K_n$. 

In order to measure the quality of our potential predictors, we use the classical \textit{independent realization setting} (cf. \cite{akaike1969fitting}, \cite{davis:brockwell:book}, \cite{shibata}). Given an independent copy $Y_1, \ldots, Y_{n+1}$ of $X_1,\ldots,X_{n+1}$, the (empirical) risk is defined as
\begin{align}\label{defn:Q_n(k):first}
Q_n(k) = \E\big[(Y_{n+1} - \hat{Y}_{n+1}^{(k)})^2\big| \mathcal{X}_n\big] - \sigma^2,
\end{align}
where $\mathcal{X}_n = \sigma(X_1,\ldots,X_n)$ denotes the $\sigma$-algebra generated by $X_1,\ldots,X_n$, and $\hat{Y}_{n+1}^{(k)}$ is the linear predictor of $Y_{n+1}$ of order $k$, that is, with $Y_n,\ldots,Y_{n-k+1}$ as predictors (covariates) and $\hat{a}_i(k)$ measurable with respect to $\mathcal{X}_n$, defined above in \eqref{defn:hat:R:and:hat:a}. In other words, we have
\begin{align*}%\label{eq:prediction:independent:realization}%Nicht verwendet
\hat{Y}_{n+1}^{(k)} = \hat{a}_1(k)Y_n + \hat{a}_2(k) Y_{n-1} + \ldots + \hat{a}_k(k)Y_{n-k+1},
\end{align*}
where $\hat{a}(k) = \big(\hat{a}_i(k)\big)_{1 \leq i \leq k} \in \mathcal{X}_n$. In \eqref{defn:Q_n(k):second} below, we provide an alternative, more explicit expression for $Q_n(k)$.
%In particular does our analysis not depend on the existence of an i.i.d. copy of the process $(X_{t})_{t\in \mathbb{Z}}$.

This independent realization setting has been mainly studied in the literature, and we stick to this setting. The opposing concept of \textit{same realization prediction} (cf. \cite{ing:wei:JMVA:2003}, \cite{ing-wei-main}) has been introduced more recently, and arguably may be more realistic for many applications in practice. On the other hand, it is shown for instance in \cite{ing-wei-main} that the FPE and AIC are (asymptotically) efficient in both settings, somewhat reconciling both concepts. Note, however, that the same realization setting appears to be even more difficult to handle from a technical level than the independent realization setting, see for instance \cite{ing-wei-main} or \cite{ing:neg:mom:2021}.

Let us now turn to one of our key aspects, namely a weak dependence measure we want to employ to establish universality and thus answer Question \ref{QA}. Over the past decades, a great number of different ways to define and quantify weak dependence have been established in the literature, see for instance \cite{wu-physical-dependence}, \cite{wu2011asymptotic} and Section \ref{sec:examples}. We take up the following viewpoint. 

Consider a sequence of real-valued random variables $X_1,\ldots,X_n$. It is well known (cf. \cite{rosenblatt:book:2000}), that, on a possibly larger probability space, this sequence can be represented as
\begin{align}\label{eq_structure_condition}
{X}_{t} = g_t\bigl(\varepsilon_{t}, \varepsilon_{t-1}, \ldots \bigr), \quad 1 \leq t \leq n,
\end{align}
for some measurable functions $g_t$\footnote{In fact, $g_t$ can be selected as a map from $\rr^t$ to $\rr$, and this extends to Polish spaces.}, where $(\varepsilon_t)_{t \in \zz}$ is a sequence of independent and identically distributed random variables taking values in some measurable space. 
This motivates the following setup: 
Denote the corresponding $\sigma$-algebra with $\mathcal{F}_t = \sigma( \varepsilon_j, \, j \leq t)$, and write $X_t = g_t(\xi_t)$, where $\xi_t = (\varepsilon_t,\varepsilon_{t-1}, \ldots)$. 
For future use, we also define the $\sigma$-algebras $\sigfdouble{t}{s} = \sigma(\varepsilon_{s},\dots,\varepsilon_{t})$, for $s\leq t$.
Given a real-valued stationary sequence $({X}_t)_{t \in \zz}$, we always assume that $X_t$ is adapted to $\mathcal{F}_{t}$ for each $t \in \zz$. 
Hence, we implicitly assume that $X_{t}$ can be written as in \eqref{eq_structure_condition}. 
Over the past decades, an important question in the dynamical systems literature has been whether a stationary process $(X_t)_{t \in \zz}$ satisfies a representation like \eqref{eq_structure_condition} or not\footnote{The problem arises due to $\zz$ having infinite cardinality, whereas $n$ in \eqref{eq_structure_condition} is finite.} (e.g. \cite{emery_schachermayer_2001}, \cite{vershik_doc_transl}), and if so, whether the function $g_t$ depends on $t$ or not (e.g. \cite{feldman_rudolph_1998}). 
Both questions are, however, irrelevant to our cause, our key requirement rather being that $X_t$ and $X_{t - m}$ are close to being independent for $m$ large, see below. 
Although we will always write and express our conditions in terms of $(X_t)_{t \in \zz}$ for simplicity, we effectively only work with $X_1,\ldots,X_n$. 
Similarly, we will always assume that $(X_t)_{t \in \zz}$ is strictly stationary, although this may be readily relaxed to local (weak) stationarity or quenched setups. %see Example \ref{ex:randomwalk} for such a case.

A useful feature of representation \eqref{eq_structure_condition} is that it allows to give simple, yet efficient and general dependence conditions. Following \cite{wu-physical-dependence} and his notion of physical dependence, let $(\varepsilon_t')_{t \in \zz}$ be an independent copy of $(\varepsilon_t)_{t \in \zz}$ on the same, rich enough probability space. Slightly abusing notation, let
\begin{align}
X_t^{(l)} = g_t\big(\varepsilon_t, \ldots,\varepsilon_{l+1}, \varepsilon_{l}', \xi_{l-1}\big), \quad  t,l \in \zz.
\end{align}
For $q \geq 1$, we then measure weak dependence in terms of the distance
\begin{align}\label{defn:dep:measure:general}
\delta_{q}^{X}(l) = \sup_{t \in \zz}\big\|X_t - X_t^{(t-l)}\big\|_q, \quad l \in \nn.
\end{align}

Observe that if the functions $g_t$ satisfy $g_t = g$, that is, they do not depend on $t$, the above simplifies to
\begin{align*}%\label{defn:dep:measure:bernoulli}%nicht verwendet
\delta_{q}^{X}(l) = \big\|{X}_{l} - {X}_{l}^{\prime}\big\|_q, \quad \text{with $X_l' = X_l^{(0)}$ for $l \in \nn$.}
\end{align*}
In this case, the process $(X_t)_{t \in \zz}$ is typically referred to as (time homogenous) Bernoulli-shift process. Note that in general we may not even have $\delta_{q}^{X}(l) \to 0$ as $l$ increases, as can be seen from the simple example $X_k = X$ for $k \in \zz$, with $\|X\|_q < \infty$.

In the sequel, we will require more than $\delta_{q}^{X}(l) \to 0$, namely a certain minimal amount of polynomial decay for $\delta_{q}^{X}(l)$, and express this as
\begin{align}\label{defn:Theta}
D_q^{X}(\alpha) = \big\|X_0\big\|_q +  \sum_{l = 1}^{\infty} l^{\alpha}\delta_{q}^{X}(l), \quad \alpha \geq 0.
\end{align}

On the other hand, measuring (weak) dependence in terms of \eqref{defn:Theta}, that is, demanding $D_q^{X}(\alpha) < \infty$, is still quite general, easy to verify in many prominent cases, and has a long history going back at least to \cite{billingsley_1968}, \cite{ibraginov_1966}, we refer to Section \ref{sec:examples} for a brief account and some references. Among others, we specifically discuss the cases of random walks on the general linear group, functionals of Garch models of any order, functionals of dynamical systems arising from SDEs, functionals of linear processes, and infinite order Markov chains.

\subsection{Sharp oracle inequalities, asymptotic efficiency and weak dependence}\label{sec:asympt:efficiency:weak:dep}

If the spectral density $f_X$ is bounded and bounded away from zero, then

\begin{equation*}
  \langle x, y \rangle_R = \sum_{i,j=0}^\infty x_iy_j R_{ij} \quad  \|x\|_R^2 = \sum_{i,j=0}^\infty x_ix_j R_{ij}, \quad x,y \in \ell^2,
\end{equation*}
are well defined, that is, $\langle \cdot,\cdot \rangle_{R}$ is positive definite and $\|\cdot\|_{R}$ is a norm. A nice feature of the independent realization setting is that the risk $Q_n(k)$, defined in \eqref{defn:Q_n(k):first}, can be re-expressed as
\begin{align}\label{defn:Q_n(k):second}
  Q_n(k) \coloneqq \|\hat{a}(k) - a\|_R^2.
\end{align}

We consider $\mathbb{R}^k$ to be the subspace of $\ell^2$ consisting of vectors with non-zero values appearing only in the first $k$ entries.  
If $x, y \in \mathbb{R}^k \subseteq \ell^2$, note that $\langle x,y \rangle_R = \langle x,R(k) y\rangle$, where $\langle \cdot , \cdot \rangle$ denotes the standard scalar product on $\mathbb{R}^k$. 
The scalar product $\langle \cdot,\cdot \rangle_{R}$ is chosen in such a way, that $a(k)$ is the orthogonal projection of $a$ onto $\mathbb{R}^{k}\subseteq \ell^{2}$. Thus we have 
\begin{equation*}
  \|\hat{a}(k) - a\|_R^2 = \|\hat{a}(k) - a(k)\|_R^2 + \|a(k) - a\|_R^2.
\end{equation*}
The first term becomes small as $n$ gets large and $k$ remains small (relative to $n$), while the second term vanishes as $k$ goes to infinity. 
%With this in mind, we assume $K_n^{2}/n \to 0$. 
Replacing the variance term $\|\hat{a}(k) - a(k)\|_R^2$ with its optimal rate $\sigma^2 k/N$ (cf. \cite{ing-wei-main}, \cite{shibata}), we set
\begin{align}\label{defn:L_n(k)}
  L_n(k) \coloneqq \frac{k}{N}\sigma^2 + \|a-a(k)\|_R^2.
\end{align}

%Following \cite{shibata}, we thus (formally) arrive at the following definition.

We can now make the following definition.

\begin{mdef}\label{defn:oracle:inequality}
We say that a (stochastic) sequence $k_n$ of model orders satisfies \textmd{a sharp oracle inequality}, if there exist (absolute) constants $C,c,\gamma>0$ and a sequence $\mathfrak{a}_n \to 0$, such that with probability at least $1 - C(k_n^*)^{-\gamma}$, we have
\begin{align}\label{eq:defn:oracle:inequality}
\big|Q_n(k_n) - L_n(k^*_n)\big| \leq c \mathfrak{a}_n L_n(k^*_n),
\end{align}
where $k_n^* \in \{1, \dots,K_n\}$ is a sequence of minimizers of $L_n(k)$.  
\end{mdef}

\begin{remark}
Note that although this definition is expressed in terms of sequences, the view point we take here is of a finite sample perspective. In typical examples - see also the short discussion below Corollary \ref{cor:lower:bound} - one has $k_n^* \asymp n^{1/(2\alpha + 1)}$ together with the familiar rates $L_n(k^*_n) \asymp n^{-2\alpha/(2\alpha +1)}$, in which case \eqref{eq:defn:oracle:inequality} resembles classical (sharp) nonparametric oracle inequalities.
\end{remark}

\begin{remark}\label{def:asymp:efficiency}
If $k_n^* \to \infty$, then a sequence $k_n$ that satisfies a sharp oracle inequality implies in particular 
\begin{equation*}
  \frac{Q_n(k_n)}{L_n(k^*_n)}  \xrightarrow[n \to \infty]{\mathbb{P}} 1.
\end{equation*}
We call such sequences \textit{asymptotically efficient}, which is in line with \cite{shibata}. Condition $k_n^* \to \infty$ is standard in the literature and an immediate consequence of Assumption \ref{G1} below (cf. Lemma 4.1 in \cite{karagrigoriou1995}).
\end{remark}

A consequence of Corollary \ref{cor:lower:bound} below is that Definition \ref{defn:oracle:inequality} is justified and universal subject to mild weak dependence conditions only.

We are now ready to state our main conditions. Throughout this paper, we impose two sets of assumptions on the process $(X_t)_{t \in \zz}$ and the maximal order $K_n$. The first set \ref{G1}-\ref{G3} consists of general assumptions that ensure the well-posedness of the problem, and are therefore the (standard) minimal requirements in the literature.

\begin{ass}\label{ass:main}
Let $(X_t)_{t \in \zz}$ be a stationary process. Then
\begin{enumerate}[label=(G\arabic*),ref=(G\arabic*),leftmargin=*,align=left]
\item \label{G1} $X_t$ does not degenerate to a finite order autoregressive process. 
\item \label{G2} The spectral density $f_X$ is (uniformly) bounded away from zero.
%$A(z) = 1 + \sum_{j=1}^{\infty} a_j z^j$ converges and is nonzero for $|z| \leq 1$. 
%{\MJ Replace with: The spectral density $f_X$ is (uniformly) bounded away from zero.}
\item \label{G3} $K_{n}\in \{1,\dots,n-1\}$ is a divergent sequence of integers, and there is $\kappa>0$ such that $K_{n}^{2+\kappa}/n$ is bounded.
  %{\GK $K_n \in \mathbb{N}_{>0}$, $K_n \to \infty$ and $K_n^{2}/n \to 0$, and there is a $\kappa>0$ such that $K_{n}^{2+\kappa}/N$ is bounded.}
%$K_n \in \mathbb{N}_{>0}$, $K_n \to \infty$ and $K_n^{2}/n \to 0$.
\end{enumerate}
\end{ass}

%{\GK
%\begin{enumerate}[label=(G'\arabic*),ref=(G'\arabic*),leftmargin=*,align=left]
%\item[(G3')] \label{G3'} $K_n \in \mathbb{N}_{>0}$, $K_n \to \infty$ and $K_n^{2}/n \to 0$, and there is a $\kappa>0$ such that $K_{n}^{2+\kappa}/N$ is bounded.
%\end{enumerate}
%}
Let us briefly discuss these conditions. Note first that \ref{G1} and \ref{G2} are only based on second-order properties (the ACF) of the process $(X_t)_{t \in \zz}$. While we make use of strong stationarity in the proofs, this only has notational reasons and (significantly) increases readability. It is therefore straightforward to adapt the proofs subject only to weak stationarity (and Assumption \ref{ass:weak:dep} below). In addition,  one may actually show that stationarity can be slightly relaxed to a quenched (and hence non-stationary setup). Regarding specifically \ref{G1}, we recall that non-degeneracy in terms of the order is necessary. It is well-known that in the degenerate case of a finite order autoregressive process, the BIC (and related criteria) are optimal, but the AIC is not. Note that we can express non-degeneracy in terms of $\|a(k) - a\|_R > 0$ for any $k \in \nn$, hence the bias term never vanishes, but gets arbitrarily small as $k$ increases. Similarly to \ref{G1}, \ref{G2} is absolutely standard in the literature and rules out certain degenerate processes. We refer to \cite{bradley:BJ:2002} for discussions and equivalent conditions. Finally, condition \ref{G3} imposes a very mild growth constraint on the maximally fitted order, and is standard in the literature \cite{shibata, ing-wei-main}. 
Note that this assumption in particular implies that $K_{n}^{2}/n \to 0$, which was used in \cite{shibata}. 
Furthermore, it implies that $K_{n}^{2+\kappa}/N$ is bounded for $N=n - K_{n}$, as $n/N \to 1$. 
For further comparison with classical assumptions, we refer to Remark \ref{rem:compare-assumptions}.
By modifying (regularizing) the estimators a bit, it appears that one may relax \ref{G3}.
However, such an endeavor is not so relevant (and beyond the scope of this work), since it turns out the selected order is significantly less than $K_n$, see \eqref{eq:Ln:equality} and the attached discussion below, as well as the results in Figure \ref{fig:m-dep} for a numerical illustration.\\
\\
The second set of assumptions lays out the framework for our universality results. They replace the independence conditions imposed on the innovations in the literature so far (e.g. \cite{ing-wei-main}, \cite{karagrigoriou2001}, \cite{shibata}).  

%\begin{enumerate}[label=(I\arabic*),ref=(I\arabic*),leftmargin=*,align=left]
%\item \label{I1} $X_t \in L^q$ for some $q>8$.
%\item \label{I2} $\sum_{m=1}^\infty m^{\frac{5}{2}}|a_m| < \infty$. {\MJP Change/move this to $D^X_q(\frac{7}{2}) < \infty$?!}
%\item \label{I3} $X_{t}$ is a physically dependent process with $D^X_q(\frac{5}{2}) < \infty$.
%\end{enumerate}

\begin{ass}\label{ass:weak:dep} 
Let $(X_t)_{t \in \zz}$ be a stationary process, $q > 8$, and $\alpha \geq 5/2$. Then
\begin{enumerate}[label=(I\arabic*),ref=(I\arabic*),leftmargin=*,align=left]
\item \label{I1} $\E(X_t) = 0$ and $X_t \in L^q$.
\item \label{I2} $X_{t}$ satisfies $D^X_q(\alpha) < \infty$.
\end{enumerate}
\end{ass}

As already explained at the end of Section \ref{sec:prelim}, conditions such as those given in Assumption \ref{ass:weak:dep} above a very general, and in many cases also rather easy to verify, see Section \ref{sec:examples} for some prominent examples. In particular, they constitute a very significant generalization compared to the linear process setup considered in the literature in the context of efficiency results. 
Moreover, to the best of our knowledge, there are no readily available methods for verifying whether the innovations of a process $X$ are truly independent or not. 
Put differently, there does not seem to be any straightforward, reliable, empirical way to verify (or falsify) the assumptions of the classical results.
The attractive interplay between AR models and physical dependence has already been observed in \cite{wang-politis}, where it has been used to establish consistent estimates for large covariance matrices. 
This provided a non-linear generalization of a seminal paper by K. Berk \cite{berk} on consistent, autoregressive spectral estimates.
It is well known, that autoregressive, spectral estimates and asymptotic efficiency of AIC (and its variants) are closely related. 
In \cite{shibata:1981:AoS:spec:density} R. Shibata used his previously established efficiency results \cite{shibata} to derive optimal autoregressive spectral estimates. 
The present work, together with \cite{wang-politis} show that this connection persists in the non-linear case.

\subsection{Main results}\label{sec:indep}

The main goal of this section is to establish sharp oracle inequalities regarding the efficiency of AIC and its variants subject only to the conditions presented in Section \ref{sec:asympt:efficiency:weak:dep} above, that is, Assumption \ref{ass:main} and Assumption \ref{ass:weak:dep}.

\begin{comment}
Recall that since AIC \cite{Akaike1998} and other AIC-like criteria ($FPE$ \cite{akaike1969fitting}, $FPE_\alpha$ \cite{bhansali1977}, $AIC_C$ \cite{hurvich1989regression}) can be thought of as an asymptotically negligible perturbation of Shibata's criterion (see \cite{shibata}, \cite{karagrigoriou2001}), it is sufficient to show that Shibata's criterion is asymptotically efficient. 

{\MJ Use this somewhere else?}
To impose as few restrictions on the process $X$ as possible (\textit{e.g.} to avoid assumptions such as those in Lemma 2.4 of \cite{karagrigoriou2001}), we adapt the techniques of Shibata \cite{shibata} and Lee \& Karagrigoriou \cite{karagrigoriou2001}. 
This poses some major technical difficulties. 
In contrast to \cite{karagrigoriou2001}, we can not rely on the independence of the innovations, nor do we have any knowledge about the behaviour of the joint cumulants of $(X_t)$, as was the case in \cite{shibata}. 
Instead, we use the fact, that all relevant quantities depend only on the first two moments of $X$. 
This point of view allows us to exploit the intrinsic structure of the problem much more effectively. 
The first part of this section focuses on proving Theorem \ref{thm:main}. 
In the second part, Section \ref{sec:indep-selection}, we use Theorem \ref{thm:main} and Lemma \ref{lemma:QF-1} to show that Shibata's criterion is asymptotically efficient.
\end{comment}

As our first main result, we establish that the empirical risk $Q_n(k)$ (defined in \eqref{defn:Q_n(k):first}) is an optimal finite sample approximation of $L_n(k)$ (defined in \eqref{defn:L_n(k)}), and thus captures the complexity of the prediction problem.

\begin{theorem}\label{thm:main}
  Grant Assumption \ref{ass:main} and Assumption \ref{ass:weak:dep}. Then for every $b>0$, there exists a constant $C>0$ (depending on $b$), such that for every $0 < \delta < \min\{1/4 - 1/(2p),\kappa/8\}$, we have %such that with probability at least $1 - C (k_n^*)^{-XXX}$, we have
\begin{align}
  \bigl|Q_n(k) - L_n(k)\bigr| \leq b L_n(k)/(k_n^{*})^{\delta}, \quad 1 \leq k \leq K_n,
\end{align}
with probability at least $1-C(k_{n}^{*})^{-\gamma}$, where $\gamma = \min\{p/2-1-2 \delta p, 2 \delta p\}>0$.
\begin{comment}
Then, as $n \to \infty$, we have
  \begin{equation*}
  \max_{1\leq k \leq K_n} \biggl|\frac{Q_n(k)}{L_n(k)} - 1\biggr| \overset{\mathbb{P}}{\longrightarrow} 0. 
  \end{equation*}
\end{comment}
\end{theorem}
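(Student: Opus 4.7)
The plan is to start from the $R$-orthogonality of the Yule--Walker projection, which gives the Pythagorean identity
\begin{equation*}
Q_n(k) \;=\; \|\hat a(k) - a(k)\|_R^2 + \|a(k) - a\|_R^2,
\end{equation*}
so that $Q_n(k) - L_n(k) = \|\hat a(k) - a(k)\|_R^2 - \sigma^2 k/N$; the bias term cancels identically. Since $L_n(k) \ge \sigma^2 k/N$ by construction, it suffices to prove that this variance residual is small relative to $k/N$, uniformly in $1 \le k \le K_n$.

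Next I would linearize through the normal equations. Writing the projection innovation $e_{t+1,k} = X_{t+1} + a(k)^\top X_t(k)$ and setting $Z_k = N^{-1}\sum_{t=K_n}^{n-1} X_t(k)\, e_{t+1,k}$, the Yule--Walker identity yields $\hat a(k) - a(k) = -\hat R(k)^{-1} Z_k$ and hence
\begin{equation*}
\|\hat a(k) - a(k)\|_R^2 \;=\; Z_k^\top \hat R(k)^{-1}\, R(k)\, \hat R(k)^{-1} Z_k.
\end{equation*}
A uniform operator-norm estimate $\sup_{k \le K_n}\|\hat R(k) - R(k)\|_{\mathrm{op}} = o_{\mathbb{P}}(1)$ (available from Assumption \ref{I2} via the physical-dependence based concentration of banded covariance estimators, together with Assumption \ref{G2}) would let me substitute $R(k)^{-1}$ for $\hat R(k)^{-1}$ up to a multiplicative factor $1 + o_{\mathbb{P}}(1)$ uniformly in $k$. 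This reduces the theorem to estimating the quadratic form $T_n(k) := Z_k^\top R(k)^{-1} Z_k$.

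The heart of the proof is then to show $|T_n(k) - \sigma^2 k/N| \le b(k/N)/(k_n^*)^\delta$ uniformly in $k$, with the required probability. I would split into mean and fluctuation: expanding $T_n(k) = N^{-2}\sum_{s,t}\langle X_s(k), R(k)^{-1} X_t(k)\rangle\, e_{s+1,k} e_{t+1,k}$, the diagonal $s=t$ contributes $N^{-1}\sigma_k^2\,\mathrm{tr}(R(k)^{-1} R(k)) = \sigma_k^2 k/N$ with $\sigma_k^2 \to \sigma^2$, while the off-diagonal piece is driven, by the orthogonality $\mathbb{E}[X_t(k)\,e_{t+1,k}] = 0$, by cross-cumulants of the residuals that I would bound through the coupling coefficients $\delta_q^X(\cdot)$ and the summability $\alpha \ge 5/2$ in Assumption \ref{I2}. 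For the deviation $T_n(k) - \mathbb{E} T_n(k)$, the natural tool is a Hanson--Wright/Nagaev-type moment inequality for quadratic forms of physically dependent variables, which is valid under the $q>8$ hypothesis of \ref{I1}; Markov's inequality with exponent $p = q/2$ then produces the probability $1 - C(k_n^*)^{-\gamma}$ with $\gamma = \min(p/2 - 1 - 2\delta p,\, 2\delta p)$, a union bound over $k \in \{1,\ldots,K_n\}$ costing only a polynomial factor that is absorbed by the $K_n^{2+\kappa}/n = O(1)$ condition \ref{G3}.

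The main obstacle is achieving all of these estimates uniformly in $k \le K_n$ without independence of innovations. In the i.i.d.\ settings of \cite{shibata} and \cite{karagrigoriou2001} the cross terms among the residuals $e_{\cdot,k}$ vanish by independence, collapsing the off-diagonal analysis; here they genuinely persist and must be controlled through $\delta_q^X(l)$ for all lags simultaneously, and uniformly across fitted orders $k$. The delicate balance between the decay rate $\alpha \ge 5/2$, the moment level $q > 8$, and the growth $K_n^{2+\kappa} = O(n)$ is exactly what makes this close up to yield the stated exponent $\gamma$. I expect the uniform concentration of the quadratic form $T_n(k)$ under physical dependence to be the genuine technical bottleneck, with Steps 1--2 being essentially linear algebra on top of the already established operator-norm control of $\hat R(k)$.
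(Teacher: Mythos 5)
Your overall architecture matches the paper's: Pythagoras, linearization $\hat a(k)-a(k)=-\hat R(k)^{-1}B_{nk}$, replacement of $\hat R(k)^{-1}$ by $R(k)^{-1}$ via uniform matrix concentration (the paper's terms $A_2$--$A_4$ and Lemma \ref{lemma:mat-norm}), and then Markov plus a union bound over $k$ at the rate $\sqrt{k}/N$ per order. But the heart of your argument has a genuine gap. First, the asserted diagonal identity is wrong as stated: $\mathbb{E}\bigl[e_{t+1,k}^2\, X_t(k)^{\top}R(k)^{-1}X_t(k)\bigr]$ does not factor into $\sigma_k^2\,\mathrm{tr}\bigl(R(k)^{-1}R(k)\bigr)$ under the paper's assumptions, because the (pseudo-)innovations are only \emph{uncorrelated} with linear functions of the past, not independent of it; the same failure makes the off-diagonal terms non-centered, so there is no "cross-cumulant" structure to exploit for free. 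Showing that the diagonal concentrates at $\sigma^2$ with error $O(k^{-1/2})$ in $L^{q/4}$ and that the off-diagonal has mean $O(k^{-1})$ and fluctuation $O(k^{-1/2})$ is precisely the content of Lemma \ref{lemma:A1} (Steps I--IV), and it rests on an idea your sketch does not contain: the factorization $R(k)^{-1}=L(k)^{T}D(k)L(k)$ in \eqref{eq:R(k)-inverse-representation}, which rewrites the quadratic form as $\frac{1}{Nk}\sum_j \sigma_{k-j}^{-2}\sum_{s,t}e_t e_s\, e_{t-j,k-j}\,e_{s-j,k-j}$ and lets one work directly with innovations and pseudo-innovations, whose dependence is controlled uniformly in $k$ via Baxter's inequality (Lemmas \ref{lemma:dep-rates}, \ref{lemma:basic-props-AR(oo)}). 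The paper stresses that one cannot discard $R(k)^{-1}$ by eigenvalue bounds here; its normalization must be preserved, which is exactly what this factorization achieves.

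Second, your appeal to a "Hanson--Wright/Nagaev-type moment inequality for quadratic forms of physically dependent variables" is a black box that does not exist at the required precision: what is needed is an $L^{q/4}$ bound of order $\sqrt{k}/N$ for the centered, $R(k)^{-1}$-weighted quadratic form, uniformly in $k\le K_n$, with entries that are neither independent nor martingale differences and whose cross terms have a non-zero mean that must itself be shown to be $O(k^{-1})$ (the paper's Step II, Lemma \ref{lemma:stepII}, which is vacuous only in the i.i.d.-innovation case). The paper builds this estimate by hand through martingale projections $\mathcal{P}_{t-l}$, the truncation operators $Q_s^t,H_s^t$, and the asymmetric quadratic-form bound of Lemma \ref{lemma:QF-1}, which it notes is not available in the literature. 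A minor symptom of the missing bookkeeping: the quadratic form involves products of four data points, so the usable moment exponent is $p=q/4$ (as in the stated $\gamma$), not $q/2$. In short, you have correctly identified the bottleneck, but the proposal assumes it away rather than resolving it.
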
 

The proof of this result is involved and lengthy, and is split into many different parts, see Section \ref{sec:proof:thm:main} for more details.

Based on Theorem \ref{thm:main}, we are now in a position to obtain our first result regarding optimality in terms of a lower bound, justifying in particular Definition \ref{def:asymp:efficiency}.

\begin{corollary}\label{cor:lower:bound}
Let $\tilde{k}_n \in \nn$ be any (stochastic) order selection bounded by $K_n$, measurable with respect to $\mathcal{X}_n =\sigma(X_1,\ldots, X_n)$. Then, subject to the conditions of Theorem \ref{thm:main}, for any $\delta > 0$, we have
\begin{align*}
\lim_{n \to \infty}\P\Big(\frac{Q_n(\tilde{k}_n)}{L_n(k_n^*)} \geq 1 - \delta \Big) = 1,
\end{align*}
where $k_n^* \in \nn$ is any minimizer of $L_n(\cdot)$.
\end{corollary}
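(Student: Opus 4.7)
The plan is to deduce the corollary directly from the uniform sharp control provided by Theorem \ref{thm:main}, combined with two elementary facts: the defining minimality of $k_n^*$ for the deterministic sequence $L_n(\cdot)$, and the deterministic divergence $k_n^* \to \infty$ noted in Remark \ref{def:asymp:efficiency}. The crucial observation is that the inequality in Theorem \ref{thm:main} holds \emph{simultaneously} for all $k \in \{1,\ldots,K_n\}$ on a single high-probability event. Since $\tilde{k}_n$ is $\mathcal{X}_n$-measurable and takes values in $\{1,\ldots,K_n\}$, the bound may therefore be evaluated at $k = \tilde{k}_n$ without any additional probabilistic cost.

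To implement this, I would fix an admissible exponent $\delta_0 \in \bigl(0,\min\{1/4 - 1/(2q),\kappa/8\}\bigr)$ and apply Theorem \ref{thm:main} with, say, $b = 1$. Letting $A_n$ be the event on which the stated bound holds for every $k \in \{1,\ldots,K_n\}$, we have $\P(A_n) \geq 1 - C(k_n^*)^{-\gamma}$. On $A_n$,
\begin{align*}
Q_n(k) \;\geq\; L_n(k)\bigl(1 - (k_n^*)^{-\delta_0}\bigr), \qquad 1 \leq k \leq K_n.
\end{align*}
Substituting $k = \tilde{k}_n$, using $L_n(\tilde{k}_n) \geq L_n(k_n^*)$ by optimality of $k_n^*$, and noting that $1 - (k_n^*)^{-\delta_0} \geq 0$ once $n$ is large (since $k_n^* \to \infty$), we obtain on $A_n$
\begin{align*}
\frac{Q_n(\tilde{k}_n)}{L_n(k_n^*)} \;\geq\; 1 - (k_n^*)^{-\delta_0}.
\end{align*}

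To conclude, for any prescribed $\delta > 0$ pick $n$ large enough so that $(k_n^*)^{-\delta_0} \leq \delta$, which is possible by $k_n^* \to \infty$. Then $A_n$ is contained in $\{Q_n(\tilde{k}_n)/L_n(k_n^*) \geq 1 - \delta\}$, and since $\P(A_n^c) \leq C(k_n^*)^{-\gamma} \to 0$, the claimed probabilistic limit follows. I do not expect any genuine obstacle in this step: Theorem \ref{thm:main} carries the entire analytic weight, and the only thing to verify is that a single good event works uniformly in $k$, which is already built into its formulation. The corollary is, in essence, the translation of the two-sided uniform approximation into a one-sided oracle lower bound valid for arbitrary data-driven selections.
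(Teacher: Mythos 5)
Your argument is correct and is essentially the paper's own (implicit) proof: Corollary \ref{cor:lower:bound} is treated there as an immediate consequence of the uniform-in-$k$ bound of Theorem \ref{thm:main}, using exactly the chain $Q_n(\tilde{k}_n)\geq L_n(\tilde{k}_n)(1-(k_n^*)^{-\delta_0})\geq L_n(k_n^*)(1-(k_n^*)^{-\delta_0})$ together with $k_n^*\to\infty$ (Remark \ref{def:asymp:efficiency}) and $\P(A_n^c)\lesssim (k_n^*)^{-\gamma}\to 0$. The only blemish is the harmless slip of writing $1/(2q)$ instead of $1/(2p)$ (with $p=q/4$) in the admissible range for $\delta_0$, which does not affect the argument since any sufficiently small $\delta_0>0$ suffices.
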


Recall condition \ref{G3}, which implies $K_n/\sqrt{n} = o(1)$, and is standard in the literature. While heuristically clear, in the present setup, we cannot per se rule out that $Q_n$ is potentially minimized for some value bigger than $K_n$. On the other hand, subject to Assumptions \ref{ass:main} and \ref{ass:weak:dep}, we have
\begin{align}\label{eq:Ln:equality}
\min_{1 \leq k \leq K_n} L_n(k) = \min_{k \in \nn}L_n(k).  
\end{align}
Indeed, due to Lemma \ref{lemma:basic-props-AR(oo)}, it follows that $\|a - a(k)\|_R \lesssim k^{-\alpha}$, and hence we have
\begin{align}
L_n(k) \leq \sigma^2\frac{k}{N} + C k^{-2\alpha}, \quad C > 0.
\end{align}
This bound is familiar from classic nonparametric statistics and selecting $k = N^{1/(2\alpha +1)}$, we get 
\begin{align}
L_n(k) \lesssim N^{-2\alpha/(2\alpha +1)} \asymp n^{-2\alpha/(2\alpha +1)}. 
\end{align}
Since $L_n(k) \geq \sigma^2 k/N$, \eqref{eq:Ln:equality} follows. While this does not prove that $Q_n$ is always minimized at an order $k \leq K_n$ (with high probability), it provides decent hints that this is indeed the case.

We now turn to our ultimate goal, namely showing asymptotic efficiency for the AIC, FPE, and more. To this end, we adopt Shibata's approach in \cite{shibata}, Section 4. First, we pick a model order $\hat{k}_n$ as a minimizer of 
\begin{align}\label{defn:S_n:model:selection}
S_n(k) = (N+2k)\hat{\sigma}_k^2,
\end{align}
where the empirical variance estimator $\hat{\sigma}_k^2$ is defined as
\begin{align}\label{defn:hat:sigma}
    \hat{\sigma}_k^2 = \frac{1}{N} \sum_{t=K_n+1}^n (X_t - \hat{a}_1(k)X_{t-1} - \cdots - \hat{a}_k(k) X_{t-k})^2.
\end{align}

We wish to show that the sequence $\hat{k}_n$ is asymptotically efficient in the sense of Definition \ref{def:asymp:efficiency}. Once this has been established, we may adapt Shibata's perturbation argument to establish analogous results for the AIC, FPE, and related, see Theorem \ref{thm:irs-stable} below.

%It is well known \cite{shibata}, that sufficiently \textit{regular} perturbations of $S_{n}(k)$, such as AIC and the FPE, as well yield asymptotically efficient sequences of model orders.
%Then we verify that sufficiently \textit{regular} perturbations of $S_n(k)$ also yield asymptotically efficient sequences of model orders (see Theorem \ref{thm:irs-stable}). It is well known (see \cite{shibata}), that AIC and the FPE are sufficiently regular perturbations of Shibata's criterion. 
%Hence, model orders selected by AIC and its (sufficiently regular) variants are asymptotically efficient.

\begin{comment}
Grant Assumption \ref{ass:main} and Assumption \ref{ass:weak:dep}. Then any sequence of minimizers $\hat{k}_n$ of $S_n(k) = (N+2k)\hat{\sigma}_k^2$ is asymptotically efficient, that is, as $n \to \infty$, we have
    \begin{equation*}
        \frac{Q_n(\hat{k}_n)}{L_n(k_n^*)} \overset{\mathbb{P}}{\longrightarrow} 1,
    \end{equation*}
    where $k_n^* \in \nn$ is any minimizer of $L_n(\cdot)$. % \mathrm{argmin}_{1\leq k \leq K_n}L_n(k)$. 
\end{comment}

\begin{theorem}[Universality I] \label{thm:irs-asymp-eff} Grant Assumption \ref{ass:main} and Assumption \ref{ass:weak:dep}. Then any sequence of minimizers $\hat{k}_n$ of $S_n(k) = (N+2k)\hat{\sigma}_k^2$ satisfies
      \begin{equation*}
        |Q_{n}(\hat{k}_{n}) - L_{n}(k_{n}^{*})| \leq 8(k_{n}^{*})^{-\delta}L_{n}(k_{n}^{*}),
      \end{equation*}
      with probability at least $1 - C(k_{n}^{*})^{-\gamma}$, for some $C,\delta,\gamma>0$, where $k_n^* \in \nn$ is any minimizer of $L_n(\cdot)$.
\end{theorem}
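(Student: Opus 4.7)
The plan is to deduce the statement from Theorem \ref{thm:main} by showing that, with high probability, the minimizer $\hat{k}_n$ of $S_n$ is nearly a minimizer of $L_n$. Applying Theorem \ref{thm:main} with $b=1$, say, on an event $E_1$ of probability at least $1-C(k_n^*)^{-\gamma}$ we have $|Q_n(k)-L_n(k)|\leq L_n(k)(k_n^*)^{-\delta}$ uniformly in $1\leq k\leq K_n$. If we can separately establish, on an event $E_2$ of comparable probability, the deterministic inequality $L_n(\hat{k}_n)\leq L_n(k_n^*)\bigl(1+c(k_n^*)^{-\delta}\bigr)$ for some absolute $c$, then on $E_1\cap E_2$
\begin{equation*}
Q_n(\hat{k}_n)\leq L_n(\hat{k}_n)\bigl(1+(k_n^*)^{-\delta}\bigr)\leq L_n(k_n^*)\bigl(1+(k_n^*)^{-\delta}\bigr)\bigl(1+c(k_n^*)^{-\delta}\bigr),
\end{equation*}
and the lower bound follows symmetrically (or directly from Corollary \ref{cor:lower:bound}); choosing $c$ small enough relative to $8$ absorbs the product into a single factor $1+8(k_n^*)^{-\delta}$.

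The reduction to an inequality for $L_n$ proceeds via a Mallows-type decomposition. Writing $e_t(k)=X_t+\sum_{i\leq k}a_i(k)X_{t-i}$ for the best linear predictor residual and expanding
\begin{equation*}
\hat{\sigma}_k^2=\frac{1}{N}\sum_{t}e_t(k)^2-2\bigl\langle \hat{a}(k)-a(k),\,\tfrac{1}{N}\textstyle\sum_{t}e_t(k)X_{t-1}(k)\bigr\rangle+\|\hat{a}(k)-a(k)\|_{\hat{R}(k)}^2,
\end{equation*}
and using that $E[e_0X_{-j}]=0$ for $j\geq 1$ gives $E\bar{\sigma}_k^2=\sigma^2+\|a-a(k)\|_R^2$, while the standard heuristic $E\|\hat{a}(k)-a(k)\|_{\hat{R}(k)}^2\asymp \sigma^2k/N$ yields the Mallows identity in expectation
\begin{equation*}
\frac{S_n(k)}{N}=\hat{\sigma}_k^2+\frac{2k}{N}\hat{\sigma}_k^2=T_n+L_n(k)+U_n(k),
\end{equation*}
where $T_n=\frac{1}{N}\sum e_t^2$ does not depend on $k$ and $U_n(k)$ collects four $k$-dependent remainder terms: (i) $\frac{1}{N}\sum(e_t(k)^2-e_t^2)-\|a-a(k)\|_R^2$, (ii) the cross term $\langle \hat{a}(k)-a(k),\,\frac{1}{N}\sum e_t(k)X_{t-1}(k)\rangle$, which via the normal equations reduces to a quadratic in $\hat{a}(k)-a(k)$, (iii) $\|\hat{a}(k)-a(k)\|_{\hat{R}(k)}^2-\sigma^2 k/N$, and (iv) $2k/N\cdot(\hat{\sigma}_k^2-\sigma^2)$.

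The crux is the uniform bound $\max_{1\leq k\leq K_n}|U_n(k)|\leq c\,L_n(k_n^*)(k_n^*)^{-\delta}$ on an event $E_2$ of probability at least $1-C'(k_n^*)^{-\gamma'}$. Term (i) is a partial sum of centred $L^{q/2}$ random variables with physical-dependence measures inherited from $X$, so a maximal inequality under Assumption \ref{ass:weak:dep} gives $O_{\mathbb{P}}(N^{-1/2})$ uniformly in $k$, which is $o(L_n(k_n^*))$ provided $K_n$ is not too large (exactly what \ref{G3} provides). Term (iv) is of lower order by the same moment bounds. The genuinely hard pieces are (ii) and (iii): both are quadratic functionals of $\hat{a}(k)-a(k)$ and must be centred around their mean at a rate strictly better than $\sigma^2 k/N$. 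This is precisely the type of uniform control already built in the proof of Theorem \ref{thm:main}, because an analogous expansion of $Q_n(k)-L_n(k)$ produces the very same quadratic/cross terms; one therefore reuses the matrix concentration bounds for $\hat{R}(k)-R(k)$, the $L^p$ control of $\hat{r}(k)-r(k)$, and the Burkholder/Rosenthal-type inequalities under physical dependence that underlie Theorem \ref{thm:main}. The main obstacle is book-keeping: guaranteeing that the quantitative rate in (iii), which absorbs the $\sigma^2 k/N$ bias exactly, remains $(k_n^*)^{-\delta}L_n(k_n^*)$ once the supremum over $k\leq K_n$ is taken; for this one needs the decomposition $\hat{R}(k)\bigl(\hat{a}(k)-a(k)\bigr)=-(\hat{r}(k)-r(k))-(\hat{R}(k)-R(k))a(k)$ and Lemma \ref{lemma:basic-props-AR(oo)}-type decay $\|a-a(k)\|\lesssim k^{-\alpha}$ to split the risk into a variance-dominated small-$k$ regime and a bias-dominated large-$k$ regime and handle each separately.

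Once this is in place, the optimality $S_n(\hat{k}_n)\leq S_n(k_n^*)$ yields $L_n(\hat{k}_n)-L_n(k_n^*)\leq U_n(k_n^*)-U_n(\hat{k}_n)\leq 2c\,L_n(k_n^*)(k_n^*)^{-\delta}$, and combining with the first step gives the stated $8(k_n^*)^{-\delta}L_n(k_n^*)$ bound after adjusting the numerical constants in the two applications of Theorem \ref{thm:main} and in the uniform control of $U_n$.
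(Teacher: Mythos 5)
Your overall architecture (reduce, via Theorem \ref{thm:main} and the optimality $S_n(\hat{k}_n)\leq S_n(k_n^*)$, to showing that $L_n(\hat{k}_n)\leq L_n(k_n^*)(1+o(1))$ with high probability, and then expand $S_n(k)$ into $NL_n(k)$ plus remainder terms) is the same as the paper's, and your identification of the remainder terms essentially matches the paper's decomposition into $k\sigma^2-N\|\hat{a}(k)-a(k)\|^2_{\hat R(k)}$, $N(s_k^2-\sigma_k^2)$ and $2k(\hat\sigma_k^2-\sigma^2)$. But there are two genuine gaps. First, the bound you declare to be "the crux", $\max_{1\leq k\leq K_n}|U_n(k)|\leq c\,L_n(k_n^*)(k_n^*)^{-\delta}$, is normalized by the wrong quantity and is generally false: for instance your term (iii) at $k=K_n$ has fluctuations of order $\sqrt{K_n}/N$ (this is exactly the $k^{-1/2}$ rate of \eqref{eq:A1-rate-goal} multiplied by $k/N$), which need not be below $L_n(k_n^*)(k_n^*)^{-\delta}\asymp (k_n^*)^{1-\delta}/N$ when $K_n\gg k_n^*$. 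The correct formulation, used in the paper (and in Shibata), controls the differences $U_n(k_n^*)-U_n(k)$ relative to $L_n(k)$, i.e. events of the form $\max_k |I_i(k)|/(NL_n(k))>(k_n^*)^{-\delta}/6$, and the algebra with $S_n(\hat k_n)\leq S_n(k_n^*)$ is then carried out through $1-L_n(k_n^*)/L_n(\hat k_n)$ rather than through an absolute bound at the random index $\hat k_n$.

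Second, and more seriously, you dismiss precisely the term that requires the paper's new technical ingredient. Your term (i) is (up to the $k$-free recentering) $s_k^2-\sigma_k^2$, and your claim that a maximal inequality gives $O_{\mathbb P}(N^{-1/2})$ uniformly in $k$, "which is $o(L_n(k_n^*))$", is false: under Assumption \ref{ass:weak:dep} one has $\|a-a(k)\|_R^2\lesssim k^{-2\alpha}$ with $\alpha\geq 5/2$, so $L_n(k_n^*)\asymp k_n^*/N\asymp N^{-2\alpha/(2\alpha+1)}\ll N^{-1/2}$; a crude $N^{-1/2}$ bound is far too large. The paper handles this through Lemma \ref{lemma:irs:sk}: the difference $s_{k_n^*}^2-\sigma_{k_n^*}^2-s_k^2+\sigma_k^2$ is written as a quadratic form $\langle a(k_n^*)-a(k),(\hat R(K_n)-R(K_n))(a(k_n^*)+a(k))\rangle$ plus a linear term, and the gain comes from the asymmetric quadratic-form estimate of Lemma \ref{lemma:QF-1}, which produces a factor $\|a(k_n^*)-a(k)\|_{\ell^2}\lesssim \sqrt{L_n(k)}+\sqrt{L_n(k_n^*)}$ rather than an $\ell^1$-norm; only then is the ratio to $L_n(k)$ of order $k^{-1/2}$-type and summable after Markov's inequality. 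This lemma is the substitute for Shibata's Lemma 4.2 (unavailable here because the innovations are not independent), and your proposal contains no replacement for it, so the key step of the proof is missing.
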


As was already shown by Shibata in \cite{shibata} (cf. Theorem 4.2), Theorem \ref{thm:irs-asymp-eff} together with the following perturbation result provides asymptotic efficiency for a big number of model selection criteria.

\begin{comment}
    Grant Assumption \ref{ass:main} and Assumption \ref{ass:weak:dep} and a sequence of (potentially random) functions $\delta_n:\nn \to \rr$ such that, as $n \to \infty$, we have
    \begin{equation*}
        \max_{1\leq k \leq K_n}\frac{|\delta_n(k)|}{N} \overset{\mathbb{P}}{\longrightarrow} 0
    \end{equation*}
    and 
    \begin{equation*}
        \max_{1\leq k \leq K_n}\frac{|\delta_n(k) - \delta_n(k_n^*)|}{NL_n(k)} \overset{\mathbb{P}}{\longrightarrow} 0.
    \end{equation*}
    Then any sequence $\hat{k}_n^{\delta}$ of minimizers of $S_n^\delta(k) = (N + \delta_n(k) + 2k)\hat{\sigma}^2_k$ is also asymptotically efficient. 
\end{comment}

\begin{theorem}[Universality II]\label{thm:irs-stable} 
Grant Assumption \ref{ass:main} and Assumption \ref{ass:weak:dep} and a sequence of (potentially random) functions $\rho_n:\nn \to \rr$. If there is a $\delta_{0}>0$, such that for every $0 < \delta< \delta_{0}$ and $b>0$ there are constants $C_{1},C_{2}, \gamma_{1},\gamma_{2}$ (depending on $\delta$ and $b$) with
 \begin{equation*}
   \begin{split}
     \mathbb{P}\biggl(\max_{1\leq k \leq K_{n}}\frac{|\rho_{n}(k)|}{N}> b(k_{n}^{*})^{-\delta}\biggr) &\leq C_{1}(k_{n}^{*})^{-\gamma_{1}}, \quad \text{and}\\
     \mathbb{P}\biggl(\max_{1\leq k\leq K_{n}}\frac{|\rho_{n}(k) - \rho_{n}(k_{n}^{*})|}{NL_{n}(k)} > b(k_{n}^{*})^{-\delta}\biggr) &\leq C_{2}(k_{n}^{*})^{-\gamma_{2}},
   \end{split}
 \end{equation*}
 then any sequence $\hat{k}_{n}(\rho)$ of minimizers of $S_n^\rho(k) = (N + \rho_n(k) + 2k)\hat{\sigma}^2_k$ satisfies 
 \begin{equation*}
   |Q_{n}(\hat{k}_{n}(\rho)) - L_{n}(k_{n}^{*})| \leq 8(k_{n}^{*})^{-\delta}L_{n}(k_{n}^{*}),
 \end{equation*}
 with probability at least $1- C(k_{n}^{*})^{-\gamma}$, for some $C,\gamma>0$.
\end{theorem}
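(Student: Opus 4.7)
The plan is to reduce the perturbed problem to the unperturbed one (Theorem \ref{thm:irs-asymp-eff}) via Shibata's perturbation trick. First I would decompose
\begin{equation*}
  S_n^\rho(k) - S_n^\rho(k_n^*) \;=\; \bigl[S_n(k) - S_n(k_n^*)\bigr] \;+\; \bigl[\rho_n(k)\hat\sigma_k^2 - \rho_n(k_n^*)\hat\sigma_{k_n^*}^2\bigr],
\end{equation*}
and use that $\hat{k}_n(\rho)$ is a minimizer to obtain $S_n^\rho(\hat{k}_n(\rho)) \leq S_n^\rho(k_n^*)$. The proof strategy used for Theorem \ref{thm:irs-asymp-eff} (combining Theorem \ref{thm:main} with the identity $(N+2k)\hat\sigma_k^2 \approx N\sigma^2 + N Q_n(k)$) yields, on the same high-probability event, a two-sided bound of the form $|S_n(k)/N - \sigma^2 - Q_n(k)| \leq c(k_n^*)^{-\delta} L_n(k)$ uniformly in $k \leq K_n$. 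Combined with Theorem \ref{thm:main}, this lets me replace $S_n(k) - S_n(k_n^*)$ by $N[L_n(k) - L_n(k_n^*)]$ up to an error of order $(k_n^*)^{-\delta} N L_n(k)$.

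Next I would handle the perturbation term. Uniform consistency of the residual variance, namely $\max_{1\le k\le K_n}|\hat\sigma_k^2 - \sigma^2| = o_{\mathbb{P}}(1)$ with explicit polynomial rate, follows from Theorem \ref{thm:main} together with the bound $\hat\sigma_k^2 = \sigma^2 + O(\|\hat a(k)-a\|_R^2) + o(1)$ that is established en route to Theorem \ref{thm:irs-asymp-eff}. On the event where $\hat\sigma_k^2$ is bounded (from above by $2\sigma^2$, say), the hypothesis on $\rho_n$ gives
\begin{equation*}
  \bigl|\rho_n(k)\hat\sigma_k^2 - \rho_n(k_n^*)\hat\sigma_{k_n^*}^2\bigr| \;\leq\; 2\sigma^2\bigl|\rho_n(k) - \rho_n(k_n^*)\bigr| + 2\sigma^2 |\rho_n(k_n^*)|\cdot|\hat\sigma_k^2 - \hat\sigma_{k_n^*}^2|/\sigma^2,
\end{equation*}
and both summands are controlled: the first directly by $b(k_n^*)^{-\delta} N L_n(k)$ via the second assumed bound, the second by the first assumed bound on $|\rho_n|/N$ combined with uniform consistency of $\hat\sigma_k^2$. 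The resulting estimate is $|\rho_n(k)\hat\sigma_k^2 - \rho_n(k_n^*)\hat\sigma_{k_n^*}^2| \leq c\,b\,(k_n^*)^{-\delta} N L_n(k)$ uniformly, on an event of probability at least $1 - C_1(k_n^*)^{-\gamma_1} - C_2(k_n^*)^{-\gamma_2}$ minus the event from Theorem \ref{thm:irs-asymp-eff}.

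Putting the two pieces together, the minimization inequality becomes
\begin{equation*}
  L_n(\hat{k}_n(\rho)) - L_n(k_n^*) \;\leq\; c'\,b\,(k_n^*)^{-\delta}\bigl(L_n(\hat{k}_n(\rho)) + L_n(k_n^*)\bigr),
\end{equation*}
and since $L_n(k_n^*) \leq L_n(\hat{k}_n(\rho))$ by definition of $k_n^*$, solving for $L_n(\hat{k}_n(\rho))$ yields $L_n(\hat{k}_n(\rho)) \leq [1 + C''(k_n^*)^{-\delta}] L_n(k_n^*)$, for $n$ large enough so that $c'b(k_n^*)^{-\delta} < 1/2$. A final application of Theorem \ref{thm:main} at $k = \hat{k}_n(\rho)$ gives $|Q_n(\hat{k}_n(\rho)) - L_n(\hat{k}_n(\rho))| \leq b L_n(\hat{k}_n(\rho))/(k_n^*)^{\delta}$, and the triangle inequality together with the bound on $L_n(\hat{k}_n(\rho))/L_n(k_n^*)$ delivers the claimed inequality, with the constant $8$ absorbed by choosing $b$ small enough.

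The main obstacle is book-keeping: one has to carefully align the polynomial exponents and probability rates so that the event from Theorem \ref{thm:irs-asymp-eff} (where $S_n(k)/N - \sigma^2 - Q_n(k)$ is uniformly small), the event from the perturbation assumption, and the event on which $\hat\sigma_k^2$ is uniformly close to $\sigma^2$, simultaneously hold on a set of probability at least $1 - C(k_n^*)^{-\gamma}$. In particular, the exponent $\delta$ in the conclusion must be chosen smaller than both $\delta_0$ from the hypothesis and the admissible $\delta$ from Theorem \ref{thm:main}, so that the three error terms can be combined additively without deteriorating the overall rate.
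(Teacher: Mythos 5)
Your overall architecture is the same as the paper's: exploit $S_n^\rho(\hat k_n(\rho))\leq S_n^\rho(k_n^*)$, split the criterion difference into the unperturbed part (handled exactly as in Theorem \ref{thm:irs-asymp-eff}) and the perturbation part $\rho_n(k)\hat\sigma_k^2-\rho_n(k_n^*)\hat\sigma_{k_n^*}^2$, deduce $L_n(\hat k_n(\rho))\leq (1+o(1))L_n(k_n^*)$, and finish with Theorem \ref{thm:main} at $k=\hat k_n(\rho)$. The first summand of your decomposition, $(\rho_n(k)-\rho_n(k_n^*))\hat\sigma_k^2$, is fine (modulo the cosmetic point that $\hat\sigma_k^2$ is not bounded by $2\sigma^2$ but by a constant of order $\gamma(0)$, since $\hat\sigma_k^2$ estimates $\sigma_k^2$, not $\sigma^2$).

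The genuine gap is in the cross term $\rho_n(k_n^*)\bigl(\hat\sigma_k^2-\hat\sigma_{k_n^*}^2\bigr)$, which you propose to control by ``uniform consistency of the residual variance'' $\max_{1\leq k\leq K_n}|\hat\sigma_k^2-\sigma^2|=o_{\mathbb P}(1)$. First, that statement is false: $\hat\sigma_k^2$ is consistent for $\sigma_k^2$, and by \ref{G1} the bias $\sigma_k^2-\sigma^2=\|a-a(k)\|_R^2$ is a fixed positive constant at $k=1$, so the maximum over $1\leq k\leq K_n$ does not vanish. Second, and more importantly, even the correct uniform statement $\max_k|\hat\sigma_k^2-\sigma_k^2|=O_{\mathbb P}(N^{-1/2}+K_n/N)$ is far too coarse for what you need. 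Since $|\rho_n(k_n^*)|/N\leq b(k_n^*)^{-\delta}$ only supplies the factor $(k_n^*)^{-\delta}N$, you must show $|\hat\sigma_k^2-\hat\sigma_{k_n^*}^2|\leq C\,L_n(k)$ uniformly in $k$; but $L_n(k)\geq L_n(k_n^*)\asymp N^{-2\alpha/(2\alpha+1)}$ with $\alpha\geq 5/2$, which is $o(N^{-1/2})$, whereas any bound obtained by passing through $\sigma^2$ (or through $\sigma_k^2$ and $\sigma_{k_n^*}^2$ separately) inevitably retains the $N^{-1/2}$-size fluctuations of $s_k^2-\sigma_k^2$, which swamp $L_n(k)$ for $k$ near $k_n^*$. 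The point of the paper's argument (term $I_4$ in its proof) is precisely that one must keep the difference structure and invoke the cancellation bound of Lemma \ref{lemma:irs:sk}, i.e.\ $\max_k|s_{k_n^*}^2-\sigma_{k_n^*}^2-s_k^2+\sigma_k^2|/L_n(k)\leq b(k_n^*)^{-\delta}$, which rests on the asymmetric quadratic-form estimate of Lemma \ref{lemma:QF-1} applied with $\eta=a(k_n^*)-a(k)$, combined with the Theorem \ref{thm:main}-type control of $\|\hat a(k)-a(k)\|_{\hat R(k)}^2$ and the deterministic bound $|\sigma_k^2-\sigma_{k_n^*}^2|\leq 2L_n(k)$. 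Your sketch never uses Lemma \ref{lemma:irs:sk} for the perturbation term, so this step fails as written; replacing the uniform-consistency appeal by that lemma repairs it and brings you back to the paper's proof.
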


  \begin{remark}\label{rem:compare-assumptions}
  Theorem \ref{thm:main} implies 
  \begin{equation}\label{eq:max-ratio}
    \max_{1\leq k\leq K_{n}}\biggl|\frac{Q_{n}(k)}{L_{n}(k_{n}^{*})} - 1\biggr| \overset{\mathbb{P}}{\longrightarrow} 0.
  \end{equation}
  By a closer inspection of our proofs, one can easily see that \eqref{eq:max-ratio} does not require the existence of a $\kappa>0$ such that $K_{n}^{2+\kappa}/n$ is bounded; it is enough to assume that $K_{n}^{2}/n\to 0$.
  This can be thought of as a very general analog of Theorem 3.1, Theorem 3.2, and in particular Proposition 3.2 in Shibata's seminal work \cite{shibata}.
  Thus, if one is contend with classical asymptotic efficiency in the sense of \cite{shibata, ing-wei-main}, Assumption \ref{G3} in Corollary \ref{cor:lower:bound},  Theorem \ref{thm:irs-asymp-eff}, and Theorem \ref{thm:irs-stable}, may be relaxed slightly to $K_{n}^{2}/N \to 0$. In this case AIC and its variants are asymptotically efficient, but we do not get explicit (oracle) rates.
\end{remark}

As an application, consider now the perturbations 
\begin{align}\nonumber
&S_{n}^{*}(k) = (n + 2k)\hat{\sigma}^{2}_{k}, \quad S_{n}^{\mathrm{AIC}}(k) = n e^{2k/n}\hat{\sigma}^{2}_{k},\\
&S_{n}^{\mathrm{FPE}}(k) = \big(n(n + k)/(n-k)\big) \hat{\sigma}^{2}_{k}.
\end{align}

It is easy to see that these model selection criteria satisfy the assumptions of Theorem \ref{thm:irs-stable}. For example, in the case of AIC, $\rho_{n}(k) = n\exp((2k)/n) - N - 2k$, and a simple Taylor expansion yields
\begin{equation*}
  \biggl| e^{\frac{2k}{n}} - 1 -  \frac{2k}{n}\biggr| \leq \frac{e^{\frac{2k}{n}}k^{2}}{2n^{2}}.
\end{equation*}
This immediately implies
\begin{equation*}
  \begin{split}
    \max_{1\leq k \leq K_{n}}\frac{|\rho_{n}(k)|}{N} &\leq \frac{K_{n}}{N}+ \frac{2n}{N}e^{\frac{2K_{n}}{n}}\biggl(\frac{K_{n}}{N}\biggr)^{2}, \quad \text{as well as}\\
    \max_{1\leq k\leq K_{n}}\frac{|\rho_{n}(k) - \rho_{n}(k_{n}^{*})|}{NL_{n}(k)} &\leq \frac{2n}{\sigma^{2}N} \biggl(e^{\frac{2K_{n}}{n}} \frac{K_{n}}{n} + e^{\frac{2k_{n}^{*}}{n}}\frac{k_{n}^{*}}{n}\biggr),
  \end{split}
\end{equation*}
and the assumptions of Theorem \ref{thm:irs-stable} are satisfied with $\delta_{0}= 1$.
Similar arguments apply in the case of $S_{n}^{\mathrm{FPE}}$ and $S_{n}^{*}$.

Note that $S_n^*$ is just the 'natural' version of $S_n$ in \eqref{defn:S_n:model:selection}, where $N$ is replaced with $n$. With some effort, this can also be done for $\hat{\sigma}_k^2$ in \eqref{defn:hat:sigma}. %({\MJ konnten wir machen, folgt 'sofort' aus den Lemmas}). 
By Theorem \ref{thm:irs-stable}, these are all (universal) asymptotically efficient order selection criteria. 

Regarding the classical AIC, recall that in connection with an AR($\infty$)-process the selected order $k_n^{\mathrm{AIC}}$ is usually defined as
\begin{align}
k_n^{\mathrm{AIC}} = \operatorname{argmin}_{k \in \nn}\big(n \log \hat{\sigma}_k^2 + 2k\big).
\end{align}

By a $\log$-transformation, we now immediately obtain asymptotic efficiency for $k_n^{\mathrm{AIC}}$. A similar reasoning applies to the FPE and the corresponding selected order $k_n^{\mathrm{FPE}}$, and we thus have the following result.

\begin{corollary}\label{cor:aic:fpe:efficient}
Grant the assumptions of Theorem \ref{thm:irs-asymp-eff}. Then both $k_n^{\mathrm{AIC}}$ and $k_n^{\mathrm{FPE}}$ are both (universal) asymptotically efficient.    
\end{corollary}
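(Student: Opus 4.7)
The plan is to reduce both $k_n^{\mathrm{AIC}}$ and $k_n^{\mathrm{FPE}}$ to instances of the perturbation framework already covered by Theorem \ref{thm:irs-stable}, using the fact that $\operatorname{argmin}$ is invariant under strictly monotone transformations. First I would observe that $k_n^{\mathrm{AIC}} = \operatorname{argmin}_k \big(n\log \hat\sigma_k^2 + 2k\big)$ coincides with $\operatorname{argmin}_k \exp\!\big(\log \hat\sigma_k^2 + 2k/n\big) = \operatorname{argmin}_k \hat\sigma_k^2 e^{2k/n}$, and multiplying by the positive constant $n$ (independent of $k$) yields $\operatorname{argmin}_k S_n^{\mathrm{AIC}}(k)$. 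Hence $k_n^{\mathrm{AIC}}$ is a minimizer of $S_n^{\mathrm{AIC}}$. For FPE, $k_n^{\mathrm{FPE}} = \operatorname{argmin}_k \hat\sigma_k^2 (n+k)/(n-k)$, and scaling by the $k$-free factor $n$ yields $\operatorname{argmin}_k S_n^{\mathrm{FPE}}(k)$, so $k_n^{\mathrm{FPE}}$ is a minimizer of $S_n^{\mathrm{FPE}}$.

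Next I would invoke the verification carried out in the paragraph preceding the corollary: both $S_n^{\mathrm{AIC}}$ and $S_n^{\mathrm{FPE}}$ arise as $S_n^{\rho}$ with perturbations $\rho_n(k)$ that, by the Taylor/geometric-series estimates displayed there, satisfy
\begin{equation*}
  \max_{1\leq k \leq K_n}\frac{|\rho_n(k)|}{N} \lesssim \frac{K_n}{N} + \frac{n}{N}\Big(\frac{K_n}{N}\Big)^{2}, \qquad \max_{1\leq k \leq K_n}\frac{|\rho_n(k) - \rho_n(k_n^*)|}{N L_n(k)} \lesssim \frac{K_n + k_n^*}{N\sigma^2}.
\end{equation*}
Under Assumption \ref{G3} we have $K_n^{2+\kappa}/n$ bounded, so $K_n/N = o(1)$ at a polynomial rate in $n$ and hence (since $k_n^* \to \infty$ and $k_n^* \leq K_n$) also at a polynomial rate in $k_n^*$. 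Thus for $\delta_0$ sufficiently small and any $0<\delta<\delta_0$, both probability bounds demanded by Theorem \ref{thm:irs-stable} hold.

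Applying Theorem \ref{thm:irs-stable} then yields, for each of $\rho \in \{\rho^{\mathrm{AIC}}, \rho^{\mathrm{FPE}}\}$, the sharp oracle inequality
\begin{equation*}
  \big|Q_n(\hat{k}_n(\rho)) - L_n(k_n^*)\big| \leq 8 (k_n^*)^{-\delta} L_n(k_n^*)
\end{equation*}
with probability at least $1 - C(k_n^*)^{-\gamma}$ for some $C,\gamma>0$. By the identification of $\hat k_n(\rho^{\mathrm{AIC}})$ with $k_n^{\mathrm{AIC}}$ and of $\hat k_n(\rho^{\mathrm{FPE}})$ with $k_n^{\mathrm{FPE}}$ from the first paragraph, this is exactly the sharp oracle inequality of Definition \ref{defn:oracle:inequality}, which by Remark \ref{def:asymp:efficiency} (together with $k_n^*\to\infty$ under \ref{G1}) implies asymptotic efficiency $Q_n(k_n^{\mathrm{AIC}})/L_n(k_n^*) \overset{\mathbb{P}}{\to} 1$ and likewise for the FPE, completing the proof.

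The only mildly subtle step is the $\log$-to-exp passage for AIC; the main technical content is already absorbed in Theorem \ref{thm:irs-stable}, so I do not anticipate a serious obstacle. A small care point is checking that the minimizers are well-defined (the sets $\{1,\dots,K_n\}$ are finite, so existence is automatic and we may work with any measurable selector).
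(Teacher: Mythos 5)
Your proposal is correct and follows essentially the same route as the paper: identify $k_n^{\mathrm{AIC}}$ and $k_n^{\mathrm{FPE}}$ as minimizers of $S_n^{\mathrm{AIC}}$ and $S_n^{\mathrm{FPE}}$ via monotone ($\log$/scaling) invariance of the argmin, use the Taylor-expansion verification that the corresponding perturbations $\rho_n$ satisfy the hypotheses of Theorem \ref{thm:irs-stable} (with $K_n/N \lesssim (k_n^*)^{-(1+\kappa)}$ by \ref{G3} and $k_n^* \leq K_n$), and then pass from the resulting oracle inequality to asymptotic efficiency via Remark \ref{def:asymp:efficiency} and $k_n^* \to \infty$.
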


Finally, as mentioned earlier in the introduction, it is possible to generalize many more results based on our approach. Using the tools developed in Section \ref{sec:proofs}, we may, for instance, generalize the results presented in \cite{hannan:quinn:1979:JrssB}, \cite{Jirak:2012:AoS}, \cite{jirak:2014:jmva}  or \cite{shibata:1981:AoS:spec:density}. To exemplify this further, we present a CLT for the coefficient vector $\hat{a}(k)$, $k$ finite.

\begin{theorem}\label{thm:normal:distribution}
  Given Assumptions \ref{ass:main} and \ref{ass:weak:dep}, we have for any fixed $k \in \nn$
\begin{align*}
\sqrt{n}\big(\hat{a}(k) - a(k)\big) \xrightarrow[]{w} \mathcal{N}\big(0,\Sigma(k)\big),
\end{align*}
  where $\Sigma(k) = \sum_{h\in \mathbb{Z}} R(k)^{-1} \mathbb{E}(e_{0}e_{h}X_{0}(k)X_{h}(k)^{T})R(k)^{-1}$.

\end{theorem}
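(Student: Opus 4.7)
The plan is to reduce the assertion to a central limit theorem for a weakly dependent stationary sequence via a standard linearization, and then to invoke Slutsky's theorem. Since $a(k)$ solves the Yule-Walker system $R(k)\,a(k) = -r(k)$, introducing the finite-order residual $\tilde e_{t+1,k} = X_{t+1} + a(k)^{T} X_t(k)$, which by construction is orthogonal in $L^{2}$ to $X_t(k)$, one obtains the exact identity
\begin{equation*}
\sqrt{N}\bigl(\hat a(k) - a(k)\bigr) = -\hat R(k)^{-1} \cdot \frac{1}{\sqrt{N}}\sum_{t=K_n}^{n-1} X_t(k)\,\tilde e_{t+1,k}.
\end{equation*}
Since $N/n \to 1$, it suffices to analyze the two factors on the right separately.

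For the first factor, each entry of $\hat R(k)$ is a sample autocovariance at a lag $h$ with $|h| < k$. Under \ref{I1}-\ref{I2}, the standard $L^{q/2}$ law of large numbers for physically dependent stationary sequences gives $\hat R(k) \inPlong R(k)$, and then \ref{G2} combined with the continuous mapping theorem yields $\hat R(k)^{-1} \inPlong R(k)^{-1}$. For the second factor, set $\xi_t = X_t(k)\,\tilde e_{t+1,k} \in \R^{k}$. By stationarity and projection, $\E[\xi_t] = 0$; since $k$ is fixed, $\xi_t$ is a measurable function of $(\varepsilon_j)_{j \leq t+1}$, and applying the product inequality $\|UV - U'V'\|_{q/2} \leq \|U\|_q\|V - V'\|_q + \|V'\|_q\|U - U'\|_q$ coordinatewise (using that $\tilde e_{t+1,k}$ is a fixed linear combination of $X_{t+1},\dots,X_{t+1-k}$) yields $\delta^{\xi}_{q/2}(l) \lesssim_{k} \delta^{X}_q(l - k - 1)$ for $l > k+1$. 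Assumption \ref{I2} together with $q > 8$ then implies $\sum_{l \geq 0} \delta^{\xi}_{q/2}(l) < \infty$ and $q/2 > 4$, so that Wu's multivariate CLT for physically dependent stationary sequences applies and produces
\begin{equation*}
\frac{1}{\sqrt{N}}\sum_{t=K_n}^{n-1} \xi_t \weakly \mathcal{N}\bigl(0, V(k)\bigr), \qquad V(k) = \sum_{h \in \zz} \E\bigl[\xi_0\,\xi_h^{T}\bigr].
\end{equation*}

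Slutsky's theorem then combines the two pieces into $\sqrt{N}\,(\hat a(k) - a(k)) \weakly \mathcal{N}\bigl(0, R(k)^{-1} V(k) R(k)^{-1}\bigr)$, which is exactly $\Sigma(k)$ after expanding $\xi_0 \xi_h^{T} = \tilde e_{1,k}\tilde e_{1+h,k}\, X_0(k)\, X_h(k)^{T}$. No single step is technically delicate; the main conceptual point is that the derived process $(\xi_t)_{t \in \zz}$ inherits summable physical-dependence coefficients from $(X_t)_{t \in \zz}$, and this relies only on the product rule together with the fact that $k$ is held fixed. This stands in sharp contrast with the uniform-in-$k$ analysis required for the data-driven minimizer in Section \ref{sec:proofs}, where the fitted order is allowed to diverge and considerably more delicate bookkeeping is needed; for a fixed $k$ as considered here, the problem reduces to classical territory.
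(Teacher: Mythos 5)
Your architecture is the same as the paper's: consistency of $\hat{R}(k)^{-1}$, a multivariate CLT for physically dependent sequences (Theorem 3 of \cite{wu2011asymptotic}, via Cram\'er--Wold) applied to the score term, and Slutsky. The one structural difference is that you linearize the Yule--Walker estimator exactly, via $\sqrt{N}\,(\hat a(k)-a(k)) = -\hat R(k)^{-1}N^{-1/2}\sum_t X_t(k)\,\pino{t+1}{k}$, whereas the paper passes through the regression-type estimator $a^{*}(k)$ and then shows $\sqrt{n}\,(\hat a(k)-a^{*}(k))\inPlong 0$ (Lemmas \ref{lemma:thm-an-1}--\ref{lemma:thm-an-2}). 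Your route is cleaner, it folds the comparison lemma into algebra, and the dependence bookkeeping for $\xi_t=X_t(k)\,\pino{t+1}{k}$ at fixed $k$ (product rule, summability of $\delta^{\xi}_{q/2}$ from \ref{I2}) is unproblematic.

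The genuine gap is the final identification of the covariance. Your residual $\tilde e_{t+1,k}$ is the order-$k$ pseudo-innovation $\pino{t+1}{k}$ from \eqref{eq:fit-k}, so expanding $\xi_0\xi_h^{T}$ yields the limit covariance $\sum_{h}R(k)^{-1}\E\bigl(\pino{1}{k}\,\pino{1+h}{k}\,X_0(k)X_h(k)^{T}\bigr)R(k)^{-1}$, not the displayed $\Sigma(k)$, which is written with the AR($\infty$) innovations $e_t$. These two matrices coincide only if $\pino{t}{k}=e_t$, i.e.\ if the process were a genuine AR($k$) --- exactly what \ref{G1} excludes for fixed $k$. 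Nor is the difference asymptotically negligible: $\pino{t+1}{k}-e_{t+1}=\sum_{i}\bigl(a_i(k)-a_i\bigr)X_{t+1-i}$ is a centered stationary sequence with variance $\sigma_k^2-\sigma^2=\|a-a(k)\|_R^2>0$, and $N^{-1/2}\sum_t X_t(k)\bigl(\pino{t+1}{k}-e_{t+1}\bigr)$ contributes a nondegenerate Gaussian component, so it genuinely alters the limit. Hence the sentence ``which is exactly $\Sigma(k)$'' requires either a proof that the two candidate covariances agree (they do not in general) or the acknowledgement that what your argument establishes is the pseudo-innovation form. For comparison, the paper's Lemma \ref{lemma:thm-an-1} writes $X_{n+1}(k)=Xa(k)+Z$ with $Z$ built from the $e_t$, a relation that is exact only for a true AR($k$); your direct linearization makes this tension between the proof and the stated $\Sigma(k)$ visible rather than resolving it, and as a proof of the literal statement it is incomplete at that last step.
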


\section{Examples}\label{sec:examples}

Throughout this section, we only focus on validating Assumption \ref{ass:weak:dep}. As already previously mentioned, our setup contains a huge variety of prominent dynamical systems and time series models. 
For the following exposition, we heavily borrow from \cite{berkes2014}, \cite{jirak:jems}, \cite{jirak:tams:2021}, \cite{wu2011asymptotic}.  

\begin{comment}
In addition, we note that more recently, based on \cite{korepanov2018}, a connection was made
to many more (nonuniformly hyperbolic) dynamical systems in a series of works, see for instance \cite{cuny_dedecker_korepanov_merlevede_2019}, \cite{cuny_quickly_2020} and \cite{CUNY20181347}.
\end{comment}

Before discussing the actual examples, let us briefly touch on a useful property of our setup regarding functionals of the underlying sequence. To be more specific, let $(\mathbb{Y},\mathrm{d})$ be a metric space. In many cases, if $X_t = f(Y_t)$ for $Y_t \in \mathbb{Y}$ and $f: \mathbb{Y} \to \R$, it is easier to control $\mathrm{d}(Y_t,Y_t')$ rather than directly $|X_t-X_t'|$. Of course, this is only useful if the function $f$ is 'nice enough', allowing for a transfer of the rate. More generally, for any finite $d$, consider $\mathbb{Y}^{d}$ equipped with $\mathrm{d}(x,y) = \sum_{k = 1}^{d} \mathrm{d}(x_k,y_k)$, where $x = (x_1,\ldots,x_{d})$ for $x \in \mathbb{Y}^{d}$ and likewise for $y$. Let $\mathcal{G}(C_f,\ad,\bd)$ be the class of functions $f$ that satisfy $f:\mathbb{Y}^{d} \to \R$ with
\begin{align*}%\label{ex:gen:hoelder:condition}%Ref nicht verwendet
\big|f(x) - f(y)\big| \leq C_f \big(\mathrm{d}(x,y)^{\ad} \wedge 1 \big) \big(1 + \mathrm{d}(x,0) + \mathrm{d}(y,0)\big)^{\bd},
\end{align*}
where $C_f, \bd \geq 0$, $0 < \ad \leq 1$ and $0 \in \mathbb{Y}^{d}$ some fixed point of reference. Define $X_t$ by
\begin{align}\label{ex:gen:g:function}
X_t = f\big(Y_t,Y_{t-1}, \ldots, Y_{t-d+1}\big) - \E f\big(Y_t,Y_{t-1}, \ldots, Y_{t-d+1}\big).
\end{align}
If $q \geq 1 \vee p(\ad + \bd)$ and $\E \mathrm{d}^q(Y_t,0) < \infty$, then straightforward computations reveal the following result.

\begin{prop}\label{prop:function}
Given \eqref{ex:gen:g:function}, there exists $C > 0$ such that
\begin{align}
\sup_{t \in \zz} \big\|X_t - X_t^{(t-l)}\big\|_{p} \leq C \sup_{t \in \zz} \big( \E \mathrm{d}^{q}(Y_t, Y_t^{(t-l)})\big)^{\ad/q}.
\end{align}
\end{prop}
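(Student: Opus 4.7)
\textbf{Proof plan for Proposition \ref{prop:function}.} The plan is to reduce the statement to a direct application of the Hölder growth condition defining $\mathcal{G}(C_f,\ad,\bd)$, followed by Hölder's inequality with a carefully chosen pair of exponents, the admissibility of which is exactly the assumption $q \geq 1 \vee p(\ad+\bd)$.

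First, I would observe that the centering constant $\E f(Y_t,\ldots,Y_{t-d+1})$ drops out of the difference: by stationarity of $(Y_t)_{t\in\zz}$ and the fact that $(Y_t^{(t-l)})_{t\in\zz}$ has the same distribution as $(Y_t)_{t\in\zz}$, the expectations agree, so $X_t - X_t^{(t-l)} = f(\mathbf{Y}_t) - f(\mathbf{Y}_t^{(t-l)})$, where I set $\mathbf{Y}_t = (Y_t,\ldots,Y_{t-d+1})$ and $\mathbf{Y}_t^{(t-l)} = (Y_t^{(t-l)},\ldots,Y_{t-d+1}^{(t-l)})$. Applying the defining inequality of $\mathcal{G}(C_f,\ad,\bd)$ pointwise,
\begin{align*}
\big|X_t - X_t^{(t-l)}\big| \leq C_f\, \mathrm{d}(\mathbf{Y}_t,\mathbf{Y}_t^{(t-l)})^{\ad}\, \big(1 + \mathrm{d}(\mathbf{Y}_t,0) + \mathrm{d}(\mathbf{Y}_t^{(t-l)},0)\big)^{\bd},
\end{align*}
where I have dropped the minimum with $1$ (this only makes the bound larger in the relevant regime).

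Next, I would invoke Hölder's inequality on the $L^p$-norm of the right-hand side with exponents $r = q/(\ad p)$ and its conjugate $r' = q/(q-\ad p)$, noting $r>1$ is automatic under $q \geq p(\ad+\bd)$ (and the case $\bd=0$ is handled directly by Jensen). This gives
\begin{align*}
\big\|X_t - X_t^{(t-l)}\big\|_p \leq C_f \big(\E\, \mathrm{d}(\mathbf{Y}_t,\mathbf{Y}_t^{(t-l)})^{q}\big)^{\ad/q} \big\|1 + \mathrm{d}(\mathbf{Y}_t,0) + \mathrm{d}(\mathbf{Y}_t^{(t-l)},0)\big\|_{\bd p r'}^{\bd}.
\end{align*}
The key admissibility check is $\bd p r' \leq q$, which reduces algebraically to $q \geq p(\ad+\bd)$; hence the second factor is finite and, by stationarity together with $\E \mathrm{d}^q(Y_t,0)<\infty$, bounded by a constant uniformly in $t$ and $l$.

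Finally, I would unfold the product metric on $\mathbb{Y}^d$: since $\mathrm{d}(\mathbf{Y}_t,\mathbf{Y}_t^{(t-l)}) = \sum_{k=0}^{d-1} \mathrm{d}(Y_{t-k},Y_{t-k}^{(t-l)})$, Minkowski's inequality in $L^q$ gives
\begin{align*}
\big(\E\,\mathrm{d}(\mathbf{Y}_t,\mathbf{Y}_t^{(t-l)})^q\big)^{1/q} \leq d\, \sup_{t\in\zz}\big(\E\,\mathrm{d}(Y_t,Y_t^{(t-l)})^q\big)^{1/q},
\end{align*}
and raising to the $\ad$-th power and absorbing constants yields the claim. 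The only mildly delicate step is the Hölder bookkeeping above; once the exponents are pinned down by the moment hypothesis $q \geq p(\ad+\bd)$, everything else is bookkeeping via stationarity and Minkowski.
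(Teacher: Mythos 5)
Your computation is correct and is exactly the "straightforward computation" the paper alludes to (no proof is given there): the centering cancels because the coupled vector has the same law, the $\mathcal{G}(C_f,\ad,\bd)$ bound is applied pointwise, and H\"older with $r=q/(\ad p)$, $r'=q/(q-\ad p)$ works precisely because $\bd p r'\leq q$ is equivalent to $q\geq p(\ad+\bd)$, with the weight factor uniformly bounded by $\E\,\mathrm{d}^q(Y_t,0)<\infty$. The exponent bookkeeping, the $\bd=0$ case via Jensen, and dropping the $\wedge 1$ are all fine.

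One wrinkle in the last step deserves to be made explicit. After Minkowski you have $\sum_{k=0}^{d-1}\bigl(\E\,\mathrm{d}^q(Y_{t-k},Y_{t-k}^{(t-l)})\bigr)^{1/q}$, and the $k$-th term is the coupling of $Y_{t-k}$ at lag $l-k$, not at lag $l$; it is not of the form $\E\,\mathrm{d}^q(Y_s,Y_s^{(s-l)})$ for any $s$, so taking $\sup_{t}$ does not convert it into the right-hand side of the proposition. (Concretely, if $Y_t=\varepsilon_{t-1}$ and $f$ depends only on the second coordinate, the left side at lag $l=2$ is positive while $\sup_t\E\,\mathrm{d}^q(Y_t,Y_t^{(t-l)})=0$.) This imprecision is inherited from the statement itself rather than introduced by you: the honest conclusion of your argument is a bound by $\max_{0\leq k\leq d-1}\sup_t\bigl(\E\,\mathrm{d}^q(Y_t,Y_t^{(t-(l-k))})\bigr)^{\ad/q}$, which is what the paper actually uses in its examples (there $d$ is fixed and the coupling bounds are monotone in the lag, so the shift by at most $d-1$ is immaterial for verifying Assumption 2). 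You should either state the conclusion in that form or add the monotonicity/shift remark; with that, the proof is complete.
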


\begin{remark}
Observe that if $\sup_{t \in \zz} \|X_t - X_t^{(t-l)}\|_{p}$ has exponential decay, one may select $\ad > 0$ arbitrarily small and still maintain exponential decay.
\end{remark}

Armed with Proposition \ref{prop:function}, we are now ready to discuss some examples.

\subsection{Banach space valued linear processes}\label{ex:banach:linear}

Suppose that $\mathbb{Y} = \mathds{B}$ is a separable Banach space with norm $\|\cdot\|_{\mathds{B}}$. Slightly abusing notation, we write $\|X\|_p = \| \|X\|_{\mathds{B}}\|_p $ for the Orlicz-norm for a random variable $X \in \mathds{B}$. %, see ~\cite{X} for more details and references.
Let $(A_i)_{i \in \nn}$ be a sequence of linear operators $A_i : \mathds{B} \to \mathds{B}$, and denote with
$\Vert A_i \Vert_{\mathrm{op}}$ the corresponding operator norm. For an i.i.d. sequence $(\varepsilon_t)_{t \in \zz} \in \mathds{B}$, consider the linear process
\begin{align*}
Y_t = \sum_{i = 0}^{\infty} A_i \varepsilon_{t-i}, \quad k \in \zz,
\end{align*}
which exists if $\|\varepsilon_0\|_{1} < \infty$ and $\sum_{i \in \nn} \|A_i\|_{\mathrm{op}} < \infty$, which we assume from now on. Recall that autoregressive processes (even of infinite order) can typically be expressed as linear processes, in particular the (famous) dynamical system 2x mod 1 (Bernoulli convolution, doubling map). For the latter, we refer to Example 3.2 in ~\cite{jirak_be_aop_2016} and the references therein for more details. Obviously, we have the bound

\begin{align*}
\big\|Y_t - Y_t'\big\|_p \leq 2\big\|\varepsilon_t\big\|_p \big\|A_t\big\|_{\mathrm{op}}.
\end{align*}

Suppose that
\begin{align*}%\label{ex:linear:banach:condi}%Ref nicht verwendet
\big\|\varepsilon_t\big\|_{q} < \infty, \quad \sum_{l = 1}^{\infty} l^{\alpha} \big\|A_l\big\|_{\mathrm{op}}^{\ad} < \infty, \quad \alpha \geq 5/2,
\end{align*}
for $q \geq 1 \vee p (\ad + \bd)$, $p > 8$. Then clearly $X_t = f(Y_t) - \E f(Y_t)$ with $f \in \mathcal{G}(C_f,\ad,\bd)$ satisfies Assumption \ref{ass:weak:dep}.

\subsection{SDEs}\label{sec:ex:sde}

Consider the following stochastic differential equation on $\mathbb{Y} = \R^d$ equipped with the Euclidian norm $\| \cdot \|_{\R^d}$:
\begin{align}\label{eq:ex:sde:1}
dY_t = a(Y_t) dt +\sqrt{2b(Y_t)} dB_t, \quad Y_0 = \xi,
\end{align}
where $(B_t)_{t\geq0}$ is a standard Brownian motion in $\R^d$, and the functions $a:\R^d \to \R^d$
and $b: \R^d \to \R^{d \times d}$ satisfy the following conditions. For a given matrix
$A$, we define the Hilbert–Schmidt norm $\|A\|_{\mathrm{HS}} = \sqrt{\operatorname{tr}(AA^{\top})}$.
We assume that the following stability condition (cf. \cite{Djellout:AoP:2004}) is satisfied: the functions $a$ and $b$ are Lipschitz continuous, and there exists $\gamma > 0$ such that
%care: changed 2 to \sqrt{2} above
\begin{align}\label{ass:sde}
\|b(x) - b(y)\|_{\mathrm{HS}}^2 + \langle x-y,a(x)-a(y) \rangle \leq - \gamma\|x - y\|_{\R^d}^2, \quad x,y \in \R^d.
\end{align}
Regarding the existence of a (strong and pathwise unique) solution, we refer to \cite{Djellout:AoP:2004} Section 4 or \cite{karatzas_shreve_1991}, Chapter 5. Now, for any fixed $\delta > 0$ and $t/\delta \in \nn$, let
$\mathcal{I}_i = ((i-1)\delta, i\delta]$ for $i \geq 1$. We may thus write $Y_t$ as
\begin{align*}%\label{rep:sde}%nicht verwendet
Y_t = g_t\big((B_s - B_{t - \delta})_{s \in \mathcal{I}_{t/\delta}}, (B_s - B_{t - 2\delta})_{s \in \mathcal{I}_{t/\delta - 1}}, \ldots, (B_s)_{s \in \mathcal{I}_{1}}, \xi \big),
\end{align*}
that is, as a map from $\big(C[0,\delta)\big)^{{t/\delta}} \times \R \to \R^d$, where $C[0,\delta)$ denotes the space of continuous functions mapping from $[0,\delta)$ to $\R$. By properties of the Brownian motion,
we can thus set $\varepsilon_i = (B_s - B_{(i-1)\delta})_{s \in \mathcal{I}_{i}}$.

For simplicity, we assume from now on that the initial condition $Y_0 = \xi$ admits a stationary solution $Y_t$. We then have the following result (see Proposition 3.4 in \cite{jirak:jems}).

\begin{prop}\label{prop:sde}
Grant Assumption \ref{ass:sde}, and assume the (stationary) solution $(Y_t)_{t \geq 0}$ satisfies $\E \|Y_t\|_{\R^d}^q < \infty$, $q > 2$. Pick any $\delta > 0$. Then there exist $C, c > 0$, such that for any $2 \leq p < q$, we have
\begin{align*}
\sup_{t \in \zz}\E \big\|Y_t - Y_t^{(t-l)}\big\|_{\R^d}^p \leq C \exp(-c l), \quad t/\delta \in \nn.
\end{align*}
\end{prop}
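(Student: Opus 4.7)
My plan is a synchronous coupling argument anchored at time $s_* := (t-l)\delta$, combining exponential contraction on $[s_*,t]$ with a crude uniform moment bound on the discrepancy at $s_*$. Write $\tilde Y_s := Y_s^{(t-l)}$. By construction $Y$ and $\tilde Y$ share the driving Brownian increments on every interval $\mathcal{I}_i$ with $i \neq t-l$, while on $\mathcal{I}_{t-l}$ the process $\tilde Y$ uses an independent copy. Consequently, $Y_{(t-l-1)\delta} = \tilde Y_{(t-l-1)\delta}$, and on $[s_*,t]$ both processes satisfy \eqref{eq:ex:sde:1} driven by the \emph{same} Brownian motion, started from the (generally distinct) values $Y_{s_*}$ and $\tilde Y_{s_*}$.

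\textbf{Contraction on $[s_*,t]$.} First I would apply It\^o's formula to $s \mapsto \|Y_s - \tilde Y_s\|_{\R^d}^2$ on $[s_*,t]$. Taking expectations kills the martingale part, while the drift and quadratic-variation terms combine, up to a constant depending on the convention for $\sqrt{2b}$, into an expression of the form
\begin{equation*}
\tfrac{d}{ds}\E\|Y_s - \tilde Y_s\|_{\R^d}^2 \;=\; 2\,\E\!\left[\langle Y_s - \tilde Y_s,\, a(Y_s) - a(\tilde Y_s)\rangle + \|b(Y_s) - b(\tilde Y_s)\|_{\mathrm{HS}}^2\right].
\end{equation*}
By \eqref{ass:sde} the bracket is bounded by $-2\gamma \|Y_s - \tilde Y_s\|_{\R^d}^2$, so Gronwall yields
\begin{equation*}
\E\|Y_t - \tilde Y_t\|_{\R^d}^2 \;\leq\; e^{-2\gamma l\delta}\,\E\|Y_{s_*} - \tilde Y_{s_*}\|_{\R^d}^2 .
\end{equation*}

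\textbf{Initial discrepancy and $L^p$-upgrade.} By stationarity $\tilde Y_{s_*} \disteq Y_{s_*} \disteq Y_0$, so the triangle inequality gives $\E\|Y_{s_*} - \tilde Y_{s_*}\|_{\R^d}^q \leq 2^q\,\E\|Y_0\|_{\R^d}^q < \infty$. For $p \in [2,q)$, H\"older interpolation between the $L^2$-contraction above and this uniform $L^q$ bound yields $\sup_{t \in \zz}\E\|Y_t - \tilde Y_t\|_{\R^d}^p \leq C e^{-c l}$, with $c$ proportional to $\gamma\delta(q-p)/(q-2)$ and $C$ depending only on $p,q,\gamma,\delta$ and $\E\|Y_0\|_{\R^d}^q$. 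This is precisely the claim.

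\textbf{Main obstacle.} The only non-routine point is the $L^p$-upgrade for $p > 2$: a direct It\^o computation on $\|Y - \tilde Y\|^p$ produces an additional convexity correction that need not be controlled by the $L^2$-type dissipativity in \eqref{ass:sde} alone. The H\"older interpolation route circumvents this issue at the mild cost of a smaller exponential rate, which is harmless for the stated conclusion; recovering the full rate $2\gamma$ would require either reinforcing \eqref{ass:sde} or carrying out the It\^o-Gronwall loop directly for $\|\cdot\|^p$ using that $b$ is Lipschitz.
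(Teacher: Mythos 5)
Your argument is correct in substance, and it is worth noting that the paper itself gives no proof of Proposition \ref{prop:sde} at all: it simply cites Proposition 3.4 of \cite{jirak:jems}. So you are supplying a self-contained argument where the paper outsources, and your route -- synchronous coupling anchored at the end of the replaced block, an It\^o--Gronwall contraction in $L^2$ from the dissipativity condition \eqref{ass:sde}, and then an $L^2$--$L^q$ interpolation against the uniform $q$-th moment bound (valid because the coordinatewise i.i.d.\ replacement leaves the law of $Y_{s_*}^{(t-l)}$ unchanged) -- is exactly the standard way such bounds are obtained and is presumably close in spirit to the cited proof. The interpolation exponent $(q-p)/(q-2)$ makes your constants $C,c$ depend on $p$; read literally the proposition quantifies $C,c$ before $p$, but since the result is only ever invoked for a fixed $p$ (with $q\geq 1\vee p\bd$), and since $L^2$-contraction plus bounded $q$-th moments alone cannot yield a $p$-uniform rate, this is the natural reading and not a defect of your proof.

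Two points deserve to be made explicit rather than hedged. First, the quadratic-variation term produced by It\^o's formula is $\bigl\|\sqrt{2b(Y_s)}-\sqrt{2b(\tilde Y_s)}\bigr\|_{\mathrm{HS}}^2$, while \eqref{ass:sde} as printed controls $\|b(x)-b(y)\|_{\mathrm{HS}}^2$; these coincide only up to reading the stability condition as a condition on the diffusion coefficient itself (as in the stability condition of \cite{Djellout:AoP:2004}) or adding a hypothesis under which the matrix square root is Lipschitz (e.g.\ uniform ellipticity). This is an imprecision inherited from the paper's formulation, but your Gronwall step should state which reading it uses. Second, discarding the martingale part requires the usual localization/stopping argument, and the identity $\tilde Y_{s_*}\disteq Y_{s_*}$ comes from the i.i.d.\ replacement rather than from stationarity; both are routine but should be said. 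With these clarifications your proposal is a complete proof of the stated exponential decay.
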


%Note that Proposition \ref{prop:sde} immediately implies $\E \|Y_t - Y_t'\|_{\R^d}^p \lesssim  \exp(-c t)$.

Suppose now that $q \geq 1 \vee p \bd$, $p > 8$. Then $X_t = f(Y_t) - \E f(Y_t)$ with $f \in \mathcal{G}(C_f,\ad,\bd)$ satisfies Assumption \ref{ass:weak:dep} for any $\ad > 0$. Note that simple conditions for the existence of $\sup_{t \geq 0}\E \exp(-c \|Y_t\|_{\R^d}) < \infty$, $c > 0$, are provided for instance in \cite{Djellout:AoP:2004}. Moreover, discrete-time analogs of \eqref{eq:ex:sde:1} naturally also meet our conditions, see for instance \cite{wu2011asymptotic}.

\subsection[Left random walk on GL(R)]{Left random walk on $GL_d(\R)$}\label{ex:randomwalk}

Cocycles, in particular the random walk on $GL_d(\R)$, have been heavily investigated in the literature, see e.g. \cite{bougerol_book_1985}, \cite{benoist:quint:book:2016} and \cite{benoist2016}, \cite{cuny2017}, \cite{CUNY20181347}, \cite{cuny:2023:aop}, \cite{ion:hui:jems:2022} for some more recent results. In this example, we will particularly exploit ideas of \cite{cuny2017}, \cite{CUNY20181347}. Let $(\varepsilon_k)_{k \in \nn}$ be independent random matrices taking values in $G = GL_d(\R)$, with common distribution $\mu$. Let $A_0 = \mathrm{Id}$, and for every $n \in \nn$, $A_n = \prod_{i = 1}^n \varepsilon_i$. Denote with $\|\cdot\|_{\R^d}$ the Euclidean norm on $\R^d$, and likewise $\|g\|_{\R^d} = \sup_{\|x\|_{\R^d} = 1} \|g x\|_{\R^d}$ the induced operator norm. We adopt the usual convention that $\mu$ has a moment of order $p$, if
\begin{align*}
\int_G \big(\log N(g) \big)^p \mu(d g) < \infty, \quad N(g) = \max\big\{\|g\|_{\R^d}, \|g^{-1}\|_{\R^d}\big\}.
\end{align*}
%It is then well known that $\lambda = \lim_{n \to \infty} n^{-1} \E \log \|A_n\|$ exists if $p \geq 1$.
Let $\mathds{X} = P_{d-1}(\R^d)$ be the projective space of $\R^d\setminus\{0\}$, and write $\overline{x}$ for the projection from $\R^d\setminus\{0\}$ to $\mathds{X}$. We assume that $\mu$ is strongly irreducible and proximal, and hence there exists a corresponding unique, stationary measure $\nu$ on $\mathds{X}$, see \cite{benoist:quint:book:2016} for details. The left random walk of law $\nu$ started at $\overline{x} \in \mathds{X}$ (distributed according to $\nu$) is the Markov chain given by $Y_{0\overline{x}} = \overline{x}$, $Y_{k\overline{x}} = \varepsilon_k Y_{k-1\overline{x}}$ for $k \in \nn$. Following the usual setup, we consider the associated random variables $(X_{k\overline{x}})_{k \in \nn}$, given by
\begin{align*}%\label{ex:randomwalk:defn}%Ref nicht verwendet
X_{k\overline{x}} = h\big(\varepsilon_k, Y_{k-1\overline{x}} \big), \quad h\big(g, {z}\big) = \log \frac{\| g z\|_{\R^d}}{\|z\|_{\R^d}},
\end{align*}
for $g \in G$ and $z \in \R^d\setminus\{0\}$. Following ~\cite{CUNY20181347}, if $p > (2+\ad)q + 1$, then Proposition 3 in ~\cite{cuny2017} implies
\begin{align*}
\sum_{k = 1}^{\infty} k^{\ad} \sup_{l  \geq k} \big\|X_{k\overline{x}}^{(k-l)} - X_{k\overline{x}}^{(k-l)} \big\|_q < \infty.
\end{align*}

Hence, if $p > 37$, Assumption \ref{ass:weak:dep} holds. Finally, let us mention that an analogous claim can be made regarding the random walk on positive matrices, see \cite{cuny:positive:matrix:2023}, \cite{jirak:jems} for details.

\subsection{Iterative random Systems}\label{sec:ex:iterative}

Let $(\mathbb{Y},\mathrm{d})$ be a complete, separable metric space. An iterated random
function system on the state space $\mathbb{Y}$ is defined as
\begin{align}
Y_t = F_{\varepsilon_t}\big(Y_{t-1} \big), \quad t \in \zz,
\end{align}
where $\varepsilon_t \in \mathbb{S}$ are i.i.d. with $\varepsilon \stackrel{d}{=} \varepsilon_t$, where $\mathbb{S}$ is some measurable space. Here, $F_{\varepsilon}(\cdot) = F(\cdot, \varepsilon)$ is the $\varepsilon$-section of a jointly measurable function $F : \mathbb{Y} \times \mathbb{S} \to \mathbb{Y}$. Many dynamical systems, Markov processes, and non-linear time series are within this framework, see for instance \cite{diaconis_freedman_1999}, \cite{wu_shao_iterated_2004}. For $y \in \mathbb{Y}$, let
$Y_t(y) = F_{\varepsilon_t} \circ F_{\varepsilon_{t-1}} \circ \ldots \circ F_{\varepsilon}(y)$, and, given $y,y' \in \mathbb{Y}$ and $\gamma > 0$, we say that the system is $\gamma$-\textit{moment contracting} if there exists $0 \in \mathbb{Y}$, such that for all $y \in \mathbb{Y}$ and $t \in \zz$
\begin{align}\label{ex:iterated:contraction}
\E \mathrm{d}^{\gamma}\big(Y_t(y),Y_t(0)\big) \leq C \rho^t \mathrm{d}^{\gamma}(y,0), \quad \rho \in (0,1).
\end{align}
We note that slight variations exist in the literature. We now have the following result, which is an almost immediate consequence of Theorem 2 in \cite{wu_shao_iterated_2004}.

\begin{prop}\label{prop:iterated:1}
Assume that \eqref{ex:iterated:contraction} holds for some $\gamma > 0$, and that $\E \mathrm{d}^{\gamma}\big(0,Y_1(0)\big) < \infty$ for some $0 \in \mathbb{Y}$. Then there exists $C > 0$, such that
\begin{align*}
\sup_{t \in \zz}\E \mathrm{d}^{\gamma}\big(Y_t,Y_t^{(t-l)}\big) \leq C \rho^l.
\end{align*}
If $\E \mathrm{d}^{s}\big(0,Y_1(0)\big)$ for $s > p \geq 1$ instead, then even
\begin{align*}
\sup_{t \in \zz}\E \mathrm{d}^p\big(Y_t,Y_t^{(t-l)}\big) \leq C \rho^l.
\end{align*}
\end{prop}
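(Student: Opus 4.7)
The plan is to couple $Y_t$ with $Y_t^{(t-l)}$ at the location of the shock swap, then invoke the $\gamma$-moment contraction \eqref{ex:iterated:contraction} on the $l$ subsequent forward steps. Conditionally on $\sigma(\mathcal{F}_{t-l-1}, \varepsilon_{t-l}, \varepsilon_{t-l}')$, the random points $Z := F_{\varepsilon_{t-l}}(Y_{t-l-1})$ and $Z' := F_{\varepsilon_{t-l}'}(Y_{t-l-1})$ are fixed initial conditions, and both $Y_t$ and $Y_t^{(t-l)}$ arise as the $l$-step iteration starting from $Z$ and $Z'$, respectively, driven by the common fresh shocks $(\varepsilon_{t-l+1},\ldots,\varepsilon_t)$. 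By stationarity, $Z \disteq Z' \disteq Y_0$.

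Insert the reference trajectory $W_t := F_{\varepsilon_t}\circ\cdots\circ F_{\varepsilon_{t-l+1}}(0)$ and use the subadditivity $(a+b)^\gamma \leq c_\gamma(a^\gamma + b^\gamma)$ (with $c_\gamma = 1$ for $\gamma\in(0,1]$ and $c_\gamma = 2^{\gamma-1}$ for $\gamma\geq 1$) to obtain
\begin{equation*}
\mathrm{d}^{\gamma}\bigl(Y_t, Y_t^{(t-l)}\bigr) \leq c_\gamma\bigl(\mathrm{d}^{\gamma}(Y_t, W_t) + \mathrm{d}^{\gamma}(W_t, Y_t^{(t-l)})\bigr).
\end{equation*}
Since the forward shocks are independent of $Z$, applying \eqref{ex:iterated:contraction} conditionally yields $\E[\mathrm{d}^{\gamma}(Y_t, W_t)\mid Z] \leq C\rho^l\,\mathrm{d}^{\gamma}(Z,0)$, and analogously for the second summand. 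Taking expectations reduces the first assertion to the bound $\E\,\mathrm{d}^{\gamma}(Y_0, 0) < \infty$, which is obtained by a standard backward-iteration argument: writing $\tilde{Y}_n := F_{\varepsilon_0}\circ\cdots\circ F_{\varepsilon_{-n+1}}(0)$, the $k$-step contraction gives $\E\,\mathrm{d}^{\gamma}(\tilde{Y}_{k+1}, \tilde{Y}_k) \leq C\rho^k\,\E\,\mathrm{d}^{\gamma}(0, Y_1(0))$, so the telescoping series is summable and $\tilde{Y}_n$ is Cauchy in $L^{\gamma}(\mathrm{d})$ with limit $Y_0$.

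For the second statement, the stronger hypothesis $\E\,\mathrm{d}^{s}(0, Y_1(0)) < \infty$ propagates through the analogous backward-iteration at level $s$ (cf.~Theorem 2 in \cite{wu_shao_iterated_2004}) to the uniform bound $\sup_t \E\,\mathrm{d}^{s}(Y_t, Y_t^{(t-l)}) \leq C'$, via the triangle inequality applied to two stationary copies. Interpolating between this uniform $L^{s}$-estimate and the geometric $L^{\gamma}$-decay of the first part --- Lyapunov's inequality when $p \leq \gamma$, H\"older's inequality with exponent $\theta = (p-\gamma)/(s-\gamma)\in(0,1)$ when $\gamma < p < s$ --- produces the claimed bound $\E\,\mathrm{d}^{p}(Y_t, Y_t^{(t-l)}) \leq C''(\rho^{1-\theta})^{l}$ with geometric rate $\rho^{1-\theta} < 1$. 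The main technical point throughout is the careful conditional decoupling used to justify the first display, which rests on the independence of $(\varepsilon_{t-l+1},\ldots,\varepsilon_t)$ from $(\mathcal{F}_{t-l-1}, \varepsilon_{t-l}, \varepsilon_{t-l}')$; once this is in place, the rest is routine bookkeeping.
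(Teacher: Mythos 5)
Your first bound is fine, and it is essentially the argument the paper has in mind (the paper itself gives no proof beyond citing Theorem 2 of \cite{wu_shao_iterated_2004}): conditioning on $(\xi_{t-l-1},\varepsilon_{t-l},\varepsilon_{t-l}')$, comparing both trajectories to the reference trajectory started at $0$ — correctly inserted, since \eqref{ex:iterated:contraction} only compares against the reference point — and reducing to $\E\,\mathrm{d}^{\gamma}(Y_0,0)<\infty$, which your backward-iteration telescoping extracts from $\E\,\mathrm{d}^{\gamma}(0,Y_1(0))<\infty$. Modulo the routine identification of the backward limit with the stationary solution, that part is correct.

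The second half has a genuine gap at the uniform bound $\sup_t \E\,\mathrm{d}^{s}(Y_t,Y_t^{(t-l)})\le C'$. This does not ``propagate through the analogous backward-iteration at level $s$'': the only contraction assumed is at level $\gamma$, so the backward increments $\mathrm{d}(\tilde Y_{k+1},\tilde Y_k)$ are not controlled in $L^{s}$, and $\E\,\mathrm{d}^{s}(Y_0,0)<\infty$ is not a consequence of the stated hypotheses. It can in fact fail: take $\mathbb{Y}=\R$, $F_{\varepsilon}(y)=Ay+1$ with $A\ge 0$ random, $\E A^{\gamma}<1$ but $\E A^{s}>1$ and $\E\log A<0$. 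Then \eqref{ex:iterated:contraction} holds with $\rho=\E A^{\gamma}$ and $\E\,\mathrm{d}^{s}(0,Y_1(0))=1<\infty$, yet $Y_0=\sum_{k\ge0}A_0\cdots A_{-k+1}$ satisfies $\E Y_0^{s}\ge(\E A^{s})^{k}\to\infty$, so the stationary law has a Kesten-type heavy tail and your interpolation scaffold collapses (indeed $\E\,\mathrm{d}^{p}(Y_t,Y_t^{(t-l)})=\infty$ for $p$ above the tail index). So the missing step cannot be repaired from the proposition's hypotheses alone; it is precisely the nontrivial moment-upgrade content that the paper outsources to Wu--Shao, whose argument operates with per-step conditions (moments of the random Lipschitz coefficients), and invoking that theorem at this point makes your proof circular with respect to what it is supposed to establish. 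A secondary point: even granting the $L^{s}$ bound, your Hölder step yields the base $\rho^{(s-p)/(s-\gamma)}$ (and $\rho^{p/\gamma}$ via Lyapunov when $p\le\gamma$), not the literal $\rho$ in the display; this is harmless for the paper's applications, where any geometric rate suffices (cf. the remark following Proposition \ref{prop:function}), but it should be flagged as a weaker rate rather than presented as ``the claimed bound''.
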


Suppose now that $(Y_t)_{t \in \zz}$ is stationary and satisfies \eqref{ex:iterated:contraction} and $\E \mathrm{d}^{q}\big(0,Y_1(0)\big)<\infty$. Then if $q \geq 1 \vee p \bd $, $p > 8$, it follows that $X_t = f(Y_t) - \E f(Y_t)$ with $f \in \mathcal{G}(C_f,\ad,\bd)$ satisfies Assumption \ref{ass:weak:dep} for any $\ad > 0$.

\subsection[GARCH(p, q) processes]{GARCH$(\mathfrak{p},\mathfrak{q})$ processes}\label{sec:ex:garch}

A very prominent stochastic recursion is the GARCH($\mathfrak{p},\mathfrak{q})$ sequence, given through the relations
\begin{align*}
&Y_t = \varepsilon_t L_{t} \quad \text{where $(\varepsilon_{k})_{k \in \zz}$ is a zero mean i.i.d. sequence and}\\
&L_t^2 = \mu + \alpha_1 L_{t - 1}^2 + \ldots + \alpha_\mathfrak{p} L_{t - \mathfrak{p}}^2 + \beta_1 Y_{t - 1}^2 + \ldots + \beta_{\mathfrak{q}} Y_{t - \mathfrak{q}}^2,
\end{align*}
with $\mu > 0$, $\alpha_1,\ldots,\alpha_\mathfrak{p}, \beta_1,\ldots,\beta_{\mathfrak{q}} \geq 0$. We refer to \cite{garch:book:2010} for some general aspects of Garch processes and their importance. Assume first $\|\varepsilon_t\|_q < \infty$ for some $q \geq 2$. A key quantity here is
\begin{align*}
\gamma_C = \sum_{i = 1}^{r} \bigl\|\alpha_i + \beta_i \varepsilon_i^2\bigr\|_{q/2}, \quad \text{with $r = \max\{\mathfrak{p},\mathfrak{q}\}$},
\end{align*}
where we replace possible undefined $\alpha_i, \beta_i$ with zero. If $\gamma_C < 1$, then $(Y_k)_{t \in \zz}$ is (strictly) stationary. In particular, one can show the representation
\begin{align*}
Y_t = \sqrt{\mu}\varepsilon_t\biggl(1 + \sum_{n = 1}^{\infty} \sum_{1 \leq l_1,\ldots,l_n\leq r} \prod_{i = 1}^n\bigl(\alpha_{l_i} + \beta_{l_i}\varepsilon_{t - l_1 - \ldots - l_i}^2\bigr) \biggr)^{1/2},
\end{align*}
we refer to \cite{aue_etal_2009} for comments and references. Using this representation and the fact that $|x-y|^q \leq |x^2 - y^2|^{q/2}$ for $x,y \geq 0$, one can follow the proof of Theorem 4.2 in \cite{aue_etal_2009} to show that
\begin{align*}
\bigl\|Y_t - Y_t'\bigr\|_q \leq C \rho^{t}, \quad \text{where $0 < \rho < 1$.}
\end{align*}

Suppose now that $\gamma_C < 1$ for $q \geq 1 \vee p \bd$, $p > 8$. Then $X_t = f(Y_t) - \E f(Y_t)$ with $f \in \mathcal{G}(C_f,\ad,\bd)$ satisfies Assumption \ref{ass:weak:dep} for any $\ad > 0$.

We note that analogous results can be shown for augmented Garch processes, see \cite{berkes_2008_letter}, \cite{jirak:EJP}. Also note that from a more general perspective, Garch processes may be regarded as iterative random systems.

\subsection{Markov chains of infinite order}\label{ex:winterberger}

%, see ~\cite{X} for more details and references.

%$\Upsilon$ be an Orlicz-function and $\|X\|_{\Upsilon}$ the usual Orlicz-norm

Let $\mathbb{S} = \mathds{B}$ be a Banach space with norm $\|\cdot\|_{\mathds{B}}$. Recall that $\|X\|_p = \| \|X\|_{\mathds{B}}\|_p $ denotes the Orlicz-norm for a random variable $X \in \mathds{B}$.
Consider a sequence $(a_k)_{k \geq 1} \in \R^+$ with $\sum_{k \geq 1} a_k < 1$, and let $(\varepsilon_k)_{k \in \zz} \in \mathds{B}$ be i.i.d. Let $F:\mathds{B}^{\nn} \times \mathds{B} \to \mathds{B}$ be such that

\begin{align}
&\big\|F(x,\varepsilon_0) - F(y,\varepsilon_0) \big\|_{q} \leq \sum_{k = 1}^{\infty} a_k \|x_k - y_k\|_{\mathds{B}},\\
&\big\|F(0,0,\ldots,\varepsilon_0)\big\|_q < \infty, \quad x,y \in \mathds{B}^{\nn}, %\varepsilon_0 \in \mathds{B},
\end{align}
where we write $x = (x_1,x_2,\ldots)$ for $x \in \mathds{B}^{\nn}$ and $0 \in \mathds{B}$ is some point of reference. The Markov chain of infinite order is then (formally) defined as
\begin{align}
Y_k = F\big(Y_{k-1},Y_{k-2},Y_{k-3},\ldots, \varepsilon_k\big).
\end{align}

Existence and further properties are established in \cite{doukhan:winterberger:2008:spa}. In particular, if the sequence $(a_k)_{k \geq 1}$ satisfies
\begin{align*}%\label{ex:markov:infinite:weakdep:coef:decay}%Ref nicht verwendet
\sum_{i \geq k} a_i \leq C_a k^{-\ad}, 
\end{align*}
the results in \cite{doukhan:winterberger:2008:spa} imply that for some constant $C > 0$, we have
\begin{align}
\big\|Y_k - Y_k'\big\|_q \leq C  k^{-\ad' + \delta}, 
\end{align}
where $\delta > 0$ can be selected arbitrarily small. Suppose now $q \geq 1 \vee p (\bd + \ad)$, $p > 8$ and $\ad' > 5/2\ad$. Then $X_t = f(Y_t) - \E f(Y_t)$ with $f \in \mathcal{G}(C_f,\ad,\bd)$ satisfies Assumption \ref{ass:weak:dep}.

\section{Simulations}\label{sec:sim}

    AIC has been a staple in the model selection toolkit ever since its inception \cite{akaike:1974}, and its usefulness has been verified in countless examples.
  In the context of asymptotic efficiency, a simulation study has been conducted for instance in \cite{ing-wei-main}. 
  However, the effect of dependence in this context has -- to the best of our knowledge -- not been fully explored.
  In order to assess the potential impact of weak dependence on the performance of AIC, we consider a sequence of processes $(X_{t}^{(m)})_{t\in \mathbb{Z}}$, with $m$-dependent innovations as $m$ grows.
For an i.i.d. sequence of standard normal variables $\varepsilon_{i}$, we define a sequence of uncorrelated (martingale difference type), $m$-dependent innovations $e_{t}^{(m)}$ by
\begin{equation}\label{eq:sim-innov}
  e^{(m)}_{t} = \varepsilon_t\biggl(\prod_{j=1}^{m-1}|\varepsilon_{t-j}|\biggr)^{\frac{1}{m-1}}.
\end{equation}
The process $X^{(m)}_{t}$ is then given by
\begin{equation}\label{eq:sim-proc}
  X_{t}^{(m)} = e_{t}^{(m)} + \sum_{j=1}^{\infty}j^{-4}e^{(m)}_{t-j}.
\end{equation}
Note that any $m$-dependent process ($m$ fixed) automatically satisfies Assumption \ref{ass:weak:dep} for any $\alpha \geq 0$, given $q > 8$ moments. Taking the $m$-th root in \eqref{eq:sim-innov} ensures that the innovations are of the same magnitude for different $m$. Note that $m=1$ corresponds to the classical setting of \cite{shibata, karagrigoriou2001, ing-wei-main}. On the other hand, since 
\begin{align*}
\biggl(\prod_{j=1}^{m-1}|\varepsilon_{t-j}|\biggr)^{\frac{1}{m-1}} = e^{\E \log |\varepsilon|} + O_{\P}\big(m^{-1/2}\big),    
\end{align*}
it is tempting to expect the 'very large $m$' case to (exactly) mirror the i.i.d. ($m = 1$) setting. However, our choices of $m$ are not large enough for such an effect to take place, see the discussion below.
%To the best of our knowledge, there are no results guaranteeing the asymptotic efficiency of AIC in the case $m\geq 2$. 

We compute 4000 independent runs of the process \eqref{eq:sim-proc} for $t=41,\dots, 1840$ and $m=1,2,5,10,15,20$ and $25$.
The process $X_{t}^{(m)}$ in \eqref{eq:sim-proc} is approximated by 
\begin{equation*}
  \tilde{X}_{t}^{(m)} = e_{t}^{(m)} + \sum_{j=1}^{400}j^{-4}e_{t-j}^{(m)},
\end{equation*}
since $\sum_{j=401}^{\infty}j^{-4}\approx 5.2\times10^{-9}$ and $e_{t}^{(m)}$ is strongly concentrated on the intervall $[-1,1]$ (see Figure \ref{fig:sim-innov-dist}). 
We observe in Figure \ref{fig:m-dep} that larger values of $m$ tend to cause a quicker increase in the size of the models AIC selects.
However, qualitatively the curves in Figure \ref{fig:m-dep} look very similar.
For example, moving from a $5$-dependent process to a $25$-dependent process increases the average selected model order by less than a factor of two. This is the titular \textit{universality} of AIC. The model orders have been selected on the \textit{unrestricted} domain $k=1,\dots,n-1$, as this is what most implementations of AIC, and hence most practitioners, do. As has been highlighted in the discussion below Corollary \ref{cor:lower:bound}, even for very moderate sample sizes the selected model orders are significantly smaller than $\sqrt{n}$.

In a second set of simulations, we try to asses how 
the rate of decay of the AR coefficients potentially influences the performance of AIC. To this end, we consider a sequence of moving average coefficients, $b_{j}^{(p)} = j^{-p}$, for $p>1$. The innovations $e_{t}$ follow a GARCH($0.25$,$0.25$) model with standard Gaussian innovations, that is,
\begin{align*}
 e_{t} &= \varepsilon_{t}L_{t},
\end{align*}
where $\varepsilon_{t}$ is a sequence of i.i.d. standard Gaussian random variables, and
\begin{align*}
 L_{t}^{2} &= \frac{1}{10} + \frac{1}{4}L_{t-1}^{2} + \frac{1}{4}e_{t-1}^{2}.    
\end{align*}
The process we consider is given by 
\begin{equation*}
  Y_{t}^{(p)} = e_{t} + \sum_{j=1}^{\infty}b_{j}^{(p)}e_{t-j},
\end{equation*}
which is again approximated by 
\begin{equation*}
\tilde{Y}_{t}^{(p)} = e_{t} + \sum_{j=1}^{400}b_{j}^{(p)}e_{t-j}.
\end{equation*}

Note that by Lemma \ref{lemma:dep-rates} and the results of Section \ref{sec:ex:garch}, the AR-process $Y_{t}$ satisfies $D^{Y}_{q}(r) < \infty$ for any $r<p$. 

As before, we select the model order by AIC for various partial samples $(\tilde{Y}_{s})_{s=1}^{t}$, for $t = 41, \dots, 1840$, and record their average in Figure \ref{fig:garch-sim}.  
Slow decay of autoregressive coefficients tends to cause AIC to select larger models. The results should be contrasted with Figure \ref{fig:m-dep}. 
Slow decay in the AR coefficients has a similar effect on the performance of AIC as a higher degree of dependence among the innovations. 
This is in line with Lemma \ref{lemma:dep-rates}. 
In particular, it should be noted that even for smaller $p$, the average selected model order is well below the threshold of $\sqrt{n}$. Note that for $p < 5/2$, the process $Y_{t}$ may not satisfy Assumption \ref{ass:weak:dep}. It is well known \cite{shibata} that for centered i.i.d. Gaussian innovations absolute summability is enough to guarantee the asymptotic efficiency of the model orders selected by AIC. 
The results in Figure \ref{fig:garch-sim} suggest, that for a subclass of very weakly dependent processes, one may be able to relax Assumption \ref{ass:weak:dep}, and move even closer to the i.i.d. case.
\begin{center}
\begin{figure}
    \centering
    \includegraphics[width=1\textwidth]{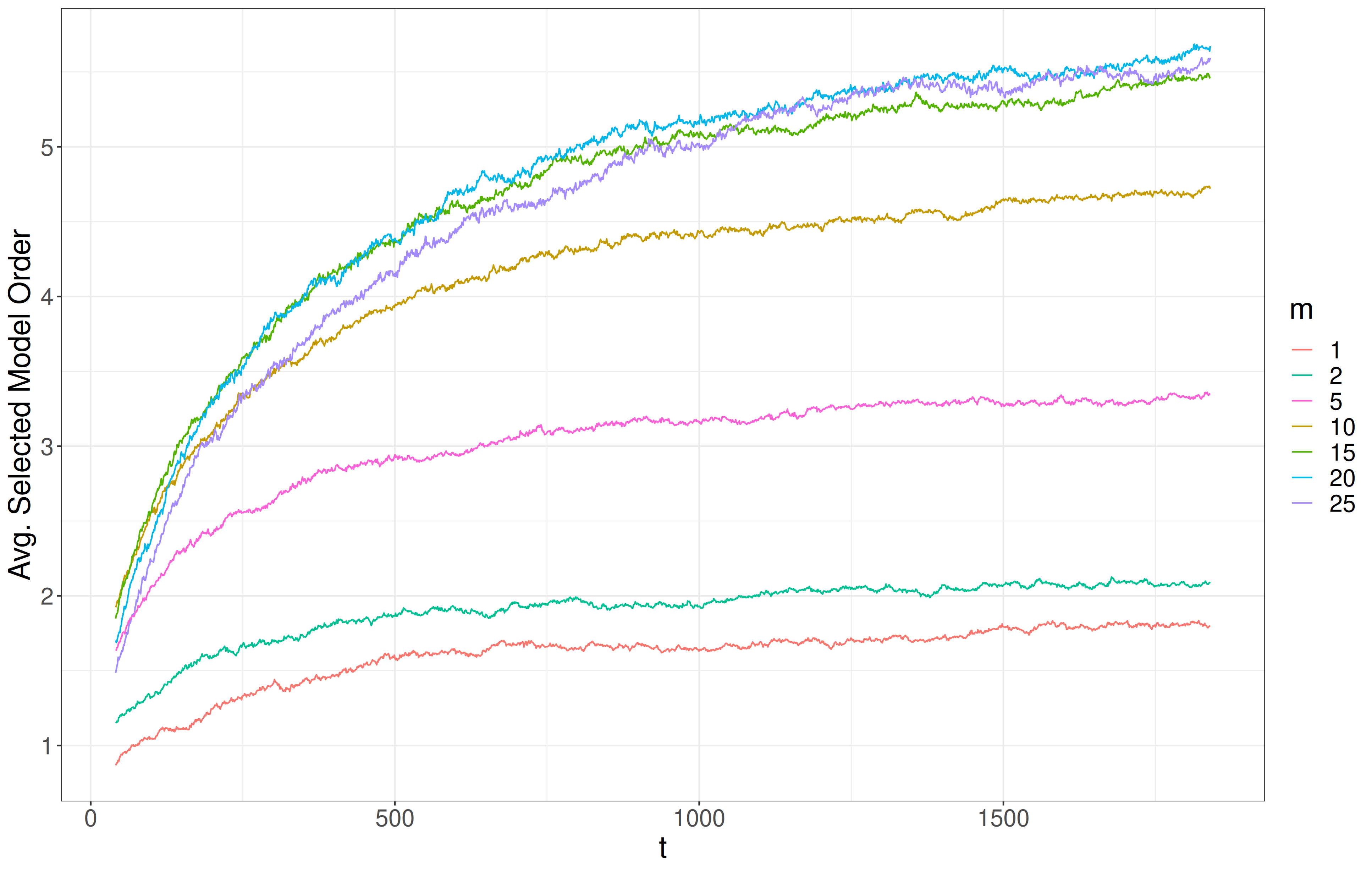}
  \caption{Average model order selected by AIC for $(\tilde{X}_{s}^{(m)})_{s=1}^{t}$ over 4000 runs, for various degrees of dependence $m$ and $t=41,\dots,1840$.}
    \label{fig:m-dep}
\end{figure}
\end{center}

\begin{center}
\begin{figure}
    \centering
    \includegraphics[width=0.6\textwidth]{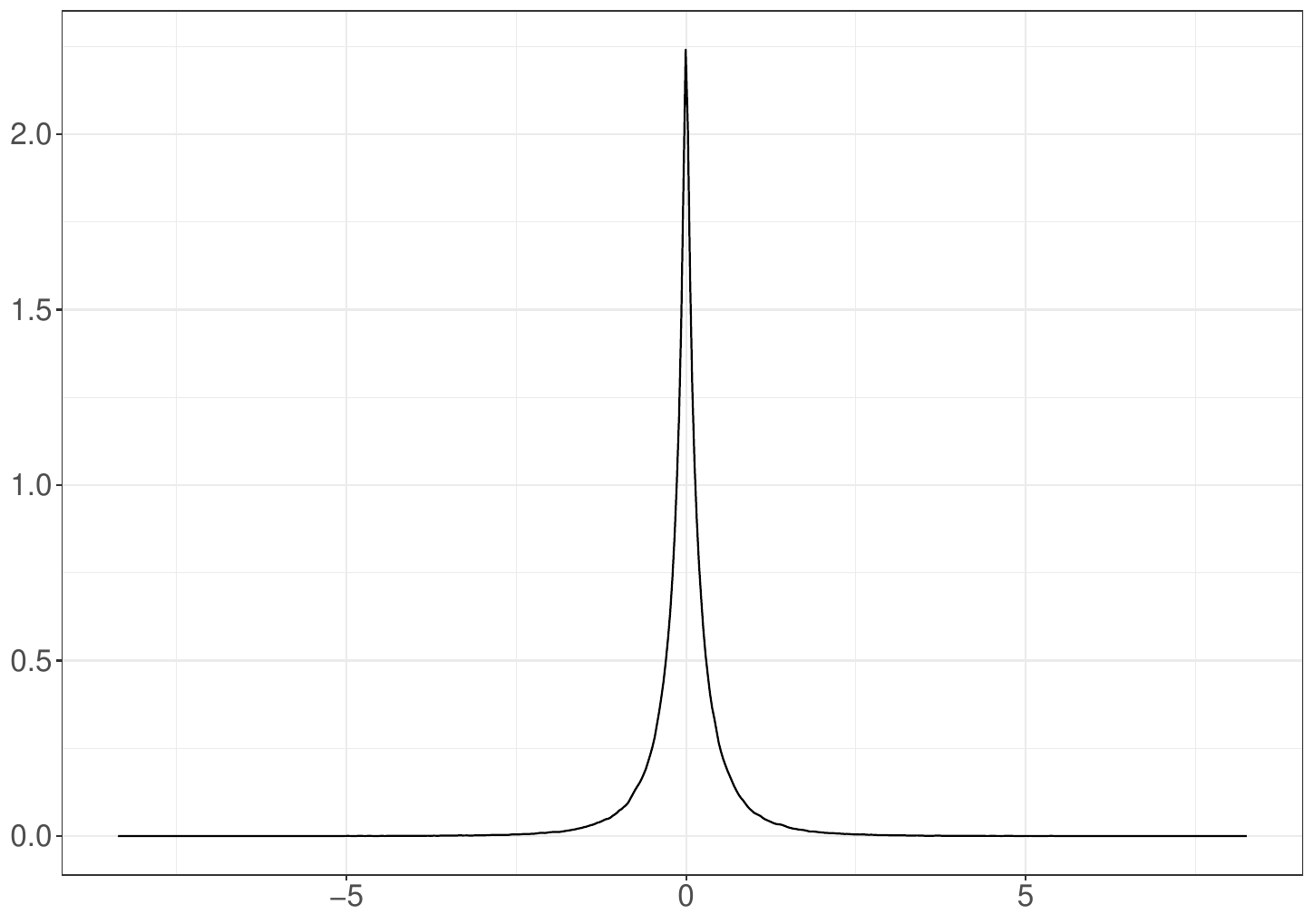}
    \caption{Distribution of $e^{(5)}_{0}$. Most of its mass is concentrated on $[-1,1]$.}
    \label{fig:sim-innov-dist}
\end{figure}
\end{center}

\begin{center}
\begin{figure}
\centering
\includegraphics[width=1\textwidth]{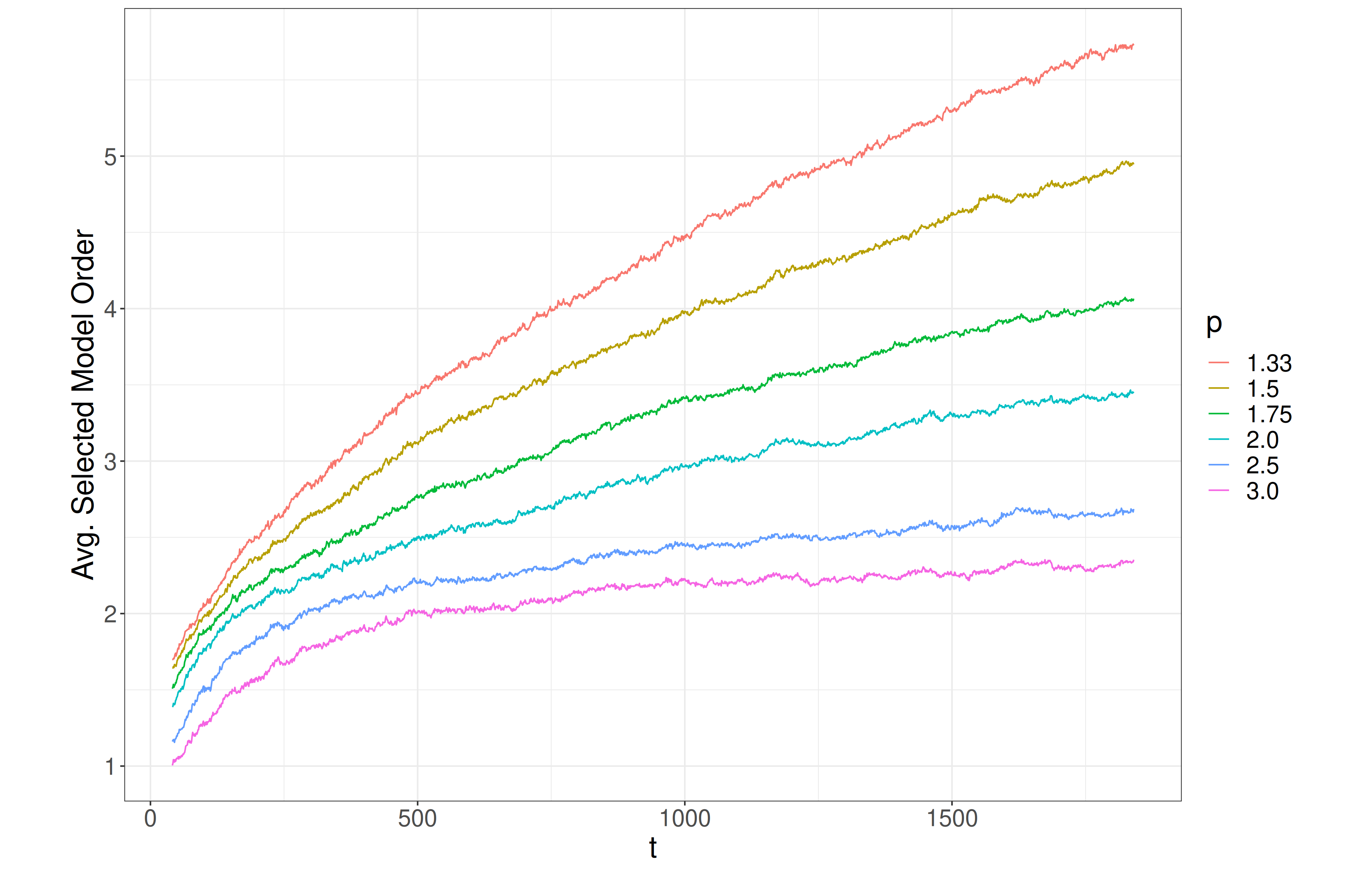}
\caption{Average model orders selected by AIC for $(\tilde{Y}_{s}^{(p)})_{s=1}^{t}$ over 4000 runs, for various AR decay rates $p$ and $t=41,\dots,1840$.}%
\label{fig:garch-sim}
\end{figure}
\end{center}

\section*{Acknowledgements}
Funding by the Austrian Science Fund (FWF) grant I 5485-N is gratefully acknowledged. We would also like to thank D. N. Politis for pointing to the connection between modern autoregressive, spectral estimates and the present work. Furthermore, we would like to thank the participants of the ISOR Privatissimum for providing useful feedback on an early draft. In particular, we would like to thank B. M. Pötscher for suggesting a shortcut in the proof of Lemma \ref{lemma:Bn-Bnk}.

\section{Proofs}\label{sec:proofs}

This section is structured as follows. In Section \ref{sec:pd-ar-props} we introduce the basic techniques that we need for later proofs, and highlight their connections to classical theory of autoregressive processes. In Section \ref{sec:proof:thm:main} we proof Theorem \ref{thm:main}, while Section \ref{sec:proof:thm:irs-asymp_eff} is devoted to the proofs of Theorems \ref{thm:irs-asymp-eff} and \ref{thm:irs-stable}, and Theorem \ref{thm:normal:distribution} is established in Section \ref{sec:proof:normal}.
As a general rule, the proofs in Sections \ref{sec:proof:thm:main} -- \ref{sec:proof:normal} depend on technical lemmas. For the ease of reference, these are formulated in the respective sections where they are first used. 
Sections \ref{sec:2-forms-weakly-proc}, \ref{sec:lemmas-for-main-theorem}, and \ref{sec:lemma-A1} deal with the proofs of these lemmas.

\subsection{Properties of physical dependence and connections to autoregression}\label{sec:pd-ar-props}

In this section, we will briefly touch upon the theory of physically dependent (autoregressive) models, and explain how our assumptions compare to the classical assumptions imposed in the literature. One of the benefits of using physical dependence measures is their amenability to martingale difference approximation arguments. Martingale difference approximations in connection with limit theorems have a very long tradition (cf. \cite{MP:survey} and the references therein) and can be performed on any integrable, $\mathcal{F}_t$ measurable random variable. As a reference, we formulate the following, well-known result (cf. \cite{wu-physical-dependence}, \cite{wu2011asymptotic}).

\begin{lemma}[Martingale difference approximation]\label{lemma:mdsa}
  If $Z \in L^1$ is $\mathcal{F}_t = \sigma(\xi_t)$ measurable (see Section \ref{sec:prelim} for the definition of $\mathcal{F}_{t}$ and $\xi_{t}$), then
  \begin{equation*}
    Z - \mathbb{E}(Z)= \sum_{l=0}^\infty \mathcal{P}_{t-l}(Z) = \sum_{l\in\mathbb{Z}}\mathcal{P}_{t-l}(Z) = \sum_{l\in\mathbb{Z}}\mathcal{P}_{l}(Z),
  \end{equation*}
  where $\mathcal{P}_t(Z) = \mathbb{E}(Z\mid\mathcal{F}_t) - \mathbb{E}(Z\mid\mathcal{F}_{t-1})$.
\end{lemma}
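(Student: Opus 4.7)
The plan is to establish the decomposition as a telescoping-sum identity combined with a backward martingale convergence argument, together with triviality of the tail $\sigma$-algebra.

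First, I would set up the basic telescoping identity. Since $Z$ is $\mathcal{F}_t$-measurable we have $\mathbb{E}(Z\mid\mathcal{F}_t)=Z$, so for any $n\in\nn$
\begin{equation*}
  Z - \mathbb{E}(Z\mid\mathcal{F}_{t-n}) \;=\; \sum_{l=0}^{n-1}\bigl[\mathbb{E}(Z\mid\mathcal{F}_{t-l})-\mathbb{E}(Z\mid\mathcal{F}_{t-l-1})\bigr] \;=\; \sum_{l=0}^{n-1}\mathcal{P}_{t-l}(Z).
\end{equation*}
This reduces the claim to identifying the limit of $\mathbb{E}(Z\mid\mathcal{F}_{t-n})$ as $n\to\infty$.

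Next, I would invoke Lévy's backward (reverse) martingale convergence theorem: the sequence $\bigl(\mathbb{E}(Z\mid\mathcal{F}_{t-n})\bigr)_{n\geq 0}$ is a uniformly integrable reverse martingale (with respect to the decreasing filtration $(\mathcal{F}_{t-n})_{n\geq 0}$), and hence converges almost surely and in $L^1$ to $\mathbb{E}(Z\mid\mathcal{F}_{-\infty})$, where $\mathcal{F}_{-\infty}=\bigcap_{n\geq 0}\mathcal{F}_{t-n}$. The key observation is that $\mathcal{F}_{-\infty}$ is contained in the tail $\sigma$-algebra of the i.i.d. sequence $(\varepsilon_s)_{s\in\zz}$; by Kolmogorov's zero–one law this tail $\sigma$-algebra is $\mathbb{P}$-trivial, so $\mathbb{E}(Z\mid\mathcal{F}_{-\infty})=\mathbb{E}(Z)$ almost surely. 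Combining with the telescoping identity and passing to the limit (in $L^1$) yields
\begin{equation*}
  Z - \mathbb{E}(Z) \;=\; \sum_{l=0}^{\infty}\mathcal{P}_{t-l}(Z),
\end{equation*}
with convergence in $L^1$.

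To finish, I would extend the summation range. For any $l<0$ the index $t-l>t$, so $Z$ is $\mathcal{F}_{t-l-1}$-measurable (since $t-l-1\geq t$) and therefore both $\mathbb{E}(Z\mid\mathcal{F}_{t-l})$ and $\mathbb{E}(Z\mid\mathcal{F}_{t-l-1})$ coincide with $Z$, giving $\mathcal{P}_{t-l}(Z)=0$. Hence
\begin{equation*}
  \sum_{l=0}^{\infty}\mathcal{P}_{t-l}(Z) \;=\; \sum_{l\in\zz}\mathcal{P}_{t-l}(Z),
\end{equation*}
and a trivial re-indexing $l\mapsto t-l$ produces the final equality $\sum_{l\in\zz}\mathcal{P}_{t-l}(Z)=\sum_{l\in\zz}\mathcal{P}_{l}(Z)$. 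I expect the main (minor) obstacle to be the clean justification that $\mathcal{F}_{-\infty}$ is trivial and hence $\mathbb{E}(Z\mid\mathcal{F}_{-\infty})=\mathbb{E}(Z)$; everything else is telescoping plus measurability. No moment assumption beyond $Z\in L^1$ is needed, as the backward martingale convergence theorem and uniform integrability deliver $L^1$ convergence directly.
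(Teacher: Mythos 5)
Your proof is correct and is exactly the canonical argument behind this lemma, which the paper states without proof as a well-known fact (citing Wu): telescoping the conditional expectations, Lévy's downward martingale convergence theorem, and triviality of $\bigcap_{n}\mathcal{F}_{t-n}$ via Kolmogorov's zero--one law for the i.i.d.\ innovations, plus the observation that $\mathcal{P}_{t-l}(Z)=0$ for $l<0$ by measurability. Nothing is missing.
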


Note that $\mathcal{P}_{l}(Z)$ is a martingale difference sequence. 
If $Z = Z_t = h(\xi_t)$, then we have $\mathbb{E}(Z_t\mid \mathcal{F}_{-1}) = \mathbb{E}(Z_t'\mid \mathcal{F}_{-1}) = \mathbb{E}(Z_t'\mid \mathcal{F}_0)$ and thus 
\begin{equation}\label{eq:P(Z)=E(Z-Z')}
    \mathcal{P}_0(Z_t) = \mathbb{E}(Z_t - Z_t'\mid \mathcal{F}_0).
\end{equation}
Hence we can control the $L^q$ norm of the increments $\mathcal{P}_0(Z_t)$ by $\|\mathcal{P}_0(Z_t)\|_q \leq \delta_q^Z(t)$, if the process is physically dependent.
In the proofs themselves, we frequently use that $\mathcal{P}_{t-l}(Z_{t})\disteq \mathcal{P}_{0}(Z_{l})$, since $Z_{t} = h(\xi_{t})$.  
If the function $h$ depends on $t$ as well, i.e., if $Z_{t} = h_{t}(\xi_{t})$, then we have the estimate
\begin{equation*}
  \|\mathcal{P}_{t-l}(Z_{t})\|_{q} \leq \delta_{q}^{Z}(l) = \sup_{t\in \mathbb{Z}}\|g_{t}(\xi_{t}) - g_{t}(\xi_{t}^{(l)})\|_{q}.
\end{equation*}
In both cases, the physical dependence coefficients control the $L^{q}$ norm of the projections $\mathcal{P}_{t-l}(Z_{t})$.

%\begin{lemma}[Baxter's inequality, see %\cite{baxter62}]\label{lemma:baxter}
%    Let $(X_t)$ be generated by an AR$(\infty)$ %model satisfying \ref{G1} and \ref{G2}, and let %$\alpha \geq 0$. If the ACF $\gamma_X(h)$ of $(X_t)$ %satisfies 
%\begin{equation*}
%    \sum_{h=0}^\infty h^\alpha |\gamma_X(h)| < \infty
%\end{equation*}
%then there is a $k_\alpha \in \mathbb{N}$ and %$M_\alpha >0$ such that for all $k\geq k_\alpha$
%\begin{equation*}
%    \sum_{m=0}^k m^\alpha |a_m - a_m(k)| \leq %M_{\alpha} \sum_{m=k+1}^\infty m^\alpha |a_m|.
%\end{equation*}
%\end{lemma}

As we shall now discuss, physical dependence allows for a very tight control of the autocovariance function (ACF). The ACF inherits the rate of decay of $\delta_{2}^{X}(l)$ up to non-decreasing functions. The following result is well-known in the literature. For the sake of completeness, we provide the short proof.

\begin{lemma}\label{lemma:rate-acf}
    If $Z_t$ is a stationary process and $g:\mathbb{N} \to [0,\infty)$ is a non-decreasing function such that 
    \begin{equation*}
        \sum_{l=0}^\infty g(l) \delta_2^Z(l) < \infty,
    \end{equation*}
    then the ACF $\gamma_Z(h)$ of $(Z_t)$ satisfies
    \begin{equation*}
        \sum_{h=0}^\infty g(h) |\gamma_Z(h)| < \infty.
    \end{equation*}
\end{lemma}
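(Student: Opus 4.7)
The plan is to combine the martingale difference decomposition of Lemma \ref{lemma:mdsa} with the bound $\|\mathcal{P}_l(Z_t)\|_2 \leq \delta_2^Z(t-l)$ for $l \leq t$, dominating $|\gamma_Z(h)|$ by a convolution of the dependence coefficients, and then to exchange the order of summation using monotonicity of $g$.

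Concretely, for $h \geq 0$, one writes
$$Z_0 - \mathbb{E}(Z_0) = \sum_{l \leq 0} \mathcal{P}_l(Z_0), \qquad Z_h - \mathbb{E}(Z_h) = \sum_{l \leq h} \mathcal{P}_l(Z_h),$$
where the upper bound on $l$ is forced by $\mathcal{F}_t$-measurability of $Z_t$. Pairwise orthogonality of the martingale differences $\mathcal{P}_l(\cdot)$ in $L^2$ collapses the cross terms under the expectation, and Cauchy--Schwarz together with \eqref{eq:P(Z)=E(Z-Z')}, conditional Jensen, and stationarity give $\|\mathcal{P}_l(Z_t)\|_2 \leq \delta_2^Z(t-l)$. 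Substituting $j = -l$ then yields
$$|\gamma_Z(h)| = \Big|\sum_{l \leq 0} \mathbb{E}\big(\mathcal{P}_l(Z_0)\mathcal{P}_l(Z_h)\big)\Big| \leq \sum_{j \geq 0} \delta_2^Z(j)\,\delta_2^Z(h+j).$$

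Finally, multiplying by $g(h)$, summing over $h \geq 0$, and using $g(h) \leq g(h+j)$ for $j \geq 0$ to shift the weight onto the larger index, a swap of sums gives
$$\sum_{h \geq 0} g(h)\,|\gamma_Z(h)| \leq \Big(\sum_{j \geq 0} \delta_2^Z(j)\Big)\Big(\sum_{m \geq 0} g(m)\,\delta_2^Z(m)\Big).$$
The right-hand side is finite: the second factor by hypothesis, and for the first, unless $g \equiv 0$ (in which case the claim is vacuous), monotonicity yields $g(l) \geq g(l_0) > 0$ from some $l_0$ onward, so the tail $\sum_{j \geq l_0} \delta_2^Z(j)$ is bounded by $g(l_0)^{-1}\sum_{j \geq l_0} g(j)\,\delta_2^Z(j)$, while the initial finite block is harmless since each $\delta_2^Z(j) \leq 2\|Z_0\|_2 < \infty$. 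The main obstacle is only this cosmetic step of controlling the initial block where $g$ may vanish; the orthogonality-plus-Cauchy--Schwarz core is entirely routine.
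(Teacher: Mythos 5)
Your proposal is correct and follows essentially the same route as the paper's proof: a martingale-difference decomposition via Lemma \ref{lemma:mdsa}, the identity reducing $\mathbb{E}(\mathcal{P}_l(Z_h)Z_0)$ to a product of projections bounded through $\|\mathcal{P}_l(Z_t)\|_2 \leq \delta_2^Z(t-l)$, and then monotonicity of $g$ to shift the weight onto the larger index, yielding the bound $D_2^Z(0)\sum_l g(l)\delta_2^Z(l)$. The paper also handles the degenerate case $g\equiv 0$ and the initial block where $g$ vanishes exactly as you do, so there is nothing genuinely different to compare.
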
 
\begin{proof}
The statement is trivial if $g$ is identical to zero. Otherwise, there is a minimal index $n_0$, such that $0 < g(n_0) \leq g(n)$ for all $n\geq n_0$. Hence 
    \begin{equation*}
        \sum_{l=n_0}^\infty \delta_2^Z(l) \leq \frac{1}{g(n_0)} \sum_{l=n_0}^\infty g(l) \delta_2^Z(l) < \infty,
    \end{equation*}
    \textit{i.e.}, $D_2^Z(0) < \infty$. Using Lemma \ref{lemma:mdsa}, we get 
    \begin{equation}\label{eq:acf1}
        \begin{split}
            \sum_{h=0}^\infty g(h) |\mathbb{E}(Z_h Z_0)| & \leq \sum_{h=0}^\infty g(h) \sum_{l=0}^\infty |\mathbb{E}(\mathcal{P}_{l-h}(Z_h)Z_0)|.
        \end{split}
    \end{equation}
    Since $\mathbb{E}(\mathcal{P}_{l-h}(Z_h)Z_0) = \mathbb{E}(\mathcal{P}_{l-h}(Z_h)\mathcal{P}_{l-h}(Z_0))$ and $\delta_q^Z(m) = 0$ for $m<0$, we get
    \begin{equation*}
        \begin{split}
            \eqref{eq:acf1} & \leq \sum_{h=0}^\infty g(h) \sum_{l=0}^\infty \|\mathcal{P}_0(Z_l)\|_2\|\mathcal{P}_0(Z_{h-l})\|_2 \\
            & \leq \sum_{l=0}^\infty \delta_2^Z(l)\sum_{h=0}^l g(h)\delta_2^Z(l-h)\\
            & \leq D_2^Z(0)\sum_{l=0}^\infty g(l) \delta_2^Z(l)  < \infty.
        \end{split}
    \end{equation*}
\end{proof}
% We were unable to obtain a reference for the following simple lemma on the decay rate of the coefficients in the Cauchy product of two series. 
% \begin{lemma}\label{lemma:delta-I-kj}
%   Let $(w_n)$ be a sequence of non-negative numbers such that $w_{n+m} \leq C_w \max (w_n+w_m, w_nw_m)$ for some constant $C_w$. If for some $(a_n), (b_n) \in \ell^1$
%   \begin{equation*}
%     \sum_{m=0}^\infty w_m|a_m| < \infty \quad \text{and} \quad \sum_{l=0}^\infty w_l|a_l| < \infty,
%   \end{equation*}
%   then we also have
%   \begin{equation*}
%     \sum_{l=0}^\infty w_l \sum_{m=0}^l |a_mb_{l-m}| < \infty.
%   \end{equation*}
% \end{lemma}

% \begin{proof}
%   We have
%   \begin{equation*}
%     \begin{split}
%       \sum_{l=0}^\infty w_l \sum_{m=0}^l |a_mb_{l-m}| &= \sum_{m=0}^\infty |a_m| \sum_{l=m}^\infty w_l|b_{l-m}| \\
%       &= \sum_{m=0}^\infty |a_m| \sum_{l=0}^\infty w_{l+m}|b_{l}| \\
%       &\leq C_w\sum_{m=0}^\infty |a_m| \sum_{l=0}^\infty w_l|b_{l}| + C_w\sum_{m=0}^\infty w_m|a_m| \sum_{l=0}^\infty |b_{l}| < \infty\\
%     \end{split}
%   \end{equation*}
%   The proof for sequences satisfying $w_{n+m} \leq C_ww_n w_m$ follows in a similar fashion. 
% \end{proof}

It is a well-established fact that precise control over the ACF is already sufficient to control many other second-order properties of a stationary process. To begin with, we recall the following result (Theorem 3.8.4 in \cite{brillinger:book:1981}).

\begin{lemma}\label{lemma:brillinger}
Let $(X_t)_{t \in \zz} \in \rr$ be a stationary, zero mean process with ACF $\gamma_{X}$. Suppose that 
\begin{equation*}
\sum_{h = 1}^{\infty} h^{\alpha} |\gamma_{X}(h)| < \infty, \quad \text{for some } \alpha\geq 0.
\end{equation*}
If the spectral density satisfies $f_X(\lambda) \neq 0$ for $\lambda \in \rr$, then we may write $X_t$ as $X_t = \sum_{j = 0}^{\infty} b_j e_{t-j}$ with $e_t = \sum_{j = 0}^{\infty} a_j X_{t-j}$ for any $t \in \zz$ and $a_{0} = b_{0} = 1$, such that
\begin{align}
\sum_{j = 1}^{\infty} j^{\alpha} \big(|a_j| + |b_j|\big) < \infty.
\end{align}
%{\MJP change $\mathfrak{c}$ to $\alpha$, $k$ to $h$? $\alpha$ to $\mathfrak{a}$?}
\end{lemma}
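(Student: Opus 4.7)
The plan is to proceed by spectral factorization in a suitable weighted Wiener algebra. Since $\sum_{h\geq 1}h^{\alpha}|\gamma_X(h)|<\infty$ forces $\gamma_X\in\ell^1$, the spectral density $f_X(\lambda)=(2\pi)^{-1}\sum_{h\in\zz}\gamma_X(h)e^{-ih\lambda}$ is continuous; combined with $f_X(\lambda)\neq 0$ and compactness of $[-\pi,\pi]$, one obtains $0<\inf f_X\leq \sup f_X<\infty$. Define the weighted algebra
\begin{equation*}
\mathcal{A}_\alpha=\Bigl\{f(\lambda)=\sum_{k\in\zz}c_k e^{ik\lambda}:\;\|f\|_\alpha:=\sum_{k\in\zz}(1+|k|)^\alpha|c_k|<\infty\Bigr\},
\end{equation*}
so that the hypothesis reads $f_X\in\mathcal{A}_\alpha$. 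The goal is to produce a causal factorization $f_X(\lambda)=\sigma^2(2\pi)^{-1}|A(e^{-i\lambda})|^{-2}$ with $A(z)=1+\sum_{j\geq 1}a_jz^j$ and corresponding inverse $B(z)=1/A(z)=1+\sum_{j\geq 1}b_jz^j$, both of whose coefficient sequences lie in $\ell^1$ weighted by $(1+j)^\alpha$.

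First I would verify that $\mathcal{A}_\alpha$ is a Banach algebra: submultiplicativity $(1+|k+l|)^\alpha\leq 2^\alpha(1+|k|)^\alpha(1+|l|)^\alpha$ for $\alpha\geq 0$ gives $\|fg\|_\alpha\lesssim\|f\|_\alpha\|g\|_\alpha$. The critical input is then a weighted Wiener–Lévy theorem: if $f\in\mathcal{A}_\alpha$ is bounded away from $0$, then $\log f\in\mathcal{A}_\alpha$. Applied to $f_X$, this furnishes a decomposition $\log(2\pi f_X)=c_0+g_+(\lambda)+g_-(\lambda)$, where $g_+$ collects the strictly positive Fourier modes and $g_-=\overline{g_+}$ the strictly negative ones (since $f_X$ is real and positive). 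The projections onto positive/negative Fourier modes are bounded on $\mathcal{A}_\alpha$, so $g_\pm\in\mathcal{A}_\alpha$ as well. Setting
\begin{equation*}
A(z)=\exp(-g_+(z))=1+\sum_{j\geq 1}a_j z^j,\qquad B(z)=\exp(g_+(z))=1+\sum_{j\geq 1}b_j z^j,
\end{equation*}
closure of $\mathcal{A}_\alpha$ under the exponential (which is an entire function, hence acts holomorphically on the algebra) gives $A,B\in\mathcal{A}_\alpha$, and $A(z)B(z)=1$ identifies $B=1/A$. Reading off Fourier coefficients yields $\sum_{j\geq 1}j^\alpha(|a_j|+|b_j|)<\infty$, and the identity $2\pi f_X(\lambda)=e^{c_0}|A(e^{-i\lambda})|^{-2}$ with $\sigma^2:=e^{c_0}$ encodes the spectral factorization. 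The process-level statements $X_t=\sum_{j\geq 0}b_j e_{t-j}$ and $e_t=\sum_{j\geq 0}a_jX_{t-j}$ then follow by applying the filters $B$ and $A$ to the $L^2$-spectral representation of $(X_t)_{t\in\zz}$, using that convergence in spectral norm induced by $f_X$ is equivalent to $L^2(\P)$ convergence.

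The main obstacle is the weighted Wiener–Lévy step: the unweighted case ($\alpha=0$) is the classical Wiener lemma, and for positive integer $\alpha$ one can proceed by differentiating $\log f_X$ and iterating Wiener's theorem on $f_X'/f_X$. For general real $\alpha\geq 0$, one needs that the weight $\omega(k)=(1+|k|)^\alpha$ satisfies the Beurling–Gelfand condition $\sum_n \log\omega(n)/n^2<\infty$, which ensures that the Gelfand spectrum of the weighted convolution algebra $\ell^1_\alpha$ is again the unit circle; holomorphic functional calculus then applies to any element bounded away from zero, delivering $\log f_X\in\mathcal{A}_\alpha$. Everything else in the argument is routine bookkeeping with absolutely convergent series. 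As noted, this is essentially Theorem 3.8.4 in \cite{brillinger:book:1981}, and the above sketch mirrors the standard route via the cepstrum.
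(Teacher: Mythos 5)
Your argument is sound and is essentially the same route the paper relies on: the paper does not prove this lemma itself but imports it as Theorem 3.8.4 of \cite{brillinger:book:1981}, whose proof is precisely the canonical (outer) spectral factorization via the weighted Wiener algebra / cepstrum that you sketch. The only ingredients you would still need to spell out — submultiplicativity of the weight $(1+|k|)^{\alpha}$, the GRS condition guaranteeing that the maximal ideal space of $\mathcal{A}_{\alpha}$ is the circle so that the holomorphic functional calculus yields $\log f_X\in\mathcal{A}_{\alpha}$, and the $L^2$ bookkeeping identifying $B=1/A$ at the process level — are exactly the standard steps, so no genuinely different approach or gap is involved.
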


In light of Lemma \ref{lemma:rate-acf}, Assumption \ref{I2} implies
\begin{equation*}
  \sum_{h=0}^{\infty}h^{\alpha}|\gamma_{X}(h)| < \infty, \quad \text{for} \quad \alpha \geq \frac{5}{2},
\end{equation*}
which in turn implies $\sum_{j=1}^{\infty}j^{\alpha}(|a_{j}| + |b_{j}|) < \infty$, for $\alpha \geq \frac{5}{2}$ by Lemma \ref{lemma:brillinger}. For the sake of reference, we include this (important) fact below in Lemma \ref{lemma:basic-props-AR(oo)}. Moreover, this property %the fact that $(j^{\alpha}a_{j})_{j\geq 1}$ and $(j^{\alpha}b_{j})_{j\geq 1}$ are in $\ell^{1}$ 
will be used in Lemma \ref{lemma:dep-rates} below. It ensures that the innovations too are weakly dependent to a sufficiently high degree. In light of the discussion after Lemma \ref{lemma:A1} this is crucial for our proof.

As a second key result, we discuss Baxter's inequality. Baxter's inequality will be an indispensable tool for the rest of the paper.
In its original version, Theorem 2.2 of \cite{baxter62} states that the rate at which the ACF decays controls the rate at which the AR coefficients $(a_{i})_{i\geq 1}$ can be approximated by $a(k)$. To be more precise, if $X = (X_t)_{t \in \mathbb{Z}}$ is generated by an AR$(\infty)$ model satisfying \ref{G1} and \ref{G2}, and the ACF $\gamma_X(h)$ of $X$ satisfies 
\begin{equation}\label{eq:cond-baxter-old}
    \sum_{h=0}^\infty h^\alpha |\gamma_X(h)| < \infty,
\end{equation}
for some $\alpha\geq 0$, then there is a $k_\alpha \in \mathbb{N}$ and $M_\alpha >0$ such that for all $k\geq k_\alpha$
\begin{equation*}
    \sum_{m=0}^k(2^{\alpha}+ m^\alpha) |a_m - a_m(k)| \leq M_{\alpha} \sum_{m=k+1}^\infty (2^{\alpha}+ m^\alpha) |a_m|.
\end{equation*}
As it turns out, the case $\alpha=0$ is already sufficient. 
If $g:\mathbb{N}\to [0,\infty)$ is a non-decreasing function, Baxter's inequality for $\alpha=0$ implies (for some constant $M>0$)
\begin{align*}
  \sum_{m=0}^{k}g(m)|a_{m}-a_{m}(k)| &\leq g(k) \sum_{m=0}^{k}|a_{m}-a_{m}(k)| \\& \leq Mg(k)\sum_{m=k+1}^{\infty}|a_{m}|\leq M \sum_{m=k+1}^{\infty}g(m)|a_{m}|.
\end{align*}
In other words, the rate at which $(a_{i})_{i\geq 1}$ can be approximated by $a(k)$ depends only on the rate of decay of $(a_{i})_{i\geq 1}$ as long as $\gamma_{X}\in \ell^{1}$.
In light of Lemma \ref{lemma:brillinger}, condition \eqref{eq:cond-baxter-old} is simply a sufficient condition to guarantee that the right-hand side of Baxter's traditional inequality is finite.
However, for arbitrary weight functions $g(h)$ that are not of the form $h^{\alpha}$, for some $\alpha\geq 0$, Lemma \ref{lemma:brillinger} does not guarantee that the absolute summability of $g(H)\gamma_{X}(h)$ implies the absolute summability of $g(h)a_{h}$. 
If the right-hand side of Baxter's inequality is not finite, the inequality is trivially true.
This is summarized in the following lemma.
\begin{lemma}[Baxter's inequality]\label{lemma:baxter}
Let $(X_t)_{t\in \mathbb{Z}}$ be generated by an AR$(\infty)$ model satisfying \ref{G1} and \ref{G2}. If the ACF $\gamma_X(h)$ of $(X_{t})_{t\in \mathbb{Z}}$ is absolutely summable, then there is a constant M, and a $k_{0}\geq 1$, such that for all $k\geq k_{0}$, and all non-decreasing functions $g:\mathbb{N}\to [0,\infty)$,
\begin{equation*}
  \sum_{m=1}^{k}g(m)|a_{m}-a_{m}(k)| \leq Mg(k)\sum_{m=k+1}^{\infty}|a_{m}|.
\end{equation*}
\end{lemma}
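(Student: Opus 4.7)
The plan is as follows. The discussion immediately preceding the lemma already reduces the general non-decreasing $g$ case to the special case $g \equiv 1$, since for any non-decreasing $g$ we have the chain
\begin{equation*}
\sum_{m=1}^{k} g(m)|a_{m}-a_{m}(k)| \le g(k)\sum_{m=1}^{k}|a_{m}-a_{m}(k)| \le Mg(k)\sum_{m>k}|a_{m}| \le M\sum_{m>k}g(m)|a_{m}|.
\end{equation*}
Thus it suffices to produce constants $M>0$ and $k_{0}\ge 1$ such that $\sum_{m=1}^{k}|a_{m}-a_{m}(k)| \le M\sum_{m>k}|a_{m}|$ for every $k\ge k_{0}$.

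Next I would derive a Toeplitz equation for the error vector $d(k) = (a_{m}-a_{m}(k))_{m=1}^{k}$. From the infinite Yule-Walker identity $\gamma(h)+\sum_{j\ge 1}a_{j}\gamma(h-j)=0$ valid for every $h\ge 1$, and the finite version $\gamma(h)+\sum_{j=1}^{k}a_{j}(k)\gamma(h-j)=0$ valid for $1\le h\le k$, subtracting gives $R(k)d(k)=b(k)$, where
\begin{equation*}
b_{h}(k) \;=\; -\sum_{j>k}a_{j}\gamma(h-j), \qquad 1\le h\le k.
\end{equation*}
Using $\gamma\in\ell^{1}(\mathbb{Z})$ and Fubini, the $\ell^{1}$ norm of the inhomogeneity is controlled uniformly in $k$ by
\begin{equation*}
\|b(k)\|_{\ell^{1}} \;\le\; \sum_{j>k}|a_{j}|\sum_{h=1}^{k}|\gamma(h-j)| \;\le\; \|\gamma\|_{\ell^{1}(\mathbb{Z})}\sum_{j>k}|a_{j}|.
\end{equation*}
Combined with the trivial inequality $\|d(k)\|_{\ell^{1}} \le \|R(k)^{-1}\|_{\ell^{1}\to\ell^{1}}\|b(k)\|_{\ell^{1}}$, the entire argument collapses to the single uniform operator-norm estimate $\sup_{k\ge k_{0}}\|R(k)^{-1}\|_{\ell^{1}\to\ell^{1}}<\infty$.

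This uniform $\ell^{1}\to\ell^{1}$ bound for the inverses of the finite Toeplitz sections is the real obstacle and is Baxter's classical contribution. The plan is to build an approximate inverse of $R(k)$ inside the Wiener algebra. Since $\gamma\in\ell^{1}(\mathbb{Z})$, the spectral density $f_{X}(\lambda)=\sum_{h}\gamma(h)e^{-ih\lambda}$ lies in the Wiener algebra $A(\mathbb{T})$, and by \ref{G2} is bounded away from zero; Wiener's $1/f$ theorem therefore places $1/f_{X}$ in $A(\mathbb{T})$ as well, and positivity of $f_{X}$ yields a Wiener-Hopf factorization $1/f_{X} = \psi \,\overline{\psi}$ with $\psi(\lambda)=\sum_{j\ge 0}c_{j}e^{-ij\lambda}$ and $(c_{j})\in\ell^{1}$. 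Writing $P_{k}$ for the coordinate projection onto the first $k$ indices and $T(\phi)$ for the bi-infinite Toeplitz operator with symbol $\phi$, the candidate approximate inverse is $A_{k}:= P_{k}T(1/f_{X})P_{k}$. Its $\ell^{1}\to\ell^{1}$ operator norm is bounded uniformly in $k$ by a function of $\|\psi\|_{\ell^{1}}$, while the remainder $R(k)A_{k}-I_{k}$ is a sum of two "corner" operators whose $\ell^{1}\to\ell^{1}$ norms are dominated by the tails $\sum_{|j|>k/2}|\gamma(j)|$ and $\sum_{|j|>k/2}|c_{j}|$. Both tails vanish as $k\to\infty$, so for $k\ge k_{0}$ the remainder has norm at most $1/2$ and a Neumann series argument delivers $\|R(k)^{-1}\|_{\ell^{1}\to\ell^{1}}\le 2\|A_{k}\|_{\ell^{1}\to\ell^{1}}$, uniformly in $k$.

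Combining these three ingredients furnishes the $g\equiv 1$ Baxter inequality with an explicit constant $M = 2\|\gamma\|_{\ell^{1}}\sup_{k\ge k_{0}}\|A_{k}\|_{\ell^{1}\to\ell^{1}}$, and the elementary reduction recorded in the text immediately upgrades the estimate to every non-decreasing $g:\mathbb{N}\to[0,\infty)$. The only genuinely delicate step is the Wiener-Hopf construction of the approximate inverse and the corner-commutator tail estimate used to close the Neumann series; the Yule-Walker derivation of $R(k)d(k)=b(k)$ and the ensuing $\ell^{1}$ bookkeeping are essentially automatic.
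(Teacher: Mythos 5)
Your reduction of the weighted inequality to the unweighted case $g\equiv 1$ via monotonicity of $g$ is exactly what the paper does; the paper then simply \emph{cites} Theorem 2.2 of Baxter (1962) with $\alpha=0$ rather than reproving it. Your Yule--Walker bookkeeping is also correct: subtracting the finite from the infinite Yule--Walker equations gives $R(k)d(k)=b(k)$ with $b_h(k)=-\sum_{j>k}a_j\gamma(h-j)$, and $\|b(k)\|_{\ell^1}\leq \|\gamma\|_{\ell^1(\mathbb{Z})}\sum_{j>k}|a_j|$, so everything indeed reduces to $\sup_{k\geq k_0}\|R(k)^{-1}\|_{\ell^1\to\ell^1}<\infty$. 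This is the classical skeleton of Baxter's proof.

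The gap is in how you close that last step. With $A_k=P_kT(1/f_X)P_k$ the error $R(k)A_k-I_k$ is \emph{not} small in $\ell^1\to\ell^1$ operator norm: the two corner blocks are localized near the $(1,1)$ and $(k,k)$ corners and have norm bounded away from zero uniformly in $k$, so the tails $\sum_{|j|>k/2}|\gamma(j)|$ and $\sum_{|j|>k/2}|c_j|$ do not dominate them. A concrete counterexample: for $f_X(\lambda)=|1-\rho e^{i\lambda}|^2$ one computes
\begin{equation*}
  (R(k)A_k-I_k)_{1,1}=\frac{\rho^2}{1-\rho^2},\qquad (R(k)A_k-I_k)_{1,j}=\frac{\rho^{\,j+1}}{1-\rho^2}\ (j\geq 2),
\end{equation*}
so the first column of the error has $\ell^1$ norm at least $\rho^2/(1-\rho^2)$ for every $k$, which exceeds $1/2$ once $\rho$ is moderately large, and the Neumann series never closes. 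This is the standard pitfall of the finite section method: $P_kT(a)P_kT(a^{-1})P_k-P_k$ converges to zero strongly but not in norm. The repair requires using the Wiener--Hopf factorization structurally rather than just for a norm bound: e.g.\ write $1/f_X=\psi\overline{\psi}$ with $\psi$ analytic, exploit that $T(\psi)$ is lower triangular and $T(\overline{\psi})$ upper triangular so that $P_kT(\psi)P_k=P_kT(\psi)$ on the range of $P_k$, and use a Widom-type approximate inverse built from \emph{both} corners (or simply invoke Baxter's theorem / the $\ell^1$ finite-section theorem in the Wiener algebra, which is what the paper does). As written, the decisive uniform bound is asserted by an argument that fails.
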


Equipped with Lemmas \ref{lemma:rate-acf} -- \ref{lemma:baxter}, we are ready to compare our set of assumptions to the ones imposed in the classical literature.
In the classical case, assumptions are typically imposed on the innovations, and not directly on the process $X_t$ itself. Clearly $X_{t}\in L^{q}$ for some $q>8$ if and only if $e_{t} \in L^{q}$. 
However, it is a priori not obvious whether \ref{I2} directly controls the degree of dependence among the innovations.
Given the close relationship between a linear process and its innovations, one would hope that this is the case. 
If $X_{t} = g_{t}(\varepsilon_{t},\varepsilon_{t-1},\dots)$, then by Lemma \ref{lemma:brillinger}, $e_{t}= \sum_{j=0}^{\infty}a_{j}X_{t-j} = h_{t}(\varepsilon_{t},\varepsilon_{t-1},\dots)$ for some measurable function $h_{t}$.
This allows us to define $\delta_{q}^{I}(l) = \sup_{t\in \mathbb{Z}}\|e_{t}-e_{t-l}^{(l)}\|_{q}$ (see \eqref{defn:dep:measure:general} and the preceding discussion) and 
\begin{equation*}
  D_{q}^{I}(\alpha) = \|e_{0}\|_{q}+ \sum_{l=1}^{\infty}l^{\alpha}\delta_{q}^{I}(l), \quad \text{for } \alpha\geq 0.
\end{equation*}
The quantity $D^{I}_{q}(\alpha)$ measures the degree of dependence among the innovations.
Lemma \ref{lemma:brillinger} and Lemma \ref{lemma:dep-rates} below ensure that given Assumptions \ref{G1} and \ref{I1}, Assumption \ref{I2} implies
\begin{enumerate}[label=(I\arabic*'),ref=(I\arabic*'),leftmargin=*,align=left]
  \setcounter{enumi}{1}
\item \label{I2'} $\sum_{j=0}^{\infty}j^{\alpha}|a_{j}| < \infty$ and $e_{t}$ is a physically dependent process with $D^I_q(\alpha) < \infty$, for some $\alpha \geq 5/2$. 
\end{enumerate}
In this sense, Assumption \ref{I2} relaxes the assumption of independent innovations imposed in the classical literature.
While it may seem more natural to impose assumptions on the process $X$ itself, rather than on the unobservable innovations, it turns out that \ref{I2} and \ref{I2'} are logically equivalent, if one assumes $\sum_{j=0}^{\infty} j^{\alpha}|a_{j}| < \infty$ for some $\alpha\geq 5/2$ in addition to \ref{G1}, \ref{G2}, and \ref{I1}.

In a similar way we can control the dependence among the errors of the fitted AR($k$) processes $\pino{t}{k}$ (see Equation \eqref{eq:fit-k}). 
Let $\delta_{q}^{I_{k}}(l) = \sup_{t\in \mathbb{Z}}\|\pino{t}{k} -\pino{t}{k}^{(l)}\|_{q}$, and 
\begin{equation*}
  D_{q}^{I_{k}}(\alpha) = \|\pino{0}{k}\|_{q} + \sum_{l=1}^{\infty}l^{\alpha}\delta_{q}^{I_{k}}(l), \quad \text{for } \alpha\geq 0.
\end{equation*}
Again, $D_{q}^{I_{k}}(\alpha)$ measures the degree of dependence among the pseudo-innovations $\pino{t}{k}$.

Using Baxter's inequality, we can show that $D_{q}^{X}(\alpha) <\infty$ if and only if $D_{q}^{I}(\alpha)<\infty$ for all $\alpha\leq \beta$ such that $(n^{\beta}a_{n})_{n\in \mathbb{N}} \in \ell^{1}$. 
Furthermore, we get a uniform bound for $D_{q}^{I_{k}}(\alpha)$ in $k$. 
In other words, we have uniform control over the degree of dependence among all fitted AR($k$) models. 

\begin{lemma}\label{lemma:dep-rates}
  Let $\alpha\geq 0$, $q\geq 1$ and assume \ref{G2}. If $\sum_{m\geq 0} m^\alpha |a_m| < \infty$, then there are constants $b_\alpha, B_\alpha >0$, such that
  \begin{equation*}
    b_{\alpha}D_q^X(\alpha) \leq D_q^I(\alpha) \leq B_\alpha D_q^X(\alpha),
  \end{equation*}
  \textit{i.e.}, $D_q^X(\alpha) < \infty$ if and only if $D_q^I(\alpha) < \infty$.
  If $D_q^I(\alpha) < \infty$ (or equivalently $D_q^X(\alpha) < \infty$), then there is a $C_{\alpha}>0$ such that $D_q^{I_k}(\alpha) < C_{\alpha}$ for all $k\geq 0$.  
\end{lemma}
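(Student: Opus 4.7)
The plan is to exploit the two causal Wold-type representations available under our hypotheses, namely $e_t = \sum_{j \geq 0} a_j X_{t-j}$ by definition (with $a_0 = 1$), and $X_t = \sum_{j \geq 0} b_j e_{t-j}$ supplied by Lemma \ref{lemma:brillinger}, and to translate each representation into a coupling bound for $\delta_q^{\,\cdot}(l)$ via the triangle inequality in $L^q$. The basic building block is the following observation: for any $Y_t = \sum_{j \geq 0} c_j X_{t-j}$ converging in $L^q$, replacing $\varepsilon_{t-l}$ by an independent copy affects $X_{t-j}$ only for $j \leq l$, and in that case $\|X_{t-j} - X_{t-j}^{(t-l)}\|_q = \delta_q^X(l-j)$. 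Consequently,
\[
\delta_q^Y(l) \;\leq\; \sum_{j=0}^{l} |c_j|\, \delta_q^X(l-j).
\]

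For the upper bound $D_q^I(\alpha) \leq B_\alpha D_q^X(\alpha)$ I apply this with $c_j = a_j$, multiply by $l^\alpha$, interchange the order of summation, and use $(m+j)^\alpha \leq 2^\alpha(m^\alpha + j^\alpha)$ to separate the decay contributions of the process from those of the coefficient sequence. Together with the trivial estimate $\|e_0\|_q \leq \|X_0\|_q \sum_j |a_j|$, this yields $D_q^I(\alpha) \lesssim \bigl(\sum_j |a_j| + \sum_j j^\alpha |a_j|\bigr) D_q^X(\alpha)$. The converse inequality is obtained by the same computation applied to the dual representation, once $\sum_j j^\alpha |b_j| < \infty$ is established; this is the main obstacle, since the hypothesis constrains $(a_j)$ but not directly $(b_j)$. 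It will follow from the Wiener–Lévy theorem in the Beurling-weighted Banach algebra $\ell^1_w$ with $w(j) = (1+j)^\alpha$: assumption \ref{G2} forces the symbol $A(z) = 1 + \sum_j a_j z^j$ to be bounded away from zero on the closed unit disk, so that $1/A$ lies in the same weighted algebra. Alternatively, one can route the argument through Lemma \ref{lemma:rate-acf} and Lemma \ref{lemma:brillinger}, obtaining the required decay of $(b_j)$ from that of the ACF.

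For the uniform bound on $D_q^{I_k}(\alpha)$, I apply the same convolution inequality with coefficients $a_j(k)$, extended by zero for $j > k$. Reducing as in the previous paragraph, it suffices to control $\sum_{j=1}^{k} |a_j(k)|$ and $\sum_{j=1}^{k} j^\alpha |a_j(k)|$ uniformly in $k$, together with $\|\pino{0}{k}\|_q$. Writing $a_j(k) = a_j + (a_j(k) - a_j)$ and invoking Baxter's inequality (Lemma \ref{lemma:baxter}) with the non-decreasing weights $g \equiv 1$ and $g(j) = j^\alpha$, I obtain $\sum_{j=1}^k |a_j(k) - a_j| \leq M \sum_{j > k} |a_j|$ and, since $k^\alpha \leq j^\alpha$ for $j > k$, also $\sum_{j=1}^k j^\alpha |a_j(k) - a_j| \leq M k^\alpha \sum_{j>k} |a_j| \leq M \sum_{j>k} j^\alpha |a_j|$. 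Both are bounded by $(1+M)\sum_j j^\alpha |a_j|$ uniformly in $k$, and the bound on $\|\pino{0}{k}\|_q$ is obtained in exactly the same way, completing the proof.
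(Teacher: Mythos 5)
Your treatment of the upper bound $D_q^I(\alpha)\lesssim D_q^X(\alpha)$ and of the uniform bound on $D_q^{I_k}(\alpha)$ is essentially the paper's own argument: the coupling/convolution inequality $\delta_q^I(l)\le\sum_{m\le l}|a_m|\,\delta_q^X(l-m)$, interchanging the sums and splitting $(l+m)^\alpha\lesssim l^\alpha+m^\alpha$, and then Baxter's inequality with the non-decreasing weights $g\equiv 1$ and $g(j)=j^\alpha$ (with the finitely many $k<k_0$ absorbed into the constant), together with the trivial bounds on $\|e_0\|_q$ and $\|\pino{0}{k}\|_q$.

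The divergence is in the lower bound, which the paper dismisses with ``the lower bound follows similarly'', implicitly drawing $\sum_j j^\alpha|b_j|<\infty$ from Lemma \ref{lemma:brillinger}; you correctly identify this as the crux and propose the weighted Wiener--L\'evy theorem, which is a sound and arguably cleaner route. However, your justification of its hypothesis has the spectral-density/symbol correspondence reversed: since $f_X(\lambda)=\tfrac{\sigma^2}{2\pi}\,|A(e^{i\lambda})|^{-2}$, Assumption \ref{G2} ($f_X$ bounded away from zero) is equivalent to an \emph{upper} bound on $|A|$ on the unit circle — which is automatic from $\sum_m|a_m|<\infty$ — and does not force $|A|$ to be bounded away from zero. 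The lower bound $|A|\ge c>0$ on the circle comes instead from $f_X$ being bounded \emph{above}, which is available because the standing assumption $\sum_{h}|\gamma(h)|<\infty$ makes $f_X$ continuous; the absence of zeros in the open disk then follows from the Wold structure ($B\in H^2$, $A$ continuous on the closed disk, $AB=1$ a.e.\ on the boundary, hence $AB\equiv 1$ in the disk). With that correction your Wiener--L\'evy step, and hence the lower bound, goes through. Note also that your proposed alternative route through Lemmas \ref{lemma:rate-acf} and \ref{lemma:brillinger} is circular in the only direction where the lower bound carries new information ($D_q^I(\alpha)<\infty\Rightarrow D_q^X(\alpha)<\infty$): Lemma \ref{lemma:rate-acf} requires weighted decay of $\delta_2^X$, which is precisely what is to be proved there.
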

% \begin{remark}
%   By Wiener's theorem (\cite{zygmund}, p. 245) $\sum_{m=0}^\infty m^{\alpha} |a_m| < \infty$ is equivalent to $\sum_{m=0}^\infty m^{\alpha} |b_m| < \infty$. Hence \ref{S1} implies $\sup_{q\geq 1}\|X_0\|_q < \infty$.
% \end{remark}
\begin{proof}
  % By Wiener's theorem (\cite{zygmund}, p. 245) the condition $\sum_{m=0}^\infty m^{r} |a_m| < \infty$ is equivalent to $\sum_{m=0}^\infty m^{r} |b_m| < \infty$. Lemma \ref{lemma:delta-I-kj} with $w_l = l^r$ for $r\geq 0$ and the estimates
% \begin{equation*}
%   \begin{split}
%     \delta_q^I(l) &\leq  \sum_{i=0}^l |a_i| \delta_q^X(l-i)\quad \text{and} \quad \delta_q^X(l) \leq \sum_{i=0}^l |b_i| \delta_q^I(l-i) 
%   \end{split}
% \end{equation*} 
% yield the first claim.
By Lemma \ref{lemma:brillinger}, there a constans $c,C$ such that $c\|e_{0}\|_{q}\leq \|X_{0}\|_{q}\leq C\|e_{0}\|$. Hence it is enough to deal with the term $\sum_{l=1}^{\infty}l^{\alpha}\delta_{q}^{X}(l)$ (and the analogue for $e_{t}$). 
    Note that for $x,y \geq 0$ and for $\alpha \geq 1$ we have $(x+y)^\alpha \leq 2^{\alpha -1}(x^\alpha + y^\alpha)$, while for $0 \leq \alpha < 1$ we have $(x+y)^\alpha \leq x^\alpha + y^\alpha$. Hence $(x+y)^\alpha \leq C_\alpha(x^\alpha + y^\alpha)$ for all $x,y \geq 0$ and some $C_\alpha >0$.
  A simple estimate yields
  \begin{equation*}%\label{eq:DXDI}%Ref nicht verwendet
    \begin{split}
      \sum_{l=1}^{\infty}l^{\alpha}\delta_{q}^{I}(l) &\leq \sum_{l=1}^\infty l^\alpha \sum_{m=0}^\infty |a_m| \delta_q^X(l-m) \\
      & = \sum_{m=0}^\infty |a_m| \sum_{l=1}^\infty (l+m)^\alpha \delta_q^X(l) \\
      & \leq C_\alpha \biggl(D_q^X(\alpha)\sum_{m=0}^\infty |a_m| + D_q^X(0)\sum_{m=0}^\infty m^\alpha |a_m|\biggr).
    \end{split}
  \end{equation*}
  The bound follows from $D_{q}^{X}(0) \leq D_{q}^{X}(\alpha)$ for $\alpha\geq 0$.  
  The lower bound follows similarly.
  By an almost identical estimate, we have
\begin{equation}\label{eq:delta-Ij-asym}
  \begin{split}
    \sum_{l=0}^\infty l^\alpha \delta_q^{I_{k}}(l) & \leq C_\alpha \biggl(\sum_{m=0}^k |a_m(k)| + \sum_{m=0}^k m^\alpha |a_m(k)|\biggr)D_q^X(\alpha).
    % &\leq \sum_{l=0}^\infty l^r \sum_{m=0}^{l \wedge k} |a_m(k)| \delta_q^X(l-m) \\
    % & = \sum_{m=0}^k |a_m(k)| \sum_{l=0}^\infty (l+m)^r \delta_q^X(l) \\
    % & \leq 2^{1-\frac{1}{r}}\sum_{m=0}^k m^r|a_m(k)| \sum_{l=0}^\infty l^r \delta_q^X(l).
  \end{split}  
\end{equation}

By  Assumption \ref{G2}  the continuous spectral density of $X$ is uniformly bounded and bounded away from zero. 
Thus, by Baxter's inequality (Lemma \ref{lemma:baxter}), there are $k_0, M >0$, such that for all $k>k_0$
\begin{equation*}
  \sum_{m=0}^k m^\alpha|a_m(k)-a_m| \leq M \sum_{m=k}^\infty m^\alpha|a_m|.
\end{equation*}
Hence, \eqref{eq:delta-Ij-asym} is bounded by some $B_{\alpha}$ independent of $k$.
\end{proof}

The last lemma in this section collects some useful properties of AR$(\infty)$ models, given our assumptions. These properties are well-known. For the convenience of the reader and ease of reference, they are repeated here.
\begin{lemma}\label{lemma:basic-props-AR(oo)}
  Given Assumptions \ref{G1}, \ref{G2}, \ref{I1} and \ref{I2}, there are constants $C_{1}$ and $C_{2}$ such that (note that $q$ and $\alpha$ are given by Assumption \ref{I1} and \ref{I2} respectively). 

    \begin{enumerate}
        \item $\|a-a(k)\|_{\ell^1}\leq C_1k^{-\alpha}$ for all $k$. 
        \item $\|e_0 - \pino{0}{k}\|_{q} \leq C_2k^{-\alpha}$ for all $k$.         
        \item $\|\pino{0}{k}\|_q$ is uniformly bounded in $k$. 
        \item $\mathbb{E}(\pino{t}{i}\pino{0}{j}) = 0$ for $1 \leq t \leq i-j$.  
        \item $\sigma_{k}^2 \geq \sigma^2$ for all $k$.
        \item $\sum_{j=1}^{\infty}j^{\alpha}(|a_{j}| + |b_{j}|) < \infty$, see Lemma \ref{lemma:brillinger} for the definition of $(b_j)_{j \in \nn}$.
    \end{enumerate}
\end{lemma}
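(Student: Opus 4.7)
The plan is to prove the six items in an order that lets each build on the previous ones. I would start with item 6, which is the engine powering items 1--3. By Lemma \ref{lemma:rate-acf} applied to $X_t$ with the weight $g(h) = h^{\alpha}$ (noting $\delta_2^X(l) \leq \delta_q^X(l)$ and that $D_q^X(\alpha) < \infty$ by \ref{I2}), we get $\sum_h h^{\alpha}|\gamma_X(h)| < \infty$. Assumptions \ref{G1} and \ref{G2} then make the spectral density bounded and bounded away from zero, so Lemma \ref{lemma:brillinger} applies directly and yields the weighted summability of both $(a_j)$ and $(b_j)$, establishing item 6.

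For item 1, I would apply Baxter's inequality (Lemma \ref{lemma:baxter}) with $g \equiv 1$; absolute summability of $\gamma_X$ is a free consequence of item 6 (or equivalently the $\alpha = 0$ case of Lemma \ref{lemma:rate-acf}). Baxter gives $\sum_{m=1}^{k} |a_m - a_m(k)| \leq M \sum_{m > k} |a_m|$ for $k$ large enough. Splitting $\|a - a(k)\|_{\ell^1} = \sum_{m \leq k}|a_m - a_m(k)| + \sum_{m > k}|a_m|$ and using the crude tail bound $\sum_{m > k}|a_m| \leq k^{-\alpha}\sum_{m > k} m^{\alpha}|a_m|$ (finite by item 6) delivers the rate $C_1 k^{-\alpha}$; absorbing the small-$k$ cases into the constant handles $k$ below the Baxter threshold.

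Items 2 and 3 follow quickly. Subtracting the defining equations $e_t = X_t + \sum_{j \geq 1} a_j X_{t-j}$ and $\pino{t}{k} = X_t + \sum_{j=1}^{k} a_j(k) X_{t-j}$ gives
\begin{equation*}
e_t - \pino{t}{k} = \sum_{j=1}^{k} (a_j - a_j(k)) X_{t-j} + \sum_{j > k} a_j X_{t-j}.
\end{equation*}
Minkowski's inequality together with stationarity pulls out $\|X_0\|_q$, so $\|e_0 - \pino{0}{k}\|_q \leq \|X_0\|_q \|a - a(k)\|_{\ell^1} \leq C_2 k^{-\alpha}$ by item 1. Item 3 is then immediate from the triangle inequality $\|\pino{0}{k}\|_q \leq \|e_0\|_q + \|e_0 - \pino{0}{k}\|_q$.

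Item 4 is Yule--Walker orthogonality: by construction $\pino{t}{i}$ is orthogonal in $L^2$ to $\operatorname{span}(X_{t-1}, \ldots, X_{t-i})$, and $\pino{0}{j}$ lies in $\operatorname{span}(X_0, X_{-1}, \ldots, X_{-j})$; the condition $1 \leq t \leq i - j$ is precisely what guarantees $\{0, -1, \ldots, -j\} \subseteq \{t-1, \ldots, t-i\}$. Item 5 is the monotonicity of projection residuals: since the AR($k$) projection space is a subspace of the AR($\infty$) projection space, $\sigma_k^2 = \|\pino{0}{k}\|_2^2 \geq \|e_0\|_2^2 = \sigma^2$. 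None of the steps are deep; the only step with any subtlety is the index bookkeeping in item 4, and the main structural point is simply that item 6 must be proven first so that the tail $\sum_{m > k}|a_m| = O(k^{-\alpha})$ can be used to close item 1.
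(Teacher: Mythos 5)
Your proposal is correct and follows essentially the same route as the paper: Lemma \ref{lemma:rate-acf} plus Lemma \ref{lemma:brillinger} for item 6, Baxter's inequality with the tail bound $\sum_{m>k}|a_m|\leq k^{-\alpha}\sum_{m>k}m^{\alpha}|a_m|$ for item 1, Minkowski and the triangle inequality for items 2--3, Yule--Walker orthogonality for item 4, and nested projection subspaces for item 5. The only cosmetic difference is that the paper spells out the identification $e_0 = X_0 - P_\infty(X_0)$ in item 5 via the Pythagorean decomposition, which you assert implicitly.
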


\begin{proof}
    Starting with the first claim, Baxter's inequality (Lemma \ref{lemma:baxter}) yields a $k_0$ and a $M>0$, such that for all $k\geq k_0$
    \begin{equation*}
        \begin{split}
            \|a-a(k)\|_{\ell^1} & = \sum_{m=0}^{k-1} |a_m-a_m(k)| + \sum_{m=k}^\infty |a_m| \\
            & \leq (M+1)\sum_{m=k}^\infty |a_m| \\
            & \leq \frac{M+1}{k^{\alpha}} \sum_{m=k}^\infty m^{\alpha} |a_m| \lesssim k^{-\alpha}.
        \end{split}
    \end{equation*}
  By possibly making the constant infront of $k^{-\alpha}$ larger, we get that $\|a-a(k)\|_{\ell^{1}}\lesssim k^{-\alpha}$ for all $k$. 
%    Since $k_0$ is fixed, we may perform the following estimate for $k\leq k_0$
%\begin{equation*}
%    \begin{split}
%        \|a-a(k)\|_{\ell^1} & \leq k^{-\frac{5}{2}}k_0^{\frac{5}{2}} \max_{1\leq k \leq k_0} \|a-a(k)\|_{\ell^1} \lesssim k^{-\frac{5}{2}},
%    \end{split}
%\end{equation*}

The second claim is an immediate consequence of the first one and the stationarity of the process $(X_t)_{t\in\zz}$,
\begin{equation*}
\begin{split}
    \|e_0 - \pino{0}{k}\|_q & \leq \sum_{m=1}^\infty |a_m - a_m(k)| \|X_{-m}\|_q = \|X_0\|_q \|a-a(k)\|_{\ell^1} \lesssim k^{-\alpha}.
\end{split}
\end{equation*}
For the third claim, we simply note that 
\begin{equation*}
    \|\pino{0}{k}\|_q \leq \|e_0\|_q + \|\pino{0}{k} - e_0\|_q \leq \|e_0\|_q + Ck^{-\alpha},
\end{equation*}
for some constant $C>0$, which is uniformly bounded in $k$. 

For the fourth claim, recall that $\pino{t}{i}$ is the projection onto the orthogonal complement of the subspace $V_{i}$ of $L^{2}$ spanned by $X_{t-1},\dots,X_{t-i}$. As $\pino{0}{j}$ is a linear combination of $X_{0}, \dots, X_{-j}$, $\mathbb{E}(\pino{t}{i}\pino{0}{j}) = 0$ if $\pino{0}{j} \in V_{i}$, that is, if $1 \leq t \leq i-j$. 

Next, we are going to show that $\sigma_{k}^2 \geq \sigma^2$ for all $k$. Denote with $V_k$ the $L^2$-subspace spanned by $X_{-1},\dots,X_{-k}$, and with $V_\infty$ the closure of the $L^2$-subspace spanned by $X_{j}$ for $j\leq -1$. Let $P_k$ and $P_\infty$ be the orthogonal projections onto $V_k$ and $V_\infty$ respectively. By the definition of the Yule-Walker equations, we have $\pino{0}{k} = X_0 - P_k(X_0)$. 
Since $V_k \subseteq V_\infty$, we have $P_k = P_\infty P_k$, and thus we may decompose the variance of $\pino{0}{k}$, $\sigma_{k}^2$ into
\begin{equation*}
   \sigma_{k}^2= \|\pino{0}{k}\|_{2}^2 = \|X_0 - P_k(X_0)\|_{2}^2 =  \|X_0 - P_\infty(X_0)\|_{2}^2 + \|P_\infty(X_0 - P_k(X_0))\|_{2}^2.
\end{equation*}
The orthogonal projection $Z = P_\infty(X_0)$ is uniquely characterized by 
\begin{equation*}
    \langle X_0 -Z, Y\rangle_{L^2} = 0 \quad \text{for all } Y \in V_\infty 
\end{equation*}
and since $Z = \sum_{j\geq 1} a_j X_{t-j}$ satisfies this requirement on the dense subspace of $V_\infty$ spanned by $X_j$ for $j\leq -1$ (and hence on all of $V_\infty$), we conclude that $X_0-P_\infty(X_0) = e_0$, i.e., $\sigma_{k}^2 \geq \sigma^2$ for all $k$. The last claim was already shown in the discussion below Lemma \ref{lemma:brillinger}.
\end{proof}
Let us end this section by summarizing some central techniques used in this paper. 
Lemma \ref{lemma:burkholder} to \ref{lemma:Q} contain some ideas that have been introduced by W. B. Wu in \cite{wu-physical-dependence} and refined in subsequent works of his and various coauthors (e.g., \cite{WU-QF,Wu-Shao,Wu-Xiao-Cov-Matrix} and the references therein).
The martingale approximation lemma (Lemma \ref{lemma:mdsa}) together with the version of Burkholder's inequality outlined below will be used many times throughout the paper. 
\begin{lemma}[Lemma 1, \cite{Wu-Shao}]\label{lemma:burkholder}
  If $p>1$, $p' = \min(2,p)$ and $D_i \in L^p$ is a martingale difference sequence, then
  \begin{equation*}
    \bigg\lVert \sum_{i=1}^n D_i\bigg\rVert_p^{p'} \leq B_p^{p'}\sum_{i=1}^n \|D_i\|_p^{p'},
  \end{equation*}
  for some constant $B_p$ depending only on $p$. An application of Fatou's Lemma implies
  \begin{equation*}
    \bigg\lVert \sum_{i=1}^\infty D_i\bigg\rVert_p^{p'} \leq B_p^{p'}\sum_{i=1}^\infty \|D_i\|_p^{p'}.
  \end{equation*}
\end{lemma}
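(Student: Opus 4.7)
The plan is to reduce the stated inequality to the classical Burkholder--Davis--Gundy (BDG) square function estimate for martingales, and then to dominate the quadratic variation by the sum of the individual $L^p$-norms, treating the regimes $p>2$ and $1<p\leq 2$ separately. The extension from finite to infinite martingale difference sequences will be handled by Fatou's lemma applied to partial sums; the BDG step is the substantive input, everything else is elementary.

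Concretely, I would invoke BDG: there exists a constant $B_p>0$ depending only on $p$ such that
\begin{equation*}
\Bigl\lVert \sum_{i=1}^n D_i \Bigr\rVert_p \leq B_p \lVert S_n \rVert_p, \qquad S_n = \Bigl(\sum_{i=1}^n D_i^2\Bigr)^{1/2}.
\end{equation*}
Raising both sides to the power $p' = \min(2,p)$, it suffices to prove $\lVert S_n\rVert_p^{p'} \leq \sum_{i=1}^n \lVert D_i\rVert_p^{p'}$. For $p\geq 2$, I would note that $p/2\geq 1$, so Minkowski's inequality in $L^{p/2}$ applied to the non-negative summands $D_i^2$ yields
\begin{equation*}
\lVert S_n\rVert_p^2 = \lVert S_n^2\rVert_{p/2} = \Bigl\lVert \sum_{i=1}^n D_i^2\Bigr\rVert_{p/2} \leq \sum_{i=1}^n \lVert D_i^2\rVert_{p/2} = \sum_{i=1}^n \lVert D_i\rVert_p^2,
\end{equation*}
which is exactly the claim with $p'=2$. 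For $1<p\leq 2$, the exponent $p/2$ lies in $(1/2,1]$, and subadditivity of $t\mapsto t^{p/2}$ on $[0,\infty)$ gives $\bigl(\sum_i D_i^2\bigr)^{p/2}\leq \sum_i \lvert D_i\rvert^p$ pointwise; taking expectations produces $\lVert S_n\rVert_p^p \leq \sum_{i=1}^n \lVert D_i\rVert_p^p$, the required estimate with $p'=p$.

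Combining either case with BDG establishes the finite-sum inequality. For the infinite version, I would apply the finite-sum bound to the partial sums $T_n=\sum_{i=1}^n D_i$ and pass to the limit: Fatou's lemma gives $\lVert \sum_{i=1}^\infty D_i\rVert_p \leq \liminf_n \lVert T_n\rVert_p$, and since the right-hand side of the finite bound is monotone non-decreasing in $n$, it converges to $B_p^{p'}\sum_{i=1}^\infty \lVert D_i\rVert_p^{p'}$. The main obstacle is BDG itself, which is the genuinely nontrivial input proved via stopping-time techniques and good-$\lambda$ inequalities in Burkholder's original work; with BDG in hand, the remaining Minkowski/subadditivity dichotomy is a standard manipulation.
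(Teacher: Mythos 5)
Your argument is correct and is essentially the standard proof of the cited result: the paper itself imports this lemma from Wu--Shao without proof (apart from the Fatou remark for the infinite sum), and the source's argument is exactly your combination of the Burkholder square-function inequality with Minkowski in $L^{p/2}$ for $p\geq 2$ and pointwise subadditivity of $t\mapsto t^{p/2}$ for $1<p\leq 2$. The Fatou step for the infinite series also matches what the paper indicates, so there is nothing to add.
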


\begin{lemma}\label{lemma:PD-standart-est}
    Let $Z_t\in L^{p}$ be a physically dependent process for $p>1$, $\lambda \in \rr^m$, and $p'=\min\{p,2\}$. Then
    \begin{equation*}
        \biggl\|\sum_{t=1}^m \lambda_t(Z_t - \mathbb{E}(Z_t))\biggl\|_p \leq B_{p}\|\lambda\|_{\ell^{p'}}\sum_{l=0}^{\infty}\|Z_{l}-Z_{l}'\|_{p}\leq B_{p}\|\lambda\|_{\ell^{p'}}D_p^Z(0),
    \end{equation*}
    where $B_p$ is the constant from Burkholder's inequality.
\end{lemma}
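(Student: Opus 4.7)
The plan is to implement the Burkholder--martingale approximation technique that underpins much of the rest of Section \ref{sec:pd-ar-props}: decompose each centered summand via Lemma \ref{lemma:mdsa} into its projections $\mathcal{P}_{t-s}(Z_t)$, swap the order of summation, apply Burkholder's inequality (Lemma \ref{lemma:burkholder}) one projection slice at a time, and finally collapse with the triangle inequality. Concretely, I would first write $Z_t - \E(Z_t) = \sum_{s=0}^{\infty} \mathcal{P}_{t-s}(Z_t)$ (the sum effectively terminates at $s = t$ since $Z_t$ is $\mathcal{F}_t$-measurable) and interchange the two summations to obtain
\begin{equation*}
\sum_{t=1}^m \lambda_t\bigl(Z_t - \E(Z_t)\bigr) = \sum_{s=0}^\infty M_s, \qquad M_s = \sum_{t=1}^m \lambda_t \mathcal{P}_{t-s}(Z_t).
\end{equation*}

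The key observation is that for each \emph{fixed} $s \geq 0$ the array $(\lambda_t\mathcal{P}_{t-s}(Z_t))_{t=1}^m$ is itself a martingale difference sequence with respect to the shifted filtration $(\mathcal{F}_{t-s})_{t \geq 1}$: by construction $\mathcal{P}_{t-s}(Z_t)$ is $\mathcal{F}_{t-s}$-measurable with $\E(\mathcal{P}_{t-s}(Z_t)\mid \mathcal{F}_{t-s-1})=0$. Lemma \ref{lemma:burkholder} therefore yields
\begin{equation*}
\|M_s\|_p^{p'} \leq B_p^{p'} \sum_{t=1}^m |\lambda_t|^{p'} \|\mathcal{P}_{t-s}(Z_t)\|_p^{p'}.
\end{equation*}
Stationarity together with \eqref{eq:P(Z)=E(Z-Z')} and the contractivity of conditional expectation imply $\|\mathcal{P}_{t-s}(Z_t)\|_p \leq \|Z_s - Z_s'\|_p$ uniformly in $t$, so that $\|M_s\|_p \leq B_p \|\lambda\|_{\ell^{p'}} \|Z_s - Z_s'\|_p$.

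The final step is simply the triangle inequality $\|\sum_s M_s\|_p \leq \sum_s \|M_s\|_p$, which produces the first claimed bound; the second follows at once by comparing with $D_p^Z(0) = \|Z_0\|_p + \sum_{l\geq 1}\|Z_l - Z_l'\|_p$ (using $\|Z_0 - Z_0'\|_p \leq 2\|Z_0\|_p$ and absorbing the harmless constant into $B_p$ if desired). There is no real obstacle here; the only subtlety worth flagging is that Burkholder cannot be reapplied to the outer sum in $s$, because the $M_s$ do \emph{not} themselves form a martingale difference sequence in $s$. This is precisely why the weak-dependence series enters linearly, via $\sum_s \|Z_s - Z_s'\|_p$, rather than in an $\ell^{p'}$ fashion.
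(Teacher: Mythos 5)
Your proposal is correct and follows essentially the same route as the paper's proof: martingale-difference decomposition via Lemma \ref{lemma:mdsa}, Burkholder's inequality (Lemma \ref{lemma:burkholder}) applied slice-wise in $t$ for each fixed projection lag, the bound $\|\mathcal{P}_{t-s}(Z_t)\|_p \leq \delta_p^Z(s)$ from \eqref{eq:P(Z)=E(Z-Z')}, and the triangle inequality over the lag sum. Your added remarks (why Burkholder cannot hit the outer sum, and the harmless constant when comparing with $D_p^Z(0)$) are accurate but not needed beyond what the paper does.
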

\begin{proof}
    An application of Lemma \ref{lemma:mdsa} yields
    \begin{equation*}
        \begin{split}
            \biggl\|\sum_{t=1}^m \lambda_t(Z_t - \mathbb{E}(Z_t))\biggl\|_p &\leq \sum_{l=0}^\infty \biggl\|\sum_{t=1}^m \lambda_t\mathcal{P}_{t-l}(Z_t)\biggl\|_p.
        \end{split}
    \end{equation*}
    Note that $\lambda_t\mathcal{P}_{t-l}(Z_t)$ is a martingale difference sequence in $t$ for fixed $l$. Applying Burkholder's inequality, we get
    \begin{equation*}
        \begin{split}
            \sum_{l=0}^\infty \biggl\|\sum_{t=1}^m \mathcal{P}_{t-l}(Z_t)\biggl\|_p & \leq B_p\sum_{l=0}^\infty \biggl(\sum_{t=1}^m \lambda_t^{p'}\|\mathcal{P}_{t-l}(Z_t)\|_p^{p'}\biggl)^{\frac{1}{p'}}
        \end{split}
    \end{equation*}
    Since $\|\mathcal{P}_{t-l}(Z_{t})\|_{p}\leq \delta_{p}^{Z}(l)$ 
    (see \eqref{eq:P(Z)=E(Z-Z')}), we have
    \begin{equation*}
      \biggl(\sum_{t=1}^m \lambda_t^{p'}\|\mathcal{P}_{t-l}(Z_t)\|_p^{p'}\biggl)^{\frac{1}{p'}} \leq B_{p}D_{p}^{Z}(0)\|\lambda\|_{\ell^{p'}}.
    \end{equation*}
%    Since physically dependent processes are stationary, we have $\|\mathcal{P}_{}{t-l}(Z_{}t)\|_p = \|\mathcal{P}_{0}(Z_l)\|_p$. This yields
%    \begin{equation*}
%    \begin{split}
%        B_p\sum_{l=0}^\infty \biggl(\sum_{t=1}^m \lambda_t^{p'}\|\mathcal{P}_{t-l}(Z_t)\|_p^{p'}\biggl)^{\frac{1}{p'}} & = B_p\sum_{l=0}^\infty \biggl(\sum_{t=1}^m \lambda_t^{p'} \|\mathcal{P}_{0}(Z_l)\|_p^{p'}\biggl)^{\frac{1}{p'}} \\
%        &= \|\lambda\|_{\ell^{p'}}B_p\sum_{l=0}^\infty \|\mathcal{P}_{0}(Z_l)\|_p.
%    \end{split}
%    \end{equation*}
%    Now $\mathcal{P}_0(Z_l) = \mathbb{E}(Z_l\mid\mathcal{F}_0) - \mathbb{E}(Z_l\mid\mathcal{F}_{-1}) = \mathbb{E}(Z_l-Z'_l\mid \mathcal{F}_0)$, and we may estimate $\|\mathcal{P}_0(Z_l)\|_p = \|\mathbb{E}(Z_l-Z'_l\mid \mathcal{F}_0)\|_p \leq \|Z_l-Z'_l\|_p$, since the conditional expectation is an $L^p$ contraction. 
    \end{proof}
\begin{remark}\label{rem:prod-pd}
If $V_{t}$ and $W_{t}$ are physically dependent, then so is $Z_{t} = V_{t-i}W_{t-j}$ for $i,j\geq 0$.
In fact, 
\begin{equation*}
  \delta_{p}^{Z}(l) = \|V_{0}\|_{2p}\delta_{2p}^{W}(l-j) + \|W_{0}\|_{2p}\delta_{2p}^{W}(l-i).
\end{equation*}
This is easily seen by inserting $\pm V_{t-i}W_{t-j}'$ into $\|V_{l-i}W_{l-j} - V_{l-i}'W_{l-j}'\|_{p}$ and using Hölder's inequality,
\begin{equation}\label{eq:pd-prod-est}
       \|V_{l-i}W_{l-j} - V_{l-i}'W_{l-j}'\|_{p} \leq \|V_{0}\|_{2p}\|W_{l-j} - W_{l-j}'\|_{2p} + \|W_{0}\|_{2p}\|V_{l-i}-V_{l-i}'\|_{2p}.
\end{equation}
In particular, Lemma \ref{lemma:PD-standart-est} and the previous estimate yield
\begin{equation*}
  \begin{split}
    \biggl\|\sum_{t=1}^{m} \lambda_{t} (V_{t-i}W_{t-j} - \mathbb{E}(V_{t-i}W_{t-j}))\biggr\|_{p} & \leq B_{p}\|\lambda\|_{\ell^{p'}}\sum_{l=0}^{\infty}\|V_{l-i}W_{l-j} - V_{l-i}'W_{l-j}'\|_{p} \\
                                                                                         &  \leq B_{p}\|\lambda\|_{\ell^{p'}}(\|V_{0}\|_{2p}D^{W}_{2p}(0) + \|W_{0}\|_{2p}D_{2p}^{V}(0)),
  \end{split}
    \end{equation*}
    since $\delta_{2p}^{V}(l) = \delta_{2p}^{W}(l) = 0$ for $l<0$.
\end{remark}
The next lemma allows us to control the difference between a physically dependent process $Z_t$ and the projection onto a finite section of its past $\mathbb{E}(Z_t\mid\sigfdouble{t}{t-s+1})$, where $\sigfdouble{t}{r} = \sigma(\varepsilon_{r},\dots,\varepsilon_{t})$, for $r\leq t$. 
\begin{lemma}\label{lemma:Q}
    Let $Z_t \in L^{p}$ be a physically dependent process for $p>1$, and let $Q^t_s(Z_t) = Z_t - \mathbb{E}(Z_t\mid \sigfdouble{t}{s})$. For $p'=\min(2,p)$ we have 
    \begin{equation*}
        \|Q^t_{t-s+1}(Z_t)\|_p \leq  B_{p}\sqrt[p']{\sum_{l=s}^{\infty}\|Z_{l} - Z_{l}'\|_{p}^{p'}} = B_{p}\sqrt[p']{\sum_{l=s}^{\infty}\delta_{p}^{Z}(l)^{p'}}.%\sum_{l=s}^\infty \delta_p^Z(l).
    \end{equation*}
\end{lemma}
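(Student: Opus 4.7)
The plan is to realize $Q^t_{t-s+1}(Z_t)$ as the tail of a forward martingale with respect to an increasing filtration that reveals the innovations $\varepsilon_{t-s}, \varepsilon_{t-s-1}, \ldots$ one by one, bound its increments by the physical dependence coefficients via a short coupling identity, and close with Burkholder's inequality (Lemma \ref{lemma:burkholder}).

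Concretely, I would set $\mathcal{B}_m = \sigfdouble{t}{t-m+1} = \sigma(\varepsilon_{t-m+1}, \ldots, \varepsilon_t)$ for $m \geq 1$. The family $(\mathcal{B}_m)_{m \geq s}$ is increasing, with $\bigvee_{m \geq s} \mathcal{B}_m = \mathcal{F}_t$. Define $W_m = \mathbb{E}(Z_t \mid \mathcal{B}_m)$, so that $W_s = \mathbb{E}(Z_t \mid \sigfdouble{t}{t-s+1})$, and, by Doob's theorem applied to the $L^p$-bounded martingale $(W_m)_{m \geq s}$, one has $W_m \to Z_t$ in $L^p$. Writing $D_m = W_{m+1} - W_m$, this yields the representation
\begin{equation*}
  Q^t_{t-s+1}(Z_t) \; = \; Z_t - W_s \; = \; \sum_{m=s}^{\infty} D_m,
\end{equation*}
with convergence in $L^p$, and $(D_m)_{m \geq s}$ a martingale difference sequence with respect to $(\mathcal{B}_{m+1})_{m \geq s}$.

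The heart of the proof is the bound $\|D_m\|_p \leq \delta_p^Z(m)$, which I would establish via a coupling. Using the paper's coupled version $Z_t^{(t-m)} = g_t(\varepsilon_t, \ldots, \varepsilon_{t-m+1}, \varepsilon'_{t-m}, \varepsilon_{t-m-1}, \ldots)$, note that $Z_t^{(t-m)}$ does not depend on $\varepsilon_{t-m}$ as a function of its arguments, while $\varepsilon'_{t-m}$ is independent of $\mathcal{B}_{m+1}$ with the same law as $\varepsilon_{t-m}$. Integrating over $\varepsilon'_{t-m}$ together with $\varepsilon_{t-m-1}, \varepsilon_{t-m-2}, \ldots$ (all independent of $\mathcal{B}_{m+1}$) produces the key identity
\begin{equation*}
  \mathbb{E}\bigl(Z_t^{(t-m)} \bigm| \mathcal{B}_{m+1}\bigr) \; = \; \mathbb{E}(Z_t \mid \mathcal{B}_m) \; = \; W_m.
\end{equation*}
Combined with $W_{m+1} = \mathbb{E}(Z_t \mid \mathcal{B}_{m+1})$, this gives $D_m = \mathbb{E}(Z_t - Z_t^{(t-m)} \mid \mathcal{B}_{m+1})$, and conditional Jensen immediately yields $\|D_m\|_p \leq \|Z_t - Z_t^{(t-m)}\|_p \leq \delta_p^Z(m)$.

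Finally, applying Lemma \ref{lemma:burkholder} to the martingale difference sequence $(D_m)_{m \geq s}$ (in the infinite form via Fatou) and substituting the increment bound gives
\begin{equation*}
  \|Q^t_{t-s+1}(Z_t)\|_p^{p'} \; \leq \; B_p^{p'} \sum_{m=s}^{\infty} \|D_m\|_p^{p'} \; \leq \; B_p^{p'} \sum_{m=s}^{\infty} \delta_p^Z(m)^{p'},
\end{equation*}
and taking the $p'$-th root finishes the proof. The only mildly delicate point is the conditional-expectation identity $\mathbb{E}(Z_t^{(t-m)} \mid \mathcal{B}_{m+1}) = W_m$, which hinges on the product structure of the innovation law and the fact that $Z_t^{(t-m)}$ ignores $\varepsilon_{t-m}$; everything else is routine.
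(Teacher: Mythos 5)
Your proposal is correct and follows essentially the same route as the paper: a telescoping martingale decomposition of $Z_t - \mathbb{E}(Z_t\mid\sigfdouble{t}{t-s+1})$ along the filtration $\sigfdouble{t}{t-m+1}$, the coupling identity $\mathbb{E}(Z_t^{(t-m)}\mid\sigfdouble{t}{t-m}) = \mathbb{E}(Z_t\mid\sigfdouble{t}{t-m+1})$ to bound each increment by $\delta_p^Z(m)$ via conditional Jensen, and then Burkholder's inequality (Lemma \ref{lemma:burkholder}) with Fatou. Your explicit justification of the conditional-expectation identity is in fact slightly more detailed than the paper's, but there is no substantive difference.
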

\begin{proof}
    Since $\mathbb{E}(Z_t\mid\sigfdouble{t}{t-l})$ converges to $Z_t$ almost surely and in $L^p$ by Doob's martingale convergence theorem, we may expand $Q^t_{t-s+1}(Z_t)$ into a telescoping sum (almost surely and in $L^p$)
    \begin{equation*}
        Q^t_{t-s+1}(Z_t) = Z_t -\mathbb{E}(Z_t\mid\sigfdouble{t}{t-s+1}) = \sum_{l=s}^\infty \mathbb{E}(Z_t\mid\sigfdouble{t}{t-l}) - \mathbb{E}(Z_t\mid\sigfdouble{t}{t-l+1}).
    \end{equation*}
    Next, we want to mimic Equation \eqref{eq:P(Z)=E(Z-Z')}. Here we have $\mathbb{E}(Z_t\mid\sigfdouble{t}{t-l+1}) = \mathbb{E}(Z_t^{(t-l)}\mid\sigfdouble{t}{t-l+1}) = \mathbb{E}(Z_t\mid\sigfdouble{t}{t-l})$ and hence
    \begin{equation*}
        \mathbb{E}(Z_t\mid\sigfdouble{t}{t-l}) - \mathbb{E}(Z_t\mid\sigfdouble{t}{t-l+1}) = \mathbb{E}(Z_t - Z_t^{(t-l)}\mid\sigfdouble{t}{t-l}).
    \end{equation*}
    By an argument similar to Inequality \eqref{eq:P(Z)=E(Z-Z')}, we get
    \begin{equation*}
        \|\mathbb{E}(Z_t\mid\sigfdouble{t}{t-l}) - \mathbb{E}(Z_t\mid\sigfdouble{t}{t-l+1})\|_p %= \|\mathbb{E}(Z_t - Z_t^{(t-l)}\mid\sigfdouble{t}{t-l})\|_p \leq \|Z_t - Z_t^{(t-l)}\|_p = 
        \leq \delta_p^Z(l).
    \end{equation*}
    Using Burkholder's inequality (Lemma \ref{lemma:burkholder}) gives
    \begin{equation*}
    \|Q^t_{t-s+1}(Z_t)\|_{p} \leq B_{p} \sqrt[p']{\sum_{l=s}^{\infty}\delta_{p}^{Z}(l)^{p'}},% \sum_{l=s}^\infty \|\mathbb{E}(Z_t\mid\sigfdouble{t}{t-l}) - \mathbb{E}(Z_t\mid\sigfdouble{t}{t-l+1})\|_p \leq \sum_{l=s}^\infty \delta_p^Z(l),
    \end{equation*}
    finishing the proof. 
\end{proof}
\begin{remark}
    If $D_p^Z(\alpha) < \infty$ for $\alpha \geq 0$ we get 
    \begin{equation}\label{eq:Q-rate}
        \|Q^t_{t-s+1}(Z_t)\|_{p} \leq \sum_{l=s}^\infty \delta_p^Z(l) \leq s^{-\alpha} \sum_{l=s}^\infty l^{\alpha} \delta_p^Z(l) \lesssim s^{-\alpha}.
    \end{equation} 
    Let $V_{t}$ and $W_t$ be physically dependent processes with $D_{2p}^{V}(\beta)< \infty$ for $\beta\geq 0$ and $D_{2p}^W(\gamma) < \infty$ for $\gamma \geq 0$, and let $Z_{t} = V_{t-i}W_{t-j}$ for $i,j<s$.
    By Remark \ref{rem:prod-pd} and Estimate \eqref{eq:Q-rate}, we have
    \begin{equation}\label{eq:Q-prod-rate}
    \begin{split}
      \|Q^t_{t-s+1}(Z_t)\|_{p} &\lesssim\sum_{l=s}^\infty \delta_{2p}^V(l-i) + \delta_{2p}^W(l-j) \lesssim (s-i)^{-\beta} + (s-j)^{-\gamma}.
    \end{split} 
    \end{equation}
    Note that for $Z_t = \pino{t}{k}$ the rate  in\eqref{eq:Q-rate} still holds uniformly in $k$ for $\alpha \leq \frac{5}{2}$,  as $D^{I_k}_p(\frac{5}{2})$ is uniformly bounded in $k$ by Lemma \ref{lemma:dep-rates}. 
    Similarly \eqref{eq:Q-prod-rate} remains valid for $Z_{t} = \pino{t-i}{k}\pino{t-j}{k'}$ uniformly in $k$ and $k'$, as $\|\pino{0}{k}\|$ is uniformly bounded by Lemma \ref{lemma:basic-props-AR(oo)}.
\end{remark}

\subsection{Proof of Theorem \ref{thm:main}}\label{sec:proof:thm:main}

\begin{proof}[Proof of Theorem \ref{thm:main}]
  Using the definition of $L_n(k)$, we get
\begin{equation}\label{eq:thm-main-intro}
\begin{split}
    \max_{1\leq k \leq K_n} \biggl|\frac{Q_n(k)}{L_n(k)} - 1\biggr| &=  \max_{1 \leq k \leq K_n} \frac{|\frac{N}{k} \|\hat{a}(k) - a(k)\|_R^2 - \sigma^2|}{\frac{N}{k}L_n(k)}.
    %$&\leq  \max_{1 \leq k \leq K_n} \frac{\frac{N}{k} \|\hat{a}(k) - a(k)\|_R^2 - \sigma^2}{\frac{N}{k}L_n(k)}
\end{split}
\end{equation}
%it is sufficient to show that $\frac{N}{k} \|\hat{a}(k) - a(k)\|_R^2 \overset{P}{\to} \sigma^2$. 
%Following S. Lee \& A. Karagrigoriou \cite{karagrigoriou2001}, 
Next, consider the standard quantities 
\begin{equation*}
  B_n = \frac{1}{N}\sum_{t=K_n+1}^{n} X_{t-1}(k)e_{t} \quad \text{and} \quad B_{nk} = \frac{1}{N}\sum_{t=K_n+1}^{n} X_{t-1}(k)\pino{t}{k},
\end{equation*}
where $X_{t-1}(k) = (X_{t-1}, \dots , X_{t-k})^T$. 
Using this, we can write
\begin{equation*}
\begin{split}
    \hat{a}(k) - a(k) &= - \hat{R}(k)^{-1}B_{nk} \\
    & = (R(k)^{-1} - \hat{R}(k)^{-1})B_{nk} + R(k)^{-1}(B_n-B_{nk}) - R(k)^{-1}B_n.\\
\end{split}
\end{equation*}
By multiple applications of the triangle and Cauchy-Schwarz inequalities, \eqref{eq:thm-main-intro} can be estimated by 
\begin{equation}\label{eq:thm-main-intro-est}
    A_1 + A_2^2 + A_3^2 + 2(A_2A_3 + A_2A_4 + A_3A_4),
\end{equation}
where
\begin{equation*}
    \begin{split}
        A_1 &= \max_{1 \leq k \leq K_n} \frac{|\frac{N}{k}\|R(k)^{-1}B_n\|_R^2 - \sigma^2|}{\frac{N}{k}L_n(k)}, \\  
        A_2^2 &= \max_{1 \leq k \leq K_n} \frac{\|(R(k)^{-1} - \hat{R}(k)^{-1})B_{nk}\|_R^2}{L_n(k)}, \\
        A_3^2 &= \max_{1 \leq k \leq K_n} \frac{\|R(k)^{-1}(B_n-B_{nk})\|_R^2}{L_n(k)}, \\
        A_4^2 &= \max_{1 \leq k \leq K_n} \frac{\|R(k)^{-1}B_n\|_R^2}{L_n(k)}. \\
    \end{split}
\end{equation*}

  For $\delta>0$, consider the events $\mathcal{A}_i = \{A_{i}\leq b_{0}(k_{n}^{*})^{-\delta}/13\}$, for $i=1,2,3$ and $b_{0} = \min(b,\sqrt{b})$. 
% \begin{equation*}
%   \begin{split}
%     \mathcal{A}_{1} & = \{A_{1}\leq (k_{n}^{*})^{-\delta}\},\\
%     \mathcal{A}_{2} & = \{A_{2}\leq (k_{n}^{*})^{-\delta}\}, \text{ and} \\
%     \mathcal{A}_{3} & = \{A_{3}\leq (k_{n}^{*})^{-\delta}\}.
%   \end{split}
% \end{equation*}
  Since $\sigma^{2} \leq (N/k)L_{n}(k)$, we get that $A_{4}^{2} \leq A_{1} + 1$, and thus the expression in \eqref{eq:thm-main-intro-est} (and hence \eqref{eq:thm-main-intro}) is bounded by 
\begin{equation*}
  \big[b_{0} + 2b_{0}^{2} + 2( b_{0}^{2} + 2b_{0}\sqrt{b_{0}+1})\big](k_{n}^{*})^{-\delta}/13 \leq b(k_{n}^{*})^{-\delta}
\end{equation*}
on the set $\mathcal{A}_{1}\cap \mathcal{A}_{2}\cap\mathcal{A}_{3}$. 
By a simple union bound, it is enough to show that
\begin{equation*}
  \mathbb{P}((\mathcal{A}_{1}\cap \mathcal{A}_{2}\cap \mathcal{A}_{3})^{c}) \leq \mathbb{P}(\mathcal{A}_{1}^{c}) + \mathbb{P}(\mathcal{A}_{2}^{c})  +\mathbb{P}(\mathcal{A}_{3}^{c})  \lesssim (k_{n}^{*})^{-\gamma}
\end{equation*}
for some $\gamma>0$. By Lemma \ref{lemma:A1}, we have
\begin{equation*}
  \mathbb{P}(\mathcal{A}_{1}^{c}) \lesssim (k_{n}^{*})^{1-p/2+\delta p},
\end{equation*}
while Lemma \ref{lemma:Bn-Bnk} implies
\begin{equation*}
  \mathbb{P}(\mathcal{A}_{3}^{c}) \lesssim (k_{n}^{*})^{1-p/2+2\delta p},
\end{equation*}
where $p=q/4>2$. 
This already requires $d< 1/4 - 1/(2p)$, for otherwise $1-p/2 + 2 \delta p>0$.

For the set $\mathcal{A}_{2}$, we recall that $\|x\|_R = \sqrt{\langle x , R(k) x\rangle}$ is equivalent to the Euclidean norm $\|\cdot\|_{\ell^2}$ on $\mathbb{R}^k$, since the eigenvalues of $R(k)$ all lie in the interval $[\Lambda_1,\Lambda_2] \subseteq (0,\infty)$, by Assumption \ref{G2}.
We may estimate the term $A_2^2$ by
\begin{equation}\label{eq:main-thm-A_2-est}
    A_2^2 \leq \Lambda_2 \max_{1\leq k \leq K_n} \|\hat{R}(k)^{-1} - R(k)^{-1}\|^2 \max_{1\leq k \leq K_n} \frac{\|B_{nk}\|_{\ell^2}^2}{L_n(k)}.
\end{equation}
Since for all $x \in \mathbb{R}^k$
\begin{equation*}
    \Lambda_2^{-1} \leq \frac{\langle x , R(k)^{-1}x\rangle}{\|x\|_{\ell^2}^2} \leq \Lambda_1^{-1},
\end{equation*}
by Assumption \ref{G2}, and $\langle x, R(k)^{-1} x\rangle = \|R(k)^{-1}x\|_R^2$, we get $\|x\|_{\ell^2}^2 \leq \Lambda_2\|R(k)^{-1}x\|_R^2$.
Using this and $(|a| + |b|)^2 \leq 2(|a|^2+|b|^2)$ for $a,b \in \mathbb{R}$, we can estimate the second factor in \eqref{eq:main-thm-A_2-est} by inserting $\pm B_n$ into $\|B_{nk}\|_{\ell^2}$, yielding
\begin{equation*}
\begin{split}
    \max_{1\leq k \leq K_n} \frac{\|B_{nk}\|_{\ell^2}^2}{L_n(k)} &\leq 2\Lambda_2 \max_{1\leq k \leq K_n} \frac{\|R(k)^{-1}(B_{nk} - B_n)\|_{R}^2}{L_n(k)} + 2 \Lambda_2 \max_{1\leq k \leq K_n} \frac{\|R(k)^{-1}B_n\|_{R}^2}{L_n(k)} \\
    & = 2\Lambda_2(A_3^2 + A_4^2). 
\end{split}
\end{equation*}
On the set $\mathcal{A}_{1}\cap \mathcal{A}_{3}$, we have $A_{4}^{2}\leq b_{0}(k_{n}^{*})^{-\delta}/13 + 1$ and $A_{3}\leq b_{0}(k_{n}^{*})^{-\delta}/13$, and hence
\begin{equation*}
  \max_{1\leq k \leq K_n} \frac{\|B_{nk}\|_{\ell^2}^2}{L_n(k)} \leq 2 \Lambda_{2}(1+2 b_{0}/13)
\end{equation*}
as well as 
\begin{equation*}
 A_{2}^{2}\leq 2\Lambda_{2}(1+2b_{0}/13) \max_{1\leq k \leq K_n} \|\hat{R}(k)^{-1} - R(k)^{-1}\|^2.
\end{equation*}
Thus, the set $\mathcal{A}_{2}^{c}\cap(\mathcal{A}_{1}\cap \mathcal{A}_{3})$ is contained in 
\begin{equation*}
  \mathcal{R} =  \biggl\{ \max_{1\leq k \leq K_n} \|\hat{R}(k)^{-1} - R(k)^{-1}\|^2 > \frac{b_{0}^{2}(k_{n}^{*})^{- 2\delta}}{2 \Lambda_{2}13^{2}(1+2 b_{0}/13 )}\biggr\}.
\end{equation*}
Point \ref{mat-norm3} in Lemma \ref{lemma:mat-norm} implies 
\begin{equation*}
    \mathbb{P}(\mathcal{R}) \lesssim ((k_{n}^{*})^{2 \delta}K_{n}N^{-1/2})^{p}.
\end{equation*}
Since $K_{n}^{1+\kappa/2}/\sqrt{N}$ is bounded, the preceeding display can be estimated for any $\delta< \kappa/8$ by
\begin{equation*}
  \mathbb{P}(\mathcal{R}) \lesssim (k_{n}^{*})^{-2 \delta p}.
\end{equation*}

This in turn yields
\begin{align*}
  \mathbb{P}(\mathcal{A}_{2}^{c}) &\leq \mathbb{P}(\mathcal{A}_{2}^{c}\cap(\mathcal{A}_{1}\cap \mathcal{A}_{3})) + \mathbb{P}(\mathcal{A}_{1}^{c}) + \mathbb{P}(\mathcal{A}_{3}^{c}) 
  \\&\leq \mathbb{P}(\mathcal{R})+ \mathbb{P}(\mathcal{A}_{1}^{c}) + \mathbb{P}(\mathcal{A}_{3}^{c}) \lesssim (k_n^*)^{-\gamma},
\end{align*}
where $\gamma = \min\{p/2-1-2 \delta p, 2 \delta p\}>0$, and $\delta < \min\{\kappa/8, 1/4-1/(2p)\}$.

In total, we obtain a constant $C>0$, such that with probability at least $1-C(k_n^*)^{-\gamma}$, we have
\begin{align*}
\bigl|Q_n(k) - L_n(k)\bigr| \lesssim (k_n^*)^{-\delta} L_n(k), \quad 1 \leq k \leq K_n,
\end{align*}
for any $\delta < \min\{1/4 - 1/(2p), \kappa/8\}$, which completes the proof.

\end{proof}

In the very broad scheme of things, our proof above follows the general ideas put forward by Shibata \cite{shibata}. However, there are many technical and conceptual innovations in this proof, which are not apparent from the rough outline presented above. While the technical details are postponed to Section \ref{sec:proofs} for the sake of readability, we summarize and briefly discuss the lemmata used in the proof of Theorem \ref{thm:main} below.

Lemma \ref{lemma:mat-norm} can be thought of as an $L^{p}$ version of Lemma 6 of \cite{Wu-Xiao-Cov-Matrix}. Analog results in the literature are, for instance, Lemma 2.2 in \cite{karagrigoriou2001} or Lemma 2 in \cite{ing:wei:JMVA:2003}, albeit subject to much stronger conditions. In \cite{Wu-Xiao-Cov-Matrix}, Wu et al. achieve a better rate in probability than we do in $L^{p}$. %For the proof of Theorem \ref{thm:main} this would be sufficient. 
We do, however, require an $L^{p}$ version for our Lemma \ref{lemma:QF-1}. An $L^{p}$-rate for the entries of $\hat{R}(k)-R(k)$ is given in \cite{wu2011asymptotic}. However, the $L^{p}$ convergence of $\max_{1\leq k\leq K_{n}} \|\hat{R}{k}-R(k)\| \leq \|\hat{R}(K_{n}) - R(K_{n})\|_{F}$ or the convergence of $\max_{1\leq k \leq K_n}\|\hat{R}(K_n)^{-1} - R(K_n)^{-1}\|$ does not seem to be recorded in the literature.

\begin{lemma}\label{lemma:mat-norm}
If $X_t \in L^{2p}$ satisfies \ref{G2}, such that $D_{2p}^X(0) < \infty$ for some $p>1$ and $K_n^2 n^{1-p'} \to 0$ for $p'=\min(p,2)$, then
\begin{enumerate}
  \item \label{mat-norm1}
    $\displaystyle \bigl\|\|\hat{R}(K_{n}) - R(K_{n})\|_{F}\bigr\|_{p}\leq C\bigl(K_n^2N^{1-p'} \bigr)^{\frac{1}{p'}} \to 0$, for some $C>0$,
    %$\displaystyle \|\hat{R}(K_n) - R(K_n)\|_F \overset{L^p}{\longrightarrow} 0$, \label{mat-norm1} 
  \item $\displaystyle \max_{1\leq k \leq K_n}\|\hat{R}(k)^{-1} - R(k)^{-1}\| \overset{\mathbb{P}}{\longrightarrow} 0$\label{mat-norm2}, and 
  \item \label{mat-norm3} for every $K>0$, there is a constant $C$, such that for every $c>0$
      \begin{equation*}
        \mathbb{P}\biggl(\max_{1\leq k\leq K_{n}}\|\hat{R}(k)^{-1}-R(k)^{-1}\|>K/(k_{n}^{*})^{c}\biggr) \leq C((k_{n}^{*})^{cp'}K_{n}^{2}N^{1-p'})^{\frac{p}{p'}},
    \end{equation*}

%      if additionally there is a $\kappa>0$ such that $K_{n}^{2+\kappa}N^{1-p'}$ is boundend in $n$, then for every $K>0$ there is an absolute constant $C >0$, such that 
%\begin{equation*}
%  \sup_{c >0}\mathbb{P}(\max_{1\leq k\leq K_{n}}\|\hat{R}(k)^{-1}-R(k)^{-1}\|>(k_{n}^{*})^{-c}/K) \leq C (k_{n}^{*})^{-\kappa p /p'},
%\end{equation*}}
\end{enumerate}
  where $\|A\|_F = \sqrt{\mathrm{tr}(A^TA)}$ denotes the Frobenius norm, and $\|A\| = \sup_{\|x\|\leq 1} \| Ax\|$ is the operator norm. 
  \end{lemma}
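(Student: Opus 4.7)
The plan is to prove the three statements in sequence, with part 1 doing the analytic work and parts 2--3 following by matrix perturbation and Markov's inequality.

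For part 1, I start from $\|\hat R(K_n) - R(K_n)\|_F^2 = \sum_{i,j=1}^{K_n}\bigl(\hat R_{ij}(K_n) - \gamma(i-j)\bigr)^2$. For each fixed $(i,j)$ the entry $\hat R_{ij}(K_n) - \gamma(i-j)$ is $N^{-1}$ times a centered partial sum of the product process $Z_t^{(i,j)} = X_{t-i+1}X_{t-j+1}$, which is itself physically dependent by Remark \ref{rem:prod-pd} with coefficients controlled by $\|X_0\|_{2p}$ and $D_{2p}^X(0)$. Applying Lemma \ref{lemma:PD-standart-est} with weights $\lambda_t = 1/N$ on an index set of cardinality $N$ gives $\|\lambda\|_{\ell^{p'}} = N^{1/p'-1}$, hence the uniform entrywise bound $\|\hat R_{ij}(K_n) - \gamma(i-j)\|_p \leq C N^{1/p'-1}$ with $C$ independent of $i,j,K_n$. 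To combine these into an $L^p$ bound on the Frobenius norm, I use Minkowski's inequality in $L^{p/2}$ when $p \geq 2$, and the subadditivity of $x \mapsto x^{p/2}$ on $[0,\infty)$ when $p \in (1,2)$; either way one obtains $\bigl\|\|\hat R(K_n) - R(K_n)\|_F\bigr\|_p \leq C\bigl(K_n^2 N^{1-p'}\bigr)^{1/p'}$, which vanishes by the assumed growth condition.

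For part 2, I use the perturbation identity $\hat R(k)^{-1} - R(k)^{-1} = \hat R(k)^{-1}\bigl(R(k) - \hat R(k)\bigr)R(k)^{-1}$ together with $\|R(k)^{-1}\| \leq \Lambda_1^{-1}$ from Assumption \ref{G2}. Since both $R(k)$ and $\hat R(k)$ are principal submatrices of $R(K_n)$ and $\hat R(K_n)$ for every $k \leq K_n$, domination of operator norm by Frobenius norm gives
\[\max_{1 \leq k \leq K_n}\|\hat R(k) - R(k)\| \leq \max_{1 \leq k \leq K_n}\|\hat R(k) - R(k)\|_F \leq \|\hat R(K_n) - R(K_n)\|_F.\]
By Weyl's inequality, on the event $\mathcal{E} = \{\|\hat R(K_n) - R(K_n)\|_F \leq \Lambda_1/2\}$ every $\hat R(k)$ has smallest eigenvalue at least $\Lambda_1/2$, so $\|\hat R(k)^{-1}\| \leq 2/\Lambda_1$ uniformly in $k$. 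Plugging back into the identity yields the pointwise deterministic bound $\max_{1 \leq k \leq K_n}\|\hat R(k)^{-1} - R(k)^{-1}\| \leq (2/\Lambda_1^2)\|\hat R(K_n) - R(K_n)\|_F$ on $\mathcal{E}$, and convergence to zero in probability follows from part 1.

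For part 3, I combine the deterministic bound from part 2 with Markov's inequality applied to the $p$-th moment bound from part 1. Splitting on the event $\mathcal{E}$, the target probability is bounded by $\P\bigl(\|\hat R(K_n) - R(K_n)\|_F > K\Lambda_1^2/(2(k_n^*)^c)\bigr)$ plus $\P(\mathcal{E}^c)$; both are controlled by Markov with the bound from part 1, producing $(k_n^*)^{cp}(K_n^2 N^{1-p'})^{p/p'} = ((k_n^*)^{cp'}K_n^2 N^{1-p'})^{p/p'}$, which matches the stated rate (using $cp/(p/p') = cp'$). The only substantive obstacle is in part 1: verifying that the constants from Remark \ref{rem:prod-pd} and Lemma \ref{lemma:PD-standart-est} do not depend on the lags $(i,j)$ or on $K_n$ (they depend only on $\|X_0\|_{2p}$ and $D_{2p}^X(0)$), and that the correct summation step delivers precisely the $K_n^{2/p'}$ prefactor rather than the cruder $K_n^2$; the rest is standard matrix perturbation.
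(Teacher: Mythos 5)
Your proposal is correct and follows essentially the same route as the paper: entrywise $L^{p}$ bounds via Remark \ref{rem:prod-pd} and Lemma \ref{lemma:PD-standart-est}, a case distinction at $p=2$ to assemble the Frobenius-norm bound with the $K_n^{2/p'}$ prefactor, and then a good-event perturbation argument plus Markov for parts 2 and 3. The only difference is cosmetic: where you invert $\hat R(k)$ via the resolvent identity and Weyl's eigenvalue bound on the event $\{\|\hat R(K_n)-R(K_n)\|_F \le \Lambda_1/2\}$, the paper invokes the equivalent inequality $q(k)\le (r(k)+q(k))r(k)Q(k)$ from Berk's Lemma 3, yielding the same deterministic bound and the same tail estimate.
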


 Using Lemma \ref{lemma:mat-norm}, we can generalize several results that are, in one form or another, present in every asymptotic efficiency proof for AIC-like model selection criteria. 
    Lemma \ref{lemma:Bn-Bnk-rate} is a generalization of Lemma 3.1 in the classical work of Shibata \cite{shibata} (and Lemma 3 in \cite{ing:wei:JMVA:2003}). It is used in the proof of Lemma \ref{lemma:Bn-Bnk}.

 \begin{lemma}\label{lemma:Bn-Bnk-rate}
    %For every $p\geq 1$ we have
   Under Assumptions \ref{ass:main} and \ref{ass:weak:dep}, we have for $p=q/4$ 
  \begin{equation*}
    \bigl\|\|B_{n} - B_{nk}\|_{R}^{2}\bigr\|_{p} \lesssim \frac{k}{N}\|a-a(k)\|_{\ell^{1}}^{2}.
  \end{equation*}
  \end{lemma}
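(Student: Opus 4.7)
The plan is to reduce the statement to a coordinate-wise estimate via Assumption \ref{G2}, and then to apply the Burkholder-type bound of Lemma \ref{lemma:PD-standart-est} to each coordinate, exploiting the orthogonality $\E[X_{t-j}(e_t-\pino{t}{k})]=0$ for $1\leq j\leq k$ that comes from the Yule-Walker equations. First, write $e_t-\pino{t}{k}=\sum_{i=1}^{\infty}c_i X_{t-i}$ with $c_i=a_i-a_i(k)$ (using the convention $a_i(k)=0$ for $i>k$), and observe that each coordinate
$$ S_j \;:=\; (B_n-B_{nk})_j \;=\; \frac{1}{N}\sum_{t=K_n+1}^{n} X_{t-j}\bigl(e_t-\pino{t}{k}\bigr) $$
is a centered Cesàro average of the stationary zero-mean process $W_t^{(j,k)}:=X_{t-j}(e_t-\pino{t}{k})$.

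By Assumption \ref{G2} the eigenvalues of $R(k)$ are uniformly bounded, so $\|x\|_R^2\leq \Lambda_2\|x\|_{\ell^2}^2$, and Minkowski's inequality in $L^p$ yields
$$ \bigl\|\|B_n-B_{nk}\|_R^2\bigr\|_p \;\leq\; \Lambda_2\sum_{j=1}^{k}\|S_j\|_{2p}^{2}. $$
The problem therefore reduces to establishing the uniform bound $\|S_j\|_{2p}^2\lesssim N^{-1}\|a-a(k)\|_{\ell^1}^2$ for all $j\in\{1,\dots,k\}$.

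Since $2p=q/2>4$, Lemma \ref{lemma:PD-standart-est} applies with $p'=2$ and gives $\|S_j\|_{2p}\leq B_{2p}N^{-1/2}\,D^{W^{(j,k)}}_{2p}(0)$. The remaining step is to control $D^{W^{(j,k)}}_{2p}(0)$ by a constant multiple of $\|a-a(k)\|_{\ell^1}$, uniformly in $j$ and $k$. Expanding $W_t^{(j,k)}=\sum_{i\geq1}c_i X_{t-j}X_{t-i}$ and applying Remark \ref{rem:prod-pd} termwise gives
$$ \delta^{W^{(j,k)}}_{2p}(l)\;\leq\; \|X_0\|_{4p}\sum_{i\geq1}|c_i|\bigl(\delta^X_{4p}(l-i)+\delta^X_{4p}(l-j)\bigr), $$
which, together with $\|W_0^{(j,k)}\|_{2p}\leq\|X_0\|_{4p}^2\|a-a(k)\|_{\ell^1}$, yields $D^{W^{(j,k)}}_{2p}(0)\lesssim \|X_0\|_{4p}\bigl(\|X_0\|_{4p}+D^X_{4p}(0)\bigr)\|a-a(k)\|_{\ell^1}$. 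The assumption $q>8$ is exactly what is needed so that $X_0\in L^{4p}$ and $D^X_{4p}(0)<\infty$ by Assumption \ref{I2}.

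The main obstacle is keeping the constants genuinely uniform in $j$ and $k$. This is handled cleanly by the $\ell^1$-expansion of $W_t^{(j,k)}$: the dependence on $j$ enters only through the shift $\delta^X_{4p}(l-j)$, whose $l$-sum equals $D^X_{4p}(0)$ independent of $j$, while the dependence on $k$ is fully absorbed into the factor $\|a-a(k)\|_{\ell^1}$. Summing the $k$ coordinate-wise bounds then delivers the announced rate $\frac{k}{N}\|a-a(k)\|_{\ell^1}^2$.
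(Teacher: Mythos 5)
Your proposal is correct and follows essentially the same route as the paper: reduce to the $\ell^2$-coordinates $S_j$ via the norm equivalence from \ref{G2}, verify the centering $\E[X_{t-j}(e_t-\pino{t}{k})]=0$, apply Lemma \ref{lemma:PD-standart-est} to each coordinate, and control the dependence coefficients of the product process through the expansion $e_t-\pino{t}{k}=\sum_i(a_i-a_i(k))X_{t-i}$, which absorbs the $j,k$-dependence into $\|a-a(k)\|_{\ell^1}$. The only cosmetic difference is that you package the uniform constant as $D^{W^{(j,k)}}_{2p}(0)$ while the paper writes out the corresponding double sum over the coefficients $d_{m_1}(k),d_{m_2}(k)$; the substance is identical.
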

  Using Lemma \ref{lemma:Bn-Bnk}, we can control the error terms appearing in the proof of Theorem \ref{thm:main}. Once Lemma \ref{lemma:Bn-Bnk} is established, only elementary transformations are necessary to control the terms $A_{2}, A_{3}$ and $A_{4}$.

  \begin{lemma}\label{lemma:Bn-Bnk}
  Under Assumptions \ref{ass:main} and \ref{ass:weak:dep}, we have 
  \begin{equation*}
    A_3^2 = \max_{1 \leq k \leq K_n} \frac{\|R(k)^{-1}(B_n - B_{nk})\|_R^2}{L_n(k)} \overset{\mathbb{P}}{\longrightarrow} 0.
  \end{equation*}
  More precisely, for any constant $c>0$, there is an absolut constant $C >0$ such that for every $\delta>0$, we have
  \begin{equation*}
    \mathbb{P}\bigl(A_{3}^{2}> c(k_{n}^{*})^{-\delta}\bigr) \leq C (k_{n}^{*})^{1-p/2+\delta p}.
\end{equation*}
\end{lemma}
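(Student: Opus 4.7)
The plan is to control the $L^{p}$-moment of $A_{3}^{2}$ directly and then invoke Markov's inequality, with Lemma \ref{lemma:Bn-Bnk-rate} as the sole moment input. First, using Assumption \ref{G2} (the eigenvalues of $R(k)$ lie in $[\Lambda_{1}, \Lambda_{2}]$), I would observe $\|R(k)^{-1}(B_{n}-B_{nk})\|_{R}^{2} = \langle B_{n}-B_{nk},\, R(k)^{-1}(B_{n}-B_{nk})\rangle \leq \Lambda_{1}^{-2}\|B_{n}-B_{nk}\|_{R}^{2}$, so it suffices to control $\max_{k}\|B_{n}-B_{nk}\|_{R}^{2}/L_{n}(k)$.

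Since $L_{n}(k)$ is deterministic and the maximum of non-negative quantities is dominated by their sum, I would write
\begin{equation*}
\mathbb{E}\bigl[A_{3}^{2p}\bigr] \leq \Lambda_{1}^{-2p}\sum_{k=1}^{K_{n}}\frac{\mathbb{E}\|B_{n}-B_{nk}\|_{R}^{2p}}{L_{n}(k)^{p}} \lesssim \sum_{k=1}^{K_{n}} \frac{(k/N)^{p}\|a-a(k)\|_{\ell^{1}}^{2p}}{L_{n}(k)^{p}},
\end{equation*}
where the second step is Lemma \ref{lemma:Bn-Bnk-rate}. For the denominator, I would exploit both $L_{n}(k) \geq \sigma^{2}k/N$ and the optimality $L_{n}(k) \geq L_{n}(k_{n}^{*}) \geq \sigma^{2}k_{n}^{*}/N$ to obtain the unified lower bound $L_{n}(k) \geq \sigma^{2}\max(k,k_{n}^{*})/N$. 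Combining this with Lemma \ref{lemma:basic-props-AR(oo)}(1), i.e.\ $\|a-a(k)\|_{\ell^{1}} \lesssim k^{-\alpha}$, and splitting the sum at $k = k_{n}^{*}$, I would estimate
\begin{equation*}
\sum_{k=1}^{K_{n}} \frac{k^{p}\, k^{-2\alpha p}}{\max(k,k_{n}^{*})^{p}} \leq \frac{1}{(k_{n}^{*})^{p}}\sum_{k\leq k_{n}^{*}}k^{p(1-2\alpha)} + \sum_{k>k_{n}^{*}}k^{-2\alpha p} \lesssim (k_{n}^{*})^{-p}.
\end{equation*}
The first sum is bounded by a constant since $p(1-2\alpha)<-1$, while the second is $\lesssim (k_{n}^{*})^{1-2\alpha p}$; both are dominated by $(k_{n}^{*})^{-p}$ because $\alpha \geq 5/2$ and $p = q/4 > 2$ give $p(2\alpha-1) > 1$.

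Collecting, $\mathbb{E}[A_{3}^{2p}] \lesssim (k_{n}^{*})^{-p}$, and Markov's inequality yields
\begin{equation*}
\mathbb{P}\bigl(A_{3}^{2} > c(k_{n}^{*})^{-\delta}\bigr) \leq \frac{\mathbb{E}[A_{3}^{2p}]}{c^{p}(k_{n}^{*})^{-\delta p}} \lesssim c^{-p}(k_{n}^{*})^{\delta p - p}.
\end{equation*}
Since $p > 2$ gives $\delta p - p \leq 1 - p/2 + \delta p$, the stated bound $C(k_{n}^{*})^{1-p/2+\delta p}$ follows (this route actually produces a strictly sharper exponent, but loosening it to the stated form is harmless for the downstream application in Theorem \ref{thm:main}), and convergence in probability is immediate for any $0 < \delta < 1/2 - 1/p$. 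The main technical obstacle is the balancing between the polynomial decay $\|a-a(k)\|_{\ell^{1}} \lesssim k^{-\alpha}$ (itself resting on Baxter's inequality, Lemma \ref{lemma:baxter}) and the cutoff at $k_{n}^{*}$ appearing through $\max(k,k_{n}^{*})$; the condition $\alpha \geq 5/2$ is precisely strong enough to keep every resulting series absolutely summable with substantial margin.
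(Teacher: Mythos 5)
Your argument is correct and follows essentially the same route as the paper's proof: bound $A_3^{2p}$ in expectation by summing over $k$, invoke Lemma \ref{lemma:Bn-Bnk-rate} together with $\|a-a(k)\|_{\ell^1}\lesssim k^{-\alpha}$ from Lemma \ref{lemma:basic-props-AR(oo)}, split the sum at $k_n^*$ using $L_n(k)\geq \sigma^2\max(k,k_n^*)/N$, and finish with Markov's inequality. The only difference is cosmetic: the paper discards the full bias decay early (keeping only a $k^{-1/2}$ rate, which yields the exponent $1-p/2+\delta p$), whereas you retain $k^{-2\alpha p}$ and obtain the sharper exponent $\delta p-p$ before loosening it to the stated form, which is harmless.
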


  However, the main technical and conceptual innovations lie in Lemma \ref{lemma:A1}.
  Versions of this lemma are crucial steps in every proof of asymptotic efficiency so far. It generalizes Lemma 3.2 of \cite{shibata}, Lemma 2.3 of \cite{karagrigoriou2001} and Theorem 3 of \cite{ing:wei:JMVA:2003}.

\begin{lemma}\label{lemma:A1}
    Under Assumptions \ref{ass:main} and \ref{ass:weak:dep}, we have
    \begin{equation*}
        A_1 = \max_{1 \leq k \leq K_n} \frac{|\frac{N}{k}\|R(k)^{-1}B_n\|_R^2-\sigma^2|}{\frac{N}{k}L_n(k)} \overset{\mathbb{P}}{\longrightarrow} 0.
    \end{equation*}
   In particular, for every constant $c>0$, there is an absolute constant $C>0$, such that for every $\delta>0$, we have
    \begin{equation*}
      \mathbb{P}(A_{1}\geq c(k_{n}^{*})^{-\delta})\leq C (k_{n}^{*})^{1-p/2+\delta p}.
  \end{equation*}
\end{lemma}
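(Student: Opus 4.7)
The plan is to reduce the maximum over $k$ to individual $L^p$-moment bounds for each fixed $k$, via Markov and a union bound with $p = q/4 > 2$. Setting $T_k = B_n^T R(k)^{-1} B_n - \sigma^2 k/N$, one has $|T_k|/L_n(k) \leq (N/(\sigma^2 k))|T_k|$ because $L_n(k) \geq \sigma^2 k/N$, and also $|T_k|/L_n(k) \leq |T_k|/L_n(k_n^*)$. The latter bound is more efficient for the union argument: by Markov with exponent $p$,
\begin{equation*}
\mathbb{P}(A_1 \geq c(k_n^*)^{-\delta}) \leq (c (k_n^*)^{-\delta} L_n(k_n^*))^{-p} \sum_{k=1}^{K_n} \|T_k\|_p^p.
\end{equation*}
It then suffices to establish a uniform moment bound of the form $\|T_k\|_p^p \lesssim (k/N)^{p} k^{-p/2}$ (plus lower-order bias terms of order $k^{-\alpha p}$), after which summation combined with the standard lower bound $L_n(k_n^*) \gtrsim k_n^*/N$ yields the stated rate $(k_n^*)^{1-p/2+\delta p}$.

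The core task is bounding $\|T_k\|_p$. Expand
\begin{equation*}
B_n^T R(k)^{-1} B_n = \frac{1}{N^2}\sum_{s,t=K_n+1}^{n} e_t e_s\, X_{t-1}(k)^T R(k)^{-1} X_{s-1}(k),
\end{equation*}
and split the analysis into expectation and fluctuation contributions. A key decomposition is $e_t = \pino{t}{k} + r_t^{(k)}$ with $\|r_t^{(k)}\|_q \lesssim k^{-\alpha}$ (Lemma \ref{lemma:basic-props-AR(oo)} combined with Baxter's inequality Lemma \ref{lemma:baxter}), where $\pino{t}{k}$ is $L^2$-orthogonal to $X_{t-1},\dots,X_{t-k}$. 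The diagonal contribution $t=s$ of the expectation, combined with $\mathbb{E}[X_{t-1}(k)^T R(k)^{-1} X_{t-1}(k)] = k$ and $\sigma_k^2 - \sigma^2 = O(k^{-\alpha})$ from Lemma \ref{lemma:basic-props-AR(oo)}, yields the mean $\sigma^2 k/N$ up to a negligible bias. For the fluctuation I view $N T_k$ as a centered quadratic form in the physically dependent vector-valued sequence $Z_t = e_t X_{t-1}(k)$ and use coordinate-wise martingale approximation (Lemma \ref{lemma:mdsa}) together with Burkholder's inequality (Lemma \ref{lemma:burkholder}) and the product rule for physical dependence (Remark \ref{rem:prod-pd}), crucially invoking the uniform-in-$k$ bound $D_q^{I_k}(\alpha) \leq C_\alpha$ from Lemma \ref{lemma:dep-rates}. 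Assumption \ref{G2} provides $\|R(k)^{-1}\| \lesssim 1$ uniformly in $k$, which allows these bounds to be pushed through the quadratic form; the contributions involving $r_t^{(k)}$ are absorbed via Cauchy-Schwarz and the bound $\|r_t^{(k)}\|_q \lesssim k^{-\alpha}$.

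The main obstacle is controlling the off-diagonal expectation $\mathbb{E}[e_t e_s\, X_{t-1}(k)^T R(k)^{-1} X_{s-1}(k)]$ for $s\neq t$. In the classical iid case $(e_t)$ is a martingale difference sequence and these terms vanish outright, but that is precisely the assumption we abandon. I would overcome this by exploiting that $(e_t)$ is nevertheless white noise in $L^2$ (a consequence of the Wold decomposition), so $\mathbb{E}[e_t e_s] = 0$ for $s \neq t$, and rewriting each cross-term as a covariance between $e_t e_s$ and $X_{t-1}(k)^T R(k)^{-1} X_{s-1}(k)$. Truncating $e_t$ to a finite-memory proxy via the projection operator $Q^t_{t-m+1}$ of Lemma \ref{lemma:Q}, whose $L^p$-error decays at the polynomial rate dictated by $D^I_q(\alpha)$ from Lemma \ref{lemma:dep-rates}, reduces these cross-terms to short-range interactions whose magnitudes decay polynomially in $|t-s|$; the resulting double sum over $s,t$ stays at the required order uniformly in $k$. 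Combined with the fluctuation bound this closes the estimate.
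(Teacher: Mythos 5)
The decisive step is the moment bound $\bigl\|\tfrac{N}{k}\|R(k)^{-1}B_n\|_R^2-\sigma^2\bigr\|_p\lesssim k^{-1/2}$ (cf.\ \eqref{eq:A1-rate-goal}), and the toolkit you describe cannot produce it. Arguing coordinate-wise (Lemma \ref{lemma:mdsa}, Lemma \ref{lemma:burkholder}, Remark \ref{rem:prod-pd}) gives $\|B_{n,i}\|_{2p}\lesssim N^{-1/2}$ per coordinate, and pushing the uniform spectral bound on $R(k)^{-1}$ through the quadratic form then yields at best $\bigl\|B_n^{T}R(k)^{-1}B_n-\mathbb{E}\bigl(B_n^{T}R(k)^{-1}B_n\bigr)\bigr\|_p\lesssim k/N$; after multiplying by $N/k$ this is only $O_{L^p}(1)$, not $o(1)$, and misses the required factor $k^{-1/2}$ entirely. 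That factor comes from cancellation across the $k$ coordinates of the form, which an operator-norm bound cannot see; this is precisely why the paper's proof stresses that one cannot simply remove $R(k)^{-1}$ via its eigenvalues, and instead uses the factorization $R(k)^{-1}=L(k)^{T}D(k)L(k)$ to rewrite $\tfrac{N}{k}\|R(k)^{-1}B_n\|_R^2$ as $\tfrac{1}{Nk}\sum_{j=1}^{k}\sigma_{k-j}^{-2}\sum_{s,t}e_te_s\pino{t-j}{k-j}\pino{s-j}{k-j}$, followed by the four-step analysis of Lemmas \ref{lemma:stepII}--\ref{lemma:stepIV-2}, whose projection estimates decay simultaneously in the coordinate index $j$, the lag $h=t-s$, and $h-j$. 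Your off-diagonal plan (covariance of $e_te_s$ with the form plus finite-memory truncation) supplies decay in $|t-s|$ only, while each lag carries a bilinear form of size $O(k)$, so without joint decay in $j$ the double sum is $O(1)$ rather than $O(k^{-1/2})$. Similarly, your diagonal computation tacitly factorizes $\mathbb{E}\bigl(e_t^2\,X_{t-1}(k)^{T}R(k)^{-1}X_{t-1}(k)\bigr)$ into $\sigma^2k$; with merely white-noise innovations $e_t^2$ may be correlated with quadratic functionals of the past (ARCH-type effects), so the centering at $\sigma^2k/N$ must be obtained by a concentration argument as in the paper's Step I, not read off from $\mathbb{E}\bigl(X_{t-1}(k)^{T}R(k)^{-1}X_{t-1}(k)\bigr)=k$ and $\sigma_k^2-\sigma^2=O(k^{-\alpha})$.

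The reduction to moments is also flawed as stated. With $T_k=\|R(k)^{-1}B_n\|_R^2-\sigma^2k/N$ and $\|T_k\|_p\lesssim k^{1/2}/N$, using only $L_n(k)\geq L_n(k_n^*)$ gives $(k_n^*)^{\delta p}L_n(k_n^*)^{-p}\sum_{k\leq K_n}\|T_k\|_p^p\lesssim(k_n^*)^{\delta p-p}K_n^{p/2+1}$, and since $K_n$ may be of order $n^{1/(2+\kappa)}$ while $k_n^*\asymp n^{1/(2\alpha+1)}$, this does not give $(k_n^*)^{1-p/2+\delta p}$ and need not even tend to zero. You must use the split you mention and then discard: for $k\leq k_n^*$ bound $\tfrac{N}{k}L_n(k)\geq\sigma^2k_n^*/k$, and for $k>k_n^*$ bound $\tfrac{N}{k}L_n(k)\geq\sigma^2$ together with the normalized estimate $\|\tfrac{N}{k}T_k\|_p\lesssim k^{-1/2}$; both partial sums are then $O\bigl((k_n^*)^{1-p/2}\bigr)$, exactly as in \eqref{eq:Bn-Bnk-Markov-est}. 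This part is repairable; the missing quadratic-form mechanism above is the essential gap.
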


  To prove Lemma 3.2 in \cite{shibata}, Shibata exploited the gaussianity of the innovations and explicit estimates for their joint cumulants. 
  Subsequent authors \cite{karagrigoriou2001,ing-wei-main, ing:wei:JMVA:2003, ing-sin-yu-integrated} split $X_{t}$ into
  \begin{equation}\label{eq:classical-proof-idea}
    X_{t} =  \sum_{j=0}^{\infty}b_{j}e_{t-j} = \sum_{j=0}^{f(n)} b_{j}e_{t-j} + \sum_{j=f(n)+1}^{\infty}b_{j}e_{t-j},
  \end{equation}
  where $b_{0}=1$ and $f:\mathbb{N}\to \mathbb{N}$ is chosen in a suitable way, e.g., $f(n) = K_{n}$. 
  Since they assumed that the innovations are independent, the two summands on the right-hand side of Equation \eqref{eq:classical-proof-idea} are independent as well.
  However, in our case, the innovations are not independent, and we do not benefit from the simplifications taking place in earlier proofs.
Instead, we can only use the intrinsic structure of the autoregression problem.
It is well known, that the autoregressive coefficients $a(k)$ are uniquely determined by the covariance structure of the process (see Equation \eqref{eq:fit-k} and the subsequent discussion). 
However, the converse is also true.
The inverse covariance matrix of the process $X$ can be written as
\begin{equation*}
  R(k)^{-1} = L(k)^{T}D(k)L(k)
\end{equation*}
where
  \begin{equation*}
    L(k) =
    \begin{pmatrix}
      1 & a_1(k-1) &  \cdots& a_{k-2}(k-1)& a_{k-1}(k-1)\\
      0 & 1 & a_1(k-2) &  \cdots& a_{k-2}(k-2)\\
      \vdots &\vdots &\vdots &\vdots &\vdots \\
      0 & \cdots & 0 & 1 & a_1(1)\\
      0 & \cdots & 0 & 0 & 1\\
    \end{pmatrix},
  \end{equation*}
  $D(k) = \mathrm{diag}(\sigma_{k-1}^{-2}, \dots, \sigma_{0}^{-2})$, and $\sigma_{j}^{2} = \mathbb{E}(\pino{0}{j}^{2})$ (see \cite{kromer}, p. 98).
  This allows us to formulate Lemma \ref{lemma:A1} purely in terms of the innovations.
  Using Lemma \ref{lemma:dep-rates}, the physical dependence conditions imposed on the process $X$ give us sufficient control over the innovations (and the pseudo-innovations defined in Equation \eqref{eq:fit-k}).
  It should be noted, that the classical approach outlined in Equation \eqref{eq:classical-proof-idea} is feasible in our case. 
  However, if the innovations are not independent, multiple new cross terms have to be dealt with, that do not appear in previous works. 
  In dealing with these cross terms, we have to perform essentially the same steps that we perform in our proof multiple times.
  Additionally, some non-cross terms are significantly harder to deal with than in the classical case.
  An attentive reader will note that our proof of Lemma \ref{lemma:A1} simplifies drastically once independent innovations are assumed. 
  In this case, many terms simply vanish, and many results that pose a technical hurdle in our setting turn into trivialities (e.g. Lemma \ref{lemma:stepII}).
  Thus, our approach can be thought of as a streamlined version of earlier proofs, that does not depend on the independence of the innovations.

  \subsection{Proof of Theorems \ref{thm:irs-asymp-eff} and \ref{thm:irs-stable}}\label{sec:proof:thm:irs-asymp_eff}

  The proofs of Theorem \ref{thm:irs-asymp-eff} and Theorem \ref{thm:irs-stable} require  Lemma \ref{lemma:irs:sk}. 
  This lemma should be thought of as a more general, quantative version of Proposition 4.1 in \cite{shibata}.
\begin{lemma}\label{lemma:irs:sk}
%Given Assumptions \ref{ass:main} and \ref{ass:weak:dep}, we have
%        \begin{equation*}
%        \max_{1 \leq k \leq K_n}\frac{s_{k_n^*}^2 - \sigma_{k_n^*}^2 - s_k^2 +\sigma_k^2}{L_n(k)} \overset{\mathbb{P}}{\longrightarrow} 0. 
%    \end{equation*}
    
Grant Assumptions \ref{ass:main} and \ref{ass:weak:dep}. Then for every $b>0$ there is a constant $C>0$ (depending on $b$), such that for every $\delta < 1/2 - 2/q$, with a probability of at least $1-C(k_{n}^{*})^{1+\delta q/2 - q/4}$, we have
\begin{equation*}
  |s_{k_{n}^{*}}^{2}-\sigma^{2}_{k_{n}^{*}} - s_{k}^{2}+\sigma_{k}^{2}| \leq b(k_{n}^{*})^{-\delta}L_{n}(k), \qquad \forall k=1,\dots,K_{n}.
\end{equation*}
\end{lemma}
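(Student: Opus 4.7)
The plan is to express $T_n(k) := (s_k^2 - \sigma_k^2) - (s_{k_n^*}^2 - \sigma_{k_n^*}^2)$ as a centered quadratic form in the observations, derive a sharp $L^{q/2}$-estimate valid for every $k$, and combine with Markov's inequality and a union bound over $k = 1, \ldots, K_n$. Writing $a(k) \in \ell^2$ with $a_0(k) := 1$ and $a_j(k) := 0$ for $j > k$, and letting $\hat{\Gamma}$, $\Gamma$ denote the empirical and population infinite-dimensional covariance operators (with entries $N^{-1}\sum_t X_{t-i}X_{t-j}$ and $\gamma(i-j)$), we have $s_k^2 = \langle a(k), \hat{\Gamma}a(k)\rangle$ and $\sigma_k^2 = \langle a(k), \Gamma a(k)\rangle$. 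Decomposing $a(k) = a + (a(k) - a)$ on both sides and using symmetry of $\hat{\Gamma} - \Gamma$, the $k$-independent quadratic $\langle a, (\hat{\Gamma} - \Gamma) a\rangle = N^{-1}\sum_t (e_t^2 - \sigma^2)$ cancels when forming the difference at $k$ and $k_n^*$, leaving
\[
T_n(k) = \frac{2}{N}\sum_t (\pino{t}{k} - \pino{t}{k_n^*}) e_t + \mathcal{Q}_n(k) - \mathcal{Q}_n(k_n^*),
\]
where $\mathcal{Q}_n(m) := N^{-1}\sum_t\bigl[(\pino{t}{m} - e_t)^2 - \mathbb{E}(\pino{t}{m} - e_t)^2\bigr]$; the linear cross-term is automatically centered because $\mathbb{E}(\pino{t}{m}e_t) = \sigma^2$ for every $m$ by the orthogonality relations in Lemma~\ref{lemma:basic-props-AR(oo)}.

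The technical core is a sharp $L^{q/2}$-estimate for each piece. For the linear cross-term, the crucial point is to measure the size of $\pino{t}{k} - \pino{t}{k_n^*} = \sum_j (a_j(k) - a_j(k_n^*)) X_{t-j}$ in the natural $R$-norm rather than in $\ell^1$: combining the martingale-difference approximation of Lemma~\ref{lemma:mdsa} with Burkholder's inequality (Lemma~\ref{lemma:burkholder}) and Young's inequality in $\ell^2$ for the convolution of $(a_j(k) - a_j(k_n^*))_j$ against $(\delta_q^X(m))_m$, and using $\|\cdot\|_{\ell^2} \leq \Lambda_1^{-1/2}\|\cdot\|_R$ from Assumption~\ref{G2}, one arrives at $\|\pino{0}{k} - \pino{0}{k_n^*}\|_q \lesssim \|a(k) - a(k_n^*)\|_R$. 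Feeding this into the weakly dependent $L^{q/2}$-bound of Lemma~\ref{lemma:PD-standart-est} for the centered sum (with the physical-dependence budgets of $e_l$ and $\pino{l}{m}$ controlled uniformly in $m$ via Lemma~\ref{lemma:dep-rates} and Remark~\ref{rem:prod-pd}) yields
\[
\biggl\|\frac{1}{N}\sum_t (\pino{t}{k} - \pino{t}{k_n^*}) e_t\biggr\|_{q/2} \lesssim \frac{\|a(k) - a(k_n^*)\|_R}{\sqrt{N}} \lesssim \sqrt{\frac{L_n(k)}{N}},
\]
where the last step uses $\|a(k) - a(k_n^*)\|_R \leq \|a - a(k)\|_R + \|a - a(k_n^*)\|_R \leq 2\sqrt{L_n(k)}$. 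A completely analogous argument for the two quadratic pieces produces $\|\mathcal{Q}_n(m)\|_{q/2} \lesssim \|a(m) - a\|_R^2/\sqrt{N} \leq L_n(m)/\sqrt{N}$. Combining these and invoking the minimality $L_n(k) \geq L_n(k_n^*) \geq \sigma^2 k_n^*/N$, we conclude
\[
\|T_n(k)\|_{q/2} \lesssim \sqrt{L_n(k)/N} + L_n(k)/\sqrt{N} \lesssim L_n(k)/\sqrt{k_n^*}.
\]

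Markov's inequality applied at the exponent $p = q/2$ then delivers, for each fixed $k$, $\mathbb{P}(|T_n(k)| > b(k_n^*)^{-\delta}L_n(k)) \leq Cb^{-q/2}(k_n^*)^{\delta q/2 - q/4}$, and a union bound over $k = 1, \ldots, K_n$, absorbing the factor $K_n$ into one additional power of $k_n^*$ via Assumption~\ref{G3}, produces the stated probability $C(k_n^*)^{1 + \delta q/2 - q/4}$; the constraint $\delta < 1/2 - 2/q$ is precisely what makes this exponent strictly negative. The principal technical obstacle is the sharp coefficient-norm estimate $\|\pino{0}{k} - \pino{0}{k_n^*}\|_q \lesssim \|a(k) - a(k_n^*)\|_R$: the naive triangle inequality only yields an $\ell^1$-bound, which is adequate in the regime $k \gtrsim k_n^*$ but fails for small $k$ where $L_n(k)$ is dominated by the bias $\|a - a(k)\|_R^2$; recovering the proper $R$-norm scaling requires the convolution-$\ell^2$ argument sketched above, paralleling estimates used in the proofs of Lemmas~\ref{lemma:A1} and~\ref{lemma:Bn-Bnk}.
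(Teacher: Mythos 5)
Your algebraic decomposition is sound (it is Shibata's identity, which the paper also uses, merely re-split through $a$ rather than through $a(k_n^*)+a(k)$), and the quadratic pieces $\mathcal{Q}_n(m)$ are indeed harmless, since there both coefficient vectors are the ``small'' ones and an $\ell^1$-type dependence budget suffices. The genuine gap is exactly at the point you flag as the ``principal technical obstacle'': the bound $\bigl\|N^{-1}\sum_t(\pino{t}{k}-\pino{t}{k_n^*})e_t\bigr\|_{q/2}\lesssim N^{-1/2}\|a(k)-a(k_n^*)\|_R$ does \emph{not} follow from Lemma \ref{lemma:mdsa}, Burkholder, Young's inequality and Lemma \ref{lemma:PD-standart-est}. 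Writing $\Delta_t=\pino{t}{k}-\pino{t}{k_n^*}=\sum_j d_jX_{t-j}$, the dependence budget that Lemma \ref{lemma:PD-standart-est} (via Remark \ref{rem:prod-pd}) charges to the product process $\Delta_te_t$ is $\sum_{l}\bigl(\|\Delta_0\|_q\delta^I_q(l)+\|e_0\|_q\delta^{\Delta}_q(l)\bigr)$, and since $\delta^{\Delta}_q(l)\le\sum_j|d_j|\delta^X_q(l-j)$, the second part is controlled only by $\|d\|_{\ell^1}D_q^X(0)$: Young's inequality $\ell^2*\ell^1\to\ell^2$ bounds the $\ell^2$ norm of the sequence $(\delta^\Delta_q(l))_l$, whereas what enters the estimate is its $\ell^1$ norm. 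An $\ell^1$ bound is not enough: for small $k$ (e.g.\ when the bias decays exponentially, so $k_n^*\asymp\log n$) the ratio $\|a(k)-a(k_n^*)\|_{\ell^1}/(\sqrt N\,L_n(k))$ need not be small, which is precisely the failure mode discussed in the paper before Lemma \ref{lemma:QF-1}. Your cross term equals the centered bilinear form $2\langle a(k)-a(k_n^*),(\hat\Gamma-\Gamma)a\rangle$, so the missing ingredient is exactly the asymmetric quadratic-form estimate of Lemma \ref{lemma:QF-1}, whose proof requires splitting the projections $\mathcal{P}_{t-i-l}(X_{t-i}X_{t-j})$ according to $l<j-i$ versus $l\ge j-i$ (it is in the first regime, where the whole inner sum $\sum_j\eta_jX_{t-j}$ is kept intact inside the norm, that the $\ell^2$ norm of $\eta$ is produced); this cannot be replaced by a convolution trick. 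Once you invoke Lemma \ref{lemma:QF-1}, your argument essentially coincides with the paper's proof, which applies that lemma to the Shibata identity with $\eta=a(k_n^*)-a(k)$ and $\xi=a(k_n^*)+a(k)$, respectively $\xi=$ the first standard basis vector for the $\hat r-r$ part.

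A secondary, fixable slip is the union bound: $K_n$ is in general much larger than $k_n^*$ (Assumption \ref{G3} only ties $K_n$ to $n$), so you cannot absorb the factor $K_n$ into ``one additional power of $k_n^*$''. Instead, keep the $k$-dependence in the per-$k$ Markov bound, i.e.\ use $\P\bigl(|T_n(k)|>b(k_n^*)^{-\delta}L_n(k)\bigr)\lesssim (k_n^*)^{\delta q/2}(NL_n(k))^{-q/4}$, bound $NL_n(k)\ge NL_n(k_n^*)\ge\sigma^2k_n^*$ for $k\le k_n^*$ and $NL_n(k)\ge\sigma^2k$ for $k>k_n^*$, and sum; this is how the paper arrives at the exponent $1+\delta q/2-q/4$.
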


For the proof of \ref{lemma:irs:sk}, we require Lemma \ref{lemma:QF-1}, which acts as a substitute for Lemma 4.2 in \cite{shibata}. Once Lemma \ref{lemma:QF-1} is established, the proof of Lemma \ref{lemma:irs:sk} proceeds by employing an argument of Shibata (cf. Proposition 4.1, \cite{shibata}). Let us mention that if $(i \xi_{i})_{i\geq 1} \in \ell^{1}$, the rates we get even improve upon those of Shibata achieved in Lemma 4.2 of \cite{shibata} for Gaussian processes. The idea of the lemma itself is quite simple. We are interested in controlling the $L^{p}$ norm of the expression
  \begin{equation*}
    \sum_{i,j=1}^{d}\xi_{i}\eta_{j}(\hat{R}_{ij}-R_{ij}),
  \end{equation*}
  for some vectors $\xi,\eta\in \mathbb{R}^{d}$.
  A simple iteration of the Cauchy-Schwarz inequality yields a suboptimal dependence on the dimension $d$ (cf. the proof of Lemma \ref{lemma:mat-norm}).
  On the other hand, one can quite easily derive a \textit{symmetric} estimate of the form
  \begin{equation*}
     \biggl\|\sum_{i,j=1}^{d} \xi_i\eta_j (\hat{R}_{ij} - R_{ij})\biggr\|_p \lesssim N^{\frac{1}{p'}-1}(\|\eta\|_{\ell^{2}}\|\xi\|_{\ell^{1}} + \|\eta\|_{\ell^{1}}\|\xi\|_{\ell^{2}}),
  \end{equation*}
  using Lemma \ref{lemma:PD-standart-est} and the subsequent remark.
  However, in many applications $\eta = \eta(k) \to 0$, while $\xi = \xi(k)$ is only bounded in $k$.
  Hence, we would like to replace $\|\eta\|_{\ell^{1}}$ with $\|\eta\|_{\ell^{2}}$ in the second term, to achieve a faster rate of convergence. 
  On the other hand, we do not care much for $\xi$, as it does not contribute to the rate of convergence. 
  This begs the question: Can we trade $\xi$ for $\eta$, i.e., is there an \textit{asymmetric} estimate of the form 
  \begin{equation*}
     \biggl\|\sum_{i,j=1}^{d} \xi_i\eta_j (\hat{R}_{ij} - R_{ij})\biggr\|_p \lesssim N^{\frac{1}{p'}-1}\|\eta\|_{\ell^{2}}f(\xi),
  \end{equation*}
  for some function $f: \mathbb{R}^{d}\to [0,\infty)$?
  The next lemma provides a positive answer to this question.

\begin{lemma}\label{lemma:QF-1}
  % Let $k =k_n \leq K_n$ and $\xi,\eta \in \ell^1$ such that $\sum_{i=j}^\infty |\xi_i| \leq Cj^{-1/2}$, then%$\xi = (\xi_0, \dots ,\xi_{k})$ und $\eta = (\eta_1, \dots, \eta_{k})$,
 Let $X_{t} \in L^{2p}$ for $p>1$ be a physically dependent process, such that the sequence
\begin{equation*}
  \kappa_{s} = \sum_{l=s}^{\infty}\sqrt{\sum_{k=l}^{\infty}\delta_{2p}^{X}(k)^{2}}
\end{equation*}
is in $\ell^{2}$.
   There is a constant $C$ depending only on $p$ and the distribution of the underlying process $X$, such that for all $\eta = (\eta_{j})_{j=0}^{d}$, and  $\xi = (\xi_{j})_{j=0}^{d}$ in $\mathbb{R}^{d+1}$,

  \begin{equation*}
    \begin{split}
      \biggl\|\sum_{i,j=0}^{d} \xi_i\eta_j (\hat{R}_{ij} - R_{ij})\biggr\|_p \leq 
      C\|\eta\|_{\ell^2}N^{\frac{1}{p'}-1}\biggl(\|\xi\|_{\ell^1} + \|\xi\|_{\ell^1}^{\frac{1}{2}}\biggl(\sum_{i=0}^{d}(i+1)|\xi_i|\biggr)^{\frac{1}{2}}\biggr),
    \end{split}
    \end{equation*}
  where $p'=\min\{2,p\}$.
\end{lemma}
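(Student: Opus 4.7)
The strategy is to reduce the bilinear form to a sum of one-lag martingale differences and then exploit Hilbert-space structure on the $\eta$-side. Setting $Y_{ij}(t)=X_{t-i}X_{t-j}-\E X_{t-i}X_{t-j}$ and $W_t=\sum_{i,j}\xi_i\eta_j Y_{ij}(t)$, the martingale approximation of Lemma~\ref{lemma:mdsa} combined with Burkholder's inequality (Lemma~\ref{lemma:burkholder}) applied in $t$ yields
\begin{equation*}
\Bigl\|\sum_t W_t\Bigr\|_p \leq B_p N^{1/p'}\sum_{l\geq 0}\bigl\|\mathcal{P}_0(W_l)\bigr\|_p,
\end{equation*}
so after dividing by $N$ the task reduces to bounding $\sum_l \|\mathcal{P}_0(W_l)\|_p$ with the claimed asymmetric dependence on $\xi$ and $\eta$.

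For each fixed $l$ I would split the $(i,j)$-indices according to whether $i\leq l$ or $i>l$, and similarly for $j$. Since $X_s$ is $\mathcal{F}_{-1}$-measurable when $s<0$, the block $i,j>l$ contributes nothing. In a mixed block, say $i\leq l$ and $j>l$, the identity $\mathcal{P}_0(AB)=A\,\mathcal{P}_0(B)$ valid for $A\in\mathcal{F}_{-1}$ allows the $\mathcal{P}_0$-operation to be pulled out in front of the $\xi$-part; H\"older at exponents $(2p,2p)$ followed by Lemma~\ref{lemma:PD-standart-est} then bounds the $L^{2p}$-norm of the centered sum $\sum_{j>l}\eta_j X_{l-j}$ by a constant multiple of $\|\eta\|_{\ell^2}$, while the $\mathcal{P}_0$-factor contributes $\sum_{i\leq l}|\xi_i|\delta^X_{2p}(l-i)$. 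Summing this weight over $l$ produces, after exchanging the order of summation, the first term $\|\eta\|_{\ell^2}\|\xi\|_{\ell^1}$ (up to a constant depending only on $p$ and the law of $X$).

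The subtle step is the diagonal block $i,j\leq l$, where $\mathcal{P}_0(X_{l-i}X_{l-j})$ does not factor and a naive triangle bound would reintroduce $\|\eta\|_{\ell^1}$. My plan is to expand $X_{l-i}=\E(X_{l-i}\mid\mathcal{F}_{-1})+\widetilde X_{l-i}$ on both factors; the two cross terms containing one $\mathcal{F}_{-1}$-measurable factor succumb again to the factoring argument above, while the purely ``new'' diagonal term $\widetilde X_{l-i}\widetilde X_{l-j}$ is estimated with the help of Lemma~\ref{lemma:Q}, whose tail bound is precisely the sequence $\kappa$ appearing in the hypothesis. A final Cauchy--Schwarz at $L^p$-level on the $\eta$-side, combined with a careful count of the range $l\geq i$ over which $\widetilde X_{l-i}$ is active, produces a factor proportional to $(i+1)$; an AM--GM inequality $\sqrt{AB}\leq (A+B)/2$ then converts the resulting product into the mixed quantity $\|\xi\|_{\ell^1}^{1/2}\bigl(\sum_i(i+1)|\xi_i|\bigr)^{1/2}$.

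The main obstacle is precisely this diagonal block: both the trade between the $\ell^1$ and $\ell^2$ norms on $\eta$ and the $l$-summability of the tail contributions require the full strength of $\kappa\in\ell^2$, together with some bookkeeping to avoid a spurious factor of $\sqrt{d+1}$. The $(i+1)$-weight on $\xi$ is the precise cost of performing this asymmetric exchange.
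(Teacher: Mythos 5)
Your global reduction (martingale decomposition of $W_t=\sum_{i,j}\xi_i\eta_j(X_{t-i}X_{t-j}-\E X_{t-i}X_{t-j})$, Burkholder in $t$, then bounding $\sum_{l\ge0}\|\mathcal{P}_0(W_l)\|_p$) is sound, and your treatment of the block $i\le l<j$ is essentially the paper's argument for the case $i<j$, $l<j-i$: the factorization $\mathcal{P}_0(AB)=A\mathcal{P}_0(B)$ for $A\in\mathcal{F}_{-1}$, H\"older, Lemma \ref{lemma:PD-standart-est} on the $\eta$-sum, and $\sum_l\sum_{i\le l}|\xi_i|\delta^X_{2p}(l-i)\lesssim\|\xi\|_{\ell^1}$. (Note, incidentally, that the other mixed block $j\le l<i$ is \emph{not} symmetric to this one — there $\mathcal{P}_0$ lands on the $\eta$-side — and in the paper the corresponding configurations, together with the whole half $i\ge j$, are disposed of by a crude triangle/Cauchy--Schwarz bound which is precisely what generates the weight $\|\xi\|_{\ell^1}^{1/2}(\sum_i(i+1)|\xi_i|)^{1/2}$; your sketch does not address this block.)

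The genuine gap is in the diagonal block $i,j\le l$. The decomposition $X_{l-i}=\E(X_{l-i}\mid\mathcal{F}_{-1})+\widetilde X_{l-i}$ does not isolate a small remainder: $\|\widetilde X_{m}\|_{2p}=\|X_m-\E(X_m\mid\mathcal{F}_{-1})\|_{2p}$ is of the order $\|X_0\|_{2p}$ uniformly in $m$ (it is the ``innovative'' part of $X_m$, not a tail), so the pure term $\widetilde X_{l-i}\widetilde X_{l-j}$ admits no decay in $l$ and no decay in the gap $|i-j|$; neither the $l$-summation nor the $\ell^1\!\to\!\ell^2$ trade on $\eta$ can be carried out from the crude product bound, and Lemma \ref{lemma:Q} does not apply to $\widetilde X_m$ at all — that lemma concerns $Z_t-\E(Z_t\mid\mathcal{F}^t_{t-s+1})$, i.e.\ conditioning on a \emph{finite window of recent innovations}, whose essential feature is that $\E(Z_t\mid\mathcal{F}^t_{t-s+1})$ is independent of $\mathcal{F}_0$. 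What is actually needed (and what the paper does, in its analysis of $L_s=\sum_{l\ge s}\|\mathcal{P}_0(X_lX_{l-s})\|_p$) is a bound on $\|\mathcal{P}_0(X_aX_b)\|_p$, $a\ge b\ge0$, obtained by first replacing $X_a$ by $Q^a_{b+1}(X_a)=X_a-\E(X_a\mid\sigma(\varepsilon_{b+1},\dots,\varepsilon_a))$ — legitimate because the conditional expectation is independent of $\mathcal{F}_0$, so its contribution to $\mathcal{P}_0$ factors out and vanishes — and then stripping $X_b$ with $Q^b_1/H^b_1$. This yields $\|\mathcal{P}_0(X_aX_b)\|_p\lesssim\bigl(\sum_{m\ge a-b}\delta_{2p}^X(m)^2\bigr)^{1/2}\bigl(\sum_{h\ge b}\delta_{2p}^X(h)^2\bigr)^{1/2}+\bigl(\sum_{k\ge a}\delta_{2p}^X(k)^2\bigr)^{1/2}$, i.e.\ decay both in the age and in the gap; summing over $l$ and applying Cauchy--Schwarz over the gap is exactly where $\kappa\in\ell^2$ enters and where $\|\eta\|_{\ell^2}$ is produced. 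Your cross terms (one factor $\E(\cdot\mid\mathcal{F}_{-1})$) can be salvaged — the factor $\|\E(X_{l-i}\mid\mathcal{F}_{-1})\|_{2p}\lesssim(\sum_{k>l-i}\delta_{2p}^X(k)^2)^{1/2}$ supplies the $l$-summability — but this uses a smallness you have not identified, and the pure term, as proposed, fails; replacing the $\mathcal{F}_{-1}$-conditioning by the staggered finite-window decomposition is the missing idea.
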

A sufficient condition for $(\kappa_{s})_{s\in \mathbb{N}}\in \ell^{2}$ is $D_{2p}^{X}(\beta) < \infty$ for some $\beta>2$.
The process $X$ satisfies this condition by Assumption \ref{I2}.

\begin{proof}[Proof. (Theorem \ref{thm:irs-asymp-eff})]
We will show that 
  \begin{equation*}
    \mathcal{C} = \biggl\{\biggl|\frac{Q_{n}(\hat{k}_{n})}{L_{n}(k_{n}^{*})} - 1\biggr| \leq 8(k_{n}^{*})^{-\delta}\biggr\}
  \end{equation*}
  has a probability of at least $1-C(k_{n}^{*})^{-\gamma}$, for some $C,\gamma>0$, and $\delta>0$ from Theorem \ref{thm:main}.

  Let $\mathcal{A}$ be the event
  \begin{equation*}
    \mathcal{A} = \biggl\{\max_{1\leq k \leq K_{n}}\biggl|\frac{Q_{n}(k)}{L_{n}(k)}-1\biggr|\leq (k_{n}^{*})^{-\delta}/2 \biggr\},
  \end{equation*}
  with $\delta$ being taken from Theorem \ref{thm:main}.
  By Theorem \ref{thm:main}, $\mathbb{P}(\mathcal{A}^{c}) \lesssim(k_{n}^{*})^{-\gamma}$.
  Let $a = Q_{n}(\hat{k}_{n})/L_{n}(\hat{k}_{n})$ and $b = L_{n}(\hat{k}_{n})/L_{n}(k_{n}^{*})$. 
  We have
  \begin{equation}\label{eq:reduction-to-B}
    \begin{split}
      \biggl|\frac{Q_{n}(\hat{k}_{n})}{L_{n}(k_{n}^{*})} - 1\biggr| &= |ab -1| = |ab \pm a^{-1}b - a^{-1}a| \\
                                                                    & \leq |b| \bigl(|a-1| + |a^{-1}-1|\bigr)  + |a^{-1}| \bigl(|b-1| + |a-1|\bigr)\\
                                                                    &\leq |b-1| \bigl(|a-1| + |a^{-1}-1|\bigr)  + |a^{-1}-1| \bigl(|b-1| + |a-1|\bigr) \\
                                                                    &+ 2|a-1| + |a^{-1}-1| + |b-1|.
    \end{split}
  \end{equation}
  Recall that $|a -1| \leq \varepsilon$ implies 
  \begin{equation*}
    |a^{-1}-1| \leq \max \biggl(\frac{\varepsilon}{1+\varepsilon}, \frac{\varepsilon}{1-\varepsilon}\biggr) \leq 2 \varepsilon,
  \end{equation*}
  for every $\varepsilon\leq 1/2$. 
  On $\mathcal{A}$, we have
  \begin{equation*}
    |a-1| = \biggl|\frac{Q_{n}(\hat{k}_{n})}{L_{n}(\hat{k}_{n})} - 1\biggr|\leq \max_{1\leq k \leq K_{n}}\biggl|\frac{Q_{n}(k)}{L_{n}(k)} -1\biggr|\leq \frac{(k_{n}^{*})^{-\delta}}{2}\leq \frac{1}{2},
  \end{equation*}
  and hence
  \begin{equation*}
    |a^{-1}-1|\leq (k_{n}^{*})^{-\delta}.
  \end{equation*}
  Thus, by the last estimate and \eqref{eq:reduction-to-B}, we have $\mathcal{C}\subseteq \mathcal{A}\cap \mathcal{B}$, where
  \begin{equation*}
    \mathcal{B} = \biggl\{\biggl|\frac{L_{n}(\hat{k}_{n})}{L_{n}(k_{n}^{*})} -1\biggr| \leq (k_{n}^{*})^{-\delta} \biggr\},
  \end{equation*}
  and it is enough to show that $\mathbb{P}(\mathcal{B}^{c}) \leq C(k_{n}^{*})^{-\gamma}$, for some $C,\gamma>0$.
  Note that
  \begin{equation*}
    \frac{L_{n}(\hat{k}_{n})}{L_{n}(k_{n}^{*})}- 1 > (k_{n}^{*})^{-\delta} \iff 1 - \frac{L_{n}(k_{n}^{*})}{L_{n}(\hat{k}_{n})} > \frac{(k_{n}^{*})^{-\delta}}{1 + (k_{n}^{*})^{-\delta}}.
  \end{equation*}
  Since $S_{n}(k_{n}^{*}) - S_{n}(\hat{k}_{n}) \geq 0$ and $(k_{n}^{*})^{-\delta}/(1+(k_{n}^{*})^{-\delta})> (k_{n}^{*})^{-\delta}/2$, this implies 
  \begin{equation*}
    \frac{N\bigl(L_{n}(\hat{k}_{n}) - L_{n}(k_{n}^{*})\bigr) + S_{n}(k_{n}^{*}) - S_{n}(\hat{k}_{n})}{NL_{n}(\hat{k}_{n})}> \frac{(k_{n}^{*})^{-\delta}}{2},
  \end{equation*}
  and hence
  \begin{equation*}
    \mathcal{B}^{c}\subseteq \mathcal{D} \coloneqq \biggl\{\max_{1\leq k \leq K_{n}}\frac{N\bigl(L_{n}(k) - L_{n}(k_{n}^{*})\bigr) + S_{n}(k_{n}^{*})-S_{n}(k) }{NL_{n}(k)}> \frac{(k_{n}^{*})^{-\delta}}{2} \biggr\}.
  \end{equation*}
  In other words, we need to show that the dominant term in $S_n(k) - S_n(k_{n}^{*})$, relative to $NL_n(k)$, is $N(L_n(k) - L_n(k_{n}^{*}))$, uniformly in $1 \leq k \leq K_n$.
  Since 
    \begin{equation*}
    \begin{split}
        \sigma_k^2 - \sigma^2 &= \|a-a(k)\|_R^2, \quad \text{and} \\
        s_k^2 - \hat{\sigma}_k^2 & = \|\hat{a}(k) - a(k)\|_{\hat{R}(k)}^2,
    \end{split}
    \end{equation*}
    where $s_k^2 = N^{-1}\sum_{t=K_n+1}^n \pino{t}{k}^2$, Shibata's criterion $S_n(k) = (N+2k)\hat{\sigma}_k^2$ may be rewritten as 
    \begin{equation*}%\label{eq:shibata-crit}%Ref nicht verwendet
    S_n(k) = NL_n(k) + 2k(\hat{\sigma}_k^2 - \sigma^2) + k\sigma^2 - N\|\hat{a}(k) - a(k)\|_{\hat{R}(k)}^2 + N(s_k^2 - \sigma_k^2) + N\sigma^2.
    \end{equation*}
   
    Note that the term $N\sigma^2$ cancels in $S_n(k_n^*) - S_n(k)$.
    Hence, we may write
    \begin{equation*}
     N\bigl(L_{n}(k) - L_{n}(k_{n}^{*})\bigr) + S_{n}(k_{n}^{*})-S_{n}(k) = I_{1}(k) + I_{2}(k) + I_{3}(k),
    \end{equation*}
    where
    \begin{equation*}
      \begin{split}
        I_{1}(k) &= k_{n}^{*}\sigma^{2} - N\|\hat{a}(k_{n}^{*})-a(k_{n}^{*})\|_{\hat{R}(k_{n}^{*})}^{2}
 - k\sigma^{2} + N\|\hat{a}(k)-a(k)\|_{\hat{R}(k)}^{2}, \\
        I_{2}(k) & = N(s_{k_{n}^{*}}^{2}-\sigma_{k_{n}^{*}}^{2}) -N(s_{k}^{2}-\sigma_{k}^{2}), \quad \text{and}\\
        I_{3}(k) & = 2k_{n}^{*}(\hat{\sigma}_{k_{n}^{*}}^{2}-\sigma^{2}) - 2k(\hat{\sigma}_{k}^{2}-\sigma^{2}).
      \end{split}
    \end{equation*}
    Now, $\mathcal{D} \subseteq \mathcal{D}_{1} \cup \mathcal{D}_{2} \cup \mathcal{D}_{3}$, where
    \begin{equation*}
      \mathcal{D}_{i} = \biggl\{\max_{1\leq k\leq K_{n}}\frac{I_{i}(k)}{NL_{n}(k)} > \frac{(k_{n}^{*})^{-\delta}}{6} \biggr\}.
    \end{equation*}
    We are going to show that $\mathbb{P}(\mathcal{D}_{i}) \leq C_{i}(k_{n}^{*})^{-\gamma_{i}}$ for some $C_{i},\gamma_{i}>0$ and $i=1,2,3$. Then we have
    \begin{equation*}
      \mathbb{P}(\mathcal{B}) \leq  \mathbb{P}(\mathcal{D}) \leq (C_{1}+C_{2}+C_{3}) (k_{n}^{*})^{-\min(\gamma_{i})},
    \end{equation*}
    finishing the proof.

    Starting with $\mathcal{D}_{1}$, we first observe that
    \begin{equation*}
      \begin{split}
        \max_{1\leq k\leq K_{n}}\frac{\bigl|k_{n}^{*}\sigma^{2} - N\|\hat{a}(k_{n}^{*})-a(k_{n}^{*})\|_{\hat{R}(k_{n}^{*})}^{2}\bigr|}{NL_{n}(k)} &\leq \frac{\bigl|k_{n}^{*}\sigma^{2} - N\|\hat{a}(k_{n}^{*})-a(k_{n}^{*})\|_{\hat{R}(k_{n}^{*})}^{2}\bigr|}{NL_{n}(k_{n}^{*})}\\
                                                                                                                                        & \leq \max_{1\leq k\leq K_{n}}\frac{\bigl|k\sigma^{2} - N\|\hat{a}(k)-a(k)\|_{\hat{R}(k)}^{2}\bigr|}{NL_{n}(k)},
      \end{split}
    \end{equation*}
    and hence
    \begin{equation*}
      \max_{1\leq k \leq K_{n}}\frac{I_{1}(k)}{NL_{n}(k)}\leq 2 \max_{1\leq k\leq K_{n}}\frac{|k\sigma^{2} - N\|\hat{a}(k)-a(k)\|_{\hat{R}(k)}^{2}|}{NL_{n}(k)}.
    \end{equation*}

    We note that  
    \begin{equation*}
    \begin{split}
        \|\hat{a}(k) - a(k)\|_{\hat{R}(k)}^2  &= \langle \hat{a}(k) - a(k) , \hat{R}(k)(\hat{a}(k) - a(k))\rangle \\
        & = \|\hat{a}(k) - a(k)\|_R^2 + \langle \hat{a}(k) - a(k) , (\hat{R}(k) - R(k))(\hat{a}(k) - a(k))\rangle.
        %&=\langle \hat{a}(k) - a(k), R(k)(\hat{a}(k) - a(k))\rangle \\
    \end{split}
    \end{equation*}
    Since the Euclidean norm $\|\cdot\|_{\ell^2}$ and $\|\cdot\|_R$ are equivalent, we may estimate $|k\sigma^2 - N\|\hat{a}(k) - a(k)\|_{\hat{R}(k)}^2|$ by 
    \begin{equation*}
    \begin{split}
        |k\sigma^2 - N\|\hat{a}(k) - a(k)\|_{\hat{R}(k)}^2|  &\leq |k\sigma^2-  N\|\hat{a}(k) - a(k)\|_R^2|\\
        &+ N\Lambda_1^{-1}\|\hat{R}(K_n) - R(K_n)\|\|\hat{a}(k) - a(k)\|_R^2,
    \end{split}
    \end{equation*}

    where $\Lambda_1$ is the uniform lower bound on the eigenvalues of $R(k)$ implied by Assumption \ref{G2} (see the beginning of Section \ref{sec:prelim}). 
    Thus, $\mathcal{D}_{1}\subseteq \mathcal{D}_{1}' \cup \mathcal{D}_{1}''$, where
    \begin{equation*}
      \begin{split}
        \mathcal{D}_{1}' &= \biggl\{ \max_{1\leq k\leq K_{n}} \frac{|k\sigma^2-  N\|\hat{a}(k) - a(k)\|_R^2|}{NL_{n}(k)}> \frac{(k_{n}^{*})^{-\delta}}{24}\biggr\}, \quad \text{and} \\
        \mathcal{D}_{1}'' & = \biggl\{\max_{1\leq k\leq K_{n}} \frac{\|\hat{R}(K_n) - R(K_n)\|\|\hat{a}(k) - a(k)\|_R^2}{L_{n}(k)}> \Lambda_{1}\frac{(k_{n}^{*})^{-\delta}}{24} \biggr\}.
      \end{split}
    \end{equation*}
    By Theorem \ref{thm:main} (cf. Equation \eqref{eq:thm-main-intro}), we have $\mathbb{P}(\mathcal{D}_{1}') \leq C(k_{n}^{*})^{-\gamma}$.
    On the complement of $\mathcal{D}_{1}'$, we have
    \begin{equation*}
      \|\hat{R}(K_{n}) - R(K_{n})\| \max_{1\leq k\leq K_{n}}\frac{\|\hat{a}(k)- a(k)}{L_{n}(k)} \leq 2\|\hat{R}(K_{n}) - R(K_{n})\|,
    \end{equation*}
    and hence $\mathcal{D}_{1}'' \cap (\mathcal{D}_{1}')^{c}$ is contained in the set
    \begin{equation*}
      \mathcal{D}_{1}''' = \biggl\{\|\hat{R}(K_{n}) - R(K_{n})\| >\Lambda_{1}\frac{(k_{n}^{*})^{-\delta}}{48} \biggr\}.
    \end{equation*}
    By part \ref{mat-norm1} of Lemma \ref{lemma:mat-norm} and Markov's inequality, we have
    \begin{equation*}
      \mathbb{P}(\mathcal{D}_{1}''') \lesssim (k_{n}^{*})^{\delta p}K_{n}^{p}/N^{p/2}.
    \end{equation*}
    Note that $\delta < \kappa/8$ from the assumptions of Theorem \ref{thm:main}. For any $\delta < \kappa/4$, we can estimate the right-hand side of the last display by 
    \begin{equation*}
      (k_{n}^{*})^{\delta p}K_{n}^{p}/N^{p/2} \lesssim (k_{n}^{*})^{-\frac{\kappa p}{4}},
    \end{equation*}
    since $K_{n}^{1+\kappa/2}/\sqrt{N}$ is bounded by Assumption \ref{G3}.
    In total,
    \begin{equation*}
      \mathbb{P}(\mathcal{D}_{1}'') \leq \mathbb{P}(\mathcal{D}_{1}''\cap \mathcal{D}_{1}') + \mathbb{P}(\mathcal{D}_{1}'' \cap (\mathcal{D}_{1}')^{c}) \lesssim (k_{n}^{*})^{-\gamma_{0}},
    \end{equation*}
    where $\gamma_{0} = \min(\gamma, \kappa p/4)$.

    For $\mathcal{D}_{2}$, Lemma \ref{lemma:irs:sk} implies $\mathbb{P}(\mathcal{D}_{2}) \leq C_{2}(k_{n}^{*})^{1+\delta q/2 - q/4}$, for every $\delta < 1/2 - 2/q$.
Moving on to $\mathcal{D}_{3}$, we first note that
    \begin{equation*}
        \hat{\sigma}_k^2 - \sigma^2 = \hat{\sigma}_k^2- s_k^2 + s_k^2 - \sigma_k^2  + \sigma_k^2 - \sigma^2.
    \end{equation*}
    The last difference $\sigma_{k}^{2} - \sigma^{2} = \|a(k)-a\|_{R(k)}^{2}$ is deterministic, and non-negative. 
    Hence, we can estimate $\max_{1\leq k \leq K_{n}} I_{3}(k)/(NL_{n}(k))$ by $W_{1}+W_{2}+W_{3}$, with
    \begin{equation*}
      \begin{split}
        W_{1} &= 4\max_{1\leq k \leq K_{n}}\frac{\hat{\sigma}_{k}^{2} - s_{k}^{2}}{\frac{N}{k}L_{n}(k)}, \\
        W_{2} &= \max_{1\leq k \leq K_{n}}\frac{2k_{n}^{*}(s_{k_{n}^{*}}^{2}-\sigma_{k_{n}^{*}}^{2}) - 2k(s_{k}^{2}-\sigma_{k}^{2})}{NL_{n}(k)}, \quad \text{and}\\ 
        W_{3} & = \|a(k_{n}^{*})-a\|_{R(k_{n}^{*})}^{2}.
      \end{split}
    \end{equation*}
     By Assumption \ref{G2}, we have $\|\cdot\|_{R(k)} \leq \Lambda_{2}\|\cdot\|_{\ell^{2}}$, where $\Lambda_{2}$ is an upper bound on the eigenvalues of $R(k)$ (uniformly in $k$).
     By Lemma \ref{lemma:basic-props-AR(oo)} we have
     \begin{equation*}
       W_{3} = \|a(k_{n}^{*})-a\|_{R(k_{n}^{*})}^{2} \leq \Lambda_{2}^{2} \|a(k_{n}^{*})-a\|_{\ell^{1}}^{2} \leq M \Lambda_{2}^{2}(k_{n}^{*})^{-5}.
     \end{equation*}
     In other words, $\mathcal{D}_{3} \subseteq \mathcal{D}_{3}^{(1)} \cup \mathcal{D}_{3}^{(2)}$, with
     \begin{equation*}
       \begin{split}
         \mathcal{D}_{3}^{(i)} = \biggl\{W_{i} \geq \frac{(k_{n}^{*})^{-\delta}}{2}\biggl(\frac{1}{6}-M \Lambda_{2}^{2}(k_{n}^{*})^{-5+\delta}\biggr) \biggr\}. 
       \end{split}
     \end{equation*}
     There is a $n_{0}$, such that for every $n\geq n_{0}$,
     \begin{equation*}
       \frac{1}{6}-M \Lambda_{2}^{2}(k_{n}^{*})^{-5+\delta} \geq \frac{1}{12}.
     \end{equation*}
     Hence, for $n\geq n_{0}$ and $i=1,2$, $\mathcal{D}_{3}^{(i)}\subseteq \tilde{\mathcal{D}}_{3}^{(i)} = \{W_{i}\geq (k_{n}^{*})^{-\delta}/24\}$.
     In the case where $n\geq n_{0}$, we are going to show that $\mathbb{P}(\tilde{\mathcal{D}}_{3}^{(i)}) \leq \tilde{C}_{i}(k_{n}^{*})^{-\gamma_{i}}$.
     Then we simply increase the constant $\tilde{C}_{i}$ (say, to $C_{i}$) so that $C_{i}(k_{n}^{*})^{-\gamma_{i}}\geq 1$, for all $n=1,\dots,n_{0}-1$, which yields
     \begin{equation*}
       \mathbb{P}(\mathcal{D}_{3}^{(i)}) \leq  \mathbb{P}(\tilde{\mathcal{D}}_{3}^{(i)})\leq C_{i}(k_{n}^{*})^{-\gamma_{i}},
     \end{equation*}
     for all $n\in \mathbb{N}$, and hence 
     \begin{equation*}
       \mathbb{P}(\mathcal{D}_{3}) \leq \mathbb{P}(\mathcal{D}_{3}^{(1)}) + \mathbb{P}(\mathcal{D}_{3}^{(2)}) \leq (C_{1}+C_{2})(k_{n}^{*})^{-\min(\gamma_{1},\gamma_{2})},
     \end{equation*}
     finishing the proof. 

     Starting with $\tilde{\mathcal{D}}_{3}^{(1)}$, we note that 
     \begin{equation*}
     \begin{split}
       \max_{1\leq k \leq K_{n}}\frac{|\hat{\sigma}_{k}^{2} - s_{k}^{2}|}{\frac{N}{k}L_{n}(k)} &= \frac{K_{n}}{N}\max_{1\leq k \leq K_{n}} \frac{N\|\hat{a}(k)-a(k)\|_{\hat{R}(k)}^{2}}{NL_{n}(k)}\\
                                                                                               &\leq \sigma^{2} \frac{K_{n}^{2}}{N} + \frac{K_{n}}{N}\max_{1\leq k\leq K_{n}}\frac{|k \sigma^{2} - N\|\hat{a}(k) - a(k)\|_{\hat{R}(k)}^{2}}{NL_{n}(k)}.
     \end{split}  
     \end{equation*}
     This implies $\tilde{\mathcal{D}}_{3}^{(1)}\subseteq \mathcal{E}_{1}$, where
     \begin{equation*}
       \mathcal{E}_{1} = \biggl\{\sigma^{2}\frac{K_{n}^{2}}{N} + \frac{K_{n}}{N}A_{n}>  \frac{(k_{n}^{*})^{-\delta}}{24}\biggr\},
     \end{equation*}
     and 
     \begin{equation*}
       A_{n} = \max_{1\leq k\leq K_{n}}\frac{|k \sigma^{2} - N\|\hat{a}(k) - a(k)\|_{\hat{R}(k)}^{2}}{NL_{n}(k)}.
     \end{equation*}
     Note that $\mathcal{E}_{1}\subseteq \{A_{n}> (k_{n}^{*})^{-\delta}\}$ for large enough $n$: On $\mathcal{E}_{1} \cap \{A_{n}\leq (k_{n}^{*})^{-\delta}\}$, we have 
    \begin{equation*}
      \frac{(k_{n}^{*})^{-\delta}}{24} \leq \frac{K_{n}}{N}(k_{n}^{*})^{-\delta} + \sigma^{2} \frac{K_{n}^{2}}{N},
    \end{equation*} 
    which in turn implies
    \begin{equation}\label{eq:thm8-D3-1-reduction}
    \frac{(k_{n}^{*})^{-\delta}}{24} \leq \frac{K_{n}}{N}(k_{n}^{*})^{-\delta} + \sigma^{2} M_{0}(k_{n}^{*})^{-\kappa}
  \end{equation}
  for some constant $M_{0}$, as $K_{n}^{2+\kappa}/N$ is bounded.
  Since $\delta< \kappa/8$, the right-hand side of \eqref{eq:thm8-D3-1-reduction} is eventually smaller than the left-hand side.
  This implies $\mathcal{E}_{1}\cap \{A_{n} \leq (k_{n}^{*})^{-\delta}\} = \varnothing$ for large enough $n$, and hence $\mathcal{E}_{1}\subseteq \{A_{n}> (k_{n}^{*})^{-\delta}\}$ eventually. 
Similarly to $\mathcal{D}_{1}$, there is a constant $C$ such that $\mathbb{P}(A_{n}>(k_{n}^{*})^{-\delta}) \leq C(k_{n}^{*})^{-\gamma}$.
By adjusting constants, we get that there is a (potentially different) constant $C>0$, such that for all $n\in \mathbb{N}$
\begin{equation*}
  \mathbb{P}(\tilde{\mathcal{D}}_{3}^{(1)}) \leq \mathbb{P}(\mathcal{E}_{1}) \leq C(k_{n}^{*})^{-\gamma},
\end{equation*}
where $\gamma>0$ is from Theorem \ref{thm:main}.
     
Next, we deal with $\mathcal{D}_{3}^{(2)}$.
We note that
\begin{equation*}
  W_{2} \leq 2 \max_{1\leq k \leq K_{n}}\frac{s_{k_{n}^{*}}^{2} - \sigma_{k_{n}^{*}}^{2} - s_{k}^{2} + \sigma_{k}^{2}}{\frac{N}{k}L_{n}(k)} \leq \frac{2K_{n}}{N}\max_{1\leq k\leq K_{n}}\frac{s_{k_{n}^{*}}^{2} - \sigma_{k_{n}^{*}}^{2} - s_{k}^{2} + \sigma_{k}^{2}}{L_{n}(k)}.
\end{equation*}
By Lemma \ref{lemma:irs:sk} and a similar argument (cf. \eqref{eq:thm8-D3-1-reduction} and the surrounding text), $\mathcal{D}_{3}^{(2)}$ is eventually a subset of 
\begin{equation*}
  \biggl\{ \max_{1\leq k\leq K_{n}}\frac{s_{k_{n}^{*}}^{2} - \sigma_{k_{n}^{*}}^{2} - s_{k}^{2} + \sigma_{k}^{2}}{L_{n}(k)} > (k_{n}^{*})^{-\delta}\biggr\}.
\end{equation*}
Again, Lemma \ref{lemma:irs:sk} implies
\begin{equation*}
  \mathbb{P}(\mathcal{D}_{3}^{(2)}) \leq \mathbb{P}\biggl(\max_{1\leq k\leq K_{n}}\frac{s_{k_{n}^{*}}^{2} - \sigma_{k_{n}^{*}}^{2} - s_{k}^{2} + \sigma_{k}^{2}}{L_{n}(k)} > (k_{n}^{*})^{-\delta}\biggr) \leq C(k_{n}^{*})^{1+\delta q/2 - q/4}
\end{equation*}
for some $C>0$, finishing the proof.
\end{proof}

    \begin{proof}[Proof. (Theorem \ref{thm:irs-stable})]
      As in Theorem \ref{thm:irs-asymp-eff}, we would like to establish that the set
      \begin{equation*}
        \mathcal{A} = \biggl\{\biggl| \frac{Q_{n}(\hat{k}_{n}(\rho))}{L_{n}(k_{n}^{*})} - 1\biggr| \leq 8(k_{n}^{*})^{-\delta} \biggr\}
      \end{equation*}
      has a probability of at least $1-C(k_{n}^{*})^{-\gamma}$ for some $C,\delta,\gamma>0$. Here, $\delta \in (0,\delta_{0})$ is a positive number satisfying the restrictions of Theorems \ref{thm:main} and \ref{thm:irs-asymp-eff}. 
      The constants $C,C_{i},\gamma, \gamma_{i}>0$ are generic and may vary from appearance to appearance. By the same argument used in the proof of Theorem \ref{thm:irs-stable}, we have $\mathcal{A}\subseteq \mathcal{B}\cap \mathcal{C}$, where
      \begin{equation*}
        \begin{split}
          \mathcal{B} &= \biggl\{\biggl|\frac{Q_{n}(k)}{L_{n}(k)}-1\biggr|\leq  \frac{(k_{n}^{*})^{-\delta}}{2} \biggr\}, \quad \text{and}\\
          \mathcal{C} & = \biggl\{\biggl|\frac{L_{n}(\hat{k}_{n}(\rho))}{L_{n}(k_{n}^{*})} -1\biggr|\leq (k_{n}^{*})^{-\delta}\biggr\}.
        \end{split}
      \end{equation*}
      By Theorem \ref{thm:main}, we have $\mathbb{P}(\mathcal{B}^{c}) \leq C(k_{n}^{*})^{-\gamma}$, for some $C,\gamma>0$.
      Hence, it is enough to show that $\mathbb{P}(\mathcal{C}^{c}) \leq C(k_{n}^{*})^{-\gamma}$.
      As in the proof of Theorem \ref{thm:irs-asymp-eff}, we have
      \begin{equation*}
        \frac{L_{n}(\hat{k}_{n}(\rho))}{L_{n}(k_{n}^{*})} -1 > (k_{n}^{*})^{-\delta} \implies 1 - \frac{L_{n}(k_{n}^{*})}{L_{n}(\hat{k}_{n}(\rho))} > \frac{(k_{n}^{*})^{-\delta}}{1+(k_{n}^{*})^{-\delta}}> \frac{(k_{n}^{*})^{-\delta}}{2},
      \end{equation*}
      and since $S_{n}^{\rho}(k_{n}^{*}) - S_{n}^{\rho}(\hat{k}_{n}(\rho)) \geq 0$, we have $\mathcal{C}^{c}\subseteq \mathcal{E}\cup \mathcal{F}$, where
      \begin{equation*}
        \begin{split}
          \mathcal{E} & = \biggl\{\max_{1\leq k\leq K_{n}}\frac{N(L_{n}(k)-L_{n}(k_{n}^{*})) + S_{n}(k_{n}^{*}) - S_{n}(k)}{NL_{n}(k)}> \frac{(k_{n}^{*})^{-\delta}}{4} \biggr\}, \quad \text{and}\\
          \mathcal{F} & = \biggl\{\max_{1\leq k\leq K_{n}} \frac{S_{n}^{\rho}(k_{n}^{*}) - S_{n}^{\rho}(k) + S_{n}(k_{n}^{*}) - S_{n}(k)}{NL_{n}(k)}> \frac{(k_{n}^{*})^{-\delta}}{4} \biggr\}.
        \end{split}
      \end{equation*}
    In the proof of Theorem \ref{thm:irs-asymp-eff}, we have already seen that $\mathbb{P}(\mathcal{E}) \leq C(k_{n}^{*})^{-\delta}$ for some $C,\gamma>0$, and hence, we only have to deal with $\mathcal{F}$. Since 
    \begin{equation*}
      \begin{split}
        \sigma_{k}^{2} - \sigma^{2} &= \|a-a(k)\|_{R}^{2},\\
        s_{k}^{2} - \hat{\sigma}_{k}^{2} &= \|\hat{a}(k) -a(k)\|_{\hat{R}(k)}^{2}, \quad \text{and}\\
        L_{n}(k) &= \sigma^{2} \frac{k}{N} + \|a-a(k)\|_{R}^{2} = \sigma^{2} \frac{k}{N} + \sigma_{k}^{2} - \sigma^{2},
      \end{split}
    \end{equation*}
    we have
    \begin{equation*}
      \begin{split}
        S_{n}^{\rho}(k) & = S_{n}(k) + \rho_{n}(k)\sigma^{2}\biggl(1 - \frac{2k}{N}\biggr)+ \rho_{n}(k) L_{n}(k)\\
                        & + \rho_{n}(k) \frac{k \sigma^{2} - N\|\hat{a}(k)-a(k)\|_{\hat{R}(k)}^{2}}{N}\\
                        & + \rho_{n}(k)(s_{k}^{2}-\sigma_{k}^{2}).
      \end{split}
    \end{equation*}
    Note that this is just Equation (4.8) in \cite{shibata}, with a typo corrected.
    In total, we can now estimate
    \begin{equation*}
      \max_{1\leq k\leq K_{n}} \frac{|S_{n}^{\rho}(k_{n}^{*}) - S_{n}^{\rho}(k) + S_{n}(k_{n}^{*}) - S_{n}(k)|}{NL_{n}(k)} \leq I_{1} + I_{2}+ I_{3} + I_{4},
    \end{equation*}
    where
    \begin{equation*}
      \begin{split}
        I_{1} & = \sigma^{2}\max_{1 \leq k \leq K_{n}}\frac{|\rho_{n}(k) - \rho_{n}(k_{n}^{*})|}{NL_{n}(k)} + \frac{4}{N}, \\
        I_{2} & =2 \max_{1\leq k\leq K_{n}}\frac{|\rho_{n}(k)|}{N}, \\
        I_{3} & = 2\max_{1\leq k\leq K_{n}}\frac{|\rho_{n}(k)|}{N} \max_{1\leq k \leq K_{n}}\frac{|\sigma^{2}k - N\|\hat{a}(k)-a(k)\|_{\hat{R}(k)}^{2}|}{NL_{n}(k)}, \quad \text{and}\\
        I_{4} & = \max_{1\leq k\leq K_{n}}\frac{|\rho_{n}(k_{n}^{*})(s_{k_{n}^{*}}^{2} - \sigma^{2}_{k_{n}^{*}}) - \rho_{n}(k)(s_{k}^{2} - \sigma_{k}^{2})|}{NL_{n}(k)}.
      \end{split}
    \end{equation*}
    Hence, we can estimate $\mathbb{P}(\mathcal{F}) \leq \sum_{k=1}^{4}\mathbb{P}(\mathcal{F}_{i})$ with $\mathcal{F}_{i} = \{I_{i}>(k_{n}^{*})^{-\delta}/16\}$, and it is enough to show $\mathbb{P}(\mathcal{F}_{i}) \leq C_{i}(k_{n}^{*})^{\gamma_{i}}$ for some $C_{i},\gamma_{i}>0$.

    Starting with $\mathcal{F}_{1}$, we note there is an $n_{0}$ such that for every $n\geq n_{0}$, we have $4/N\leq (k_{n}^{*})^{-\delta}/32$, and hence $\mathcal{F}_{1}$ is contained in 
    \begin{equation*}
      \mathcal{F}_{1}' = \biggl\{ \max_{1 \leq k \leq K_{n}}\frac{|\rho_{n}(k) - \rho_{n}(k_{n}^{*})|}{NL_{n}(k)} > \frac{(k_{n}^{*})^{-\delta}}{32 \sigma^{2}}\biggr\}
    \end{equation*}
    for $n\geq n_{0}$. By assumption, there are $C,\gamma>0$, such that $\mathbb{P}(\mathcal{F}_{1}') \leq C(k_{n}^{*})^{-\gamma}$, and by adjusting constants, we get $\mathbb{P}(\mathcal{F}_{1}) \leq C'(k_{n}^{*})^{-\gamma}$ for some $C'>0$ and all $n\geq 1$.

    Our assumptions directly imply $\mathbb{P}(\mathcal{F}_{2}) \leq C(k_{n}^{*})^{-\gamma}$, and we move on to $\mathcal{F}_{3}$.
    Note that $\mathcal{F}_{3}\subseteq \mathcal{F}_{3}' \cup \mathcal{F}_{3}''$, where 
    \begin{equation*}
      \begin{split}
        \mathcal{F}_{3}' &=  \biggl\{\max_{1\leq k\leq K_{n}}\frac{|\rho_{n}(k)|}{N} > \frac{(k_{n}^{*})^{-\frac{\delta}{2}}}{\sqrt{8}}\biggr\}, \quad \text{and}\\
        \mathcal{F}_{3}'' & = \biggl\{\max_{1\leq k \leq K_{n}}\frac{\bigl|\sigma^{2}k - N\|\hat{a}(k)-a(k)\|_{\hat{R}(k)}^{2}\bigr|}{NL_{n}(k)}> \frac{(k_{n}^{*})^{-\frac{\delta}{2}}}{\sqrt{8}}\biggr\}.
      \end{split}
    \end{equation*}
    In the proof of Theorem \ref{thm:irs-asymp-eff}, we have already seen that $\mathbb{P}(\mathcal{F}_{3}'') \leq C(k_{n}^{*})^{-\gamma}$ for some $C,\gamma>0$, and the assumptions of Theorem \ref{thm:irs-stable} imply $\mathbb{P}(\mathcal{F}_{3}') \leq C(k_{n}^{*})^{-\gamma}$.

    For $\mathcal{F}_{4}$, we insert $\rho_{n}(k)(s_{k_{n}^{*}}^{2} - \sigma_{k_{n}^{*}}^{2})$ into $I_{4}$, and estimate $I_{4}$ by $I_{4}^{1} + I_{4}^{2}$, with
    \begin{equation*}
      \begin{split}
        I_{4}^{1} &= |s_{k_{n}^{*}}^{2}-\sigma_{k_{n}^{*}}^{2}| \max_{1\leq k\leq K_{n}}\frac{|\rho_{n}(k)-\rho_{n}(k_{n}^{*})|}{NL_{n}(k)},\quad \text{and} \\
        I_{4}^{2} & = \max_{1\leq k\leq K_{n}}\frac{|\rho_{n}(k)|}{N}\max_{1\leq k\leq K_{n}} \frac{|s_{k_{n}^{*}}^{2} - \sigma_{k_{n}^{*}}^{2} - s_{k}^{2} - \sigma_{k}^{2}|}{L_{n}(k)}.
      \end{split}
    \end{equation*}
    Again, $\mathcal{F}_{4} \subseteq \mathcal{F}_{4}^{1}\cup \mathcal{F}_{4}^{2}$ with $\mathcal{F}_{4}^{i} = \{I_{4}^{i}> (k_{n}^{*})^{-\delta}/32\}$, and we show $\mathbb{P}(\mathcal{F}_{4}^{i}) \leq C_{i}(k_{n}^{*})^{-\gamma_{i}}$ for some $C_{i},\gamma_{i}>0$.
    The event $\mathcal{F}_{4}^{2}$ can be dealt with much in the same way as $\mathcal{F}_{3}$, that is, $\{I_{4}^{2} > (k_{n}^{*})^{-\delta}/32\}$ is a subset of 
    \begin{equation*}
      \biggl\{\max_{1\leq k\leq K_{n}}\frac{|\rho_{n}(k)|}{N}> \frac{(k_{n}^{*})^{-\frac{\delta}{2}}}{\sqrt{32}}\biggr\} \cup \biggl\{\max_{1\leq k\leq K_{n}} \frac{|s_{k_{n}^{*}}^{2} - \sigma_{k_{n}^{*}}^{2} - s_{k}^{2} - \sigma_{k}^{2}|}{L_{n}(k)} >\frac{(k_{n}^{*})^{-\frac{\delta}{2}}}{\sqrt{32}}\biggr\}.
    \end{equation*}
    By Lemma \ref{lemma:irs:sk} and the assumptions of Theorem \ref{thm:irs-stable}, the probability of both of these sets is $\leq C(k_{n}^{*})^{-\gamma}$ for some $C,\gamma>0$, and the same is true for $\mathcal{F}_{4}^{2}$ by the union bound.

    Moving on to $\mathcal{F}_{4}^{1}$, we note that a rough estimate yields 
    \begin{equation*}
      \begin{split}
        |s_{k_{n}^{*}}^{2}-\sigma_{k_{n}^{*}}^{2}|& = \|\hat{a}(k_{n}^{*})-a(k_{n}^{*})\|_{\hat{R}(k_{n}^{*})}^{2} \\
                                                  &\leq L_{n}(k_{n}^{*}) \max_{1\leq k \leq K_{n}}\frac{|\sigma^{2}k - N\|\hat{a}(k)-a(k)\|_{R(k)}^{2}|}{NL_{n}(k)} + \sigma^{2}\frac{k_{n}^{*}}{N}\\
                                                  &\leq \sigma^{2}\biggl( \max_{1\leq k \leq K_{n}}\frac{|\sigma^{2}k - N\|\hat{a}(k)-a(k)\|_{R(k)}^{2}|}{NL_{n}(k)} + \frac{1}{k_{n}^{*}} \biggr).
      \end{split}
    \end{equation*}
    This means that $\mathcal{F}_{4}^{1}\subseteq \mathcal{G}_{1}\cup \mathcal{G}_{2}$, where 
    \begin{equation*}
      \begin{split}
        \mathcal{G}_{1} & = \biggl\{ \max_{1\leq k\leq K_{n}}\frac{\sigma^{2}k - N\|\hat{a}(k)-a(k)\|_{\hat{R}(k)}^{2}|}{NL_{n}(k)}\max_{1\leq k\leq K_{n}}\frac{|\rho_{n}(k) - \rho_{n}(k_{n}^{*})}{NL_{n}(k)} > \frac{(k_{n}^{*})^{-\delta}}{64 \sigma^{2}}\biggr\}, \quad \text{and} \\
        \mathcal{G}_{2} & = \biggl\{ \max_{1\leq k\leq K_{n}}\frac{|\rho_{n}(k) - \rho_{n}(k_{n}^{*})}{NL_{n}(k)} > \frac{(k_{n}^{*})^{-\delta}}{64 \sigma^{2}}\biggr\}.
      \end{split}
    \end{equation*}
    By a similar argument as was used for $\mathcal{F}_{3}$ or $\mathcal{F}_{4}^{2}$, we have $\mathbb{P}(\mathcal{G}_{1}) \leq C(k_{n}^{*})^{-\gamma}$, and by the assumptions of Theorem \ref{thm:irs-stable}, we have $\mathbb{P}(\mathcal{G}_{2}) \leq C(k_{n}^{*})^{-\gamma}$ for some $C,\gamma >0$, finishing the proof. 
  \end{proof}

\subsection{Proof of Theorem \ref{thm:normal:distribution}}\label{sec:proof:normal}

We follow the classical approach given in \cite{davis:brockwell:book}, where the proof is split into two parts. We first consider a \textit{regression-type} estimator $a^{*}(k)$, and show the asymptotic normality of $\sqrt{n}(a^{*}(k) - a(k))$ (Lemma \ref{lemma:thm-an-1}). 
Then we show that the difference between the Yule-Walker estimator $\hat{a}(k)$ and $a^{*}(k)$ is asymptotically negligible, i.e., $\sqrt{n}(\hat{a}(k) - a^{*}(k)) \overset{\mathbb{P}}{\longrightarrow} 0$ (Lemma \ref{lemma:thm-an-2}).
To be more precise, let $X = (X_{i-j})_{i,j=0}^{n-1,k-1}$ and $a^{*}(k) = (X^{T}X)^{-1}X^{T}X_{n+1}(k)$. 
With arguments almost identical to those used in the proof of Lemma \ref{lemma:mat-norm}, we get for $1 \leq p \leq q/2$ ($q$ being taken from Assumption \ref{I1})
\begin{equation*}%\label{eq:thm-an-cov}%Ref nicht verwendet
  \begin{split}
    \frac{1}{n}X^{T}X &\inLplong{p} R(k), \\
    \frac{1}{n}X^{T}X_{n+1} & \inLplong{p} \gamma(k), \quad \text{and}\\
    n(X^{T}X)^{-1} &\inPlong R(k)^{-1}.
  \end{split}
\end{equation*}

\begin{lemma}\label{lemma:thm-an-1} Given Assumptions \ref{G1}, \ref{G2}, \ref{I1}, and \ref{I2}, we have
  \begin{equation*}
    \sqrt{n}(a^{*}(k) - a(k)) \weakly \mathcal{N}(0,\Sigma(k)),
  \end{equation*}
  where $\Sigma(k) = \sum_{h\in \mathbb{Z}} R(k)^{-1} \mathbb{E}(e_{0}e_{h}X_{0}(k)X_{h}(k)^{T})R(k)^{-1}$.
\end{lemma}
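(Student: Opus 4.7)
The plan is to follow the standard two-step strategy for this kind of $M$-estimator: first reduce via Slutsky to a CLT for a stationary sum, then apply a CLT for physically dependent sequences via the Cramér--Wold device, and finally identify the limiting covariance with the stated $\Sigma(k)$.

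For the first step, I would use the Yule--Walker identity $X_{t+1} = -a(k)^{T}X_t(k) + \pino{t+1}{k}$ from \eqref{eq:fit-k} inside the defining equation of $a^*(k)$. After the usual cancellation this yields
\[
\sqrt{n}\bigl(a^*(k) - a(k)\bigr) \;=\; -\bigl(n^{-1}X^{T}X\bigr)^{-1}\cdot n^{-1/2}\sum_{t}X_t(k)\,\pino{t+1}{k}.
\]
The $L^{p}$-convergence $n^{-1}X^{T}X \inLplong{p} R(k)$ recalled just before the lemma, combined with the continuous mapping theorem, gives $(n^{-1}X^{T}X)^{-1}\inPlong R(k)^{-1}$. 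By Slutsky's theorem it therefore suffices to establish a CLT for the random vector $V_n \coloneqq n^{-1/2}\sum_{t}X_t(k)\pino{t+1}{k}$.

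For the vector-valued CLT, I would invoke the Cramér--Wold device: for arbitrary $\lambda\in\mathbb{R}^k$, consider the scalar process $Z_t \coloneqq \lambda^{T} X_t(k)\pino{t+1}{k}$. The Yule--Walker orthogonality $\mathbb{E}(X_t(k)\pino{t+1}{k})=0$ gives $\mathbb{E}(Z_t)=0$, while Hölder together with Assumption \ref{I1} gives $Z_t\in L^{q/2}$ with $q/2>4$. Combining Remark \ref{rem:prod-pd} with the uniform-in-$k$ bound on $D_{q}^{I_k}(\alpha)$ from Lemma \ref{lemma:dep-rates} shows that $(Z_t)$ is physically dependent with $\sum_{l}l^{\alpha}\delta_{q/2}^{Z}(l)<\infty$ for $\alpha\geq 5/2$. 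A CLT for such weakly dependent sequences --- obtained via the martingale approximation of Lemma \ref{lemma:mdsa}, Burkholder's inequality (Lemma \ref{lemma:burkholder}), and the Lindeberg martingale CLT (cf.\ \cite{wu-physical-dependence}) --- then yields $n^{-1/2}\sum_{t=1}^{n}Z_t\weakly\mathcal{N}(0,\sigma_Z^2)$ with $\sigma_Z^2 = \sum_{h\in\mathbb{Z}}\mathbb{E}(Z_0Z_h)$. Cramér--Wold together with the Slutsky step of the previous paragraph then gives $\sqrt{n}(a^*(k)-a(k))\weakly\mathcal{N}\bigl(0,R(k)^{-1}\Sigma_0(k)R(k)^{-1}\bigr)$, where $\Sigma_0(k) = \sum_{h}\mathbb{E}\bigl(X_0(k)X_h(k)^{T}\pino{1}{k}\pino{h+1}{k}\bigr)$.

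The main obstacle is then to identify $R(k)^{-1}\Sigma_0(k)R(k)^{-1}$ with the $\Sigma(k)$ stated in the lemma, i.e.\ to replace the pseudo-innovations $\pino{t}{k}$ by the true innovations $e_t$ inside the covariance sum. For this I would use the decomposition $\pino{t+1}{k} = e_{t+1} + \epsilon_{t+1}^{(k)}$, where $\epsilon_{t+1}^{(k)}$ is a linear combination of $(X_{t+1-j})_{j\geq 1}$ whose coefficients have $\ell^{1}$-decay controlled by Lemma \ref{lemma:basic-props-AR(oo)}. Plugging this expansion into $\pino{1}{k}\pino{h+1}{k}$, expanding, and collapsing the resulting cross terms after summation over $h\in\mathbb{Z}$ via the orthogonality identities $\mathbb{E}(X_t(k)e_{t+1}) = \mathbb{E}(X_t(k)\pino{t+1}{k})=0$, together with the summability $\sum_j j^{\alpha}|a_j-a_j(k)|\to 0$ in a telescoping fashion, yields the stated formula. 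This bookkeeping, although elementary, is the delicate part of the argument; the CLT step itself is a clean packaging of the physical-dependence toolkit developed in Section \ref{sec:pd-ar-props}.
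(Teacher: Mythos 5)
Your steps 1--3 are sound and follow essentially the same route as the paper: the same Slutsky reduction via $n(X^{T}X)^{-1}\inPlong R(k)^{-1}$, the same Cram\'er--Wold device, and the same CLT for physically dependent sequences (Theorem 3 of \cite{wu2011asymptotic}, fed by Lemma \ref{lemma:mdsa}, Lemma \ref{lemma:dep-rates} and Remark \ref{rem:prod-pd}). The divergence is in what you take as the regression error. You use the exact Yule--Walker identity \eqref{eq:fit-k}, so your score is $n^{-1/2}\sum_t X_t(k)\pino{t+1}{k}$ and your CLT delivers the long-run covariance $\Sigma_0(k)=\sum_{h\in\zz}\E\bigl(X_0(k)X_h(k)^{T}\pino{1}{k}\pino{h+1}{k}\bigr)$. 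The paper instead writes the response as $Xa(k)+Z$ with $Z$ built from the \emph{true} innovations $e_t$, so its score is $\sum_t U_t$ with $U_t=X_t(k)e_t$, and the CLT yields $\Sigma^{*}(k)=\sum_{h\in\zz}\E(e_0e_hX_0(k)X_h(k)^{T})$ directly, with no bridging step.

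The genuine gap is your step 4: the identification $R(k)^{-1}\Sigma_0(k)R(k)^{-1}=\Sigma(k)$ is not a matter of bookkeeping --- it is false in general for fixed $k$. Writing $\pino{t}{k}=e_t+\delta_t^{(k)}$ with $\delta_t^{(k)}=\sum_{i\geq1}\bigl(a_i(k)-a_i\bigr)X_{t-i}$, the orthogonality relations $\E(X_t(k)e_{t+1})=\E(X_t(k)\pino{t+1}{k})=0$ only guarantee that the score is centered; they do not annihilate the lagged fourth-moment cross terms $\sum_{h\in\zz}\E\bigl(X_0(k)X_h(k)^{T}(e_1\delta^{(k)}_{h+1}+\delta^{(k)}_1e_{h+1}+\delta^{(k)}_1\delta^{(k)}_{h+1})\bigr)$, which are of the order of $\|a-a(k)\|$ and nonzero whenever $a(k)\neq a$, i.e.\ for every $k$ under \ref{G1}. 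A concrete check: take $X_t=\phi X_{t-2}+e_t$ with $e_t$ i.i.d.\ Gaussian and fit $k=1$; then $a(1)=0$ and $\pino{t}{1}=X_t$, and the long-run variance of $X_{t-1}\pino{t}{1}$ equals $\gamma(0)^2(1+\phi)/(1-\phi)$, whereas $\sum_{h\in\zz}\E(e_0e_hX_{-1}X_{h-1})=\sigma^2\gamma(0)=\gamma(0)^2(1-\phi^2)$; these differ for every $\phi\neq0$. So the telescoping you envisage cannot close the argument: to reach the covariance stated in the lemma you must, as the paper does, decompose the response in terms of the true innovations from the outset; otherwise your (correctly derived) limit law has to be stated with the pseudo-innovation covariance $\Sigma_0(k)$ rather than with $\Sigma(k)$.
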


\begin{proof}

Note that we have $X_{t+1}(k) = Xa(k) + Z$, where $Z = (e_{t+1},\dots,e_{t+1-k})^{T}$.
This implies
\begin{equation*}
  \sqrt{n}(a^{*}(k)-a(k)) = \sqrt{n}\bigl[(X^{T}X)^{-1}X^{T}(Xa(k)+Z) - a(k)\bigr] = n(X^{T}X)^{-1}\frac{1}{\sqrt{n}}X^{T}Z.
\end{equation*}
Since $n(X^{T}X)^{-1} \inPlong R(k)^{-1}$, we restrict our attention to $n^{-1/2}X^{T}Z$.
Setting $U_{t}= X_{t}(k)e_{t}$, we can rewrite this as
\begin{equation*}
  \frac{1}{\sqrt{n}}X^{T}Z = \frac{1}{\sqrt{n}}\sum_{t=1}^{n}U_{t}.
\end{equation*}
Note that $\mathbb{E}(U_{t})=0$. For $\lambda\in \mathbb{R}^{k}$, we set $A_{t} = \lambda^{T}U_{t}$.
By arguments similar to Inequality \eqref{eq:pd-prod-est}, we get the following estimate for $p \leq q/2$
\begin{equation*}
  \|A_{l}-A_{l}'\|_{p} \leq \sum_{j=1}^{k}|\lambda_{j}| \delta_{2p}^{X}(l-i) \|e_{0}\|_{2p} + \|X_{0}\|_{2p}\|\lambda\|_{\ell^{1}}\delta_{2p}^{I}(l).
\end{equation*}
This immediately implies 
\begin{equation*}
  \sum_{l=1}^{\infty}l^{\frac{5}{2}}\|A_{l}-A_{l}'\|_{p} < \infty,
\end{equation*}
by Assumption \ref{I2}.
Theorem 3 of \cite{wu2011asymptotic} then implies
\begin{equation*}
\frac{1}{\sqrt{n}}\sum_{t=1}^{n}\lambda^{T}U_{t} \weakly \mathcal{N}(0,\sigma_{\lambda}^{2}),
\end{equation*}
where $\sigma_{\lambda}^{2} = \sum_{h\in \mathbb{Z}} \mathbb{E}(\lambda^{T}U_{0}U_{h}^{T}\lambda)$.
An application of the Cramer-Wold device yields
\begin{equation*}
  \frac{1}{\sqrt{n}}\sum_{t=1}^{n}U_{t} \weakly N(0,\Sigma^{*}(k)),
\end{equation*}
where $\Sigma^{*}(k) = \sum_{h\in \mathbb{Z}} \mathbb{E}(e_{0}e_{h}X_{0}(k)X_{h}(k)^{T})$.
Using this, we get
\begin{equation*}
  \sqrt{n}(a^{*}(k)-a(k)) = n(X^{T}X)^{-1}\frac{1}{\sqrt{n}}X^{T}Z \weakly \mathcal{N}(0,\Sigma(k)).
\end{equation*}
\end{proof}

\begin{lemma}\label{lemma:thm-an-2}
Given Assumptions \ref{ass:main} and \ref{ass:weak:dep}, we have
\begin{equation*}
  \sqrt{n}(\hat{a}(k)-a^{*}(k)) \inPlong 0.
\end{equation*}
\end{lemma}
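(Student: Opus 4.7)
The plan is to exploit that both $\hat{a}(k)$ and $a^{*}(k)$ take the form $M^{-1}v$ for matrices/vectors that differ only through boundary contributions. Set $R^{*} = n^{-1}X^{T}X$ and $r^{*} = n^{-1}X^{T}\mathbf{y}$ (with $\mathbf{y}$ the regression response vector from Lemma \ref{lemma:thm-an-1}), and recall $\hat{R}(k) = N^{-1}\sum_{t=K_{n}}^{n-1} X_{t}(k)X_{t}(k)^{T}$ together with $\hat{r}(k) = N^{-1}\sum_{t=K_{n}}^{n-1} X_{t}(k) X_{t+1}$. Using
\[
\hat{R}(k)^{-1}\hat{r}(k) - (R^{*})^{-1}r^{*} = \hat{R}(k)^{-1}\bigl(\hat{r}(k)-r^{*}\bigr) + \bigl(\hat{R}(k)^{-1}-(R^{*})^{-1}\bigr)r^{*},
\]
together with the resolvent identity $\hat{R}(k)^{-1}-(R^{*})^{-1} = \hat{R}(k)^{-1}(R^{*}-\hat{R}(k))(R^{*})^{-1}$, I would reduce matters to showing $\sqrt{n}\|\hat{R}(k)-R^{*}\| \inPlong 0$ and $\sqrt{n}\|\hat{r}(k)-r^{*}\|_{\ell^{2}} \inPlong 0$. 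Indeed, both $\hat{R}(k)^{-1}$ and $(R^{*})^{-1}$ are bounded in probability by the $L^{p}$-statements recorded just before Lemma \ref{lemma:thm-an-1} (combined with the continuous mapping theorem applied to $M\mapsto M^{-1}$), while $r^{*}$ converges in probability to $\gamma(k)$.

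Next, I would compare the two averages index by index. Because $k$ is fixed while $K_{n}$ diverges, the sums defining $\hat{R}(k)$ and $R^{*}$ (respectively $\hat{r}(k)$ and $r^{*}$) differ only through $O(K_{n})$ boundary indices, plus the mismatch of prefactors $N^{-1}$ versus $n^{-1}$. Writing
\[
\hat{R}(k)-R^{*} = \Bigl(\tfrac{1}{N}-\tfrac{1}{n}\Bigr)\sum_{t=K_{n}}^{n-1} X_{t}(k)X_{t}(k)^{T} - \tfrac{1}{n}\sum_{t \in \mathcal{B}} X_{t}(k)X_{t}(k)^{T},
\]
where $\mathcal{B}$ collects the boundary indices (of cardinality $O(K_{n})$), and analogously for $\hat{r}(k)-r^{*}$, stationarity and Assumption \ref{I1} ensure that each summand has $L^{p}$-bounded entries (for any $p\leq q/2$). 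Since $\bigl|\tfrac{1}{N}-\tfrac{1}{n}\bigr| = K_{n}/(nN)$, a triangle-inequality bound yields $\bigl\||\hat{R}(k)-R^{*}|\bigr\|_{L^{p}} \lesssim K_{n}/n$ and, completely in parallel, $\bigl\|\hat{r}(k)-r^{*}\bigr\|_{L^{p}} \lesssim K_{n}/n$.

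To finish, Assumption \ref{G3} forces $K_{n}^{2+\kappa}/n$ to be bounded, so $K_{n}/\sqrt{n} = o(1)$, whence
\[
\sqrt{n}\|\hat{R}(k)-R^{*}\|,\ \sqrt{n}\|\hat{r}(k)-r^{*}\|_{\ell^{2}} = O_{\mathbb{P}}\bigl(K_{n}/\sqrt{n}\bigr) = o_{\mathbb{P}}(1),
\]
and the first paragraph then gives the claim. The main obstacle is pure bookkeeping: carefully matching index ranges and signs so that the discrepancy between the Yule–Walker sum (starting at $t=K_{n}$) and the least-squares sum (starting essentially at $t=k-1$) is displayed as an explicit sum of $O(K_{n})$ stationary rank-one terms. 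All of the underlying probabilistic content — the $L^{p}$-convergence of $\hat{R}(k)$, the uniform lower bound on its eigenvalues, and the stability of matrix inversion — is already contained in Lemma \ref{lemma:mat-norm} and the preliminaries of Section \ref{sec:pd-ar-props}; no new dependence-theoretic estimate is required.
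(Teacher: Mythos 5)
Your proposal is correct and follows essentially the same route as the paper: decompose $\hat{a}(k)-a^{*}(k)$ via the inverse-matrix/resolvent identity, observe that the Yule--Walker and least-squares sums differ only through the $1/N$ versus $1/n$ prefactor and $O(K_{n})$ boundary summands with $L^{p}$-bounded entries, and conclude from $K_{n}/\sqrt{n}\to 0$ under Assumption \ref{G3}. The only difference is cosmetic: you spell out the treatment of the $\hat{R}(k)^{-1}-n(X^{T}X)^{-1}$ term explicitly, whereas the paper handles only the vector part in detail and leaves the matrix part to the analogous argument.
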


\begin{proof}
  We rewrite $\sqrt{n}(\hat{a}(k) - a^{*}(k))$ as 
  \begin{equation*}
    \begin{split}
      \sqrt{n}(\hat{a}(k) - a^{*}(k)) &= \sqrt{n}\bigl[\hat{R}(k)^{-1}\hat{\gamma}(k) - (X^{T}X)^{-1}X^{T}Y\bigr]\\
                                      & = \sqrt{n}[\hat{R}(k)^{-1}(\hat{\gamma}(k) - n^{-1}X^{T}X_{n+1}\bigl(k\bigr)] \\
                                      & + \sqrt{n} \bigl[\hat{R}(k)^{-1} - n(X^{T}X)^{-1}\bigr]n^{-1}X^{T}X_{n+1}(k).
    \end{split}
  \end{equation*}
  Note that the  $i$-th component of $\hat{\gamma}$ is given by $\frac{1}{N}\sum_{t=K_{n}}^{n-1}X_{t-i}X_{t+1}$, while the $i$-th component of $n^{-1}X^{T}X_{n+1}(k)$ is given by $\frac{1}{n}\sum_{t=0}^{n-1}X_{t-i}X_{t+1}$.
  Hence the difference between their $i$-th components can be written as
  \begin{equation*}
    \frac{1}{n}\sum_{t=K_{n}}^{n-1}\biggl(\frac{n}{N}-1\biggr)X_{t-i}X_{t+1} - \frac{1}{n}\sum_{t=0}^{K_{n}-1} X_{t-i}X_{t+1} = I + II.
  \end{equation*}
  We have by Assumption \ref{G3}
\begin{equation*}
  \sqrt{n}\|I\|_{p} \lesssim \sqrt{n}\biggl(\frac{n}{n-K_{n}} -1\biggr) = \sqrt{n}\frac{K_{n}}{n-K_{n}}\to 0,
\end{equation*}
as well as
\begin{equation*}
  \sqrt{n}\|II\|_{p} \lesssim \frac{K_{n}}{\sqrt{n}} \to 0,
\end{equation*}
finishing the proof of Theorem \ref{thm:normal:distribution}.
\end{proof}

\subsection{2-forms of weakly dependent processes}\label{sec:2-forms-weakly-proc}
\begin{proof}[\proofof{Lemma \ref{lemma:mat-norm}}]
First, note that for $1 < p \leq 2$
    \begin{equation*}
        \mathbb{E}(\|\hat{R}(K_n) - R(K_n)\|_F^p) \leq \sum_{i,j=1}^{K_n} \mathbb{E}(|\hat{R}_{ij} - R_{ij}|^p),
    \end{equation*}
    while Lyapunov's inequality implies 
    \begin{equation*}
        \mathbb{E}(\|\hat{R}(K_n) - R(K_n)\|_F^p) \leq K_n^{p-2}\sum_{i,j=1}^{K_n} \mathbb{E}(|\hat{R}_{ij} - R_{ij}|^p)
    \end{equation*}
    for $p>2$. An application of Remark \ref{rem:prod-pd} yields 
    \begin{equation}\label{eq:R^_ij - R_ij est}
    \begin{split}
        \|\hat{R}_{ij} - R_{ij}\|_p & = \frac{1}{N}\biggl\|\sum_{t=K_n+1}^n X_{t-i}X_{t-j} - R_{ij}\biggr\|_p \leq 2\|X_{0}\|_{2p}D_{2p}^{X}(0)B_p N^{\frac{1}{p'}-1}. %\sum_{l=0}^\infty \|X_{l-i}X_{l-j} - X_{l-i}'X_{l-j}'\|_p,
    \end{split}
    \end{equation}
   % and $\|X_{l-i}X_{l-j} - X_{l-i}'X_{l-j}'\|_p$ in turn can be estimated by
    %\begin{equation*}
     %   \begin{split}
     %      \|X_{l-i}X_{l-j} - X'_{l-i}X'_{l-j}\|_p & \leq \|X_{l-i}(X_{l-j} - X'_{l-j})\|_p + \|X'_{l-j}(X_{l-i} - X'_{l-i})\|_p\\ 
      %      & \leq \|X_{0}\|_{2p} \|(X_{l-j} - X'_{l-j})\|_{2p} + \|X'_{0}\|_{2p}\|(X_{l-i} - X'_{l-i})\|_{2p}.
       % \end{split}
   % \end{equation*}
   % Since $\delta_{2p}^X(m) = 0$ for $m < 0$ we get in total
   % \begin{equation}\label{eq:R^_ij - R_ij est}
   %     \|\hat{R}_{ij} - R_{ij}\|_p \leq \|X_0\|_{2p}B_pN^{\frac{1}{p'}-1} \sum_{l=0}^\infty \delta_{2p}^X(l-i) + \delta_{2p}^X(l-j) = 2B_p\|X_0\|_{2p}N^{\frac{1}{p'}-1}D_{2p}^X\bigl(0). 
   % \end{equation}
    Thus the $L^p$ norm of $\|\hat{R}(k) - R(k)\|_F$ is bounded by 
\begin{equation*}
    \bigl\|\|\hat{R}(k) - R(k)\|_F \bigr\|_p \lesssim K_n^{\frac{2}{p'}} N^{\frac{1}{p'}-1} = \bigl(K_n^2N^{1-p'} \bigr)^{\frac{1}{p'}} \to 0,
\end{equation*}
for any $p>1$.
Claim \ref{mat-norm2} and \ref{mat-norm3} follows in the same way as in Lemma 3 of \cite{berk}. However, we require explicit rates in terms of $n$ and $K_{n}$ and hence provide the proof. 
  As in \cite{berk}, we set $q(k)=\|\hat{R}(k)^{-1} - R(k)^{-1}\|$, $r(k) = \|R(k)^{-1}\|$, and $Q(k) = \|\hat{R}(k) - R(k)\|$, and get
  \begin{equation*}
    q(k) \leq (r(k)+q(k))r(k)Q(k).
  \end{equation*}
  Additionally, we set $Q'(k) = \|\hat{R}(k)-R(k)\|_{F}$, and note that $Q(k) \leq Q'(k) \leq Q'(K_{n})$.
  Since the eigenvalues of $R(k)$ are uniformly bounded aways from zero, by Assumption \ref{ass:main}, there is a constant $\Lambda>0$, such that $r(k)\leq \Lambda$ for all $k$.
  Hence, if $\Lambda Q'(K_{n}) \leq 1/2$, then also $r(k)Q(k) \leq 1/2$ for all $k = 1,\dots, K_{n}$, and a similar argument as in Lemma 3 of \cite{berk} yields
  \begin{equation*}
    q(k) \leq \frac{r(k)^{2}Q(k)}{1-r(k)Q(k)}\leq 2r(k)^{2}Q(k) \leq 2 \Lambda^{2}Q'(K_{n}),
  \end{equation*}
  for all $k = 1,\dots, K_{n}$. 
  Now, let $c>0$ be an arbitrary positive number, and set $q_{n}^{*} = \max_{1\leq k\leq K_{n}}q(k)$.
  A simple case distinction implies
  \begin{equation*}
    \begin{split}
      \mathbb{P}(q_{n}^{*}> K/(k_{n}^{*})^{c}) &= \mathbb{P}(q_{n}^{*}> K/(k_{n}^{*})^{c}, \Lambda Q'(K_{n}) > 1/2) + \mathbb{P}(q_{n}^{*}> K/(k_{n}^{*})^{c}, \Lambda Q'(K_{n}) \leq 1/2)\\
                                              & \leq \mathbb{P}(Q'(K_{n}) > 1/(2 \Lambda)) + \mathbb{P}(Q'(K_{n}) > K/(2(k_{n}^{*})^{c} \Lambda^{2})). 
    \end{split}
  \end{equation*}
 Markov's inequality, the first part of the Lemma, and the fact that $k_{n}^{*}\geq 1$ give
  \begin{equation*}
    \begin{split}
      \mathbb{P}(q_{n}^{*}>K/(k_{n}^{*})^{c}) &\leq (2 \Lambda)^{p}\bigl(1 + (\Lambda K)^{p}(k_{n}^{*})^{cp}\bigr)\mathbb{E}(\|\hat{R}(k) - R(k)\|_{F}^{p}) \\
                                             & \leq C(K) (k_{n}^{*})^{cp} (K_{n}^{2}N^{1-p'})^{\frac{p}{p'}},
    \end{split}
  \end{equation*}
  for any $p>1$, and some constant $C(K)$ depending on $K$.
%  If additionally, there is a $\kappa>0$, such that $K_{n}^{2+\kappa}N^{1-p'}$ is bounded, we have
%  \begin{equation*}
%    \mathbb{P}(q_{n}^{*}> (k_{n}^{*})^{-c}) \lesssim (K_{n}^{2}N^{1-p'})^{\frac{p}{p'}} \lesssim (k_{n}^{*})^{-\frac{\kappa p}{p'}}.
%  \end{equation*}
%  Since $c>0$ was arbitrary, and the constants do not depend on $c$, Claim \ref{mat-norm3} follows.
\end{proof}   

\begin{proof}[\proofof{Lemma \ref{lemma:QF-1}}]
In the case $i\geq j$, a rough estimate is sufficient. An application of the triangle inequality yields
    \begin{equation*}
        \biggl\|\sum_{j=0}^{K_n} \sum_{i=j}^{K_n} \xi_i \eta_j (\hat{R}_{ij}-R_{ij})\biggr\|_p \leq \sum_{j=0}^{K_n} |\eta_j|\sum_{i=j}^{K_n}|\xi_i|  \|\hat{R}_{ij}-R_{ij}\|_p.
    \end{equation*}
    From Equation \eqref{eq:R^_ij - R_ij est} we know that $\|\hat{R}_{ij} - R_{ij}\|_p \lesssim N^{\frac{1}{p'}-1}$ for $p'=\min\{p,2\}$ (the constants do not depend on $i$ and $j$).
    An application of the Cauchy-Schwarz inequality and rearranging terms yields
    \begin{equation*}
    \begin{split}
        \biggl\|\sum_{j=0}^{K_n} \sum_{i=j}^{K_n} \xi_i \eta_j (\hat{R}_{ij}-R_{ij})\biggr\|_p & \lesssim N^{\frac{1}{p'}-1}\|\eta\|_{\ell^2} \biggl(\sum_{j=0}^{K_n} \biggl(\sum_{i=j}^{K_n} |\xi_i|\biggr)^2\biggr)^{\frac{1}{2}}\\
        & \lesssim N^{\frac{1}{p'}-1}\|\eta\|_{\ell^2}\|\xi\|_{\ell^1}^{\frac{1}{2}} \biggl(\sum_{i=0}^{K_n} i |\xi_i|\biggr)^{\frac{1}{2}}.
    \end{split}
    \end{equation*}
    A slightly more delicate analysis is necessary in the case $i< j$. Again, we start with the triangle inequality
    \begin{equation*}
        \biggl\|\sum_{i=0}^{K_n-1} \xi_i \sum_{j=i+1}^{K_n} \eta_j (\hat{R}_{ij}-R_{ij})\biggr\|_p \leq \sum_{i=0}^{K_n-1} |\xi_i| \biggl\|  \sum_{j=i+1}^{K_n} \eta_j (\hat{R}_{ij}-R_{ij})\biggr\|_p.
    \end{equation*}
    For the rest of the proof, we focus on the expression in the $L^p$ norm. 
    Since $\mathbb{E}(\hat{R}_{ij}) = R_{ij}$, we can use a martingale approximation in the sense of Lemma \ref{lemma:mdsa},

    \begin{equation*}
        \biggl\|\sum_{j=i+1}^{K_n} \eta_j (\hat{R}_{ij}-R_{ij})\biggr\|_p  = \frac{1}{N}\biggl\|\sum_{t=K_n+1}^n \sum_{j=i+1}^{K_n} \eta_j \sum_{l=0}^\infty \mathcal{P}_{t-i-l}(X_{t-i}X_{t-j})\biggr\|_p.
    \end{equation*}
    We may split the sum over $l$ into two sums: The first one deals with terms where $0 \leq l <j-i$, and the second one deals with the case $l> j-i$. 
    Starting with the first case, we note that for $l < j-i$, $X_{t-j}$ is $\sigf{t-i-l-1}$ measurable. Hence, we have 
    \begin{equation*}
        \mathcal{P}_{t-i-l}(X_{t-i}X_{t-j}) = X_{t-j}\mathcal{P}_{t-i-l}(X_{t-i}).
    \end{equation*}
    Using this and changing the order of summation, we may estimate
    \begin{multline}\label{eq:lemma-cuckcoo-l<j-i:1}
        \biggl\|\frac{1}{N} \sum_{t=K_n+1}^{K_n} \sum_{j=i+1}^{K_n} \eta_j\sum_{l=0}^{j-i-1}\mathcal{P}_{t-i-l}(X_{t-i}X_{t-j}) \biggr\|_p \\\leq \frac{1}{N}\sum_{l=0}^{K_n-i-1}\biggl\| \sum_{t=K_n+1}^{K_n} \sum_{j=i+1+l}^{K_n} \eta_j\mathcal{P}_{t-i-l}(X_{t-i})X_{t-j} \biggr\|_p. 
    \end{multline}
    Since $\mathcal{P}_{t-i-l}(X_{t-i}\sum_{j=i+1+l}^{K_n} \eta_j X_{t-j})$ is a martingale difference sequence with respect to $t$, we may apply Burkholder's inequality (Lemma \ref{lemma:burkholder})
    \begin{equation*}
      \biggl\| \sum_{t=K_n+1}^{K_n} \sum_{j=i+1+l}^{K_n} \eta_j\mathcal{P}_{t-i-l}(X_{t-i})X_{t-j} \biggr\|_p \leq \frac{B_p}{N} \biggl(\sum_{t=K_n+1}^{K_n} \biggl\|\sum_{j=i+1+l}^{K_n} \eta_j\mathcal{P}_{t-i-l}(X_{t-i})X_{t-j}\biggr\|_p^{p'}\biggr)^{\frac{1}{p'}}.
    \end{equation*}
    Since $X$ is stationary and $\mathcal{P}_{t-i-l}(X_{t-i}) \disteq \mathcal{P}_{0}(X_{l})$ (see \eqref{eq:P(Z)=E(Z-Z')} and the subsequent discussion), we have
    \begin{equation*}
      \biggl\|\sum_{j=i+1+l}^{K_n} \eta_j\mathcal{P}_{t-i-l}(X_{t-i})X_{t-j}\biggr\|_p \leq  \|\mathcal{P}_{0}(X_{l})\|_{2p}\biggl\|\sum_{j=i+1+l}^{K_n} \eta_jX_{t-j+i}\biggr\|_{2p} \leq \delta_{2p}^{X}(l)\|\eta\|_{\ell^{2}}B_{2p}D_{2p}^{X}(0),
    \end{equation*}
    by Lemma \ref{lemma:PD-standart-est}.
    which does not depend on $t$ anymore.
    %In light of this, we may estimate the right-hand side of Inequality \eqref{eq:lemma-cuckcoo-l<j-i:1} further using Hölder's inequality by 
   % \begin{equation*}
        %\frac{B_p}{\sqrt{N}} \sum_{l=0}^{K_n-i-1}\biggl\|\mathcal{P}_{0}(X_{l})\sum_{j=i+1+l}^{K_n} \eta_jX_{t-j+i}\biggr\|_p \leq sqrt(N) statt genaue Potenz
   %     B_p N^{\frac{1}{p'}-1}\sum_{l=0}^{K_n-i-1} \|\mathcal{P}_{0}(X_{l})\|_{2p}\biggl\|\sum_{j=i+1+l}^{K_n} \eta_jX_{t-j+i}\biggr\|_{2p}.
  %  \end{equation*}

    Since $(\kappa_{s})_{s\geq 0} \in \ell^{2}$ we have in particular $\kappa_{s}<\infty$.
    A simple integral comparison implies that $D_{2p}^{X}(0)<\infty$. Hence, the sum over $l$ is bounded by $D_{2p}^{X}(0)<\infty$.
    Putting these estimates back into Inequality \eqref{eq:lemma-cuckcoo-l<j-i:1}, we get 
    \begin{equation*}
      \biggl\|\frac{1}{N} \sum_{t=K_n+1}^{K_n} \sum_{j=i+1}^{K_n} \eta_j\sum_{l=0}^{j-i-1}\mathcal{P}_{t-i-l}(X_{t-i}X_{t-j}) \biggr\|_p \leq \|\eta\|_{\ell^2}N^{\frac{1}{p'}- 1}B_pB_{2p}D_{2p}^X(0)^2. 
    \end{equation*}
    This finishes the case $0\leq l < j-i$.  

    Moving on to the case $l\geq j-i$, we have to deal with the expression
    \begin{equation*}
        \frac{1}{N} \biggl\|\sum_{t=K_n+1}^n \sum_{j=i+1}^{K_n} \eta_j \sum_{l=j-i}^\infty \mathcal{P}_{t-i-l}(X_{t-i}X_{t-j})\biggr\|_p,
    \end{equation*}
    which we estimate using Burkholder's inequality (Lemma \ref{lemma:burkholder}), and the fact that $\mathcal{P}_{t-i-l}(X_{t-i}X_{t-j}) \disteq \mathcal{P}_{0}(X_{l}X_{l-j+i})$, by 
    \begin{equation*}
      \leq \frac{1}{N}\sum_{j=i+1}^{K_{n}}|\eta_{j}| \sum_{l=j-i}^{\infty} \biggl\|\sum_{t=K_{n+}+1}^{n}\mathcal{P}_{t-i-l}(X_{t-i}X_{t-j})\biggr\|_{p} \lesssim N^{\frac{1}{p'}-1}\sum_{j=i+1}^{K_{n}} |\eta_{j}| \sum_{l=j-i}^{\infty}\|\mathcal{P}_{0}(X_{l}X_{l-j+i})\|_{p}.
    \end{equation*}
    If the sequence $L = (L_{s})_{s\geq1}$ with $L_{s} = \sum_{l=s}^{\infty}\|\mathcal{P}_{0}(X_{l}X_{l-s})\|_{p}$ is in $\ell^{2}$, Cauchy-Schwarz implies
    \begin{equation*}
      N^{\frac{1}{p'}-1}\sum_{j=i+1}^{K_{n}} |\eta_{j}| \sum_{l=j-i}^{\infty}\|\mathcal{P}_{0}(X_{l}X_{l-j+i})\|_{p} \leq N^{\frac{1}{p'}-1}\|\eta\|_{\ell^{2}}\|L\|_{\ell^{2}} \lesssim N^{\frac{1}{p'}-1}\|\eta\|_{\ell^{2}},
    \end{equation*}
    finishing the proof.
    We again distinguish two cases, $l=s$ and $l>s$.
    For $l=s$ we note that $H_{1}^{s}(X_{s}) = \mathbb{E}(X_{s}\mid \sigfdouble{s}{1})$ is independent of $\sigf{0}$, and hence
    \begin{equation*}
      \mathcal{P}_{0}(H_{1}^{s}(X_{s})X_{0}) = \mathbb{E}(X_{s})\mathcal{P}_{0}(X_{0}) =0,
    \end{equation*}
    Therefore we have 
    \begin{equation*}
      \mathcal{P}_{0}(X_{s}X_{0}) = \mathcal{P}_{0}(Q_{1}^{s}(X_{s})X_{0}),
    \end{equation*}
    for $Q_{1}^{s}(X_{s}) = X_{s} - H_{1}^{s}(X_{s})$.
    Using Hölder's inequality and Lemma \ref{lemma:Q}, we get 
    \begin{equation}\label{eq:lemma-Q:l>=j-i:l=s}
      \|\mathcal{P}_{0}(X_{s}X_{0})\|_{p} \leq 2\|X_{0}\|_{2p}\|Q_{1}^{s}(X_{s})\|_{2p} \lesssim \sqrt{\sum_{m=s}^{\infty}\delta_{2p}^{X}(m)^{2}}.
    \end{equation}
    For the case $l>s$, a similar argument as above yields
    \begin{equation*}
      \mathcal{P}_{0}(X_{l}X_{l-s}) = \mathcal{P}_{0}(Q_{l-s+1}^{l}(X_{l})X_{l-s}).
    \end{equation*}
    Now we write $X_{l-s} = Q_{1}^{l-s}(X_{l-s}) + H_{1}^{l-s}(X_{l-s})$, which gives
    \begin{equation*}
      \mathcal{P}_{0}(X_{l}X_{l-s}) = \mathcal{P}_{0}(Q^{l}_{l-s+1}(X_{l})Q_{1}^{l-s}(X_{l-s})) + \mathcal{P}_{0}(Q_{l-s+1}^{l}(X_{l})H_{1}^{l-s}(X_{l-s})).
    \end{equation*}
    Observe that $H_{l-s+1}^{l}(X_{l})H_{1}^{l-s}(X_{l-s})$ is independent of $\sigf{0}$, which implies 
    \begin{equation*}
      \mathcal{P}_{0}(H_{l-s+1}^{l}(X_{l})H_{1}^{l-s}(X_{l-s})) = 0.
    \end{equation*}
    Since $H_{1}^{l}(X_{l})H_{1}^{l-s}(X_{l-s})$ is also independent of $\sigf{0}$, we have
    \begin{equation*}
      \mathcal{P}_{0}(Q^{l}_{l-s+1}(X_{l})H_{1}^{l-s}(X_{l-s})) = \mathcal{P}_{0}(X_{l}H_{1}^{l-s}(X_{l-s})) = \mathcal{P}_{0}(Q_{1}^{l}(X_{l})H_{1}^{l-s}(X_{l-s})). 
    \end{equation*}
    In total, we get 
    \begin{equation*}
      \mathcal{P}_{0}(X_{l}X_{l-s}) = \mathcal{P}_{0}(Q_{l-s+1}^{l}(X_{l})Q_{1}^{l-s}(X_{l-s})) + \mathcal{P}_{0}(Q_{1}^{l}(X_{l})H_{1}^{l-s}(X_{l-s})).
    \end{equation*}
    As before, Lemma \ref{lemma:Q} implies
    \begin{equation*}
      \|\mathcal{P}_{0}(X_{l}X_{l-s})\|_{p} \lesssim \sqrt{\sum_{m=s}^{\infty}\delta_{2p}^{X}(m)^{2}} \sqrt{\sum_{h=l-s}^{\infty}\delta_{2p}^{X}(h)^{2}} + \sqrt{\sum_{k=l}^{\infty}\delta_{2p}^{X}(k)^{2}}.
    \end{equation*}
    Combining this with Estimate \eqref{eq:lemma-Q:l>=j-i:l=s} yields
    \begin{equation*}
      \begin{split}
        L_{s} &\lesssim \sqrt{\sum_{m=s}^{\infty}\delta_{2p}^{X}(m)^{2}}\biggl(1+\sum_{l=1}^{\infty}\sqrt{\sum_{h=l}^{\infty}\delta_{2p}^{X}(h)^{2}}\biggr) + \sum_{l=s+1}^{\infty}\sqrt{\sum_{k=l}^{\infty}\delta_{2p}^{X}(k)^{2}}\\
              & \lesssim \sum_{l=s}^{\infty}\sqrt{\sum_{k=l}^{\infty}\delta_{2p}^{X}(k)^{2}},
      \end{split}
    \end{equation*}
    finishing the proof.
\end{proof}

     \subsection{Proofs of Lemma \ref{lemma:Bn-Bnk-rate}, \ref{lemma:Bn-Bnk}, and \ref{lemma:irs:sk}}\label{sec:lemmas-for-main-theorem}

  \begin{proof}[\proofof{Lemma \ref{lemma:Bn-Bnk-rate}}]
Since the $\|\cdot\|_{R}$ norm and the Euclidean norm $\|\cdot\|_{\ell^{2}}$ are equivalent, we may as well look at $\|B_{n} - B_{nk}\|_{\ell^{2}}$ instead. 
Setting $\Delta_t(k) = e_t - \pino{t}{k}$, we write
\begin{equation*}
    \begin{split}
        \|B_n - B_{nk}\|_{\ell^2}^2 &= \langle B_n - B_{nk}, B_n - B_{nk}\rangle \\
        &= \frac{1}{N^2}\sum_{s,t=K_n+1}^n \Delta_t(k) \Delta_s(k) \biggl \langle \begin{pmatrix}
      X_{t-1}\\
      \vdots\\
      X_{t-k}
    \end{pmatrix}, \begin{pmatrix}
      X_{s-1}\\
      \vdots\\
      X_{s-k}
    \end{pmatrix} \biggr\rangle \\
    & = \frac{1}{N^2}\sum_{j=1}^k \biggl( \sum_{t=K_n+1}^n \Delta_t(k) X_{t-j}\biggr)^2.
    \end{split}
\end{equation*}
By the triangle inequality, we have
\begin{equation}\label{eq:Bn-Bnk:rate-triangle-ineq-general}
\begin{split}
  \bigl\| \|B_n-B_{nk}\|_{\ell^{2}}^2\bigr\|_{p} &\leq \frac{1}{N^{2}}\sum_{j=1}^k \biggl\|\biggl(\sum_{t=K_n+1}^n \Delta_t(k) X_{t-j}\biggr)^2\biggr\|_p \\
    & =\frac{1}{N^{2}}\sum_{j=1}^k \biggl\|\sum_{t=K_n+1}^n \Delta_t(k) X_{t-j}\biggr\|_{2p}^2. \\
\end{split}
\end{equation}

We would like to apply Lemma \ref{lemma:PD-standart-est} to the term in the $L^{2p}$-norm. 
To this end, we need to check whether $\mathbb{E}(\Delta_t(k)X_{t-j}) = \mathbb{E}(e_tX_{t-j}) - \mathbb{E}(\pino{t}{k}X_{t-j}) = 0$. 
By Lemma \ref{lemma:brillinger}, we can write $X_{t}$ as 
%By Assumption \ref{I2} and Wiener's $1/f$ Theorem (\cite{zygmund}, p. 245), the coefficients $(b_{m})$ of the multiplicative inverse $B(z) = 1 + \sum_{m=1}^\infty b_mz^m$ of $A(z)$ are absolutely summable. Hence, we may write 
\begin{equation*}
    X_t = e_t + \sum_{m=1}^\infty b_m e_{t-m}. 
\end{equation*}
Since the innovations are uncorrelated, and $(b_{m})_{m\geq 0} \in \ell^{1}$, this yields
\begin{equation*}
    \mathbb{E}(e_tX_{t-j}) = \sum_{m=0}^{\infty} b_m \mathbb{E}(e_te_{t-j-m}) = 0,
\end{equation*}
for $j>0$.
The pseudo-innovations $\pino{t}{k}$ on the other hand are defined as the projections onto the orthogonal complement of $V_k = \langle X_{t-1}, \dots, X_{t-k}\rangle$.  
Since $\pino{t-j}{k-j} \in V_k$ we have $\mathbb{E}(\pino{t}{k}\pino{t-k}{k-j}) = 0$. Applying Lemma \ref{lemma:PD-standart-est} to the left-hand side of \eqref{eq:Bn-Bnk:rate-triangle-ineq-general}, yields
\begin{equation}\label{eq:Bn-Bnk:after-Burkholder}
    \frac{1}{N} \sum_{j=1}^k \biggl\|\sum_{t=K_n+1}^n \Delta_t(k) X_{t-j}\biggr\|_{2p}^2 \lesssim \frac{1}{N} \sum_{j=1}^k \sum_{l=0}^\infty \|\Delta_l(k)X_{l-j} - \Delta_l(k)'X_{l-j}'\|_{2p}^2. 
\end{equation}
We write $d_{m}(k) = a_{m}-a_{m}(k)$. Note that  
\begin{equation*}
  \begin{split}
    \|\Delta_{l}(k)X_{l-j} - \Delta_{l}(k)'X_{l-j}'\|_{2p} & = \biggl\|\sum_{m=1}^{\infty}d_{m}(k)(X_{l-m}X_{l-j}-X_{l-m}'X_{l-j}')\biggr\|_{2p} \\
                                                           & \leq \sum_{m=1}^{\infty}|d_{m}(k)| \|X_{l-m}X_{l-j} - X_{m-l}'X_{l-j}'\|_{2p}\\
                                                           &\leq \|X_{0}\|_{4p} \sum_{m=1}^{\infty}|d_{m}(k)|(\delta_{4p}^{X}(l-m) + \delta_{4p}^{X}(l-j)).
  \end{split}
\end{equation*}
Thus, we may estimate the right-hand side of Inequality \eqref{eq:Bn-Bnk:after-Burkholder} by 
\begin{equation*}
  \begin{split} 
  &\frac{1}{N}\sum_{m_{1},m_{2}=1}^{\infty} |d_{m_{1}}(k)| |d_{m_{2}}(k)| \sum_{j=1}^{k}\sum_{l=0}^{\infty} (\delta_{4p}^{X}(l-m_{1}) + \delta_{4p}^{X}(l-j))(\delta_{4p}^{X}(l-m_{2}) + \delta_{4p}^{X}(l-j)) \\
  &\lesssim \frac{k}{N}\|a-a(k)\|_{\ell^{1}}^{2},
  \end{split}
\end{equation*}
as $\delta_{4p}^{X}(l) = 0$ for $l<0$. 
\end{proof}

\begin{proof}[\proofof{Lemma \ref{lemma:Bn-Bnk}}]
    If we can show that
    \begin{equation}\label{eq:Bn-Bnk-rate}
        \frac{N}{k} \bigl\| \|R(k)^{-1}(B_n-B_{nk})\|_R^2\bigr\|_{p} \lesssim k^{-\frac{1}{2}},
    \end{equation}
    where $p=q/4 >2$, then Markov's inequality implies the following estimate 
    \begin{equation}\label{eq:Bn-Bnk-Markov-est}
    \begin{split}
        \mathbb{P}\bigl(A_3^2 > \varepsilon\bigr) & \leq \frac{1}{\varepsilon^p} \mathbb{E}\biggl(\max_{1\leq k \leq K_n} \|R(k)^{-1}(B_n - B_{nk})\|_R^{2p}L_n(k)^{-p}\biggr)\\
        &\leq \frac{1}{\varepsilon^p} \sum_{k=1}^{K_n} \biggl(\frac{N}{k}L_n(k)\biggr)^{-p}\biggl(\frac{N}{k}\bigl\| \|R(k)^{-}(B_n-B_{nk})\|_R^2\bigr\|_{p}\biggr)^p \\
        & \lesssim \frac{1}{\varepsilon^{p}} \sum_{k=1}^{K_n} \biggl(\frac{N}{k}L_n(k)\biggr)^{-p} k^{-\frac{p}{2}}\\
        & = \frac{1}{\varepsilon^{p}}\sum_{k=1}^{k_n^*} \underbrace{\biggl(\frac{N}{k}L_n(k)\biggr)^{-p}}_{\leq (k_n^* \frac{1}{k} \sigma^2)^{-p}} k^{-\frac{p}{2}} + \frac{1}{\varepsilon^{p}}\sum_{k=k_n^*+1}^{K_n} \underbrace{\biggl(\frac{N}{k}L_n(k)\biggr)^{-p}}_{\leq \sigma^{-2p}} k^{-\frac{p}{2}}\\ 
        & \lesssim \frac{1}{(\varepsilon k_n^*)^p}\sum_{k=1}^{k_n^*} k^{\frac{p}{2}} + \frac{1}{\varepsilon^{p}}\sum_{k=k_n^*+1}^{K_n} k^{-\frac{p}{2}} \lesssim \frac{(k_{n}^{*})^{1-p/2}}{\varepsilon^{p}}.
    \end{split}
    \end{equation}
    Since any sequence $k_n^* \in \mathrm{argmin}_{1\leq k \leq K_n} L_n(k)$ diverges, by the same arguments as in Lemma 4.1 of \cite{karagrigoriou1995}, this implies $A_3^2 \to 0$ in probability. 
    Setting $\varepsilon = c(k_{n}^{*})^{-\delta}$ yields quantitative estimate of the lemma.

    Now, to establish \eqref{eq:Bn-Bnk-rate}, we may apply Lemma \ref{lemma:Bn-Bnk-rate} and observe that 
    \begin{equation*}
      \|a - a(k)\|_{\ell^{1}}^{2} \lesssim k^{-5},
    \end{equation*}
    by Lemma \ref{lemma:basic-props-AR(oo)}. 

     \end{proof}

     \begin{proof}[\proofof{Lemma \ref{lemma:irs:sk}}]
      The proof follows along the lines of Shibata's original approach (Proposition 4.1 in \cite{shibata}). The main difference is, that Lemma 4.2 of \cite{shibata} is not available to us, as our innovations are not independent. 
Lemma \ref{lemma:QF-1} provides a replacement.  
Note that under our set of assumptions (in particular Assumption \ref{I2}), we can achieve a better rate than Shibata, which simplifies the proof slightly. 
We are going to show that 
\begin{equation*}
  \max_{1\leq k\leq K_{n}}\frac{|s_{k_n^*}^2 - \sigma_{k_n^*}^2 - s_k^2 +\sigma_k^2|}{L_{n}(k)}\leq b(k_{n}^{*})^{-\delta},
\end{equation*}
with probability at least $1-C(k_{n}^{*})^{1+\delta q/2 - q/4}$.

    The expression $s_{k_n^*}^2 - \sigma_{k_n^*}^2 - s_k^2 +\sigma_k^2$ may be rewritten as (see Proposition 4.1 in \cite{shibata})
    \begin{equation}\label{eq:s-sigma-split}
        \begin{split}
            s_{k_n^*}^2 - \sigma_{k_n^*}^2 - s_k^2 +\sigma_k^2 & = 2\langle a(k_n^*) - a(k), \hat{r}(K_n) - r(K_n)\rangle \\
            & + \langle a(k_n^*) - a(k), (\hat{R}(K_n) - R(K_n))(a(k_n^*) + a(k))\rangle,
    \end{split}
    \end{equation}
    where $\hat{r}(k) = (\hat{\gamma}(1),\dots,\hat{\gamma}(k))^T$, and $\hat{\gamma}(j) = N^{-1}\sum_{t=K_{n}+1}^{n}X_{t}X_{t-j}$.
    Here we think of every vector as $K_{n}$ dimensional, padding zeros as necessary.
    We are first going to deal with the second term, since it is more involved an requires more attention.

   Applying Markov's inequality for $p=q/2$, estimating the maximum against the corresponding sum, and setting $\eta(k)  = \eta_{n}(k)= a(k_n^*) - a(k)$ and $\xi(k) = \xi_{n}(k)= a(k_n^*) + a(k)$ in Lemma \ref{lemma:QF-1}, we get
    \begin{multline}\label{eq:markov-s_k-sigma_k}
      \mathbb{P}\left(\max_{1\leq k \leq K_{n}}\frac{|\langle a(k_n^*) - a(k), (\hat{R}(K_n) - R(K_n))(a(k_n^*) + a(k))\rangle|}{L_{n}(k)}> b(k_{n}^{*})^{-\delta} \right) \lesssim\\
            (k_{n}^{*})^{\delta p}\sum_{k=1}^{K_n} \frac{1}{N^{p/2}L_n(k)^p} \|\eta(k)\|_{\ell^2}^p\biggl(\|\xi(k)\|_{\ell^1} + \|\xi(k)\|_{\ell^1}^{\frac{1}{2}}\biggl(\sum_{i=1}^{K_n}(i+1)|a_i(k_n^*) + a_i(k)|\biggr)^{\frac{1}{2}}\biggr)^p.
    \end{multline}

    First, we are going to show that the expression
    \begin{equation*}
        \|\xi(k)\|_{\ell^1} + \|\xi(k)\|_{\ell^1}^{\frac{1}{2}}\biggl(\sum_{i=1}^{K_n}(i+1)|a_i(k_n^*) + a_i(k)|\biggr)^{\frac{1}{2}}
    \end{equation*}
    is uniformly bounded in $k$ and $n$. 
    Note that by Baxter's inequality (Lemma \ref{lemma:baxter}), we have
    \begin{equation*}
        \|\xi(k)\|_{\ell^1} \leq \|a(k_n^*) - a\|_{\ell^1} + \|a(k) - a \|_{\ell^1} + 2\|a\|_{\ell^1} \lesssim  \|a\|_{\ell^1}. 
    \end{equation*}
    In a similar vein, Lemma \ref{lemma:baxter} and Assumption \ref{I2} yield
    \begin{equation*}
    \begin{split}
        \sum_{i=1}^{K_n}(i+1)|a_i(k_n^*) + a_i(k)| &\leq \sum_{k=1}^{K_n}(i+1)|a_i(k_n^*) - a_i| + \sum_{k=1}^{K_n}(i+1)|a_i(k_n^*) - a_i| + 2\sum_{k=1}^{K_n}(i+1)|a_i|.  \\
        & \lesssim \sum_{k=1}^{\infty} i|a_i| < \infty.
    \end{split}
    \end{equation*}
    Additionally, the $\ell^2$ norm and the $\|\cdot\|_R$ norm are equivalent by Assumption \ref{G2}. Hence the right-hand side of Inequality \eqref{eq:markov-s_k-sigma_k} may be further estimated by
    \begin{equation*}
        \lesssim (k_{n}^{*})^{\delta p}\sum_{k=1}^{K_n} \frac{\|\eta(k)\|_{R}^p}{N^{p/2}L_n(k)^p} . 
    \end{equation*}
    We may insert $\pm a$ into $\|\eta(k)\|_{R}$ and use $(x+y)^p \leq 2^{1-1/p}(x^p + y^p)$ for $x,y\geq 0$ and $p\geq 1$, to further estimate this expression by
    \begin{equation}\label{eq:sk-sigmak:sum-split}
        \lesssim (k_{n}^{*})^{\delta p}\sum_{k=1}^{K_n} \frac{\|a(k_n^*) - a\|_{R}^p}{N^{p/2}L_n(k)^p} + (k_{n}^{*})^{\delta p} \sum_{k=1}^{K_n} \frac{\|a(k) - a\|_{R}^p}{N^{p/2}L_n(k)^p} =: I + II. 
    \end{equation}
    We are going to deal with each of these two terms separately. 
    Since $L_n(k) = \|a -a(k)\|_{R}^2 + \sigma^2 k/N$, we have
    \begin{equation}\label{eq:|a-a(k)| vs. L_n(k)}
        \frac{\|a-a(k)\|_R^2}{L_n(k)} = \frac{\|a-a(k)\|_R^2}{\|a -a(k)\|_{R}^2 + \sigma \frac{k}{N}} \leq 1.
    \end{equation}
    
    Starting with the first term, we split the sum along $k=k_n^*$. For $k\leq k_n^*$ we use $L_n(k) \geq L_n(k_n^*)$ and  \eqref{eq:|a-a(k)| vs. L_n(k)}, to arrive at the estimate
    \begin{equation*}
      (k_{n}^{*})^{\delta p}\sum_{k=1}^{k_n^*} \frac{\|a(k_n^*) - a\|_{R}^p}{N^{p/2}L_n(k)^p} \leq \frac{(k_n^*)^{1 +\delta p}}{N^{p/2}L_n(k_{n}^{*})^{p/2}}.
    \end{equation*}
    Since $NL_n(k) = N\|a-a(k)\|_R^2 + \sigma^2 k \geq \sigma^2 k$, this yields
\begin{equation*}
      (k_{n}^{*})^{\delta p}\sum_{k=1}^{k_n^*} \frac{\|a(k_n^*) - a\|_{R}^p}{N^{p/2}L_n(k)^p} \leq \frac{(k_n^*)^{1 +\delta p}}{N^{p/2}L_n(k_{n}^{*})^{p/2}} \lesssim (k_{n}^{*})^{1+\delta p - p/2}.
    \end{equation*}
    The exponent is negative, if $\delta < 1/2 - 1/p$.
    For $k>k_n^*$, we use $L_n(k)^p \geq L_n(k)^{p/2} L_n(k_n^*)^{p/2}$, \eqref{eq:|a-a(k)| vs. L_n(k)}, and $NL_n(k) \geq \sigma^2 k$,  to arrive at the estimate
    \begin{equation*}
      \begin{split}
        (k_{n}^{*})^{\delta p}\sum_{k=k_n^*+1}^{K_n} \frac{\|a(k_n^*) - a\|_{R}^p}{N^{p/2}L_n(k)^p}& \leq (k_{n}^{*})^{\delta p}\sum_{k=k_n^*+1}^{K_n} \frac{1}{N^{p/2}L_n(k)^{p/2}} \lesssim (k_{n}^{*})^{\delta p} \sum_{k=k_{n}^{*}+1}^{K_{n}}k^{-p/2}\\
                                                                                                   &\lesssim (k_{n}^{*})^{1+\delta p -p/2}.
      \end{split}
    \end{equation*}
    In total, this yields $I\lesssim (k_{n}^{*})^{1+\delta p - p/2}$.
    The expression $II$ in Equation \eqref{eq:sk-sigmak:sum-split} can be estimated by the exact same arguments, yielding $II\lesssim (k_{n}^{*})^{1+\delta p - p/2}$.

    Moving on to the first part of \eqref{eq:s-sigma-split}, we set $\eta = a(k_{n}^{*}) - a(k)$, $\xi_{0}=1$, and $\xi_{i}= 0$ for $i=1,\dots,K_{n}$ in Lemma \ref{lemma:QF-1}.
    This yields
    \begin{equation*}
      \begin{split}
        \mathbb{P}\biggl(\max_{1\leq k \leq K_{n}}\frac{2|\langle a(k_n^*) - a(k), \hat{r}(K_n) - r(K_n)\rangle |}{L_{n}(k)}> b(k_{n}^{*})^{-\delta}\biggr) &  \lesssim (k_{n}^{*})^{\delta p} \sum_{k=1}^{K_{n}}\frac{\|a(k_{n}^{*}) - a(k)\|_{R}^{p}}{N^{p/2}L_{n}(k)^{p}} \\
                                                                                                                                                           & \lesssim (k_{n}^{*})^{1+\delta p - p/2},
      \end{split}
    \end{equation*}
    by the same argument used to estimate \eqref{eq:sk-sigmak:sum-split} in the first part. This concludes the proof. 
\end{proof}

     \subsection{Proof of Lemma \ref{lemma:A1}}\label{sec:lemma-A1}

\begin{proof}[\proofof{Lemma \ref{lemma:A1}}]
If we can establish
\begin{equation}\label{eq:A1-rate-goal}
    \bigg\lVert \frac{N}{k}\|R(k)^{-1}B_n\|_R^2 - \sigma^2 \bigg\rVert_p \lesssim k^{-\frac{1}{2}}
\end{equation}    
for $p = q/4 >2$, then we can perform the same estimate as in \eqref{eq:Bn-Bnk-Markov-est},
    \begin{equation*}
      \begin{split}
        \mathbb{P}\biggl(\max_{1 \leq k \leq K_n} \frac{|\|R(k)^{-1}B_{n}\|_R^2 - \frac{k}{N}\sigma^2|}{L_n(k)} > \varepsilon \biggr) &\leq \frac{1}{\varepsilon^p} \sum_{k=1}^{K_n} \frac{\mathbb{E}(|\frac{N}{k}\|\hat{a}(k) - a(k)\|_R^2 - \sigma^2|^p)}{(\frac{N}{k}L_n(k))^p} \cformat{main-lemma-prep-0}{\\
        & \lesssim \frac{1}{(k_n^*)^p}\sum_{k=1}^{k_n^*} k^{\frac{p}{2}} + \sum_{k=k_n^* +1}^{K_n} k^{-\frac{p}{2}}\\
        &}{}\lesssim \frac{(k_n^*)^{1-\frac{p}{2}}}{\varepsilon^{p}}.
      \end{split}
    \end{equation*}

Here we again exploit the fact that any sequence $k_n^* \in \mathrm{argmin}_{1 \leq k \leq K_n} L_n(k)$ diverges (see Lemma 4.1 of \cite{karagrigoriou1995}). 
In particular, setting $c\varepsilon=(k_{n}^{*})^{-\delta}$, yields
\begin{equation*}
  \mathbb{P}(A_{1}\geq (k_{n}^{*})^{-\delta}) \lesssim (k_{n}^{*})^{1-p/2+\delta p}.
\end{equation*}
Thus, it is sufficient to establish \eqref{eq:A1-rate-goal} to finish the proof of Lemma \ref{lemma:A1}. 
The proof of \eqref{eq:A1-rate-goal} poses a major technical difficulty and is split into multiple steps. 
In contrast to the proof of Lemma \ref{lemma:Bn-Bnk}, we can not simply get rid of $R(k)^{-1}$ by using that its eigenvalues are uniformly bounded in $k$. 
The normalization $R(k)^{-1}$ provides is crucial for the proof. 
Hence, a more fine-grained analysis is necessary. 
Recall that the inverse covariance matrix $R(k)^{-1}$ of the stationary process $X$ can be decomposed into
  \begin{equation}\label{eq:R(k)-inverse-representation}
    R(k)^{-1} = L(k)^T D(k) L(k),
  \end{equation}
  where
  \begin{equation*}
    L(k) =
    \begin{pmatrix}
      1 & a_1(k-1) &  \cdots& a_{k-2}(k-1)& a_{k-1}(k-1)\\
      0 & 1 & a_1(k-2) &  \cdots& a_{k-2}(k-2)\\
      \vdots &\vdots &\vdots &\vdots &\vdots \\
      0 & \cdots & 0 & 1 & a_1(1)\\
      0 & \cdots & 0 & 0 & 1\\
    \end{pmatrix}
  \end{equation*}
and $D(k) = \mathrm{diag}(\sigma_{k-1}^{-2}, \dots, \sigma_{0}^{-2})$ (see \cite{kromer}, p. 98, and note that $a(k)$ are the negative Yule-Walker coefficients). 
Since the $j$-th row of $L(k)$ contains the AR-coefficients of $\pino{t-j}{k-j}$, we get
  \begin{equation*}
    L(k)B_n = \frac{1}{N}\sum_{t=K_n+1}^n L(k)\begin{pmatrix}
      X_{t-1}\\
      \vdots\\
      X_{t-k}
    \end{pmatrix}e_{t} =  \frac{1}{N} \sum_{t=K_n+1}^{n}
    \begin{pmatrix}
      \pino{t-1}{k-1}\\
      \vdots\\
      \pino{t-k}{0}
    \end{pmatrix}e_{t}.
  \end{equation*}

  Equipped with this identity and the factorization $R(k)^{-1} = L(k)^TD(k)L(k)$, we may write
  \begin{equation*}
    \begin{split}
      \frac{N}{k}\|R(k)^{-1}B_n\|_R^2 & = \cformat{main-lemma-prep-1}{\frac{N}{k} \langle B_n, R(k)^{-1}B_n\rangle \\
    & =  \frac{N}{k} \langle L(k)B_n, D(k)L(k)B_n\rangle \\
    & =\frac{1}{Nk}\sum_{s,t=K_n+1}^n e_se_t  \biggl\langle \begin{pmatrix}
      \pino{t-1}{k-1}\\
      \vdots\\
      \pino{t-k}{0}
    \end{pmatrix},D(k)\begin{pmatrix}
      \pino{s-1}{k-1}\\
      \vdots\\
      \pino{s-k}{0}
    \end{pmatrix}\biggr\rangle\\
    & =\frac{1}{Nk} \sum_{j=1}^k \sigma_{k-j}^{-2} \sum_{s,t=K_n+1}^n e_te_s\pino{t-j}{k-j}\pino{s-j}{k-j}.}{\frac{1}{Nk} \sum_{j=1}^k \sigma_{k-j}^{-2} \sum_{s,t=K_n+1}^n e_te_s\pino{t-j}{k-j}\pino{s-j}{k-j}.}
    \end{split}
  \end{equation*}

  Following \cite{karagrigoriou2001}, we split this sum into $U_1 + 2 U_2$ with
  \begin{equation*}
    \begin{split}
      U_1 &= \frac{1}{Nk}\sum_{t=K_n+1}^{n}\sum_{j=1}^k \sigma_{k-j}^{-2} e_{t}^2 \pino{t-j}{k-j}^2, \text{ and}\\
      U_2 &= \frac{1}{Nk}\sum_{t=K_n+2}^{n}\sum_{s=K_n+1}^{t-1}\sum_{j=1}^k \sigma_{k-j}^{-2} e_{t} \pino{t-j}{k-j}e_{s}\pino{s-j}{k-j}.
    \end{split}
  \end{equation*}
  The term $U_1$ contains the \textit{mass} (\textit{i.e.} $U_1 \to \sigma^2$ in $L^p$), while $U_2$ should be thought of as an error term, i.e., $U_{2}\to 0$ in $L^{p}$.
  For the remainder of the proof, we write $\II{t}{k}{j} = e_{t}\pino{t-j}{k-j}$ and $\I{t}{s}{k}{j} = \II{t}{k}{j}\II{s}{k}{j} = e_{t}\pino{t-j}{k-j}e_{s}\pino{s-j}{k-j}$.
  %Troughout all of the following steps we set $H^i_j(Z) = \mathbb{E}(Z\mid\mathcal{F}^i_j)$, and $Q^i_j(Z) =Z- H^i_j(Z)$.
  The proof is split into the following steps:

  \begin{enumerate}[label=Step \Roman*,ref=(Step \Roman*),align=left]
\item \label{Step I} (Locate the mass). We show that $\|U_1 - \sigma^2\|_p \lesssim k^{-1/2}$.
The rest of this proof is concerned with establishing sufficiently fast decay of $U_2$, \textit{i.e.}, $\|U_2\|_p \lesssim k^{-1/2}$. 
\item\label{Step II} (Center the remainder). Center $U_2$. To this end we show that $|\mathbb{E}(U_2)| \lesssim k^{-1}$. In the remaining steps, we show that $\|U_2 -\mathbb{E}(U_2)\|_p \lesssim k^{-1/2}$.
  Note that this was not necessary in previous proofs, as $\mathbb{E}(U_{2})=0$, if the innovations are independent.
\item \label{Step III} (Estimate nearby terms). We split the remainder $\|U_2 - \mathbb{E}(U_2)\|_p$ into two terms; one in which we collect all terms with $t-s \leq k+1$ and one where $t-s >k+1$.  
To deal with the terms where $t-s \leq k+1$ we show
      \begin{equation*}
        \begin{split}
          &\frac{1}{Nk}\biggl\|\sum_{t=K_n+2}^{K_n+k+2} \sum_{s=K_n+1}^{t-1} \sum_{j=1}^k \sigma_{k-j}^{-2} (\I{t}{s}{k}{j}-\mathbb{E}(\I{t}{s}{k}{j}))\biggr\|_p \lesssim k^{-\frac{1}{2}}, \quad \text{and}\\
          &\frac{1}{Nk}\biggl\|\sum_{t=K_n+k+3}^{n} \sum_{s=t-1-k}^{t-1} \sum_{j=1}^k \sigma_{k-j}^{-2} (\I{t}{s}{k}{j}-\mathbb{E}(\I{t}{s}{k}{j}))\biggr\|_p \lesssim k^{-\frac{1}{2}}.
        \end{split}
      \end{equation*}
      These estimates are established in Lemma \ref{lemma:stepIII} and Lemma \ref{lemma:StepIII-2} respectively.
\item \label{Step IV} (Estimate terms far apart). Since the remainder is centered by \ref{Step II}, we may perform a martingale difference approximation. More precisely, we apply Lemma \ref{lemma:mdsa} to 
        \begin{equation*}
          Z_t(k) = \sum_{j=1}^k \sigma_{k-j}^{-2}\sum_{s=K_n+1}^{t-k-2} \I{t}{s}{k}{j}.
        \end{equation*}
        This yields 
        \begin{equation*}
            \frac{1}{Nk} \bigg\| \sum_{t=K_n+k+3}^n  Z_t(k) - \mathbb{E}(Z_t(k))\biggr\|_p = \frac{1}{Nk} \bigg\| \sum_{l=0}^\infty \sum_{t=K_n+k+3}^n  \mathcal{P}_{t-l}(Z_t(k))\biggr\|_p.
        \end{equation*}
        Using the linearity of $\mathcal{P}_{t-l}$, and the fact that $\II{s}{k}{j} = e_s \pino{s-j}{k-j}$ is $\mathcal{F}_{t-l-1}$-measurable for $l=0,\dots, k$, we may estimate the sum by the following two terms
      \begin{equation*}
        \begin{split}
         &\frac{1}{Nk} \bigg\| \sum_{l=0}^\infty \sum_{t=K_n+k+3}^n  \mathcal{P}_{t-l}(Z_t(k))\biggr\|_p \\
         & \leq \frac{1}{Nk}\sum_{l=0}^k\bigg\| \sum_{t=K_n+k+3}^n \sum_{s=K_n+1}^{t-k-2} \sum_{j=1}^k \sigma_{k-j}^{-2} \mathcal{P}_{t-l}(\II{t}{k}{j})\II{s}{k}{j}\biggr\|_p \\
          & + \frac{1}{Nk} \bigg\|\sum_{l=k+1}^{\infty} \sum_{t=K_n+k+3}^n \mathcal{P}_{t-l}(Z_t(k))\biggr\|_p. \\
        \end{split}
      \end{equation*}
      Since both of these estimates are somewhat involved, we will split them into two lemmas. In Lemma \ref{lemma:stepIV-1} we are going to show
      \begin{equation*}
        \frac{1}{Nk}\sum_{l=0}^k\bigg\| \sum_{t=K_n+k+3}^n \sum_{s=K_n+1}^{t-k-2} \sum_{j=1}^k \sigma_{k-j}^{-2} \mathcal{P}_{t-l}(\II{t}{k}{j})\II{s}{k}{j}\biggr\|_p \lesssim k^{-\frac{1}{2}},
      \end{equation*}
      while the estimate
      \begin{equation*}
        \frac{1}{Nk} \bigg\|\sum_{l=k+1}^{\infty} \sum_{t=K_n+k+3}^n \mathcal{P}_{t-l}(Z_t(k))\biggr\|_p \lesssim k^{-\frac{1}{2}}
      \end{equation*}
      will be established in Lemma \ref{lemma:stepIV-2}, finishing the proof.
\end{enumerate}

\end{proof}

    \begin{lemma}[Step I]
      Under Assumptions \ref{ass:main} and \ref{ass:weak:dep}, we have 
      \begin{equation*}
        \|U_1 - \sigma^2\|_p \leq C_{p}k^{-1/2},
      \end{equation*}
      where $p=q/4$, and $C_{p}$ is a constant depending only on $q$, and the distribution of the process $X$.
    \end{lemma}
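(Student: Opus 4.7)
The plan is to split $U_{1} - \sigma^{2}$ into a bias part $\mathbb{E}(U_{1}) - \sigma^{2}$ and a centred fluctuation part $U_{1} - \mathbb{E}(U_{1})$, and to bound them by $k^{-1}$ and $N^{-1/2}$ respectively. Since Assumption \ref{G3} implies $K_{n} \lesssim \sqrt{N}$, and hence $N^{-1/2} \lesssim K_{n}^{-1/2} \leq k^{-1/2}$ uniformly for $k \leq K_{n}$, combining the two bounds will yield the claimed rate $k^{-1/2}$.

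For the bias, I would exploit the approximate independence between $e_{t}$ and $\pino{t-j}{k-j}$ induced by the time gap of length $j$. Introduce $H_{j} = \mathbb{E}(e_{0}\mid \sigfdouble{0}{-j+1})$ and $Q_{j} = e_{0} - H_{j}$. By construction $H_{j}$ is $\sigma(\varepsilon_{0},\dots,\varepsilon_{-j+1})$-measurable, hence independent of $\pino{-j}{k-j}$ (which is driven by $\varepsilon_{-j},\varepsilon_{-j-1},\dots$), and Lemma \ref{lemma:Q} together with Lemma \ref{lemma:dep-rates} yields $\|Q_{j}\|_{4} \lesssim j^{-\alpha}$. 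Expanding $e_{0}^{2}\pino{-j}{k-j}^{2} = (H_{j}+Q_{j})^{2}\pino{-j}{k-j}^{2}$ and using the independence of $H_{j}^{2}$ from $\pino{-j}{k-j}^{2}$ together with $\mathbb{E}(H_{j}) = 0$ and $\mathbb{E}(H_{j}Q_{j}) = 0$, Hölder's inequality produces
\[
\bigl|\mathbb{E}(e_{0}^{2}\pino{-j}{k-j}^{2}) - \sigma^{2}\sigma_{k-j}^{2}\bigr| \lesssim \|Q_{j}\|_{2}^{2}\sigma_{0}^{2} + \|Q_{j}\|_{4}\|\pino{-j}{k-j}\|_{4}^{2}\bigl(\|H_{j}\|_{4}+\|Q_{j}\|_{4}\bigr) \lesssim j^{-\alpha}.
\]
After dividing by $\sigma_{k-j}^{2}$ (bounded from below by $\sigma^{2}$ thanks to Lemma \ref{lemma:basic-props-AR(oo)}), averaging over $j = 1,\dots,k$, and invoking $\alpha \geq 5/2 > 1$, this gives $|\mathbb{E}(U_{1}) - \sigma^{2}| \lesssim k^{-1}$.

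For the fluctuation term, I would view the summand $Z_{t}^{(j,k)} = e_{t}^{2}\pino{t-j}{k-j}^{2} - \mathbb{E}(e_{t}^{2}\pino{t-j}{k-j}^{2})$ as a centred, physically dependent process in $t$. By Lemma \ref{lemma:dep-rates} the pseudo-innovation processes satisfy $D_{4p}^{I_{k-j}}(0) \leq C$ uniformly in $k-j$, and iterating Remark \ref{rem:prod-pd} transfers this uniform control to the product, yielding a bound $D_{p}^{Z^{(j,k)}}(0) \leq C'$ independent of $j$ and $k$. Applying Lemma \ref{lemma:PD-standart-est} to the constant weight vector of length $N$ then produces
\[
\Bigl\| \tfrac{1}{N}\sum_{t=K_{n}+1}^{n} Z_{t}^{(j,k)} \Bigr\|_{p} \lesssim N^{-1/2},
\]
uniformly in $j$ and $k$. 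Combining with the triangle inequality over $j$ and the uniform upper bound $\sigma_{k-j}^{-2} \leq \sigma^{-2}$ then yields $\|U_{1} - \mathbb{E}(U_{1})\|_{p} \lesssim N^{-1/2}$.

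The main obstacle is ensuring that every implicit constant is genuinely uniform in $j$, $k$, and $k-j$: both the physical dependence coefficients of $(\pino{t}{k-j})_{t}$ and the norms $\|\pino{0}{k-j}\|_{q}$ must be kept from blowing up as $k-j$ varies. This is where Lemmas \ref{lemma:dep-rates} and \ref{lemma:basic-props-AR(oo)} do the real work, supplying the uniform-in-$m$ bounds $D^{I_{m}}_{q}(\alpha) \leq C_{\alpha}$, $\|\pino{0}{m}\|_{q} \leq C$, and $\sigma_{m}^{2} \geq \sigma^{2}$. Once these uniformities are secured, the remainder is a fairly routine application of the Burkholder-type toolkit from Section \ref{sec:2-forms-weakly-proc}.
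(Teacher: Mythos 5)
Your proposal is correct, but it follows a genuinely different route from the paper. You split $U_1-\sigma^2$ into the bias $\mathbb{E}(U_1)-\sigma^2$ and the fluctuation $U_1-\mathbb{E}(U_1)$: the bias is controlled through the decomposition $e_0=H_j+Q_j$ with $H_j=\mathbb{E}(e_0\mid\sigfdouble{0}{-j+1})$ independent of $\pino{-j}{k-j}$ and $\|Q_j\|_4\lesssim j^{-\alpha}$ (this is essentially the $H/Q$-gap technique the paper itself uses, but in Lemma \ref{lemma:stepII} for $\mathbb{E}(U_2)$, not here), giving $|\mathbb{E}(U_1)-\sigma^2|\lesssim k^{-1}$; the fluctuation is bounded lag-by-lag via Lemma \ref{lemma:PD-standart-est}, with uniformity in $j,k$ supplied by Lemmas \ref{lemma:dep-rates} and \ref{lemma:basic-props-AR(oo)}, giving $N^{-1/2}$ after a plain triangle inequality over $j$, and you then invoke $k\le K_n\lesssim\sqrt{N}$ from \ref{G3} to convert this into $k^{-1}$. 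The paper instead inserts $\pm e_t^2$, sacrifices the $t$-averaging on the main term via stationarity and Hölder, and extracts the factor $\sqrt{k}$ of cancellation in the $j$-direction: after a martingale decomposition in $l$, the projections $\mathcal{P}_{-j-l}(\pino{-j}{k-j}^2)$ form a martingale difference sequence in $j$, Burkholder gives $\bigl\|\sum_j(\pino{-j}{k-j}^2/\sigma_{k-j}^2-1)\bigr\|_{2p}\lesssim\sqrt{k}$, and the uniform bound $\sum_{l\ge0}\bigl(\tfrac1k\sum_j\delta_{4p}^{I_j}(l)^2\bigr)^{1/2}\le C$ is established by comparing $\delta_{4p}^{I_j}$ with $\delta_{4p}^{I}$ through Baxter's inequality. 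What each buys: the paper's argument produces the $k^{-1/2}$ rate for the main term without trading $N$ against $k$ (the growth condition only absorbs the $N^{-1/2}$ remainder), whereas your argument is shorter, avoids the $j$-direction martingale and the $\delta^{I_j}$-versus-$\delta^{I}$ comparison entirely, and under \ref{G3} actually yields the slightly stronger bound $k^{-1}+N^{-1/2}\lesssim k^{-1}$; its only structural cost is that the fluctuation bound genuinely needs $k\lesssim\sqrt{N}$, so it is tied to the maximal-order constraint rather than holding for each fixed $k$ in isolation.
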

    \begin{proof}
      Inserting $\pm e_t$ into $U_1 - \sigma^2$, a simple application of the triangle inequality yields
  \begin{equation*}
    \begin{split}
      \|U_1 - \sigma^2\|_p &= \biggl\|\frac{1}{kN}\sum_{t=K_n+1}^{n} \sum_{j=1}^k\biggl( e_{t}^2 \frac{\pino{t-j}{k-j}^2}{\sigma_{k-j}^2} \pm e_{t}^2 - \sigma^2\biggr)\biggr\|_p\\
      \cformat{main-lemma-stepI-1}{&= \biggl\|\frac{1}{kN}\sum_{t=K_n+1}^{n}  e_{t}^2 \sum_{j=1}^k \biggl(\frac{\pino{t-j}{k-j}^2}{\sigma_{k-j}^2} -1 \biggr) + \frac{1}{N}\sum_{t=K_n+1}^{n}\biggl(e_{t}^2 - \sigma^2\biggr)\biggr\|_p\\}{}
      &\leq \frac{1}{kN}\biggl\|\sum_{t=K_n+1}^{n}  e_{t}^2 \sum_{j=1}^k \biggl(\frac{\pino{t-j}{k-j}^2}{\sigma_{k-j}^2} -1 \biggr)\biggr\|_p + \frac{1}{N}\biggl\|\sum_{t=K_n+1}^{n}\biggl(e_{t}^2 - \sigma^2\biggr)\biggr\|_p.\\
    \end{split}
  \end{equation*}
  The second term is $O(N^{-1/2})$ by Lemma \ref{lemma:PD-standart-est}. 
  Since the process $(e_t)$ is stationary, we may estimate the first term by the triangle and Hölder's inequality, 
  \begin{equation*}
    \begin{split}
      \frac{1}{kN}\biggl\|\sum_{t=K_n+1}^{n}  e_{t}^2 \sum_{j=1}^k \biggl(\frac{\pino{t-j}{k-j}^2}{\sigma_{k-j}^2} -1 \biggr)\biggr\|_p &\leq \frac{1}{k}\|e_0\|_{2p}\biggl\| \sum_{j=1}^k \biggl(\frac{\pino{-j}{k-j}^2}{\sigma_{k-j}^2} -1 \biggr)\biggr\|_{2p}.
      %&\leq \frac{1}{k}\sum_{l\geq 0} \|e_0\|_2\biggl\| \sum_{j=1}^k \frac{1}{\sigma_{k-j}^2}\mathcal{P}_{-j-l}(\pino{-j}{k-j}^2)\biggr\|_2.
    \end{split}
  \end{equation*}
    Note that we are not in a position to apply Lemma \ref{lemma:PD-standart-est} directly to the right-hand side of this estimate, since we do not evaluate a single process $Z = (Z_t)_{t\in\zz}$ at different time points $Z_j$, but rather we evaluate a different process $(\pino{t}{k-j})_{t\in \zz}$ for each $j$. However, we still have $\mathbb{E}(\pino{-j}{k-j}^2) = \sigma_{k-j}^2$, and thus can perform a martingale difference approximation as in Lemma \ref{lemma:mdsa},
    \begin{equation*}
        \begin{split}
            \biggl\| \sum_{j=1}^k \biggl(\frac{\pino{-j}{k-j}^2}{\sigma_{k-j}^2} -1 \biggr)\biggr\|_{2p} & \leq \sum_{l=0}^\infty \biggl\| \sum_{j=1}^k \sigma_{k-j}^{-2}\mathcal{P}_{-j-l}(\pino{-j}{k-j}^2) \biggr\|_{2p}.
        \end{split}
    \end{equation*}
    After reversing the order of summation, $\mathcal{P}_{-k+j-l}(\pino{-k+j}{j})$ is a martingale difference sequence with respect to $\sigf{-k-l+j}$ for $j=0,\dots, k-1$. An application of Burkholder's inequality to the right-hand side of this estimate yields% ($2p > 2$ and hence $p'=2$ in Lemma \ref{lemma:burkholder})
    \begin{equation}\label{eq:A1-after-burkholder}
        \begin{split}
          \sum_{l=0}^\infty \biggl\| \sum_{j=1}^k \sigma_{k-j}^{-2}\mathcal{P}_{-j-l}(\pino{-j}{k-j}^2) \biggr\|_{2p} \cformat{main-lemma-stepI-3}{& \leq B_{2p} \sum_{l=0}^\infty \sqrt{ \sum_{j=1}^k \sigma_{k-j}^{-4}\|\mathcal{P}_{-j-l}(\pino{-j}{k-j}^2) \|_{2p}^2}\\}{}
            & \leq B_{2p} \sum_{l=0}^\infty \sqrt{ \sum_{j=1}^k \sigma_{k-j}^{-4}\|\mathcal{P}_{0}(\pino{l}{k-j}^2) \|_{2p}^2}.\\
        \end{split}
    \end{equation}
    Recall that $\mathcal{P}_{0}(\pino{l}{k-j}^2) = \mathbb{E}(\pino{l}{k-j}^2 - (\pino{l}{k-j}')^2\mid \mathcal{F}_0)$ (see Equation \eqref{eq:P(Z)=E(Z-Z')}). 
    \cformat{main-lemma-stepI-2}{Since the conditional expectation is an $L^{2p}$-contraction we may estimate
    \begin{equation*}
        \begin{split}
            \|\mathcal{P}_{0}(\pino{l}{k-j}^2)\|_{2p} & \leq \|\pino{l}{k-j}^2 - (\pino{l}{k-j}')^2\|_{2p} \\
            & = \|\pino{l}{k-j}^2 \pm \pino{l}{k-j}\pino{l}{k-j}' - (\pino{l}{k-j}')^2\|_{2p} \\
            & \leq \|\pino{l}{k-j}(\pino{l}{k-j}-\pino{l}{k-j}')\|_{2p} + \|\pino{l}{k-j}'(\pino{l}{k-j}-\pino{l}{k-j}')\|_{2p}.
        \end{split}
    \end{equation*}
    An application of Hölder's inequality yields
    \begin{equation*}
        \begin{split}
            \|\mathcal{P}_{0}(\pino{l}{k-j}^2)\|_{2p} & \leq \|\pino{l}{k-j}\|_{4p}\|(\pino{l}{k-j}-\pino{l}{k-j}')\|_{4p} + \|\pino{l}{k-j}'\|_{4p}\|(\pino{l}{k-j}-\pino{l}{k-j}')\|_{4p}
        \end{split}
    \end{equation*}
    Since $\pino{l}{k-j} \disteq \pino{l}{k-j}'$ and $\|\pino{0}{k-j}\|_{4p}$ is uniformly bounded in $k$ and $j$ by Lemma \ref{lemma:basic-props-AR(oo)}, this may be estimated by
    \begin{equation*}
        \|\mathcal{P}_{0}(\pino{l}{k-j}^2)\|_{2p} \leq 2\|\pino{0}{k-j}\|_{4p} \delta_{4p}^{I_{k-j}}(l) \lesssim\delta_{4p}^{I_{k-j}}(l).
    \end{equation*}}{%
    Using Lemma \ref{lemma:basic-props-AR(oo)} and a similar argument as in \eqref{eq:pd-prod-est}, we get
\begin{equation*}
        \|\mathcal{P}_{0}(\pino{l}{k-j}^2)\|_{2p} \leq 2\|\pino{0}{k-j}\|_{4p} \delta_{4p}^{I_{k-j}}(l) \lesssim\delta_{4p}^{I_{k-j}}(l).
    \end{equation*}}

    %In the proof of Lemma \ref{lemma:Bn-Bnk} we have already shown that $\sigma_k^2 \geq \sigma^2$. 
    Again, using Lemma \ref{lemma:basic-props-AR(oo)}, we estimate the terms $\sigma_{k-j}^{-4}$ in \eqref{eq:A1-after-burkholder} by $\sigma^{-4}$.
    So far, we have proved the following estimate
      \begin{equation*}
        \|U_1 - \sigma^2\|_p \lesssim \frac{1}{\sqrt{k}} \sum_{l= 0}^\infty \sqrt{\frac{1}{k} \sum_{j=0}^{k-1} \delta_{4p}^{I_j}(l)^2} + \frac{1}{\sqrt{N}}.
  \end{equation*}

  The second term is $\lesssim k^{-1}$, since $K_n^2/n \to 0$. Hence, it is sufficient to show that 
    \begin{equation*}
        \sum_{l= 0}^\infty \sqrt{\frac{1}{k} \sum_{j=0}^{k-1} \delta_{4p}^{I_j}(l)^2} \leq C,
  \end{equation*}
  for some constant $C$ independent of $k$, in order to finish the proof. 
  Inserting $\pm\delta_{4p}^I(l)$ into $\bigl(\sum_{0 \leq j \leq k-1} (\delta_{4p}^{I_j}(l)\pm\delta_{4p}^I(l))^2\bigr)^{1/2}$, and using the triangle inequality for the $\ell^2$ norm, we may estimate this expression by 
  \begin{equation*}%\label{eq:A1-pm-delta}%nicht verwendet
    \sum_{l\geq 0} \sqrt{\frac{1}{k} \sum_{j=0}^{k-1} \delta_{4p}^{I_j}(l)^2} \leq \sum_{l\geq 0} \sqrt{\frac{1}{k} \sum_{j=0}^{k-1} (\delta_{4p}^{I_j}(l) - \delta_{4p}^I(l))^2} + D^I_{4p}(0).
  \end{equation*}
  % By the inverse triangle inequality for the $\ell_2$-norm we have
  % \begin{equation*}
  %   \sum_{l\geq 0} \biggl|\biggl(\frac{1}{k} \sum_{j=0}^{k-1} \delta_4^{I_j}(l)^2\biggr)^{\frac{1}{2}}-\delta_4(l)\biggr| \leq \sum_{l\geq 0} \biggl(\frac{1}{k} \sum_{j=0}^{k-1} (\delta_4^{I_j}(l)-\delta_4(l))^2\biggr)^{\frac{1}{2}}.
  % \end{equation*}

  An application of the reverse triangle inequality for the $L^{4p}$ norm yields
  \begin{equation*}
    \begin{split}
      |\delta_{4p}^{I_j}(l) - \delta_{4p}^I(l)| \cformat{main-lemma-stepI-4}{&= |\|\pino{l}{j} - \pino{l}{j}'\|_{4p} - \|e_{l} - e_{l}'\|_{4p}| \\
      &\leq \|(\pino{l}{j} - e_l) - (\pino{l}{j}' - e_l')\|_{4p} \\
      & = \biggl\|\sum_{m= 0}^\infty (a_m(j) - a_m)(X_{l-m} - X_{l-m}')\biggr\|_{4p} \\}{}
      & \leq \sum_{m= 0}^l |a_m(j) - a_m|\delta_{4p}^X(l-m),
    \end{split}
  \end{equation*}
  where we have used the fact that $\delta_{4p}^X(t) = 0$ for $t < 0$. Using this, and defining the vector $v_m=(|a_m(j)-a_m|)_{0\leq j\leq k-1}$, we arrive at the estimate
  \begin{equation*}
      \begin{split}
        \sum_{l= 0}^\infty \sqrt{\frac{1}{k} \sum_{j=0}^{k-1} (\delta_{4p}^{I_j}(l) - \delta_{4p}^I(l))^2}&\leq \frac{1}{\sqrt{k}}\sum_{l=0}^\infty\sqrt{\sum_{j=0}^{k-1} \biggl(\sum_{m= 0}^l |a_m(j) - a_m|\delta_{4p}^X(l-m)\biggr)^2}\\
        & = \frac{1}{\sqrt{k}} \sum_{l=0}^\infty \biggl\|\sum_{m=0}^l \delta_{4p}^X(l-m) v_m\biggr\|_{\ell^2}.
      \end{split}
  \end{equation*}
  After an application of the triangle inequality for the $\ell^2$ norm, we spot the Cauchy product of two series
  \begin{equation*}
      \begin{split}
        \frac{1}{\sqrt{k}} \sum_{l=0}^\infty \biggl\|\sum_{m=0}^l \delta_{4p}^X(l-m) v_m\biggr\|_{\ell^2} & \leq \frac{1}{\sqrt{k}} \sum_{l=0}^\infty \sum_{m=0}^l \delta_{4p}^X(l-m) \|v_m\|_{\ell^2} \\
        &= \frac{1}{\sqrt{k}} \sum_{l=0}^\infty \delta_{4p}^X(l) \sum_{m=0}^\infty \|v_m\|_{\ell^2}.
      \end{split}
  \end{equation*}
    Using $\|x\|_{\ell^2} \leq \|x\|_{\ell^1}$, we get 
    \begin{equation*}
        \begin{split}
            \sum_{m=0}^\infty \|v_m\|_{\ell^2} & \leq \sum_{m=0}^\infty \|v_m\|_{\ell^1} = \sum_{j=0}^{k-1} \sum_{m=0}^\infty  |a_m(j) - a_m| = \sum_{j=0}^{k-1} \|a(j)-a\|_{\ell^1}. 
        \end{split}
    \end{equation*}
  Now by Lemma \ref{lemma:basic-props-AR(oo)}, $\|a(j) - a\|_{\ell^1} \lesssim j^{-5/2}$. Thus $\sum_{m\geq 0} \|v_m\|_{\ell^2}$ is convergent, and 
  \begin{equation*}
        \sum_{l= 0}^\infty \sqrt{\frac{1}{k} \sum_{j=0}^{k-1} \delta_{4p}^{I_j}(l)^2} \leq C
  \end{equation*}
  for some constant $C$ independent of $k$, finishing the proof of the first step. 
  \end{proof}

  \begin{lemma}[Step II]\label{lemma:stepII}
    Under Assumptions \ref{ass:main} and \ref{ass:weak:dep}, we have $|\mathbb{E}(U_2)| \leq C_{q}k^{-1/2}$, for some constant $C_{q}$ depending only on $q$, and the distribution of the process $X$.
  \end{lemma}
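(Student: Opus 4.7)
My plan is to reduce $\mathbb{E}(U_{2})$ to a trace against fourth joint cumulants and then exploit their absolute summability. Using the factorization $R(k)^{-1} = L(k)^{\top}D(k)L(k)$ together with the identity $(L(k)X_{t-1}(k))_{j} = \pino{t-j}{k-j}$ (cf.\ the derivation in the proof of Lemma \ref{lemma:A1}), I will first rewrite $\sum_{j=1}^{k}\sigma_{k-j}^{-2}\pino{t-j}{k-j}\pino{s-j}{k-j} = X_{t-1}(k)^{\top}R(k)^{-1}X_{s-1}(k)$; stationarity then produces
\begin{equation*}
\mathbb{E}(U_{2}) = \frac{1}{k}\sum_{h=1}^{N-1}(1-h/N)\, G(h), \qquad G(h) = \mathbb{E}\bigl[e_{h}e_{0}\, X_{h-1}(k)^{\top}R(k)^{-1}X_{-1}(k)\bigr].
\end{equation*}
In the classical i.i.d.\ setting of \cite{shibata,karagrigoriou2001,ing-wei-main}, $G(h) = 0$ for all $h \geq 1$ by independence of $e_{h}$ from the $\mathcal{F}_{h-1}$-measurable factor, which is why the present step is implicit there; here, $G(h)$ must be bounded quantitatively.

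Next, I would expand $G(h) = \sum_{i,j=1}^{k}(R(k)^{-1})_{ij}\mathbb{E}[e_{h}e_{0}X_{h-i}X_{-j}]$ and observe that each of the three pair-products in the fourth-moment expansion for centered variables, $\mathbb{E}[ABCD] = \kappa_{4} + \mathbb{E}[AB]\mathbb{E}[CD] + \mathbb{E}[AC]\mathbb{E}[BD] + \mathbb{E}[AD]\mathbb{E}[BC]$, contains at least one of the vanishing factors $\mathbb{E}[e_{h}e_{0}] = 0$, $\mathbb{E}[e_{h}X_{h-i}] = 0$, or $\mathbb{E}[e_{h}X_{-j}] = 0$ (valid for $h \geq 1$, $i, j \geq 1$ by Yule--Walker orthogonality and uncorrelatedness of the innovations). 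Hence $\mathbb{E}[e_{h}e_{0}X_{h-i}X_{-j}] = \kappa_{4}(e_{h}, e_{0}, X_{h-i}, X_{-j})$ is purely a fourth joint cumulant.

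By multilinearity and translation invariance of cumulants, substituting the $\ell^{1}$-summable AR expansion $e_{t} = X_{t} + \sum_{m\geq 1}a_{m}X_{t-m}$ and re-indexing $(u,v,w) = (h-m_{1}+m_{2},\, h-i+m_{2},\, m_{2}-j)$ telescopes the triple sum as
\begin{equation*}
\sum_{h \geq 1}\sum_{i,j=1}^{k}|\kappa_{4}(e_{h}, e_{0}, X_{h-i}, X_{-j})| \leq \bigl(1 + \|a\|_{\ell^{1}}\bigr)^{2}\sum_{u,v,w \in \zz}|\kappa_{4}(X_{0}, X_{u}, X_{v}, X_{w})|.
\end{equation*}
The first factor is finite by Lemma \ref{lemma:basic-props-AR(oo)}; the second, combined with the entrywise bound $\max_{i,j}|(R(k)^{-1})_{ij}| \leq \|R(k)^{-1}\|_{\mathrm{op}} \leq \Lambda_{1}^{-1}$ from Assumption \ref{G2}, yields $\sum_{h \geq 1}|G(h)| \leq C$ uniformly in $k$, so that $|\mathbb{E}(U_{2})| \leq C/k$, strictly stronger than the claimed rate $C_{q}k^{-1/2}$.

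The main obstacle will be establishing the fourth-cumulant summability $\sum_{u,v,w \in \zz}|\kappa_{4}(X_{0}, X_{u}, X_{v}, X_{w})| < \infty$ from Assumption \ref{ass:weak:dep} alone. This fact is not recorded in the earlier lemmas and will require an iterated martingale decomposition of each factor via $X = \sum_{l}\mathcal{P}_{-l}(X)$ together with Burkholder-type bounds (Lemma \ref{lemma:burkholder}) to reduce the fourfold expectation to a convolution of $\delta_{q}^{X}$-quantities, whose moments of order $\leq 5/2$ are finite by Assumption \ref{I2}.
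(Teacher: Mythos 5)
Your reduction is genuinely different from the paper's argument, and its algebraic part is correct: since $\mathbb{E}[e_he_0]=\mathbb{E}[e_hX_{h-i}]=\mathbb{E}[e_hX_{-j}]=0$ for $h,i,j\geq 1$, the mixed moment does collapse to the pure fourth cumulant, the entrywise bound $|(R(k)^{-1})_{ij}|\le \Lambda_1^{-1}$ is legitimate, and the multilinear re-indexing through $e_t=\sum_{m\ge 0}a_mX_{t-m}$ with $\|a\|_{\ell^1}<\infty$ is sound; granted your summability claim, you would even get the stronger rate $O(k^{-1})$, which is what the paper's proof also delivers. The genuine gap is exactly the step you defer: the bound $\sum_{u,v,w\in\zz}|\kappa_4(X_0,X_u,X_v,X_w)|<\infty$ is the entire difficulty of the lemma, and it does not follow routinely from Assumption \ref{ass:weak:dep}. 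The natural coupling argument (replace the variables on one side of the largest gap $g\ge w/3$ by versions built from the innovations after the gap) gives $|\kappa_4(X_0,X_u,X_v,X_w)|\lesssim \sum_{m>g}\delta_4^X(m)$, and since the number of ordered triples with largest gap $g$ grows like $g^2$, summing requires $\sum_m m^{3}\delta_4^X(m)<\infty$, i.e.\ decay of order $\alpha\ge 3$ — strictly more than the $\alpha\ge 5/2$ guaranteed by \ref{I2}. Your proposed repair (iterated martingale decomposition plus Burkholder) does not close this: those tools bound mixed fourth \emph{moments}, and the moments are not absolutely summable (already the Gaussian-type part $\gamma(u)\gamma(w-v)$ diverges when summed over the free middle index), so everything hinges on the cancellation encoded in the cumulant, which Burkholder-type inequalities do not see; once you try to exploit that cancellation you are forced into precisely the conditional-independence surgery that constitutes the actual proof.

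For comparison, the paper never passes to cumulants. It writes $\mathbb{E}(U_2)$ via stationarity as $k^{-1}\sum_h\frac{N-h}{N}\sum_j\sigma_{k-j}^{-2}\mathbb{E}(\I{h}{0}{k}{j})$, splits according to the relative position of $h$ and $j$, and on each term repeatedly inserts the finite-past conditional expectations $H^t_s$ and their residuals $Q^t_s$, using independence of disjoint innovation blocks together with $\mathbb{E}(e_t)=\mathbb{E}(\pino{t}{k})=0$ to annihilate the non-decaying pieces; Lemma \ref{lemma:Q}, Lemma \ref{lemma:dep-rates} and Assumption \ref{I2} then give rates of the form $j^{-5/2}$, $(j-h)^{-5/2}$, $h^{-5/2}$, uniformly in $k$, whose double sums are $O(k^{-1})$. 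This term-by-term treatment is exactly the cancellation your plan needs, carried out under the weaker $5/2$-decay. Unless you can actually prove fourth-cumulant summability under $\alpha\ge 5/2$ (a nontrivial result in its own right, not available from the paper's lemmas), your proposal has a genuine gap at its decisive step.
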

  \begin{proof}
    
  Using the stationarity of the processes $e_t$ and $\pino{t}{k}$ we get 
 \begin{equation}\label{eq:stepII:est-begin}
    \begin{split}
      |\mathbb{E}(U_2)| &= \frac{1}{k} \biggl|\sum_{h=1}^{N-1} \frac{N-h}{N} \sum_{j=1}^k \frac{1}{\sigma_{k-j}^2}\mathbb{E}(\I{h}{0}{k}{j})\biggr|\\
                        & \leq \frac{1}{k} \sum_{h=1}^{N-1}  \sum_{j=1}^k\sigma_{k-j}^{-2}|\mathbb{E}(\I{h}{0}{k}{j})| = II_{1} + II_{2} + II_{3},
    \end{split}
  \end{equation}
  where
  \begin{equation*}
    \begin{split}
      II_{1} & = \frac{1}{k}\sum_{j=2}^k \sigma_{k-j}^{-2}\sum_{h=1}^{j-1}  |\mathbb{E}(\I{h}{0}{k}{j})|, \\
    II_{2} & = \frac{1}{k} \sum_{j=1}^k\sigma_{k-j}^{-2}\sum_{h=j+1}^{N-1}  |\mathbb{E}(\I{h}{0}{k}{j})|, \quad \text{and} \\
    II_{3} & = \frac{1}{k} \sum_{j=1}^k  \sigma_{k-j}^{-2} |\mathbb{E}(\I{h}{0}{k}{j})|.
    \end{split}
  \end{equation*}
  Our goal is to show that $II_1, II_2$ and $II_3$, and hence $|\mathbb{E}(U_2)|$, are of order $O(k^{-1})$.
  The strategy will be the same for each of the three terms $II_1$, $II_2$, and $II_3$. 
    We may sort the time points in each product $e_he_0\pino{h-j}{k-j}\pino{-j}{k-j}$, $e_h\pino{h-j}{k-j}e_0\pino{-j}{k-j}$ and $e_je_0\pino{0}{k-j}\pino{-j}{k-j}$ in decreasing order.
  Ultimately, we wish to translate the difference between adjacent time points into a controllable measure of the dependence between adjacent random variables.  
  This will be achieved using Lemma \ref{lemma:Q}.
The term $II_1$ is structurally the most involved and therefore will be dealt with in full detail.
The terms $II_{2}$ and $II_{3}$ can be dealt with by similar, shorter arguments.
  For $II_1$, we employ the following general strategy:
  \begin{enumerate}[label=($\text{II}_1$-\alph*): ,ref=($\text{II}_1$-\alph*),align=left]
      \item \label{II1-a} First we are going to show that
      \begin{equation*}
      \begin{split}
          \mathbb{E}(e_he_0\pino{h-j}{k-j}\pino{-j}{k-j}) & =\mathbb{E}(Q^h_{h-j+1}(e_he_0)Q_{-j+1}^{h-j}(\pino{h-j}{k-j})\pino{-j}{k-j})\\
      & + \mathbb{E}(Q_1^h(e_h)Q_{h-j+1}^0(e_0)H_{-j+1}^{h-j}(\pino{h-j}{k-j})\pino{-j}{k-j}) \\
      & + \mathbb{E}(Q_{h-j+1}^h(e_h)H_{h-j+1}^0(e_0)H_{-j+1}^{h-j}(\pino{h-j}{k-j})\pino{-j}{k-j}),
      \end{split}
  \end{equation*}
  where $H_{s}^{t}(Z) = \mathbb{E}(Z\mid \sigfdouble{t}{s})$ and $Q_{s}^{t}(Z) = Z - H_{s}^{t}(Z)$.
  Applying Hölder's inequality to each of these summands individually gives 
  \begin{equation*}
  \begin{split}
      |\mathbb{E}(e_he_0\pino{h-j}{k-j}\pino{-j}{k-j})| & \lesssim \|Q^h_{h-j+1}(e_he_0)\|_2 \|Q_{-j+1}^{h-j}(\pino{h-j}{k-j})\|_4 \\
      &+ \|Q_1^h(e_h)\|_4\|Q_{h-j+1}^0(e_0)\|_4 \\
      &+ \|Q_{h-j+1}^h(e_h)\|_4.
  \end{split}
  \end{equation*}
  To see that the constants in the last estimate depend on neither $j$ nor $k$, one can use the fact that $\|\pino{0}{k-j}\|_4$ is uniformly bounded in $k$ and $j$ by Lemma \ref{lemma:basic-props-AR(oo)}, and $\|H^{t}_s(\pino{t}{k-j})\|_4 \leq \|\pino{0}{k-j}\|_4$ (and similarly $\|H^t_s(e_t)\|_4 \leq \|e_t\|_4$).
  \item \label{II1-b} An application of Lemma \ref{lemma:Q} and the subsequent remark yields the following rates
  %Next we wish to establish the following rates of decay for the quantities involved in our estimate of $|\mathbb{E}(e_he_0\pino{h-j}{k-j}\pino{-j}{k-j})|$
  \begin{equation}\label{eq:stepII:Q-rates}
      \begin{split}
          \|Q^h_{h-j+1}(e_he_0)\|_2 &\lesssim j^{-\frac{5}{2}} + (j-h)^{-\frac{5}{2}},\\
          \|Q_{-j+1}^{h-j}(\pino{h-j}{k-j})\|_4 &\lesssim h^{-\frac{5}{2}}, \\
          \|Q_1^h(e_h)\|_4 &\lesssim h^{-\frac{5}{2}},\\
          \|Q_{h-j+1}^0(e_0)\|_4 &\lesssim (j-h)^{-\frac{5}{2}},\\
          \|Q_{h-j+1}^h(e_h)\|_4 &\lesssim j^{-\frac{5}{2}}.
      \end{split}
  \end{equation}
  \item \label{II1-c} With these rates at hand, we can show that $II_1 \lesssim k^{-1}$. Since $\sigma_{k-j}^2 \geq \sigma^2$ by Lemma \ref{lemma:basic-props-AR(oo)}, $II_1$ may be estimated by $II_1 \lesssim II_1' + II_1'' + II_1'''$ where
  \begin{equation*}
      \begin{split}
          II_1' & = \frac{1}{k}\sum_{j=2}^k\sum_{h=1}^{j-1} (j^{-\frac{5}{2}} + (j-h)^{-\frac{5}{2}})h^{-\frac{5}{2}},\\
          II_1'' &=  \frac{1}{k}\sum_{j=2}^k\sum_{h=1}^{j-1}h^{-\frac{5}{2}}(j-h)^{-\frac{5}{2}}, \quad \text{and}\\
          II_1''' &= \frac{1}{k}\sum_{j=2}^k(j-1)j^{-\frac{5}{2}},
      \end{split}
  \end{equation*}
  all of which are of order $O(k^{-1}).$
  
  \end{enumerate}
  Let us start with \ref{II1-a}.
  Since $\pino{h-j}{k-j}\pino{-j}{k-j}$ is independent of $\mathcal{F}^h_{h-j+1}$ and $\mathbb{E}(e_0e_h) = 0$ for $h\geq 1$, we have $\mathbb{E}(H^h_{h-j+1}(e_0e_h)\pino{h-j}{k-j}\pino{-j}{k-j}) = \mathbb{E}(e_0e_h)\mathbb{E}(\pino{h-j}{k-j}\pino{-j}{k-j}) = 0$. Thus, we may write 
  \begin{equation*}
      \mathbb{E}(e_he_0\pino{h-j}{k-j}\pino{-j}{k-j}) = \mathbb{E}(Q^h_{h-j+1}(e_he_0)\pino{h-j}{k-j}\pino{-j}{k-j}).
  \end{equation*}
  Since $\pino{h-j}{k-j} = Q^{h-j}_{-j+1}(\pino{h-j}{k-j}) + H^{h-j}_{-j+1}(\pino{h-j}{k-j})$, we can rewrite the last display as
  \begin{equation*}
      \begin{split}
          \mathbb{E}(e_he_0\pino{h-j}{k-j}\pino{-j}{k-j}) &= \mathbb{E}(Q^h_{h-j+1}(e_he_0)\pino{h-j}{k-j}\pino{-j}{k-j}) \\
          &= \mathbb{E}(Q^h_{h-j+1}(e_he_0)Q^{h-j}_{-j+1}(\pino{h-j}{k-j})\pino{-j}{k-j}) \\
          &+\mathbb{E}(Q^h_{h-j+1}(e_he_0)H^{h-j}_{-j+1}(\pino{h-j}{k-j})\pino{-j}{k-j})
      \end{split}
  \end{equation*}
  Note that $e_0e_h$ is independent of $H^{h-j}_{-j+1}(\pino{h-j}{k-j})$, as $h<j$. 
  Hence we get
  \begin{equation*}
      \begin{split}
          \mathbb{E}(Q^h_{h-j+1}(e_he_0)H^{h-j}_{-j+1}(\pino{h-j}{k-j})\pino{-j}{k-j})& = \mathbb{E}(e_he_0H^{h-j}_{-j+1}(\pino{h-j}{k-j})\pino{-j}{k-j}) \\
          & + \mathbb{E}(H^h_{h-j+1}(e_he_0)H^{h-j}_{-j+1}(\pino{h-j}{k-j})\pino{-j}{k-j}) \\
          & = \mathbb{E}(e_he_0H^{h-j}_{-j+1}(\pino{h-j}{k-j})\pino{-j}{k-j}),
      \end{split}
  \end{equation*}
  where we have used the fact that $H^h_{h-j+1}(e_he_0)H^{h-j}_{-j+1}(\pino{h-j}{k-j})$ is independent of $\pino{-j}{k-j}$, and $\mathbb{E}(\pino{-j}{k-j}) = 0$. 
  So far we have arrived at the identity 
  \begin{equation*}
      \begin{split}
          \mathbb{E}(e_he_0\pino{h-j}{k-j}\pino{-j}{k-j}) & = \mathbb{E}(Q^h_{h-j+1}(e_he_0)Q^{h-j}_{-j+1}(\pino{h-j}{k-j})\pino{-j}{k-j}) \\
          &+\mathbb{E}(e_he_0H^{h-j}_{-j+1}(\pino{h-j}{k-j})\pino{-j}{k-j}).
      \end{split}
  \end{equation*}
  We now repeat this process for the second summand. 
  Since $H^h_1(e_h)$ is independent of $\mathcal{F}_{0}$ and $\mathbb{E}(e_h) = 0$, we get 
  \begin{equation*}
      \begin{split}
          \mathbb{E}(e_he_0H^{h-j}_{-j+1}(\pino{h-j}{k-j})\pino{-j}{k-j}) & = \mathbb{E}(Q^h_1(e_h)e_0H^{h-j}_{-j+1}(\pino{h-j}{k-j})\pino{-j}{k-j}).
      \end{split}
  \end{equation*}
  Writing $e_0 = Q^0_{h-j+1}(e_0) +H^0_{h-j+1}(e_0)$, and using the fact that $\mathbb{E}(e_h)=0$, and $H^h_1(e_h)$ is independent of $H^0_{h-j+1}(e_0)H^{h-j}_{-j+1}(\pino{h-j}{k-j})\pino{-j}{k-j}$, the right-hand side of the last display is equal to
  \begin{equation*}
      \begin{split}
          \mathbb{E}(Q^h_1(e_h)e_0H^{h-j}_{-j+1}(\pino{h-j}{k-j})\pino{-j}{k-j}) & = \mathbb{E}(Q^h_1(e_h)Q^0_{h-j+1}(e_0)H^{h-j}_{-j+1}(\pino{h-j}{k-j})\pino{-j}{k-j})\\
          & +\mathbb{E}(e_h H^0_{h-j+1}(e_0)H^{h-j}_{-j+1}(\pino{h-j}{k-j})\pino{-j}{k-j}).
      \end{split}
  \end{equation*}
  The second summand may be rewritten as 
  \begin{equation*}
      \begin{split}
          \mathbb{E}(e_h H^0_{h-j+1}(e_0)H^{h-j}_{-j+1}(\pino{h-j}{k-j})\pino{-j}{k-j}) = \mathbb{E}(Q^h_{h-j+1}(e_h) H^0_{h-j+1}(e_0)H^{h-j}_{-j+1}(\pino{h-j}{k-j})\pino{-j}{k-j}),
      \end{split}
  \end{equation*}
  since $H^h_{h-j+1}(e_h) H^0_{h-j+1}(e_0)H^{h-j}_{-j+1}(\pino{h-j}{k-j})$ is independent of $\pino{-j}{k-j}$, and $\mathbb{E}(\pino{-j}{k-j}) = 0$. In total, we get 
  \begin{equation*}
      \begin{split}
          \mathbb{E}(e_he_0\pino{h-j}{k-j}\pino{-j}{k-j}) & =\mathbb{E}(Q^h_{h-j+1}(e_he_0)Q_{-j+1}^{h-j}(\pino{h-j}{k-j})\pino{-j}{k-j})\\
      & + \mathbb{E}(Q_1^h(e_h)Q_{h-j+1}^0(e_0)H_{-j+1}^{h-j}(\pino{h-j}{k-j})\pino{-j}{k-j}) \\
      & + \mathbb{E}(Q_{h-j+1}^h(e_h)H_{h-j+1}^0(e_0)H_{-j+1}^{h-j}(\pino{h-j}{k-j})\pino{-j}{k-j}).
      \end{split}
  \end{equation*}
  This proves \ref{II1-a} and thus implies $II_1 \lesssim k^{-1}$. 
  \cformat{main-lemma-stepII-1}{Now we move on to $II_2$ and $II_3$. 
We will repeat the steps \ref{II1-a} through \ref{II1-c} for each of these two terms. 
However, $II_2$ and $II_3$ are structurally easier that $II_1$. 
For $II_2$, we will work through the following steps
\begin{enumerate}[label=($\text{II}_2$-\alph*):,ref=($\text{II}_2$-\alph*),align=left]
\item \label{II2-a}We show that 
\begin{equation*}
    \begin{split}
      \mathbb{E}(e_h\pino{h-j}{k-j}e_0\pino{-j}{k-j}) & = \mathbb{E}(Q^h_{h-j+1}(e_h)Q^{h-j}_1(\pino{h-j}{k-j})e_0\pino{-j}{k-j}) \\
      & + \mathbb{E}(Q^h_{1}(e_h)H^{h-j}_1(\pino{h-j}{k-j})e_0\pino{-j}{k-j}).
    \end{split}
  \end{equation*}
  Again, after applying Hölder's inequality to each of these terms individually and using the facts that $\|\pino{0}{k-j}\|_4$ is uniformly bounded in $k$ and $j$ by Lemma \ref{lemma:basic-props-AR(oo)} and $\|H^{h-j}_1(\pino{h-j}{k-j})\|_4 \leq \|\pino{h-j}{k-j}\|_4$, we have
  \begin{equation*}
      |\mathbb{E}(e_h\pino{h-j}{k-j}e_0\pino{-j}{k-j})| \lesssim \|Q^h_{h-j+1}(e_h)\|_4\|Q^{h-j}_1(\pino{h-j}{k-j})\|_4 + \|Q^h_{1}(e_h)\|_4.
  \end{equation*}
  \item By Lemma \ref{lemma:Q} and the subsequent remark, we get the following rates
  \begin{equation}\label{eq:stepII:Q-rates-2}
      \begin{split}
          \|Q^h_{h-j+1}(e_h)\|_4 &\lesssim j^{-\frac{5}{2}},\\
          \|Q^{h-j}_1(\pino{h-j}{k-j})\|_4 & \lesssim (h-j)^{-\frac{5}{2}}\quad \text{and}\\
          \|Q^h_{1}(e_h)\|_4 & \lesssim h^{-\frac{5}{2}}.
      \end{split}
  \end{equation}
  \item \label{II2-c}In light of \eqref{eq:stepII:Q-rates-2} and since $\sigma_{k-j}^{-2} \leq \sigma^{-2}$ by Lemma \ref{lemma:basic-props-AR(oo)}, we may estimate $II_2$ by $II_2 \lesssim II_2' + II_2''$, where
  \begin{equation*}
      \begin{split}
          II_2' &= \frac{1}{k}\sum_{j=1}^k \sum_{h=j+1}^{N-1} j^{-\frac{5}{2}}(h-j)^{-\frac{5}{2}}\quad \text{and}\\
          II_2'' & = \frac{1}{k}\sum_{j=1}^k \sum_{h=j+1}^{N-1} h^{-\frac{5}{2}},
      \end{split}
  \end{equation*}
  both of which are of order $O(k^{-1})$.
\end{enumerate}
For \ref{II2-a}, we start by writing $\pino{h-j}{k-j} = Q^{h-j}_1(\pino{h-j}{k-j}) + H^{h-j}_1(\pino{h-j}{k-j})$, which implies 
\begin{equation*}
    \begin{split}
        \mathbb{E}(e_h\pino{h-j}{k-j}e_0\pino{-j}{k-j}) & = \mathbb{E}(e_h Q^{h-j}_1(\pino{h-j}{k-j})e_0\pino{-j}{k-j}) \\
        & + \mathbb{E}(e_h H^{h-j}_1(\pino{h-j}{k-j})e_0\pino{-j}{k-j}).
    \end{split}
\end{equation*}
Since $H^{h}_{h-j+1}(e_h)$ is independent of $Q^{h-j}_1(\pino{h-j}{k-j})e_0\pino{-j}{k-j}$ and $\mathbb{E}(e_h)=0$, we may replace the first term with $\mathbb{E}(Q^h_{h-j+1}(e_h)Q^{h-j}_1(\pino{h-j}{k-j})e_0\pino{-j}{k-j})$. The second term may be replaced by $\mathbb{E}(Q^h_1(e_h) H^{h-j}_1(\pino{h-j}{k-j})e_0\pino{-j}{k-j})$ as $H^h_1(e_h)H^{h-j}_1(\pino{h-j}{k-j})$ is independent of $e_0\pino{-j}{k-j}$ and $\mathbb{E}(e_0\pino{-j}{k-j}) =0$. This proves the claim in \ref{II2-a}.

Next, we deal with $II_3$. We again employ the same strategy. Since $\mathbb{E}(e_j) = 0$ and $H^{j}_1(e_j)$ is independent of $e_0\pino{0}{k-j}\pino{-j}{k-j}$, we may write 
  \begin{equation*}
    |\mathbb{E}(e_je_0\pino{0}{k-j}\pino{-j}{k-j})| = |\mathbb{E}(Q^j_1(e_j)e_0\pino{0}{k-j}\pino{-j}{k-j})| \lesssim \|Q^j_1(e_j)\|_4,
  \end{equation*}
where we have used Hölder's inequality and the fact that $\|\pino{0}{k-j}\|_4$ is uniformly bounded in $k$ and $j$ (see Lemma \ref{lemma:basic-props-AR(oo)}). By Lemma \ref{lemma:Q} we have $\|Q^j_1(e_j)\|_4 \lesssim j^{-5/2}$. Since $\sigma_{k-j}^{-2} \leq \sigma^{-2}$, we get 
\begin{equation*}
    II_3 \lesssim \frac{1}{k} \sum_{j=1}^k j^{-\frac{5}{2}} \lesssim k^{-1},
\end{equation*}
which completes the proof.}{The terms $II_{2}$ and $II_{3}$ are shown to be of order $k^{-1}$ by similar, albeit much shorter arguments.}

% The term $Q^j_1(e_j)$ may be rewritten as a telescoping sum
% \begin{equation*}
%     Q^j_1(e_j) = e_j - \mathbb{E}(e_j\mid \mathcal{F}^j_1) = \sum_{l=j}^\infty \mathbb{E}(e_j\mid \mathcal{F}^j_{j-l}) - \mathbb{E}(e_j\mid \mathcal{F}^j_{j-l+1}).
% \end{equation*}
% Applying \eqref{eq:StepII:E(Z|t-s)-E(Z|t-s+1)} to each summand of this expression and using the fact that the conditional expectation is a $L^4$ contraction we get (cf. the estimate in Equation \eqref{eq:stepII:E(e_(h-j,k-j)|F^(h-j)_(h-j-r))<delta(r)}) 
% \begin{equation*}
%     \|\mathbb{E}(e_j\mid\mathcal{F}^j_{j-l}) - \mathbb{E}(e_j\mid \mathcal{F}^j_{j-l+1})\|_4 \leq \|e_j-e_j^{(j-l)}\|_4 = \delta_4^I(l). 
% \end{equation*}
% Combining the last two estimates we get 
% \begin{equation*}
%     \|Q^j_1(e_j)\|_4 \leq \sum_{l=j}^\infty \delta_4^I(l) \leq j^{-\frac{5}{2}} \sum_{l=j}^\infty l^{\frac{5}{2}}\delta_4^I(l) \lesssim j^{-\frac{5}{2}}. 
% \end{equation*}
% Since $\sigma_{k-j}^2 \geq \sigma^2$ (see Lemma \ref{lemma:basic-props-AR(oo)}) we can estimate $II_3$ by 
% \begin{equation*}
%     II_3 \lesssim \frac{1}{k} \sum_{j=1}^k j^{-\frac{5}{2}} \lesssim \frac{1}{k},
% \end{equation*}
% which finishes the proof.
\end{proof}

  \begin{lemma}[Step III, Part 1]\label{lemma:stepIII}
    Under Assumptions \ref{ass:main} and \ref{ass:weak:dep}, we have
    \begin{equation}
      \label{eq:step3-1}
      \frac{1}{Nk}\biggl\|\sum_{t=K_n+2}^{K_n+k+2} \sum_{s=K_n+1}^{t-1} \sum_{j=1}^k \sigma_{k-j}^{-2} (\I{t}{s}{k}{j}-\mathbb{E}(\I{t}{s}{k}{j}))\biggr\|_p \leq \frac{C_{p}}{\sqrt{k}},
    \end{equation}
    where $p=q/4$, and $C_p$ are constants depending only on $p$, and the process $X$. 
  \end{lemma}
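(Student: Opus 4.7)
The plan is to reindex the triple sum so that Burkholder-type cancellation can be extracted in $t$, and then to sum up contributions using the physical dependence framework. First I would substitute $h = t - s$, rewriting
\begin{equation*}
\sum_{t=K_n+2}^{K_n+k+2}\sum_{s=K_n+1}^{t-1} = \sum_{h=1}^{k+1}\sum_{t=K_n+1+h}^{K_n+k+2},
\end{equation*}
so the full expression becomes $\sum_{h=1}^{k+1}\sum_{j=1}^{k} \sigma_{k-j}^{-2} \sum_{t}(Z_{t,h,j} - \mathbb{E} Z_{t,h,j})$ with
\begin{equation*}
Z_{t,h,j} := e_t\,\pino{t-j}{k-j}\, e_{t-h}\,\pino{t-h-j}{k-j},
\end{equation*}
and for each fixed $(h,j)$ the inner $t$-sum has at most $k+1$ terms; by stationarity $\mathbb{E} Z_{t,h,j}$ is independent of $t$.

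For each $(h,j)$, $(Z_{t,h,j})_{t\in\mathbb{Z}}$ is a stationary, physically dependent process, so I would apply Lemma \ref{lemma:PD-standart-est} with $\lambda_t\equiv 1$ and $p'=2$ (note $p = q/4 > 2$) to obtain
\begin{equation*}
\Bigl\|\sum_{t=K_n+1+h}^{K_n+k+2}\bigl(Z_{t,h,j}-\mathbb{E} Z_{t,h,j}\bigr)\Bigr\|_p \;\leq\; B_p\sqrt{k+1}\,D_p^{Z_{h,j}}(0).
\end{equation*}
This is the Burkholder-type $\sqrt{k}$ gain from the partial-sum length, and it is the only place where I rely on genuine cancellation.

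The crucial analytic step is to bound $D_p^{Z_{h,j}}(0)$ \emph{uniformly} in $h,j,k$. Since $Z_{t,h,j}$ is a product of four physically dependent factors, I would iterate the product estimate of Remark \ref{rem:prod-pd} (telescoping $abcd-a'b'c'd'$ and applying H\"older's inequality three times) to obtain
\begin{equation*}
\delta_p^{Z_{h,j}}(l) \;\lesssim\; \delta_{4p}^{I}(l) + \delta_{4p}^{I}(l-h) + \delta_{4p}^{I_{k-j}}(l-j) + \delta_{4p}^{I_{k-j}}(l-h-j),
\end{equation*}
with implicit constant depending only on $\|e_0\|_{4p}$ and $\sup_{k'}\|\pino{0}{k'}\|_{4p}$, both finite by Lemma \ref{lemma:basic-props-AR(oo)}. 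Summing over $l\geq 0$ and using the shift-invariance of the tails together with Lemma \ref{lemma:dep-rates} (which gives uniform boundedness of $D_{4p}^{I_{k'}}(0)$ in $k'$ under Assumption \ref{I2}, since $4p=q$), I obtain $D_p^{Z_{h,j}}(0)\leq C$ for a constant $C$ independent of $h,j,k$.

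Combining everything with $\sigma_{k-j}^{-2}\leq \sigma^{-2}$ (Lemma \ref{lemma:basic-props-AR(oo)}) and the triangle inequality over $(h,j)$ finally yields
\begin{equation*}
\frac{1}{Nk}\Bigl\|\sum_{t,s,j}\sigma_{k-j}^{-2}\bigl(\I{t}{s}{k}{j}-\mathbb{E}\,\I{t}{s}{k}{j}\bigr)\Bigr\|_p \;\lesssim\; \frac{(k+1)\cdot k\cdot \sqrt{k+1}}{Nk} \;\lesssim\; \frac{k^{3/2}}{N} \;\leq\; \frac{C_p}{\sqrt{k}},
\end{equation*}
where the final inequality uses Assumption \ref{G3} ($K_n^{2+\kappa}/n$ bounded implies $k^2/N=o(1)$, in particular $k^2/N$ is uniformly bounded). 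The main obstacle will be the bookkeeping of the four-fold product when estimating $\delta_p^{Z_{h,j}}(l)$: keeping the bounds uniform in $j$ (so that $\pino{t-j}{k-j}$-factors are controlled via the uniform-in-$k'$ coefficients from Lemma \ref{lemma:dep-rates}) and in $h$ (so the shifted tail sums remain summable without accumulating $h$-dependent constants) is what makes the product estimate deliver a single $\sqrt{k}$ rather than the worse $k^2$ one would get from a naive triangle inequality over all pairs $(s,t)$.
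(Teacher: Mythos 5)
Your proposal is correct, and it arrives at the paper's bound $k^{3/2}/N\lesssim k^{-1/2}$ by the same overall strategy (a rough estimate exploiting that the block contains only $O(k^{2})$ pairs $(t,s)$, with one $\sqrt{k}$ gain coming from Lemma \ref{lemma:PD-standart-est}), but the implementation differs in two respects. First, the paper keeps the double sum indexed by $(t,s)$, uses H\"older's inequality to decouple $\II{t}{k}{j}$ from $\sum_{s}\II{s}{k}{j}$, and takes the Burkholder-type gain over the $s$-sum for fixed $t$; you instead reindex by the lag $h=t-s$ and apply Lemma \ref{lemma:PD-standart-est} directly to the $t$-sum of the full four-fold product $e_t\pino{t-j}{k-j}e_{t-h}\pino{t-h-j}{k-j}$, which requires the (easy, iterated) four-factor version of Remark \ref{rem:prod-pd} together with the uniform-in-$(h,j,k)$ bound on $D_{4p}^{I_{k-j}}(0)$ from Lemma \ref{lemma:dep-rates} and the uniform moment bounds of Lemma \ref{lemma:basic-props-AR(oo)}; your bookkeeping of the shifted tails is correct since $\delta(m)=0$ for $m<0$. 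Second, the paper disposes of the centering by invoking Lemma \ref{lemma:stepII}, whereas your route handles the centered summands directly because Lemma \ref{lemma:PD-standart-est} is stated for $Z_t-\mathbb{E}(Z_t)$; this makes your argument self-contained on this point, at the modest cost of controlling the dependence coefficients of a four-fold product rather than a two-fold one. The final step, $k^{3/2}/N\le k^{-1/2}K_n^{2}/N\lesssim k^{-1/2}$ via Assumption \ref{G3}, matches the paper exactly.
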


  \begin{proof}
  Starting with \eqref{eq:step3-1}, we note that a \textit{very rough} estimate is enough here. 
  First, by Lemma \ref{lemma:stepII}, we may ignore that the summands on the left-hand side of \eqref{eq:step3-1} are centered (note that having centered summands will be crucial for \eqref{eq:step3-2} below). 
  Applying the triangle inequality twice to the non-centered version of \eqref{eq:step3-1}, and using the fact that $\sigma_{k-j}^{-2} \leq \sigma^{-2}$ (see Lemma \ref{lemma:basic-props-AR(oo)}) yields the estimate
  \begin{equation*}
      \begin{split}
         \frac{1}{\sigma^{2}Nk}\sum_{j=1}^k \sum_{t=K_n+2}^{K_n+k+2} \biggl\|\sum_{s=K_n+1}^{t-1} \I{t}{s}{k}{j}\biggr\|_p.
      \end{split}
  \end{equation*}
  Note that $\|e_t\pino{t-j}{k-j}\|_{2p} \leq \|e_t\|_{4p}\|\pino{t-j}{k-j}\|_{4p}$ is unformly bounded in $t,j$ and $k$ by the stationarity of the process $(X_t)_{t\in\zz}$ and Lemma \ref{lemma:basic-props-AR(oo)}. Hence, an application of Hölder's inequality yields
 \begin{equation*}
   \begin{split}
   \frac{1}{\sigma^{2}Nk}\sum_{j=1}^k \sum_{t=K_n+2}^{K_n+k+2} \biggl\|\sum_{s=K_n+1}^{t-1} \I{t}{s}{k}{j}\biggr\|_p
          &\lesssim \frac{1}{Nk}\sum_{j=1}^k \sum_{t=K_n+2}^{K_n+k+2} \|\II{t}{k}{j}\|_{2p}\biggl\|\sum_{s=K_n+1}^{t-1} \II{s}{k}{j}\biggr\|_{2p} \\
          &\lesssim\frac{1}{Nk}\sum_{j=1}^k \sum_{t=K_n+2}^{K_n+k+2} \biggl\|\sum_{s=K_n+1}^{t-1} \II{s}{k}{j}\biggr\|_{2p}. 
   \end{split}
 \end{equation*} 
  Since $\mathbb{E}(e_s\pino{s-j}{k-j}) = 0$, Lemma \ref{lemma:PD-standart-est} yields 
  \begin{equation}\label{eq:stepIII:part1:example-for-part2}
      \begin{split}
        \biggl\|\sum_{s=K_n+1}^{t-1} \II{s}{k}{j}\biggr\|_{2p} & \leq 2B_{2p} \sqrt{t-K_n-2}(\|e_{0}\|_{q}D_{q}^{I_{k-j}}(0) + \|\pino{0}{k-j}\|_{q}D_{q}^{I}(0)). %\sum_{l=0}^\infty \|e_l\pino{l-j}{k-j} - e_l'\pino{l-j}{k-j}'\|_{2p}.
      \end{split}
  \end{equation}
%  In light of \eqref{eq:Z_lW_l - Z_l'W_l'} and the subsequent discussion, the term $\|e_l\pino{l-j}{k-j} - e_l'\pino{l-j}{k-j}'\|_{2p}$ may be estimated by 
%  \begin{equation*}
%      \begin{split}
%          \|e_l\pino{l-j}{k-j} - e_l'\pino{l-j}{k-j}'\|_{2p} & \lesssim \delta_{4p}^I(l) + \delta^{I_{k-j}}_{4p}(l-j).
%      \end{split}
%  \end{equation*}
%  This in turn immediately yields
%  \begin{equation*}
%      \sum_{l=0}^\infty \|e_l\pino{l-j}{k-j} - e_l'\pino{l-j}{k-j}'\|_{2p} \lesssim \sum_{l=0}^\infty \delta_{4p}^I(l) + \delta^{I_{k-j}}_{4p}(l-j) = D_{4p}^I(0) + D_{4p}^{I_{k-j}}(0).
%  \end{equation*}
  By Assumption \ref{I2}, $D_{4p}^I(0) < \infty$, by Lemma \ref{lemma:dep-rates} $D_{4p}^{I_{k-j}}(0)$ is bounded uniformly in $k$ and $j$, while Lemma \ref{lemma:basic-props-AR(oo)} implies that $\|\pino{0}{k-j}\|_{q}$ is uniformly bounded in $k$ and $j$. In total, we have
  \begin{equation*}
      \begin{split}
          \frac{1}{Nk}\sum_{j=1}^k \sum_{t=K_n+2}^{K_n+k+2} \biggl\|\sum_{s=K_n+1}^{t-1} \II{s}{k}{j}\biggr\|_{2p} &  \frac{1}{Nk}\lesssim \sum_{j=1}^k \sum_{t=K_n+2}^{K_n+k+2} \sqrt{t-K_n-2} \\
          &\lesssim \frac{1}{Nk}\sum_{j=1}^k \sum_{t=0}^{k} \sqrt{t} \lesssim \frac{k^{\frac{3}{2}}}{N},
      \end{split}
  \end{equation*}
  where we have used $\sum_{0\leq t \leq k} \sqrt{t} \lesssim k^{3/2}$. 
Since $K_n^2/n \to 0$ by Assumption \ref{G3} and $k \leq K_n$, we have $k^{3/2}/N \leq k^{-1/2} K_n^2/N \lesssim k^{-1/2}$, proving \eqref{eq:step3-1}.
\end{proof}

\begin{lemma}[Step III, Part 2]\label{lemma:StepIII-2}
  Given Assumptions \ref{ass:main} and \ref{ass:weak:dep}, we have 
 \begin{equation}
      \label{eq:step3-2}
      \frac{1}{Nk}\biggl\|\sum_{t=K_n+k+3}^{n} \sum_{s=t-1-k}^{t-1} \sum_{j=1}^k \sigma_{k-j}^{-2} (\I{t}{s}{k}{j}-\mathbb{E}(\I{t}{s}{k}{j}))\biggr\|_p \leq \frac{C_{p}}{\sqrt{k}},
  \end{equation} 
  where $C_{p}$ is a constant depending only on $p=q/4$ and the distribution of the process $X$.
\end{lemma}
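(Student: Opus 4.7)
The plan is to reorganise the inner double sum over $s$ by the gap $h=t-s\in\{1,\ldots,k+1\}$, so that the quantity in \eqref{eq:step3-2} is bounded by
$$\frac{1}{Nk}\sum_{h=1}^{k+1}\biggl\|\sum_{t=K_n+k+3}^{n}\sum_{j=1}^{k}\sigma_{k-j}^{-2}\big(\I{t}{t-h}{k}{j}-\mathbb{E}(\I{t}{t-h}{k}{j})\big)\biggr\|_p.$$
Setting $V_t^{(h)}=\sum_{j=1}^{k}\sigma_{k-j}^{-2}(\I{t}{t-h}{k}{j}-\mathbb{E}(\I{t}{t-h}{k}{j}))$, each $V_t^{(h)}$ is centred and $\mathcal{F}_t$-measurable, so for fixed $h$ I would apply the martingale difference decomposition of Lemma \ref{lemma:mdsa} in the variable $t$, followed by Burkholder's inequality (Lemma \ref{lemma:burkholder}), to obtain
$$\biggl\|\sum_{t}V_t^{(h)}\biggr\|_p \leq B_p\sqrt{N}\sum_{l=0}^{\infty}\bigl\|\mathcal{P}_0(V_l^{(h)})\bigr\|_p.$$

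The next step is to control $\|\mathcal{P}_0(V_l^{(h)})\|_p$, which, after expanding the sum in $j$, amounts to estimating $\mathcal{P}_0$ applied to the fourfold product $\I{l}{l-h}{k}{j}=e_l\pino{l-j}{k-j}e_{l-h}\pino{l-h-j}{k-j}$. Following the strategy of Lemma \ref{lemma:stepII}, I would decompose each factor as $Z=H_1^{\cdot}(Z)+Q_1^{\cdot}(Z)$, and observe that the summand in which all four factors coincide with their $H_1$-parts is $\mathcal{F}_{-1}$-measurable and hence annihilated by $\mathcal{P}_0$. Each surviving term therefore carries at least one tail $Q_1^{\cdot}(\cdot)$; combining Hölder's inequality with Lemma \ref{lemma:Q} and the uniform boundedness of $\|\pino{0}{k-j}\|_{4p}$ and $\sigma_{k-j}^{-2}$ from Lemma \ref{lemma:basic-props-AR(oo)} then produces, for each $(l,j)$, a bound of the form $\delta_{4p}^{I}(l)+\delta_{4p}^{I_{k-j}}(l-j)+\delta_{4p}^{I}(l-h)+\delta_{4p}^{I_{k-j}}(l-h-j)$ up to uniformly bounded multiplicative constants. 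A case distinction between $l\le h$ and $l>h$ (analogous to the one in the proof of Lemma \ref{lemma:QF-1}) will be required, since these two regimes produce genuinely different coupling structures.

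The crucial point is that a naive triangle inequality in $j$ would only yield a bound of order $k/\sqrt{N}$, which falls short of the required $k^{-1/2}$ under Assumption \ref{G3}. To recover the missing $\sqrt{k}$, I would repeat the martingale-difference-in-$j$ device already exploited in Step I, in particular in \eqref{eq:A1-after-burkholder}: writing the relevant contribution to $V_l^{(h)}$ as a sum, over $j$ and an auxiliary index $l'$, of projections of the form $\sigma_{k-j}^{-2}\mathcal{P}_{-j-l'}(\cdots)$, which upon reindexing by $j$ form a martingale difference sequence with respect to a shifted filtration, a second application of Burkholder in $j$ converts the $\ell^1$-norm of the coupling coefficients into an $\ell^2$-norm and thereby gains the factor $\sqrt{k}$. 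Combining the $\sqrt{N}$ gain from $t$, the $\sqrt{k}$ gain from $j$, the triangle inequality in $h$ (worth $k+1$), the normalisation $(Nk)^{-1}$, and the estimate $\sum_{l\geq 0}\sqrt{k^{-1}\sum_{j}\delta_{4p}^{I_{k-j}}(l)^2}<\infty$ already established at the end of Step I, reduces the bound to $(k/N)^{1/2}\lesssim k^{-1/2}$.

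The main obstacle will be the bookkeeping of the four inequivalent coupling positions $l,\,l-h,\,l-j,\,l-h-j$, since the two pseudo-innovations $\pino{l-j}{k-j}$ and $\pino{l-h-j}{k-j}$ involve the same auxiliary process at times separated by $h$; ensuring that the $Q_1^{\cdot}$- and $H_1^{\cdot}$-decompositions of the four factors are performed consistently so as to preserve independence from $\mathcal{F}_{-1}$ while keeping the resulting sum over $j$ in a form amenable to a second Burkholder step is the delicate part. In the degenerate case of independent innovations, most of the cross terms would vanish outright, so the genuine novelty here lies precisely in handling these cross terms via the double martingale-difference approximation.
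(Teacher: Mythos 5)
There is a genuine gap at the decisive step: the factor $\sqrt{k}$ you propose to recover ``in $j$'' is not available by the mechanism you describe. After your Burkholder-in-$t$ step, the quantity to control is $\sum_{l\ge 0}\bigl\|\mathcal{P}_0(V_l^{(h)})\bigr\|_p$ with $V_l^{(h)}=\sum_{j=1}^k\sigma_{k-j}^{-2}\bigl(\I{l}{l-h}{k}{j}-\mathbb{E}(\I{l}{l-h}{k}{j})\bigr)$. Inside the single projection $\mathcal{P}_0$ the $k$ summands all contain the common factors $e_l$ and $e_{l-h}$, whose time indices do not move with $j$; hence, for any fixed projection slot, the family indexed by $j$ is not a martingale difference sequence in $j$, and a second Burkholder inequality has nothing to act on. This is precisely the structural difference to \eqref{eq:A1-after-burkholder}: there the variables $\pino{-j}{k-j}^2$ sit at time $-j$, so the slots $-j-l$ shift with $j$ and Burkholder in $j$ applies; here they do not. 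Concretely, for $l=0$ one has $\mathcal{P}_0(V_0^{(h)})=\mathcal{P}_0(e_0)\sum_{j=1}^k\sigma_{k-j}^{-2}\pino{-j}{k-j}e_{-h}\pino{-h-j}{k-j}$, and these trilinear terms are neither centred nor orthogonal in $j$ under Assumption \ref{ass:weak:dep} alone, so no square-root cancellation can be expected; the triangle inequality only yields $\sum_{l}\|\mathcal{P}_0(V_l^{(h)})\|_p\lesssim k$. With that, your accounting gives $\frac{1}{Nk}(k+1)\sqrt{N}\,k\asymp k/\sqrt{N}$, which is not $\lesssim k^{-1/2}$ for $k\asymp K_n$ under \ref{G3}: that would require $K_n^3/n$ bounded, whereas only $K_n^{2+\kappa}/n$ bounded for some (possibly small) $\kappa>0$ is assumed. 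A minor additional slip: the all-$H_1$ term is annihilated by $\mathcal{P}_0$ because it is independent of $\mathcal{F}_0$, not because it is $\mathcal{F}_{-1}$-measurable.

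The $\sqrt{k}$ is in fact forfeited at your very first step: splitting the $s$-block by the gap $h=t-s$ and applying the triangle inequality over the $k+1$ values of $h$ turns a block whose $L^{2p}$-norm is only $O(\sqrt{k})$ into a contribution of order $k$. The paper keeps the short block $\sum_{s=t-1-k}^{t-1}\II{s}{k}{j}$ intact: for fixed $j$ it applies the martingale decomposition in the projection index $l$ to the entire $(t,s)$-double sum, uses Burkholder over $l$ (not over $t$), bounds each projection by a coupled difference, and after H\"older the short $s$-block contributes $O(\sqrt{k})$ by Lemma \ref{lemma:PD-standart-est}, while $\bigl(\sum_{l}III_1^2\bigr)^{1/2}+\bigl(\sum_{l}III_2^2\bigr)^{1/2}\lesssim\sqrt{kN}$ because one factor of the squared $\ell^1$-in-$t$ sums of dependence coefficients can be summed out. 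This gives $\sqrt{kN}$ per $j$, so the plain triangle inequality over $j$ suffices, no cancellation in $j$ is needed, and the final bound $\sqrt{k/N}\lesssim k^{-1/2}$ follows from $K_n^2/n\to 0$ alone. To salvage your route you would have to either avoid the triangle inequality over $h$ (i.e.\ treat the $s$-block as a whole, which essentially reproduces the paper's argument) or supply a genuinely new proof of $\sum_l\|\mathcal{P}_0(V_l^{(h)})\|_p\lesssim\sqrt{k}$, which your proposal does not contain.
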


\begin{proof}
  The estimate \eqref{eq:step3-2} requires a little bit more care. 
  First, we apply the triangle inequality to the sum over $j$, and use $\sigma_{k-j}^{-2} \leq \sigma^{-2}$ (Lemma \ref{lemma:basic-props-AR(oo)}). This gives the estimate
  \begin{equation*}
      \frac{1}{\sigma^2Nk} \sum_{j=1}^k \biggl\|\sum_{t=K_n+k+3}^{n} \sum_{s=t-1-k}^{t-1}  \I{t}{s}{k}{j}-\mathbb{E}(\I{t}{s}{k}{j})\biggr\|_p,
  \end{equation*}
  for the left-hand side in \eqref{eq:step3-2}. Since the summands are centered, we can use a martingale difference approximation in the sense of Lemma \ref{lemma:mdsa}, yielding
  \begin{equation}\label{eq:stepIII:quadraticform-ref}
    \biggl\|\sum_{t=K_n+k+3}^{n} \sum_{s=t-1-k}^{t-1} \I{t}{s}{k}{j} -\mathbb{E}(\I{t}{s}{k}{j})\biggr\|_p \cformat{main-lemma-stepIII2-1}{ \\
    = \biggl\|\sum_{l\in\zz}\sum_{t=K_n+k+3}^{n} \sum_{s=t-1-k}^{t-1}  \mathcal{P}_l(\I{t}{s}{k}{j})\biggr\|_p \\}{}
      = \biggl\|\sum_{l\in\zz}\mathcal{P}_l\biggl(\sum_{t=K_n+k+3}^{n} \sum_{s=t-1-k}^{t-1}  \I{t}{s}{k}{j}\biggr)\biggr\|_p.
  \end{equation}
  Now since $p= q/4 > 2$, and $\mathcal{P}_l(Z)$ is a martingale difference sequence for arbitrary $Z \in L^1$, we may apply Burkholder's inequality (Lemma \ref{lemma:burkholder}), which leads to the bound
  \begin{equation}\label{eq:stepIII:stepIII-2-Burk}
    \biggl\|\sum_{l\in\zz}\mathcal{P}_l\biggl(\sum_{t=K_n+k+3}^{n} \sum_{s=t-1-k}^{t-1}  \I{t}{s}{k}{j}\biggr)\biggr\|_p
      \lesssim \sqrt{ \sum_{l\in\zz}\biggl\|\mathcal{P}_l\biggl(\sum_{t=K_n+k+3}^{n} \sum_{s=t-1-k}^{t-1}  \I{t}{s}{k}{j}\biggr)\biggr\|_p^2}.
  \end{equation}
  By \eqref{eq:P(Z)=E(Z-Z')}, and the subsequent discussion, we have
  \begin{equation*}
      \biggl\|\mathcal{P}_l\biggl(\sum_{t=K_n+k+3}^{n} \sum_{s=t-1-k}^{t-1}  \I{t}{s}{k}{j}\biggr)\biggr\|_p
      \leq \biggl\|\sum_{t=K_n+k+3}^{n} \sum_{s=t-1-k}^{t-1}  \I{t}{s}{k}{j} - (\I{t}{s}{k}{j})^{(l)}\biggr\|_p.
  \end{equation*}
  Inserting $\pm (\II{t}{k}{j})^{(l)}\II{s}{k}{j} = \pm e_l^{(l)}\pino{t-j}{k-j}^{(l)} e_s\pino{s-j}{k-j}$, we may estimate this term further by $III_1 + III_2$, where 
  \begin{equation*}
      \begin{split}
          III_1 &= \biggl\|\sum_{t=K_n+k+3}^{n} \sum_{s=t-1-k}^{t-1}  (\II{t}{k}{j}  - (\II{t}{k}{j})^{(l)}) \II{s}{k}{j}\biggr\|_p, \\
          III_2 & = \biggl\|\sum_{t=K_n+k+3}^{n} \sum_{s=t-1-k}^{t-1}  (\II{t}{k}{j})^{(l)}(\II{s}{k}{j}  - (\II{s}{k}{j})^{(l)}) \biggr\|_p.
          \end{split}
  \end{equation*}
  Note that $III_1$ and $III_2$ depend on $l$, $k$, and $j$. 
  We will deal with $III_1$ and $III_2$ individually. 
  Starting with $III_1$, an application of Hölder's inequality yields
  \begin{equation*}
      III_1 \leq \sum_{t=K_n+k+3}^{n} \|\II{t}{k}{j}  - (\II{t}{k}{j})^{(l)}\|_{2p}\biggl\|\sum_{s=t-1-k}^{t-1}   \II{s}{k}{j}\biggr\|_{2p}.
  \end{equation*}
  Since $\mathbb{E}(\II{s}{k}{j}) = 0$, Lemma \ref{lemma:PD-standart-est} yields  \begin{equation*}
    \biggl\|\sum_{s=t-1-k}^{t-1}   \II{s}{k}{j}\biggr\|_{2p} \lesssim \sqrt{k}.
  \end{equation*}
  Using an argument similar to Estimate \eqref{eq:pd-prod-est}, we get
  %On the other hand, the term $\|e_t  \pino{t-j}{k-j}  - e_t^{(l)}  \pino{t-j}{k-j}^{(l)}\|_{2p}$ may be further estimated by applying \eqref{eq:Z_lW_l - Z_l'W_l'} (the constants are uniformly bounded in $k$ and $j$ by the discussion following Equation \eqref{eq:Z_lW_l - Z_l'W_l'})
  \begin{equation}\label{eq:stepIII:e_te_(t-j,k-j) - e_t'e_(t-j,k-j)'}
      \begin{split}
          \|\II{t}{k}{j}  - (\II{t}{k}{j})^{(l)}\|_{2p} & \lesssim \delta_{4p}^{I_{k-j}}(t-j-l) + \delta_{4p}^I(t-l),
      \end{split}
  \end{equation}
  since $\|\pino{0}{k}\|_{4p}$ is uniformly bounded in $k$ by Lemma \ref{lemma:basic-props-AR(oo)}.
  In total, we get 
\begin{equation}\label{eq:stepIII:III_1-rate}
  III_1 \lesssim \sqrt{k}\sum_{t=K_n+k+3}^n \biggl(\delta_{4p}^{I_{k-j}}(t-j-l) + \delta_{4p}^I(t-l)\biggr).
\end{equation}
For $III_2$, we may change the order of summation, and apply Hölder's inequality
\begin{equation*}
    \begin{split}
        III_2 & = \biggl\|\sum_{s=K_n+k+2}^{n} \sum_{t=(s+1)\vee(K_n+k+3)}^{(s+k+1)\wedge n}  (\II{t}{k}{j})^{(l)}(\II{s}{k}{j}  - (\II{s}{k}{j})^{(l)}) \biggr\|_p \\
        & \leq \sum_{s=K_n+k+2}^{n} \|\II{s}{k}{j}  - (\II{s}{k}{j})^{(l)}\|_{2p} \biggl\|\sum_{t=(s+1)\vee(K_n+k+3)}^{(s+k+1)\wedge n} (\II{t}{k}{j})^{(l)}\biggr\|_{2p},
    \end{split}
\end{equation*}
where $x\wedge y = \min\{x,y\}$ and $x\vee y = \max\{x,y\}$. 
Note that the sum over $t$ contains at most $k$ terms. Since $e_t^{(l)}\pino{t-j}{k-j}^{(l)} \disteq e_t\pino{t-j}{k-j}$, we have $\mathbb{E}(e_t^{(l)}\pino{t-j}{k-j}^{(l)}) =\mathbb{E}(e_t\pino{t-j}{k-j})= 0$ and hence an application of Lemma \ref{lemma:PD-standart-est} yields 
\begin{equation*}
  \biggl\|\sum_{t=(s+1)\vee(K_n+k+3)}^{(s+k+1)\wedge n}  (\II{t}{k}{j})^{(l)}\biggr\|_{2p} \lesssim \sqrt{k}. 
\end{equation*}
  The term $\|\II{s}{k}{j}  - (\II{s}{k}{j})^{(l)}\|_{2p}$ can be estimated in exactly the same way as in \eqref{eq:stepIII:e_te_(t-j,k-j) - e_t'e_(t-j,k-j)'}, i.e.,
  \begin{equation*}
      \|\II{s}{k}{j}  - (\II{s}{k}{j})^{(l)}\|_{2p} \lesssim\delta_{4p}^{I_{k-j}}(s-j-l) + \delta_{4p}^I(s-l).
  \end{equation*}
  Summing up, we have 
  \begin{equation*}%\label{eq:stepIII:III_2-rate}% Ref nicht verwendet
    III_2 \lesssim \sqrt{k}\sum_{s=K_n+k+2}^{n-1} \biggl(\delta_{4p}^{I_{k-j}}(s-j-l) + \delta_{4p}^I(s-l)\biggr).
  \end{equation*}
  %Using $(a+b)^2 \leq 2 (a^2 + b^2)$ as well as $(|a|+|b|)^{1/2} \leq |a|^{1/2} + |b|^{1/2}$ in \eqref{eq:stepIII:stepIII-2-Burk}, we get
  In order to finish the proof, we observe that \eqref{eq:stepIII:stepIII-2-Burk} can be estimated by
  \begin{equation*}
  \begin{split}
    \biggl\|\sum_{l\in\zz}\mathcal{P}_l\biggl(\sum_{t=K_n+k+3}^{n} \sum_{s=t-1-k}^{t-1}  \I{t}{s}{k}{j}\biggr)\biggr\|_p \cformat{main-lemma-stepIII2-2}{&\lesssim \sqrt{ \sum_{l\in\zz} III_1^2 + III_2^2}\\}{}
                                                                                                                                                    &\lesssim \sqrt{\sum_{l\in\zz} III_1^2} + \sqrt{ \sum_{l\in\zz}  III_2^2}.
  \end{split}
  \end{equation*}
  Starting with the first term, we plug in the estimate \eqref{eq:stepIII:III_1-rate}, and use $(a+b)^2 \leq 2 (a^2 + b^2)$ again, to obtain
  \begin{equation}\label{eq:stepIII:sum-III_1^2}
  \begin{split}
      \sum_{l\in\zz} III_1^2 &\lesssim k \sum_{l\in\zz} \biggl(\sum_{t=K_n+k+3}^n \bigl(\delta_{4p}^{I_{k-j}}(t-j-l) + \delta_{4p}^I(t-l)\bigl)\biggr)^2 \\
      &\lesssim k \sum_{l\in \zz} \biggl(\sum_{t=K_n+k+3}^n \delta_{4p}^{I_{k-j}}(t-j-l)\biggr)^2+ \sum_{l\in \zz}\biggl(\sum_{t=K_n+k+3}^n \delta_{4p}^I(t-l)\biggr)^2.
  \end{split}
  \end{equation}
  Now we may sum up one factor in each of the two squares, which yields 
  \begin{equation*}
      \begin{split}
          \biggl(\sum_{t=K_n+k+3}^n \delta_{4p}^{I_{k-j}}(t-j-l)\biggr)^2 &\leq D_{4p}^{I_{k-j}}(0) \sum_{t=K_n+k+3}^n \delta_{4p}^{I_{k-j}}(t-j-l), \quad \text{and} \\
          \biggl(\sum_{t=K_n+k+3}^n \delta_{4p}^I(t-l)\biggr)^2 & \leq D_{4p}^I(0)\sum_{t=K_n+k+3}^n \delta_{4p}^I(t-l).
      \end{split}
  \end{equation*}

  Putting this back into \eqref{eq:stepIII:sum-III_1^2}, and using the fact that $D_{4p}^{I_{k-j}}(0)$ is uniformly bounded in $k$ and $j$ by Lemma \ref{lemma:dep-rates} and Assumption \ref{I2}, yields
  \begin{equation*}
      \begin{split}
          \sum_{l\in\zz} III_1^2 &\lesssim k \sum_{t=K_n+k+3}^n \sum_{l\in \zz} \delta_{4p}^I(t-l) + k\sum_{t=K_n+k+3}^n \sum_{l\in \zz} \delta_{4p}^{I_{k-j}}(t-j-l) \lesssim Nk,
      \end{split}
  \end{equation*}
  and hence, 
  \begin{equation*}
    \biggl( \sum_{l\in\zz} III_1^2 \biggr)^{\frac{1}{2}} \lesssim \sqrt{kN}. 
  \end{equation*}
  
  Using the same argument, we can show
  \begin{equation*}
    \biggl( \sum_{l\in\zz} III_2^2 \biggr)^{\frac{1}{2}} \lesssim\sqrt{kN}. 
  \end{equation*}
  In total, we have 
  \begin{multline*}
    \frac{1}{Nk}\biggl\|\sum_{t=K_n+k+3}^{n} \sum_{s=t-1-k}^{t-1} \sum_{j=1}^k \sigma_{k-j}^{-2} (\I{t}{s}{k}{j}-\mathbb{E}(\I{t}{s}{k}{j}))\biggr\|_p\\
      \lesssim \frac{1}{Nk}\sum_{j=1}^k \biggl( \sum_{l\in\zz} III_1^2 \biggr)^{\frac{1}{2}} +\frac{1}{kN} \biggl( \sum_{l\in\zz}  + III_2^2\biggr)^{\frac{1}{2}}
      \lesssim k^{\frac{1}{2}}N^{-\frac{1}{2}}. 
  \end{multline*}
  As $K_n^2/n \to 0$ by Assumption \ref{G3}, the last expression is $\lesssim k^{-1/2}$. 
  \end{proof}
  
  \begin{lemma}[Step IV, Part 1]\label{lemma:stepIV-1}
    Given Assumptions \ref{ass:main} and \ref{ass:weak:dep}, we have
    \begin{equation}\label{eq:stepIV}
      \frac{1}{Nk}\sum_{l=0}^k\bigg\| \sum_{t=K_n+k+3}^n \sum_{s=K_n+1}^{t-k-2} \sum_{j=1}^k \sigma_{k-j}^{-2} \mathcal{P}_{t-l}(\II{t}{k}{j})\II{s}{k}{j}\biggr\|_p \leq \frac{C_{p}}{\sqrt{k}},
    \end{equation}
    for $p=q/4$ and some constant $C_{p}$ depending only on $p$, and the process $X$.
  \end{lemma}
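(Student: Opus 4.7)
For each fixed $l\in\{0,\ldots,k\}$, write
\begin{equation*}
W_l \coloneqq \sum_{t=K_n+k+3}^{n}\sum_{s=K_n+1}^{t-k-2}\sum_{j=1}^{k}\sigma_{k-j}^{-2}\mathcal{P}_{t-l}(\II{t}{k}{j})\II{s}{k}{j},\qquad S_{t,j}\coloneqq\sum_{s=K_n+1}^{t-k-2}\II{s}{k}{j}.
\end{equation*}
The critical structural observation is that, because $l\leq k$, the partial sum $S_{t,j}$ is $\sigf{t-l-1}$-measurable, so
\begin{equation*}
Y_t^{(l)}\coloneqq\sum_{j=1}^{k}\sigma_{k-j}^{-2}\mathcal{P}_{t-l}(\II{t}{k}{j})S_{t,j}=\mathcal{P}_{t-l}(V_t),\qquad V_t\coloneqq\sum_{j=1}^{k}\sigma_{k-j}^{-2}\II{t}{k}{j}S_{t,j},
\end{equation*}
forms a martingale difference sequence in $t$ with respect to the filtration $(\sigf{t-l})_t$. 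Burkholder's inequality (Lemma \ref{lemma:burkholder}) therefore gives $\|W_l\|_p^2\lesssim\sum_t\|Y_t^{(l)}\|_p^2$, reducing the problem to a pointwise bound on $\|Y_t^{(l)}\|_p$ with the correct decay in $l$.

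To obtain this decay, I would split the inner sum at $j=l$. For $j>l$, the pseudo-innovation $\pino{t-j}{k-j}$ is $\sigf{t-l-1}$-measurable, which yields the key factorisation
\begin{equation*}
\mathcal{P}_{t-l}(\II{t}{k}{j})=\pino{t-j}{k-j}\,\mathcal{P}_{t-l}(e_t),\qquad j>l,
\end{equation*}
isolating the scalar $\mathcal{P}_{t-l}(e_t)$ whose $L^{2p}$-norm is bounded by $\delta_{2p}^{I}(l)$ (cf.\ \eqref{eq:P(Z)=E(Z-Z')}), a quantity summable in $l$ by Lemma \ref{lemma:dep-rates} and Assumption \ref{I2}. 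The contribution of the indices $j>l$ thus takes the clean form $\mathcal{P}_{t-l}(e_t)\cdot B_t^{>}$, with $B_t^{>}\coloneqq\sum_{j>l}\sigma_{k-j}^{-2}\pino{t-j}{k-j}S_{t,j}$ predictable. For $j\leq l$ there are only $l$ terms, and one estimates directly via $\|\mathcal{P}_{t-l}(\II{t}{k}{j})\|_{2p}\lesssim\delta_{4p}^{I}(l)+\delta_{4p}^{I_{k-j}}(l-j)$, Hölder, and $\|S_{t,j}\|_{2p}\lesssim\sqrt{N}$ (Lemma \ref{lemma:PD-standart-est}). The uniform bound $D^{I_{k-j}}_{4p}(5/2)\leq C$ from Lemma \ref{lemma:dep-rates} makes the contribution of this regime summable, $\sum_l\sum_{j\leq l}\delta_{4p}^{I_{k-j}}(l-j)\lesssim k$, which translates into an $O(k^{-1/2})$ remainder after the outer normalisation.

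The main technical obstacle is obtaining a sharp enough bound on $\|B_t^{>}\|_{2p}$: the triangle inequality only yields $\|B_t^{>}\|_{2p}\lesssim(k-l)\sqrt{N}$, which is one factor $\sqrt{k}$ too large. The improvement $\|B_t^{>}\|_{2p}^{2}\lesssim kN$ must come from the near-orthogonality of the pseudo-innovations $\{\pino{t-j}{k-j}\}_{j=1}^{k}$, which is the same algebraic fact behind the diagonalisation $R(k)^{-1}=L(k)^{T}D(k)L(k)$ exploited at the outset of the proof of Lemma \ref{lemma:A1} and in Lemma \ref{lemma:basic-props-AR(oo)}(4). Rewriting $B_t^{>}=\sum_{s}e_s\sum_{j>l}\sigma_{k-j}^{-2}\pino{t-j}{k-j}\pino{s-j}{k-j}$ and applying Lemma \ref{lemma:PD-standart-est} in $s$ together with the orthogonality relations for the pseudo-innovations in the inner $j$-sum yields the desired gain. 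Assembling the two regimes, Burkholder in $t$ gives $\|W_l\|_p\lesssim\delta_{2p}^{I}(l)\,N\sqrt{k}+\text{l.o.t.}$, so that summing over $l$ and using $\sum_{l\geq 0}\delta_{2p}^{I}(l)<\infty$ from Assumption \ref{I2} provides $\sum_{l=0}^{k}\|W_l\|_p\lesssim N\sqrt{k}$, which after division by $Nk$ is the claimed bound.
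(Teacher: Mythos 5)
Your skeleton coincides with the paper's: split the inner sum at $j=l$, use the factorisation $\mathcal{P}_{t-l}(\II{t}{k}{j})=\pino{t-j}{k-j}\mathcal{P}_{t-l}(e_t)$ for $j>l$, apply Burkholder in $t$, and reduce everything to a bound of order $\sqrt{kN}$ for the predictable factor. However, two steps in your proposal do not hold as stated, and they are precisely the two places where the paper has to work.

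First, the regime $j\leq l$. With your estimates ($\|\mathcal{P}_{t-l}(\II{t}{k}{j})\|_{2p}\lesssim\delta_{4p}^{I}(l)+\delta_{4p}^{I_{k-j}}(l-j)$, $\|S_{t,j}\|_{2p}\lesssim\sqrt N$, Burkholder in $t$) each pair $(l,j)$ contributes $\lesssim N\bigl(\delta_{4p}^{I}(l)+\delta_{4p}^{I_{k-j}}(l-j)\bigr)$, so after the normalisation $\tfrac{1}{Nk}$ you are left with
\begin{equation*}
\frac{1}{k}\sum_{l=1}^{k}\sum_{j=1}^{l}\Bigl(\delta_{4p}^{I}(l)+\delta_{4p}^{I_{k-j}}(l-j)\Bigr)\;\lesssim\;\frac{1}{k}\Bigl(1+\sum_{j=1}^{k}D_{4p}^{I_{k-j}}(0)\Bigr)\;\asymp\;1,
\end{equation*}
since the inner sums $\sum_{m\geq0}\delta_{4p}^{I_{k-j}}(m)$ are bounded but not small. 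This is $O(1)$, not the $O(k^{-1/2})$ you claim, so this regime is not a lower-order remainder under your bound. The paper gets $O(k^{-1})$ here only because it proves the sharper rates $\|\mathcal{P}_{0}(e_l\pino{l-j}{k-j})\|_{2p}\lesssim j^{-5/2}$ for $l=j$ and $\lesssim j^{-5/2}(l-j)^{-5/2}+l^{-5/2}$ for $l>j$ (see \eqref{eq:stepIV:IV_2:l=j-rate}--\eqref{eq:stepIV:IV_2:l>j-rate}), which carry decay in $j$ as well as in $l$; these are obtained via the $Q/H$ conditioning decomposition of $e_l$ and $\pino{l-j}{k-j}$ (the same device as in Step II) together with Lemma \ref{lemma:Q}, not from the crude coupling bound.

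Second, the key estimate $\|B_t^{>}\|\lesssim\sqrt{kN}$ is asserted but not proved. Invoking ``orthogonality of the pseudo-innovations'' and ``Lemma \ref{lemma:PD-standart-est} in $s$'' does not constitute an argument: Lemma \ref{lemma:PD-standart-est} requires deterministic weights $\lambda$, whereas in your rewriting the $j$-sum $\sum_{j>l}\sigma_{k-j}^{-2}\pino{t-j}{k-j}\pino{s-j}{k-j}$ is a random coefficient, and the $L^{2}$-orthogonality $\mathbb{E}(\pino{t-i}{k-i}\pino{t-j}{k-j})=0$ gives no gain in $L^{2p}$ (or $L^{4p/3}$), which is the norm you must control. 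The paper's mechanism is different: after reindexing it writes $\pino{t-k+j}{j}=Q^{t-k+j}_{t-k}(\pino{t-k+j}{j})+H^{t-k+j}_{t-k}(\pino{t-k+j}{j})$; the $Q$-part decays like $(j+1)^{-5/2}$ by Lemma \ref{lemma:Q} and is summed absolutely over $j$ (contributing $\sqrt N$), while the $H$-part is independent of the $s$-sum (because $t-k>s$), so the product is centred and admits a martingale expansion in $\mathcal{P}_{t-k+j-m}$ in which only $m\leq j$ survive; Burkholder over $j$ plus the uniform bounds of Lemma \ref{lemma:dep-rates} then yield the $\sqrt{Nk}$ bound. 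Without supplying this (or an equivalent) mechanism, the central inequality of your proof remains unestablished.
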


  \begin{proof}
    Note that  $\pino{t-j}{k-j}$ is $\mathcal{F}_{t-l-1}$-measurable whenever $j \geq l+1$. In this case, we have
    \begin{equation*}
    \begin{split}
        \mathcal{P}_{t-l}(\II{t}{k}{j}) &= \mathbb{E}(\II{t}{k}{j} \mid \mathcal{F}_{t-l}) - \mathbb{E}(\II{t}{k}{j} \mid \mathcal{F}_{t-l-1})= \pino{t-j}{k-j}\mathcal{P}_{t-l}(e_t).
    \end{split}
    \end{equation*}
    Exploiting this property, we may estimate the sum in \eqref{eq:stepIV} by two terms, $IV_1 + IV_2$, one in which $j \geq l+1$, and one in which $j \leq l$, i.e.,
    \begin{equation*}
      \begin{split}
        IV_1& = \frac{1}{Nk}\sum_{l=0}^k\bigg\| \sum_{t=K_n+k+3}^n \mathcal{P}_{t-l}(e_t)\sum_{s=K_n+1}^{t-k-2} \sum_{j=l+1}^k \sigma_{k-j}^{-2} \pino{t-j}{k-j}\II{s}{k}{j}\biggr\|_p, \quad \text{and} \\
      IV_2 & = \frac{1}{Nk}\sum_{l=1}^k\bigg\| \sum_{t=K_n+k+3}^n \sum_{s=K_n+1}^{t-k-2} \sum_{j=1}^l \sigma_{k-j}^{-2} \mathcal{P}_{t-l}(\II{t}{k}{j})\II{s}{k}{j}\biggr\|_p.
      \end{split}
    \end{equation*}
      Since $\pino{t-j}{k-j}\II{s}{k}{j}$ is $\mathcal{F}_{t-l-1}$-measurable in $IV_1$, we have 
      \begin{equation*}
        \mathcal{P}_{t-l}(e_t)\pino{t-j}{k-j}\II{s}{k}{j} = \mathcal{P}_{t-l}(\I{t}{s}{k}{j}),
      \end{equation*}
      and thus the sequence 
      \begin{equation*}
          \mathcal{P}_{t-l}(e_t)\sum_{s=K_n+1}^{t-k-2} \sum_{j=l+1}^k \sigma_{k-j}^{-2} \pino{t-j}{k-j}\II{s}{k}{j}
          =\mathcal{P}_{t-l}\biggl(\sum_{s=K_n+1}^{t-k-2} \sum_{j=l+1}^k \sigma_{k-j}^{-2} \I{t}{s}{k}{j}\biggr)
      \end{equation*}
      is a martingale difference sequence with respect to $t$. An application of Burkholder's inequality (see Lemma \ref{lemma:burkholder}) yields
       \begin{equation*}
      \begin{split}
        IV_1 & \leq \frac{1}{Nk}\sum_{l=0}^k\sqrt{\sum_{t=K_n+k+3}^n \bigg\|\mathcal{P}_{t-l}(e_t)  \sum_{s=K_n+1}^{t-k-2} \sum_{j=l+1}^k \sigma_{k-j}^{-2} \pino{t-j}{k-j}\II{s}{k}{j}\biggr\|_p^2}.
      \end{split}
    \end{equation*}
    Applying Hölder's inequality and using the fact that $\|\mathcal{P}_{t-l}(e_t)\|_{4p} \leq \delta_{4p}^I(l)$ (see \eqref{eq:P(Z)=E(Z-Z')} and the subsequent discussion), we may further estimate this term by
    \begin{equation*}
      \begin{split}
        IV_1 & \leq \frac{1}{Nk}\sum_{l=0}^k\sqrt{\sum_{t=K_n+k+3}^n \|\mathcal{P}_{t-l}(e_t)\|_{4p}^2\bigg\|  \sum_{s=K_n+1}^{t-k-2} \sum_{j=l+1}^k \sigma_{k-j}^{-2} \pino{t-j}{k-j}\II{s}{k}{j}\biggr\|_{\frac{4p}{3}}^2} \\
        &\leq \frac{1}{Nk}\sum_{l=0}^k\delta_{4p}^I(l)\sqrt{\sum_{t=K_n+k+3}^n\bigg\|  \sum_{s=K_n+1}^{t-k-2} \sum_{j=l+1}^k \sigma_{k-j}^{-2} \pino{t-j}{k-j}\II{s}{k}{j}\biggr\|_{\frac{4p}{3}}^2},
      \end{split}
    \end{equation*}
    and thus it is sufficient to show, that
    \begin{equation}\label{eq:stepIV:rate-goal-IV_1}
      \bigg\|\sum_{s=K_n+1}^{t-k-2} \sum_{j=l+1}^k \sigma_{k-j}^{-2} \pino{t-j}{k-j}\II{s}{k}{j}\biggr\|_{\frac{4p}{3}} \lesssim \sqrt{kN},
    \end{equation}
uniformly in $t$ and $l$.
    In the subsequent argument, it is more convenient if the summands are ascending in both $s$ and $j$, rather than just $s$. This leads to
    \begin{equation*}
      \begin{split}
        \bigg\|\sum_{s=K_n+1}^{t-k-2} \sum_{j=l+1}^k \sigma_{k-j}^{-2} \pino{t-j}{k-j}\II{s}{k}{j}\biggr\|_{\frac{4p}{3}} & = \bigg\|\sum_{j=0}^{k-l-1} \sigma_{j}^{-2}\pino{t-k+j}{j} \sum_{s=K_n+1}^{t-k-2}  \II{s}{k}{k-j}\biggr\|_{\frac{4p}{3}}.
      \end{split}
      \end{equation*}
      Writing $\pino{t-k+j}{j} = Q^{t-k+j}_{t-k}(\pino{t-k+j}{j}) + H^{t-k+j}_{t-k}(\pino{t-k+j}{j})$, with $H^t_s(Z) = \mathbb{E}(Z\mid\mathcal{F}^t_s)$, $Q^t_s(Z) = Z -H^t_s(Z)$ and $\mathcal{F}_s^t = \sigma(\varepsilon_t, \dots, \varepsilon_s)$ for $t\geq s$, we estimate this term by $IV_1' + IV_1''$, where
      \begin{equation*}
        \begin{split}
          IV_1' &= \bigg\|\sum_{j=0}^{k-l-1} \sigma_{j}^{-2}Q^{t-k+j}_{t-k}(\pino{t-k+j}{j}) \sum_{s=K_n+1}^{t-k-2}  \II{s}{k}{k-j}\biggr\|_{\frac{4p}{3}}, \quad \text{and} \\
          IV_1'' &=  \bigg\|\sum_{j=0}^{k-l-1} \sigma_{j}^{-2}H^{t-k+j}_{t-k}(\pino{t-k+j}{j}) \sum_{s=K_n+1}^{t-k-2}  \II{s}{k}{k-j}\biggr\|_{\frac{4p}{3}}.
        \end{split}
      \end{equation*}

      Let us start with $IV_1'$. 
      Since $\sigma_j^{-2}\leq \sigma^{-2}$ (see Lemma \ref{lemma:basic-props-AR(oo)}), an application of the triangle and Hölder's inequality gives
      \begin{equation*}
          \begin{split}
            IV_1' \leq \frac{1}{\sigma^2} \sum_{j=0}^{k-l-1} \|Q^{t-k+j}_{t-k}(\pino{t-k+j}{j})\|_{4p}\bigg\|\sum_{s=K_n+1}^{t-k-2}  \II{s}{k}{k-j}\biggr\|_{2p}.
          \end{split}
      \end{equation*}
      Since $\mathbb{E}(\II{s}{k}{k-j}) = 0$, we may apply Lemma \ref{lemma:PD-standart-est} to the second factor, yielding 
      \begin{equation}\label{eq:stepIV:sum_s <~ N^(1/2)}
        \bigg\|\sum_{s=K_n+1}^{t-k-2}  e_s\pino{s-k+j}{j}\biggr\|_{2p} \lesssim \sqrt{N}. 
      \end{equation}
      By Lemma \ref{lemma:Q}, we have $\|Q^{t-k+j}_{t-k}(\pino{t-k+j}{j})\| \lesssim (j+1)^{-5/2}$. The constants in this estimate do not depend on $j$, as $D^{I_j}_{4p}(\frac{5}{2})$ is uniformly bounded in $j$ by Lemma \ref{lemma:dep-rates}, and 
      $\|\pino{0}{j}\|_{4p}$ is uniformly bounded in $j$ by Lemma \ref{lemma:basic-props-AR(oo)}.
      
      In total, $IV_1'$ can be estimated by 
      \begin{equation*}
        IV_1' \lesssim \sum_{j=0}^{k-l-1} \|Q^{t-k+j}_{t-k}(\pino{t-k+j}{j})\|_{4p}\bigg\|\sum_{s=K_n+1}^{t-k-2}  \II{s}{k}{k-j}\biggr\|_{2p} \lesssim \sqrt{N} \sum_{j=0}^{k-l-1} (j+1)^{-\frac{5}{2}} \lesssim \sqrt{N}.
      \end{equation*}

      Next, we have to deal with $IV_1''$. 
      Since $t-k > s$ in $IV_1''$, $H^{t-k+j}_{t-k}(\pino{t-k+j}{j})$ is independent of $\II{s}{k}{k-j} = e_s \pino{s-k+j}{j}$, and hence we have 
      \begin{equation*}
        \mathbb{E}\bigl(H^{t-k+j}_{t-k}(\pino{t-k+j}{j})\II{s}{k}{k-j}\bigr) = \mathbb{E}\bigl(H^{t-k+j}_{t-k}(\pino{t-k+j}{j})\bigr)\mathbb{E}(\II{s}{k}{k-j}) = 0.
      \end{equation*} 
      Thus, we may perform a martingale difference approximation in the sense of Lemma \ref{lemma:mdsa},
      \begin{equation*}
        IV_1'' = \bigg\|\sum_{j=0}^{k-l-1} \sigma_{j}^{-2}\sum_{s=K_n+1}^{t-k-2}\sum_{m=0}^\infty\mathcal{P}_{t-k+j-m}\biggl(H^{t-k+j}_{t-k}(\pino{t-k+j}{j})\II{s}{k}{k-j}\biggr)\biggr\|_{\frac{4p}{3}}.
      \end{equation*}
      If $m>j$, $H^{t-k+j}_{t-k}(\pino{t-k+j}{j})$ is independent of $\sigma(e_s\pino{s-k+j}{j},\mathcal{F}^{t-k+j-m}_{-\infty}) \subseteq \sigf{s}$. In this case, we have 
      \begin{equation*}
        \mathcal{P}_{t-k+j-m}\bigl(H^{t-k+j}_{t-k}(\pino{t-k+j}{j})\II{s}{k}{k-j}\bigr) = \mathbb{E}(\pino{t-k+j}{j}) \mathcal{P}_{t-k+j-m}(\II{s}{k}{k-j}) = 0.
      \end{equation*}
      However, if $m\leq j$, we get
      \begin{equation*}
        \mathcal{P}_{t-k+j-m}\bigl(H^{t-k+j}_{t-k}(\pino{t-k+j}{j})\II{s}{k}{k-j}\bigr) = \mathcal{P}_{t-k+j-m}\bigl(H^{t-k+j}_{t-k}(\pino{t-k+j}{j})\bigr)\II{s}{k}{k-j},
      \end{equation*}
      as in this case $t-k+j-m \geq t-k > s$, and thus $e_s\pino{s-k+j}{j}$ is $\mathcal{F}_{t-k+j-m-1}$-measurable. 
      Combining the last two identities and summing up, we arrive at
      \begin{equation*}
          IV_1'' = \bigg\|\sum_{m=0}^{k-l-1}\sum_{j=m}^{k-l-1} \sigma_{j}^{-2}\sum_{s=K_n+1}^{t-k-2}\mathcal{P}_{t-k+j-m}\bigl(H^{t-k+j}_{t-k}(\pino{t-k+j}{j})\bigr)e_s\pino{s-k+j}{j}\biggr\|_{\frac{4p}{3}}.
      \end{equation*}
      Performing a triangle inequality in the sum over $m$, Burkholder's inequality (see Lemma \ref{lemma:burkholder}) with respect to the sum over $j$, using $\sigma_j^{-2} \leq \sigma^{-2}$ (see Lemma \ref{lemma:basic-props-AR(oo)}) and applying Hölder's inequality, we get 
      \begin{equation}\label{eq:stepIV:IV_1''-basic-est}
        IV_1'' \lesssim \sum_{m=0}^{k-l-1} \biggl(\sum_{j=m}^{k-l-1}\biggl\|\mathcal{P}_{t-k+j-m}\bigl(H^{t-k+j}_{t-k}(\pino{t-k+j}{j})\bigr)\biggr\|_{4p}^2\biggl\|\sum_{s=K_n+1}^{t-k-2}\II{s}{k}{k-j}\biggr\|_{2p}^2\biggr)^{\frac{1}{2}}.
      \end{equation}
      By Lemma \ref{lemma:PD-standart-est},
    \begin{equation*}
      \bigg\|\sum_{s=K_n+1}^{t-k-2}  \II{s}{k}{k-j}\biggr\|_{2p} \lesssim \sqrt{N}.
      \end{equation*}
      Hence, we just have to focus on $\mathcal{P}_{t-k+j-m}\bigl(H^{t-k+j}_{t-k}(\pino{t-k+j}{j})\bigr)$. 
      Using the tower property, we obtain
      \cformat{main-lemma-stepIV1-1}{
      \begin{multline*}
              \mathcal{P}_{t-k+j-m}\bigl(H^{t-k+j}_{t-k}(\pino{t-k+j}{j})\bigr) \\
              =\mathbb{E}\bigl(H^{t-k+j}_{t-k}(\pino{t-k+j}{j})\mid \mathcal{F}_{t-k+j-m}\bigr) - \mathbb{E}\bigl(H^{t-k+j}_{t-k}(\pino{t-k+j}{j})\mid \mathcal{F}_{t-k+j-m-1}\bigr) \\
              =\mathbb{E}\bigl(\mathbb{E}\bigl(\pino{t-k+j}{j}\mid \mathcal{F}^{t-k+j}_{t-k}\bigr)\mid \mathcal{F}_{t-k+j-m}\bigr) - \mathbb{E}\bigl(\mathbb{E}\bigl(\pino{t-k+j}{j}\mid \mathcal{F}^{t-k+j}_{t-k}\bigl)\mid \mathcal{F}_{t-k+j-m-1}\bigr)\\
              = \mathbb{E}(\pino{t-k+j}{j}\mid \mathcal{F}^{t-k+j-m}_{t-k}) - \mathbb{E}(\pino{t-k+j}{j}\mid \mathcal{F}^{t-k+j-m-1}_{t-k}). 
          \end{multline*}}{
          \begin{equation*}
            \mathcal{P}_{t-k+j-m}\bigl(H^{t-k+j}_{t-k}(\pino{t-k+j}{j})\bigr) = \mathbb{E}(\pino{t-k+j}{j}\mid \mathcal{F}^{t-k+j-m}_{t-k}) - \mathbb{E}(\pino{t-k+j}{j}\mid \mathcal{F}^{t-k+j-m-1}_{t-k}). 
        \end{equation*}}
      Since we have (cf. Equation \eqref{eq:P(Z)=E(Z-Z')} and the subsequent discussion)
    \begin{equation*}
        \mathbb{E}\bigl(\pino{t-k+j}{j}\mid\sigfdouble{t-k+j-m-1}{t-k}\bigr) = \mathbb{E}\bigl(\pino{t-k+j}{j}^{(t-k+j-m)}\mid\sigfdouble{t-k+j-m-1}{t-k}\bigr) = \mathbb{E}\bigl(\pino{t-k+j}{j}^{(t-k+j-m)}\mid\sigfdouble{t-k+j-m}{t-k}\bigr),
    \end{equation*}
    we may write
    \begin{equation*}
        \mathcal{P}_{t-k+j-m}\bigl(H^{t-k+j}_{t-k}(\pino{t-k+j}{j})\bigl) = \mathbb{E}\bigl(\pino{t-k+j}{j}-\pino{t-k+j}{j}^{(t-k+j-m)}\mid\sigfdouble{t-k+j-m}{t-k}\bigl).
    \end{equation*}
    Using the fact that the conditional expectation is a $L^{4p}$ contraction again, we get
    \begin{equation*}
    \begin{split}
      \bigl\|\mathcal{P}_{t-k+j-m}\bigl(H^{t-k+j}_{t-k}(\pino{t-k+j}{j})\bigr)\bigl\|_{4p} \cformat{main-lemma-stepIV1-2}{&= \bigl\|\mathbb{E}\bigl(\pino{t-k+j}{j}-\pino{t-k+j}{j}^{(t-k+j-m)}\mid\sigfdouble{t-k+j-m}{t-k}\bigr)\bigr\|_{4p} \\}{}
        &\leq\bigl\|\pino{t-k+j}{j}-\pino{t-k+j}{j}^{(t-k+j-m)}\bigr\|_{4p} =\delta_{4p}^{I_j}(m).
    \end{split}
    \end{equation*}
    Plugging this estimate and \eqref{eq:stepIV:sum_s <~ N^(1/2)} back into \eqref{eq:stepIV:IV_1''-basic-est}, we get 
    \begin{equation*}
      IV_1'' \lesssim \sqrt{Nk} \sum_{m=0}^{k-l-1} \delta_{4p}^{I_j}(m) \lesssim \sqrt{kN},
    \end{equation*}
    as $D_{4p}^{I_j}(0)$ is uniformly bounded in $j$ by Lemma \ref{lemma:dep-rates}. 
    Together with $IV_{1}'\lesssim \sqrt{N}$, this proves \eqref{eq:stepIV:rate-goal-IV_1}, finishing the term $IV_{1}$.
      
      Now we move on to $IV_2$. 
      An application of the triangle inequality yields ($\sigma_j^{-2} \leq \sigma^{-2}$, by Lemma \ref{lemma:basic-props-AR(oo)})
      \begin{equation*}
        IV_2 \lesssim \frac{1}{Nk}\sum_{l=1}^k\sum_{j=1}^l\bigg\| \sum_{t=K_n+k+3}^n \mathcal{P}_{t-l}(\II{t}{k}{j})\sum_{s=K_n+1}^{t-k-2} \II{s}{k}{j}\biggr\|_p.
      \end{equation*} 
      Since $\mathcal{P}_{t-l}(\II{t}{k}{j})\II{s}{k}{j} = \mathcal{P}_{t-l}(\I{t}{s}{k}{j})$ as $\II{s}{k}{j}$ is $\sigf{t-l-1}$-measurable, the sequence
      \begin{equation*}
        \sum_{s=K_n+1}^{t-k-2} \mathcal{P}_{t-l}(\II{t}{k}{j})\II{s}{k}{j}
        = \mathcal{P}_{t-l}\biggl(\sum_{s=K_n+1}^{t-k-2} \I{t}{s}{k}{j}\biggr)
      \end{equation*}
      is a martingale difference sequence. An application of Lemma \ref{lemma:burkholder}, Hölder's inequality, and Lemma \ref{lemma:PD-standart-est} yields
      \begin{equation}\label{eq:stepIV:IV_2-simple-rate}
        \begin{split}
          IV_{2}&\lesssim \frac{1}{Nk}\sum_{l=1}^k\sum_{j=1}^l\biggl( \sum_{t=K_n+k+3}^n \|\mathcal{P}_{t-l}(\II{t}{k}{j})\|_{2p}^2\biggl\|\sum_{s=K_n+1}^{t-k-2} \II{s}{k}{j}\biggr\|_{2p}^2\biggr)^{\frac{1}{2}}\\
                & \lesssim \frac{1}{k}\sum_{l=1}^k \sum_{j=1}^l \|\mathcal{P}_{0}(\II{l}{k}{j})\|_{2p} = \frac{1}{k}\sum_{j=1}^k\sum_{l=j}^k \|\mathcal{P}_{0}(\II{l}{k}{j})\|_{2p}
        \end{split}
      \end{equation}
      We have to treat the cases $l=j$ and $l>j$ separately. For $l=j$, we are going to show that 
      \begin{equation}\label{eq:stepIV:IV_2:l=j-rate}
        \|\mathcal{P}_{0}(\II{l}{k}{j})\|_{2p} = \|\mathcal{P}_{0}(e_l\pino{l-j}{k-j})\|_{2p} \lesssim j^{-\frac{5}{2}},
      \end{equation}
      while $l>j$ will lead to
      \begin{equation}\label{eq:stepIV:IV_2:l>j-rate}
        \|\mathcal{P}_{0}(\II{l}{k}{j})\|_{2p} = \|\mathcal{P}_{0}(e_l\pino{l-j}{k-j})\|_{2p} \lesssim j^{-\frac{5}{2}}(l-j)^{-\frac{5}{2}} + l^{-\frac{5}{2}}
      \end{equation}
      (note that the expression \eqref{eq:stepIV:IV_2:l>j-rate} is not defined for $l=j$). Plugging these rates into \eqref{eq:stepIV:IV_2-simple-rate}, we immediately get $IV_2 \lesssim k^{-1}$, as $D^I_{4p}(0) < \infty$ by Assumption \ref{I2}.

%      Starting with the case $l=j$, we note that $H^j_1(e_j)$ is independent of $\sigma(\pino{0}{k-j},\sigf{0})$ (and hence also of $\sigma(\pino{0}{k-j},\sigf{-1})$). This means that 
%      \begin{equation*}
%        \mathcal{P}_0\bigl(H^j_1(e_j)\pino{0}{j}\bigr) = \mathbb{E}\bigl(H^j_1(e_j)\bigr)\mathcal{P}_0(\pino{0}{k-j}) = 0,
%      \end{equation*}
%      since $\mathbb{E}\bigl(H^j_1(e_j)\bigr) = \mathbb{E}(e_j) = 0$. Hence, we may replace $e_j$ with $Q^j_1(e_j)$ in $\mathcal{P}_0(e_j\pino{0}{k-j})$. This implies the following estimate
%      \begin{equation*}
%      \begin{split}
%          \|\mathcal{P}_0(e_j\pino{0}{k-j})\|_{2p} & = \|\mathcal{P}_0(Q^j_1(e_j)\pino{0}{k-j})\|_{2p} \leq 2 \|Q^j_1(e_j)\|_{4p}\|\pino{0}{k-j}\|_{4p} \lesssim j^{-\frac{5}{2}},
%      \end{split}
%      \end{equation*}
%      proving claim \eqref{eq:stepIV:IV_2:l=j-rate}. Here we have used that the conditional expectation is a contraction in conjunction with the fact that $\|\pino{0}{k-j}\|_{4p}$ is uniformly bounded in $k$ and $j$ by Lemma \ref{lemma:basic-props-AR(oo)}, and Lemma \ref{lemma:Q} to establish $\|Q^j_1(e_j)\|_{4p}\lesssim j^{-5/2}$. 
%

      Since the case $l=j$ can be dealt with by a simpler version of the argument used for the case $l>j$, we focus on the latter.
      %Note that we can not simply apply an estimate of the type
      %\begin{equation*}
      %  \|\mathcal{P}_{0}(e_l\pino{l-j}{k-j})\|_{2p}\lesssim \delta_{4p}^I(l) + \delta_{4p}^{I_{k-j}}(l-j),
      %\end{equation*}
      %as we have done before since this only shows that \eqref{eq:stepIV:IV_2-basic-est} is $O(1)$ rather than $O(k^{-1})$.
      We may apply an argument similar to the one in \ref{II1-a} of Lemma \ref{lemma:stepII}, where $\mathcal{P}_{0}(\cdot)$ takes the role of $\mathbb{E}(\cdot)$. 
      Since $j< l$, $H^l_{l-j+1}(e_l) = \mathbb{E}(e_l\mid \sigfdouble{l}{l-j+1})$ is independent of $\sigf{0}$. %the $\sigma$-algebra $\sigma(\pino{l-j}{k-j},\sigf{0})$ (and hence also of $\sigma(\pino{l-j}{k-j},\sigf{-1})$). 
      Thus, we have 
      \begin{equation*}
      \begin{split}
        \mathcal{P}_{0}\bigl(H^l_{l-j+1}(e_l)\pino{l-j}{k-j}\bigr) \cformat{main-lemma-stepIV1-3}{&= \mathbb{E}\bigl(H^l_{l-j+1}(e_l)\pino{l-j}{k-j} \mid \sigf{0}\bigr) - \mathbb{E}\bigl(H^l_{l-j+1}(e_l)\pino{l-j}{k-j} \mid \sigf{-1}\bigr) \\
                                                                                                  &=\mathbb{E}\bigl(H^l_{l-j+1}(e_l)\bigr)\mathbb{E}\bigl(\pino{l-j}{k-j} \mid \sigf{0}\bigr) - \mathbb{E}\bigl(H^l_{l-j+1}(e_l)\bigr)\mathbb{E}\bigl(\pino{l-j}{k-j} \mid \sigf{-1}\bigr)\\}{}
          &= 0,
      \end{split}
      \end{equation*}
as $\mathbb{E}\bigl(H^l_{l-j+1}(e_l)\bigr) = \mathbb{E}(e_l) = 0$. 
      In other words, %we can replace $e_l$ by $Q^l_{l-j+1}(e_l)$ in $\mathcal{P}_{0}(e_l\pino{l-j}{k-j})$. 
      \begin{equation*}
        \mathcal{P}_{0}(e_{l}\pino{l-j}{k-j}) = \mathcal{P}_{0}(Q_{l-j+1}^{l}(e_{l})\pino{l-j}{k-j}).
      \end{equation*}
      Writing $\pino{l-j}{k-j} = Q^l_{l-j+1}(\pino{l-j}{k-j}) + H^l_{l-j+1}(\pino{l-j}{k-j})$, this gives 
      \begin{equation*}
      \begin{split}
          \mathcal{P}_{0}\bigl(e_l\pino{l-j}{k-j}\bigr) &= \mathcal{P}_{0}\bigl(Q_{l-j+1}^l(e_l)\pino{l-j}{k-j}\bigr) \\
          &= \mathcal{P}_{0}\bigl(Q_{l-j+1}^l(e_l)Q_{1}^{l-j}(\pino{l-j}{k-j})\bigr) + \mathcal{P}_{0}\bigl(Q_{l-j+1}^l(e_l)H_{1}^{l-j}(\pino{l-j}{k-j})\bigr).
      \end{split}
      \end{equation*}
     As $H^{l-j}_{1}(\pino{l-j}{k-j})$ is independent of the $\sigma$-algebra $\sigma\bigl(H^{l}_{l-j+1}(e_l), \sigf{0}\bigr)$ (and hence also of $\sigma\bigl(H^{l}_{l-j+1}(e_l), \sigf{-1}\bigr)$), this yields 
      \begin{equation*}
      \begin{split}
          \mathcal{P}_{0}\bigl(H_{l-j+1}^l(e_l)H_{1}^{l-j}(\pino{l-j}{k-j})\bigr) & = \mathbb{E}\bigl(H_{1}^{l-j}(\pino{l-j}{k-j})\bigr) \mathcal{P}_{0}\bigl(H_{l-j+1}^l(e_l)\bigr) = 0,
      \end{split}
      \end{equation*}
      as $\mathbb{E}\bigl(H_{1}^{l-j}(\pino{l-j}{k-j})\bigr) = \mathbb{E}(\pino{l-j}{k-j}) = 0$. 
      Exploiting this fact, we get
      \begin{equation*}
          \mathcal{P}_{0}\bigl(Q_{l-j+1}^l(e_l)H_{1}^{l-j}(\pino{l-j}{k-j})\bigr) = \mathcal{P}_{0}\bigl(e_l H_{1}^{l-j}(\pino{l-j}{k-j})\bigr). 
      \end{equation*}
      Since $H^l_{1}(e_l)H_{1}^{l-j}(\pino{l-j}{k-j})$ is independent of $\sigf{0}$, we have 
      \begin{multline*}
          \mathcal{P}_{0}\bigl(H^l_{1}(e_l)H_{1}^{l-j}(\pino{l-j}{k-j})\bigr)
          = \mathbb{E}\bigl(H^l_{1}(e_l)H_{1}^{l-j}(\pino{l-j}{k-j})\bigr) - \mathbb{E}\bigl(H^l_{1}(e_l)H_{1}^{l-j}(\pino{l-j}{k-j})\bigr) = 0,
      \end{multline*}
      which in turn implies
      \begin{equation*}
          \mathcal{P}_{0}\bigl(e_l H_{1}^{l-j}(\pino{l-j}{k-j})\bigr) = \mathcal{P}_{0}\bigl(Q^l_{1}(e_l) H_{1}^{l-j}(\pino{l-j}{k-j})\bigr). 
      \end{equation*}
      Putting everything together, we get
      \begin{equation*}
          \begin{split}
              \mathcal{P}_{0}(e_l\pino{l-j}{k-j}) & = \mathcal{P}_{0}\bigl(Q_{l-j+1}^l(e_l)Q_{1}^{l-j}(\pino{l-j}{k-j})\bigr) \\
              &+ \mathcal{P}_{0}\bigl(Q_{1}^l(e_l)H_{1}^{l-j}(\pino{l-j}{k-j})\bigr).
          \end{split}
      \end{equation*}
      Thus, using the fact that the conditional expectation is a $L^{2p}$ contraction, and applying Hölder's inequality, we arrive at
      \begin{equation}\label{eq:stepIV:P(e_te_(t-j,k-j)-est}
          \begin{split}
            \|\mathcal{P}_{0}(e_l\pino{l-j}{k-j})\|_{4p} \cformat{main-lemma-stepIV1-4}{& \leq 2 \|Q_{l-j+1}^l(e_l)\|_{4p}\|Q_{1}^{l-j}(\pino{l-j}{k-j})\|_{4p} \\
                                                                                        &+ 2\|Q_{1}^l(e_l)\|_{4p}\|H_{1}^{l-j}(\pino{l-j}{k-j})\|_{4p}\\}{}
              &\lesssim \|Q_{l-j+1}^l(e_l)\|_{4p}\|Q_{1}^{l-j}(\pino{l-j}{k-j})\|_{4p} + \|Q_{1}^l(e_l)\|_{4p}.
          \end{split}
      \end{equation}
      An application of Lemma \ref{lemma:Q} yields
      \begin{equation*}%\label{eq:step}%Referenz nicht verwendet
      \begin{split}
          \|Q_{l-j+1}^l(e_l)\|_{4p} & \lesssim j^{-\frac{5}{2}}, \\
          \|Q_{1}^{l-j}(\pino{l-j}{k-j})\|_{4p}&\lesssim (l-j)^{-\frac{5}{2}}, \quad \text{and} \\
          \|Q_{1}^l(e_l)\|_{4p} & \lesssim l^{-\frac{5}{2}}. 
      \end{split}
      \end{equation*}
      Plugging these rates into \eqref{eq:stepIV:P(e_te_(t-j,k-j)-est}, we get 
      \begin{equation}\label{eq:stepIV:rates-P(e_le_(l-j,k-j))}
          \|\mathcal{P}_{0}(e_l\pino{l-j}{k-j})\|_{4p} \lesssim j^{-\frac{5}{2}}(l-j)^{-\frac{5}{2}} + l^{-\frac{5}{2}},
      \end{equation}
      which is sufficient to establish $IV_2 \lesssim k^{-1}$ by \eqref{eq:stepIV:IV_2-simple-rate}, finishing the proof.

  \end{proof}

  \begin{lemma}[Step IV, Part 2] \label{lemma:stepIV-2}
    Given Assumptions \ref{ass:main} and \ref{ass:weak:dep}, we have
    \begin{equation}\label{eq:stepIV-2}
      \frac{1}{Nk} \bigg\|\sum_{l=k+1}^{\infty} \sum_{t=K_n+k+3}^n \sum_{s=K_n+1}^{t-k-2} \sum_{j=1}^k \sigma_{k-j}^{-2} \mathcal{P}_{t-l}(\I{t}{s}{k}{j})\biggr\|_p \leq \frac{C_{p}}{\sqrt{k}},
    \end{equation}
    for $p=q/4$ and some constant $C_{p}$ depending only on $p$, and the process $X$. 
  \end{lemma}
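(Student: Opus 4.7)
The plan is to exploit the fact that for fixed $l \geq k+1$, most indices $s \leq t-k-2$ actually satisfy the much stronger inequality $s \leq t-l-1$, which makes $\II{s}{k}{j}$ measurable with respect to $\mathcal{F}_{t-l-1}$ and thus allows the projection $\mathcal{P}_{t-l}$ to act only on $\II{t}{k}{j}$; this regime is then analogous to the one handled in Lemma \ref{lemma:stepIV-1}. The residual ``boundary'' regime $t-l \leq s \leq t-k-2$ is small in $l$ and will be controlled by the strong summability of the dependence coefficients.

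Concretely, I would fix $l \geq k+1$ and apply Burkholder's inequality (Lemma \ref{lemma:burkholder}) to the martingale difference sequence $\{A_{t,l}\}_t$ in $t$ (with respect to $\mathcal{F}_{t-l}$), where $A_{t,l} = \sum_{j=1}^{k}\sigma_{k-j}^{-2}\sum_{s=K_n+1}^{t-k-2}\mathcal{P}_{t-l}(\I{t}{s}{k}{j})$. Then I would split $\sum_{s}$ into an interior part $s \leq t-l-1$ and a boundary part $t-l \leq s \leq t-k-2$ (the latter empty for $l=k+1$). For the interior contribution, the factorisation $\mathcal{P}_{t-l}(\I{t}{s}{k}{j}) = \II{s}{k}{j}\mathcal{P}_{t-l}(\II{t}{k}{j})$, Hölder's inequality, Lemma \ref{lemma:PD-standart-est} applied to $\|\sum_{s\leq t-l-1}\II{s}{k}{j}\|_{2p} \lesssim \sqrt{N}$, stationarity, and the rate
\begin{equation*}
    \|\mathcal{P}_0(\II{l}{k}{j})\|_{2p} \lesssim j^{-5/2}(l-j)^{-5/2} + l^{-5/2} \qquad (l \geq k+1 > j)
\end{equation*}
from \eqref{eq:stepIV:rates-P(e_le_(l-j,k-j))} give an interior contribution of order $N\sum_{j=1}^{k}\bigl[j^{-5/2}(l-j)^{-5/2} + l^{-5/2}\bigr]$.

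For the boundary regime, the two factors of $\I{t}{s}{k}{j}$ no longer decouple under $\mathcal{P}_{t-l}$, and I would use the estimate $\|\mathcal{P}_{t-l}(\xi\eta)\|_p \leq \|\xi\eta - \xi^{(t-l)}\eta^{(t-l)}\|_p$ together with Hölder and Remark \ref{rem:prod-pd} to bound each summand by sums of $\delta_{4p}^{I}$ and $\delta_{4p}^{I_{k-j}}$ at arguments $l$, $l-j$, $l-(t-s)$, and $l-j-(t-s)$. Summing over the at most $l-k-1$ admissible values of $s$, over $l \geq k+1$, and over $j=1,\ldots,k$, and using $\sum_{l \geq k+1}l^{-5/2} \lesssim k^{-3/2}$, the convolution bound $\sum_{l \geq k+1}(l-j)^{-5/2} \lesssim (k-j+1)^{-3/2}$, and uniform summability of $D_{4p}^{I_{k-j}}(5/2)$ in $k,j$ from Lemma \ref{lemma:dep-rates} (combined with Lemma \ref{lemma:baxter}), the full contribution, after division by $Nk$, becomes $\lesssim k^{-1/2}$.

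The main obstacle is the boundary regime: the number of admissible $s$ grows linearly in $l$, and a naive application of Hölder would give a sum $\sum_{l}(l-k)\delta^{I}_{4p}(\cdot)$ that converges only if the dependence coefficients decay fast enough. This is exactly where Assumption \ref{I2} with $\alpha \geq 5/2$ (rather than a weaker summability condition) is essential, matching the paper's internal note that the terms $IV_{4}''(a)$ and $IV_{4}''(b)$ require the strengthened summability. The bookkeeping separating the four perturbation channels (hitting $e_t$, $\pino{t-j}{k-j}$, $e_s$, or $\pino{s-j}{k-j}$) must be carried out carefully so that each channel contributes a convergent $j$-sum after the $l$-summation.
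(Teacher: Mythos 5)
Your treatment of the interior regime $s\leq t-l-1$ is sound and coincides with the paper's handling of the corresponding block (its term $IV_3$): there the factorisation $\mathcal{P}_{t-l}(\I{t}{s}{k}{j})=\mathcal{P}_{t-l}(\II{t}{k}{j})\II{s}{k}{j}$, Burkholder in $t$, Lemma \ref{lemma:PD-standart-est}, and the rate \eqref{eq:stepIV:rates-P(e_le_(l-j,k-j))} give exactly the bound you state, and the $j,l$-sums close. The gap is in your boundary regime $t-l\leq s\leq t-k-2$. The coupling bound $\|\mathcal{P}_{t-l}(\I{t}{s}{k}{j})\|_p\leq\|\I{t}{s}{k}{j}-(\I{t}{s}{k}{j})^{(t-l)}\|_p$ combined with H\"older and Remark \ref{rem:prod-pd} produces the four channels you list, but the channels in which the perturbation hits the second factor, namely $\delta_{4p}^{I}\bigl(l-(t-s)\bigr)$ and $\delta_{4p}^{I_{k-j}}\bigl(l-j-(t-s)\bigr)$, do not decay in $l$: in the boundary regime the gap $h=t-s$ tracks $l$ (it ranges up to $\min(l,\,t-K_n-1)$), so for every $l$ between $k+2$ and roughly $t-K_n\asymp N$ the sum over $s$ of these channels is of order $D_{4p}^{I}(0)$, i.e.\ a constant, times $k$ after the $j$-sum. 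After Burkholder in $t$ and the triangle inequality in $l$ you are left with roughly $N\cdot\sqrt{N}\,k$ from this range, and division by $Nk$ leaves something of order $\sqrt{N}$, not $k^{-1/2}$. Strengthening the polynomial decay ($\alpha\geq 5/2$) cannot repair this, because the offending $\delta$'s are evaluated at arguments that stay bounded as $l$ grows; your diagnosis that the $\alpha\geq 5/2$ assumption is what saves this step therefore misses the real difficulty.

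What is actually needed here — and what the paper does — is a finer analysis of the boundary block. The paper splits it again at $l=t-s+j$ (its $IV_4'$ and $IV_4''$) and, instead of the raw coupling bound, decomposes $\mathcal{P}_0\bigl(e_l\pino{l-j}{k-j}e_{l-h}\bigr)$ and $\mathcal{P}_0\bigl(e_l\pino{l-j}{k-j}e_{l-h}\pino{l-h-j}{k-j}\bigr)$ using the finite-window operators $H_s^t$, $Q_s^t$ and the exact vanishing of projections of terms independent of $\mathcal{F}_0$ (as in Lemma \ref{lemma:stepII} and Lemma \ref{lemma:stepIV-1}). This yields \emph{product-type} bounds such as $\bigl(h^{-5/2}+(h-j)^{-5/2}\bigr)(l-h)^{-5/2}$, $j^{-5/2}(h-j)^{-5/2}$ (only over the short range $h<l<h+j$), and $l^{-5/2}$: the crucial point is that every term either decays in $l$ or carries a factor decaying in the gap $h\geq k+1$, reflecting that the conditional mean of $e_t\pino{t-j}{k-j}$ given the distant past is nearly zero — information the one-sided coupling estimate throws away by bounding $\|(\II{t}{k}{j})^{(t-l)}\|_{2p}$ crudely by a constant. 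Only with these product bounds do the sums over $j$, $h\geq k+1$ and $l$ converge and produce the stated $C_p k^{-1/2}$. As written, your proposal does not establish \eqref{eq:stepIV-2}.
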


  \begin{proof}
    We note that for $l \leq t-s-1$, the term $\II{s}{k}{j} = e_{s}\pino{s-j}{k-j}$ is $\sigf{t-l-1}$-measurable, and hence $\mathcal{P}_{t-l}(\I{t}{s}{k}{j}) = \mathcal{P}_{t-l}(\II{t}{k}{j})\II{s}{k}{j}$. 
    We may split the sum in \eqref{eq:stepIV-2} along $l=t-s$, and estimate the left-hand side of \eqref{eq:stepIV-2} by $IV_3 + IV_4$, where
    \begin{equation*}
      \begin{split}
        IV_3 &= \frac{1}{Nk} \bigg\| \sum_{t=K_n+k+3}^n \sum_{s=K_n+1}^{t-k-2} \sum_{j=1}^k\sum_{l=k+1}^{t-s-1} \sigma_{k-j}^{-2} \mathcal{P}_{t-l}(\II{t}{k}{j})\II{s}{k}{j}\biggr\|_p, \quad \text{and} \\
        IV_4 &= \frac{1}{Nk} \bigg\| \sum_{t=K_n+k+3}^n \sum_{s=K_n+1}^{t-k-2} \sum_{j=1}^k\sum_{l=t-s}^{\infty} \sigma_{k-j}^{-2} \mathcal{P}_{t-l}(\I{t}{s}{k}{j})\biggr\|_p.
      \end{split}
    \end{equation*}
    Starting with $IV_3$, we may change the order of summation, and apply the triangle inequalities to the sum over $l$ and $j$. Since $\sigma_{k-j}^{-2} \leq \sigma^{-2}$ (see Lemma \ref{lemma:basic-props-AR(oo)}), this yields
    \begin{equation}\label{eq:stepIV-2:IV_3-first-est}
      IV_3 \lesssim \frac{1}{Nk}\sum_{l=k+1}^{n-K_n-2} \sum_{j=1}^k \bigg\|\sum_{t=K_n+l+2}^n \sum_{s=K_n+1}^{t-l-1} \mathcal{P}_{t-l}(\II{t}{k}{j})\II{s}{k}{j}\biggr\|_p.
    \end{equation}
    Our goal for this expression is to show, that 
    \begin{equation}\label{eq:stepIV-2:IV_3-rate-goal}
      \bigg\|\sum_{t=K_n+l+2}^n \sum_{s=K_n+1}^{t-l-1} \mathcal{P}_{t-l}(\II{t}{k}{j})\II{s}{k}{j}\biggr\|_p \lesssim N\biggl(j^{-\frac{5}{2}}(l-j)^{-\frac{5}{2}} + l^{-\frac{5}{2}}\biggr). 
    \end{equation}
    Plugging this into \eqref{eq:stepIV-2:IV_3-first-est}, we get 
    %In this case the sum over $j$ cancels with the $k^{-1}$ factor in front and we get
    \begin{equation*}%\label{eq:stepIV-2:IV_3-final-goal}%nicht verwendet
        IV_3 \lesssim \frac{1}{k}\sum_{l=k+1}^\infty \sum_{j=1}^k \biggl(j^{-\frac{5}{2}}(l-j)^{-\frac{5}{2}} + l^{-\frac{5}{2}}\biggr) = \frac{1}{k}\sum_{j=1}^k j^{-\frac{5}{2}} \sum_{l=k+1-j}^\infty l^{-\frac{5}{2}} + \sum_{l=k+1}^\infty l^{-\frac{5}{2}} \lesssim k^{-1}.
    \end{equation*}
    To establish Estimate \eqref{eq:stepIV-2:IV_3-rate-goal}, we note that by the introductory remarks of this proof, we have $\mathcal{P}_{t-l}(\II{t}{k}{j})\II{s}{k}{j}=\mathcal{P}_{t-l}(\I{t}{s}{k}{j})$, and hence the term
    \begin{equation}
      \sum_{s=K_n+1}^{t-l-1}\mathcal{P}_{t-l}(\II{t}{k}{j})\II{s}{k}{j} = \mathcal{P}_{t-l}\biggl(\sum_{s=K_n+1}^{t-l-1}\I{t}{s}{k}{j}\biggr)
    \end{equation}
    is a martingale difference sequence with respect to $t$. 
    \cformat{main-lemma-stepIV2-1}{Applying Burkholder's inequality (Lemma \ref{lemma:burkholder}) to \eqref{eq:stepIV-2:IV_3-rate-goal}, and using the fact that $\|\mathcal{P}_{t-l}(\II{t}{k}{j})\|_{2p} = \|\mathcal{P}_{0}(\II{l}{k}{j})\|_{2p}$, we get
      \begin{equation*}
      \begin{split}
        \bigg\|\sum_{t=K_n+l+2}^n \sum_{s=K_n+1}^{t-l-1} \mathcal{P}_{t-l}(\II{t}{k}{j})\II{s}{k}{j}\biggr\|_p &\lesssim \biggl(\sum_{t=K_n+l+2}^n\biggl\|\sum_{s=K_n+1}^{t-l-1} \mathcal{P}_{t-l}(\II{t}{k}{j})\II{s}{k}{j}\biggr\|_p^2\biggr)^{\frac{1}{2}} \\
                                                                                                               & \lesssim \biggl(\sum_{t=K_n+l+2}^n \|\mathcal{P}_{0}(\II{l}{k}{j})\|_{2p}^2 \biggl\|\sum_{s=K_n+1}^{t-l-1}\II{s}{k}{j}\biggr\|_p^2\biggr)^{\frac{1}{2}}.
      \end{split}
    \end{equation*}
    Since $\mathbb{E}(e_s\pino{s-j}{k-j})=0$ we may apply Lemma \ref{lemma:PD-standart-est} to the effect of (cf. Estimate \eqref{eq:stepIII:part1:example-for-part2})
    \begin{equation*}
      \biggl\|\sum_{s=K_n+1}^{t-l-1}e_s \pino{s-j}{k-j}\biggr\|_p \lesssim \sqrt{N}, 
    \end{equation*}
    as there are at most $N$ terms in this sum. Since $\|\mathcal{P}_{0}(e_l \pino{l-j}{k-j})\|_{2p}$ does not depend on $t$ this yields the estimate
    \begin{equation*}
        \bigg\|\sum_{t=K_n+l+2}^n \sum_{s=K_n+1}^{t-l-1} \mathcal{P}_{t-l}(e_t \pino{t-j}{k-j})e_s \pino{s-j}{k-j}\biggr\|_p \lesssim N\|\mathcal{P}_{0}(e_l \pino{l-j}{k-j})\|_{2p}.
    \end{equation*}}{
    Applying Burkholder's inequality (Lemma \ref{lemma:burkholder}) to \eqref{eq:stepIV-2:IV_3-rate-goal}, Lemma \ref{lemma:PD-standart-est} and  $\mathcal{P}_{t-l}(\II{t}{k}{j}) \disteq \mathcal{P}_{0}(\II{l}{k}{j})$ yield
    \begin{equation*}
      \begin{split}
        \bigg\|\sum_{t=K_n+l+2}^n \sum_{s=K_n+1}^{t-l-1} \mathcal{P}_{t-l}(\II{t}{k}{j})\II{s}{k}{j}\biggr\|_p %&\lesssim \biggl(\sum_{t=K_n+l+2}^n\biggl\|\sum_{s=K_n+1}^{t-l-1} \mathcal{P}_{t-l}(\II{t}{k}{j})\II{s}{k}{j}\biggr\|_p^2\biggr)^{\frac{1}{2}} \\
                                                                                                               & \lesssim N\|\mathcal{P}_{0}(e_l \pino{l-j}{k-j})\|_{2p}.
      \end{split}
    \end{equation*}}
    %We now turn our attention to $\|\mathcal{P}_{0}(e_l \pino{l-j}{k-j})\|_{2p}$. 
   % As was already the case in Lemma \ref{lemma:stepIV-1}, an estimate of the type
   % \begin{equation*}
   %     \|\mathcal{P}_{0}(e_l \pino{l-j}{k-j})\|_{2p} \lesssim \delta_{4p}^I(l) + \delta_{4p}^{I_{k-j}}(l-j)
   % \end{equation*}
   % is not good enough, since it only implies that $IV_3$ is bounded instead of $IV_3 \lesssim k^{-1}$. 
    Using Estimate \eqref{eq:stepIV:rates-P(e_le_(l-j,k-j))} for $\|\mathcal{P}_{0}(e_l \pino{l-j}{k-j})\|_{2p}$ again, immediately yields \eqref{eq:stepIV-2:IV_3-rate-goal}.

    % For $V_1$ we estimate
    % \begin{equation*}
    %   \begin{split}
    %     V_1& \leq \frac{1}{Nk}\sum_{l=k+1}^{n-K_n-2} \bigg\|\sum_{t=K_n+l+2}^n \sum_{s=K_n+1}^{t-l-1} \sum_{j=1}^k \sigma_{k-j}^{-2}\mathcal{P}_{t-l}(e_t \pino{t-j}{k-j})e_s \pino{s-j}{k-j}\biggr\|_p \\
    %     & \leq \frac{C}{\sigma^2Nk}\sum_{l=k+1}^{n-K_n-2}\sum_{j=1}^k \sqrt{\sum_{t=K_n+l+2}^n\|\mathcal{P}_{t-l}(e_t \pino{t-j}{k-j})\|_{q/2}^2\bigg\| \sum_{s=K_n+1}^{t-l-1}  e_s \pino{s-j}{k-j}\biggr\|_{q/2}^2} \\
    %   \end{split}
    % \end{equation*}
    % Using \eqref{eq:step4-p-split-2} again we get
    % \begin{equation*}
    %   \begin{split}
    %     V_1 &\leq \frac{C}{k} \sum_{l=k+1}^{n-K_n-2}\sum_{j=1}^k \sum_{m=j}^\infty \delta_q^I(m) \sum_{r=l-j}^\infty \delta_q^{I_{k-j}}(r) \\
    %     & + C\sum_{l=k+1}^{n-K_n-2} \sum_{m=l}^\infty\delta_q^I(m) \leq C\biggl(\frac{1}{k} + \frac{1}{\sqrt{k}}\biggr).
    %   \end{split}
    % \end{equation*}
    % Note that for $l$ between $t-s$  and $t-s+j-1$ we have
    % \begin{equation*}
    %   \mathcal{P}_{t-l}(e_t \pino{t-j}{k-j}e_s \pino{s-j}{k-j}) = \mathcal{P}_{t-l}(e_t \pino{t-j}{k-j}e_s) \pino{s-j}{k-j}.
    % \end{equation*}
    Moving on with $IV_4$, we note that for $l=t-s,\dots, t-s+j-1$, the term $\pino{s-j}{k-j}$ is $\sigf{t-l-1}$ measurable (and hence $\sigf{t-l}$ measurable). Hence, we have 
    \begin{equation*}
      \mathcal{P}_{t-l}(\I{t}{s}{k}{j}) = \mathcal{P}_{t-l}(\II{t}{k}{j}e_s )\pino{s-j}{k-j}.
    \end{equation*}
    To exploit this idea, we may estimate $IV_4$ by $IV_4' + IV_4''$, where

    \begin{equation*}
      \begin{split}
        IV_4' & = \frac{1}{Nk} \bigg\| \sum_{t=K_n+k+3}^n \sum_{s=K_n+1}^{t-k-2} \sum_{j=1}^k\sum_{l=t-s}^{t-s+j-1} \sigma_{k-j}^{-2} \mathcal{P}_{t-l}(\I{t}{s}{k}{j})\biggr\|_p, \quad \text{and} \\
        IV_4'' & = \frac{1}{Nk} \bigg\| \sum_{t=K_n+k+3}^n \sum_{s=K_n+1}^{t-k-2} \sum_{j=1}^k\sum_{l=t-s+j}^{\infty} \sigma_{k-j}^{-2} \mathcal{P}_{t-l}(\I{t}{s}{k}{j})\biggr\|_p.
      \end{split}
    \end{equation*}
    %Starting with $IV_4'$, we may employ the triangle inequality in all sums. Using the fact that $\|\pino{0}{t-j}\|_{4p}$ is uniformly bounded in $k$ and $j$ by Lemma \ref{lemma:basic-props-AR(oo)} and the stationarity of the underlying processes (cf. the argument in \eqref{eq:stepII:est-begin}) we see that
    Starting with $IV_{4}'$, we note that the distance $h=t-s$, between $t$ and $s$, takes the values $h=k+1,\dots,N-1$. Each of the possible values of $h$ appears no more than $N$ times. 
Using the triangle inequality, Lemma \ref{lemma:basic-props-AR(oo)}, and $\mathcal{P}_{t-l}(e_{t}\pino{t-j}{k-j}e_{t-h}) \disteq \mathcal{P}_{0}(e_{l}\pino{l-j}{k-j}e_{l-h})$, we get
    \begin{equation}\label{eq:stepIV-2:IV_4'-est}
        IV_4' \lesssim \frac{1}{k}\sum_{j=1}^k \sum_{h=k+1}^{N-1} \sum_{l=h}^{h+j-1} \|\mathcal{P}_{0}(e_l \pino{l-j}{k-j}e_{l-h})\|_{\frac{4p}{3}}.
    \end{equation}
    \cformat{main-lemma-stepIV2-2}{We are going to use a similar argument as in \eqref{eq:stepIV:IV_2:l=j-rate} and \eqref{eq:stepIV:IV_2:l>j-rate}.
    Again, we treat the cases $l=h$ and $l>h$ separately. 
    However, the argument that we use to derive the necessary estimates for both cases is very similar.
    We note that $H^l_{l-h+1}(e_l\pino{l-j}{k-j})$ is independent of the $\sigma$-algebra $\sigma(e_{l-h},\sigf{0}) = \sigf{0}$ (and hence also of $\sigf{-1}$) as $l \geq h$.
    This allows us to replace $e_l\pino{l-j}{k-j}$ with $Q^l_{l-h+1}(e_l\pino{l-j}{k-j})$, ie.,
    \begin{equation}\label{eq:stepIV-2:IV_4':P_0-first-step}
        \mathcal{P}_{0}(e_l \pino{l-j}{k-j}e_{l-h}) = \mathcal{P}_{0}\bigl(Q^l_{l-h+1}(e_l\pino{l-j}{k-j})e_{l-h}\bigr).
    \end{equation}
    For the case $l=h$, this is already good enough. 
    Using the fact that the conditional expectation is a $L^{4p/3}$ contraction and Hölder's inequality, we may estimate 
    \begin{equation*}
        \begin{split}
            \bigl\|\mathcal{P}_{0}\bigl(e_l \pino{l-j}{k-j}e_{0}\bigr)\bigr\|_{\frac{4p}{3}} &= \bigl\|\mathcal{P}_{0}\bigl(Q^l_1(e_l\pino{l-j}{k-j})e_{0}\bigr)\bigr\|_{\frac{4p}{3}} \\
            & \leq \|\mathbb{E}\bigl(Q^l_1(e_l\pino{l-j}{k-j})e_{0} \mid \sigf{0}\bigr)\|_{\frac{4p}{3}} + \bigl\|\mathbb{E}\bigl(Q^l_1(e_l\pino{l-j}{k-j})e_{0} \mid \sigf{-1}\bigr)\bigr\|_{\frac{4p}{3}} \\
            &\leq 2\bigl\|Q^l_1\bigl(e_l\pino{l-j}{k-j}\bigr)e_{0}\bigr\|_{\frac{4p}{3}} \\
            & \leq 2\|e_0\|_{4p}\bigl\|Q^l_1(e_l\pino{l-j}{k-j})\bigr\|_{2p} \lesssim \bigl\|Q^l_1(e_l\pino{l-j}{k-j})\bigr\|_{2p}. 
        \end{split}
    \end{equation*}
    By Lemma \ref{lemma:Q}, we have  $\bigl\|Q^l_1(e_l\pino{l-j}{k-j})\bigr\|_{2p} \lesssim (h-j)^{-5/2} + h^{-5/2}$. 
    In light of this and \eqref{eq:stepIV-2:IV_4'-est}, we may estimate
    \begin{equation*}
        \begin{split}
            IV_4' \lesssim \frac{1}{k} \sum_{j=1}^k \sum_{h=k+1}^{N-1} \biggl((h-j)^{-\frac{5}{2}} + h^{-\frac{5}{2}}\biggr) \lesssim k^{-\frac{1}{2}}.
        \end{split}
    \end{equation*}
    Here we have again used, that $\sum_{m\geq h} m^\alpha \lesssim h^{\alpha +1}$ for $\alpha < -1$ and $\sum_{1\leq m \leq h} m^\beta \lesssim h^{\beta + 1}$ for $\beta >-1$.

    Next, we move on to the case $l>h$. 
    Recall that Equation \eqref{eq:stepIV-2:IV_4':P_0-first-step} is still valid in this case. 
    In view of this, we may write $e_{l-h} = Q^{l-h}_1(e_{l-h}) + H^{l-h}_1(e_{l-h})$, which gives 
    \begin{equation}
        \begin{split}
            \mathcal{P}_{0}\bigl(e_l \pino{l-j}{k-j}e_{l-h}\bigr) &= \mathcal{P}_{0}\bigl(Q^l_{l-h+1}(e_l\pino{l-j}{k-j})e_{l-h}\bigr)\\
            & = \mathcal{P}_{0}\bigl(Q^l_{l-h+1}(e_l\pino{l-j}{k-j})Q^{l-h}_1(e_{l-h})\bigr)\\
            & +\mathcal{P}_{0}\bigl(Q^l_{l-h+1}(e_l\pino{l-j}{k-j})H^{l-h}_1(e_{l-h})\bigr).
        \end{split}
    \end{equation}
    Now note that $H^{l-h}_{1}(e_{l-h})$ is independent of $\sigma\bigl(H^{l}_{l-h+1}(e_l\pino{l-j}{k-j}),\sigf{0}\bigr)$ and hence also of $\sigma\bigl(H^{l}_{l-h+1}(e_l\pino{l-j}{k-j}),\sigf{-1}\bigr)$. This implies 
    \begin{equation*}
        \mathcal{P}_{0}\bigl(H^l_{l-h+1}(e_l\pino{l-j}{k-j})H^{l-h}_1(e_{l-h})\bigr) = \mathbb{E}\bigl(H^{l-h}_1(e_{l-h})\bigr)\mathcal{P}_{0}\bigl(Q^l_{l-h+1}(e_l\pino{l-j}{k-j})\bigr) = 0,
    \end{equation*}
    as $\mathbb{E}\bigl(H^{l-h}_1(e_{l-h})\bigr) = \mathbb{E}(e_{l-h}) = 0$. Thus we get 
    \begin{equation}\label{eq:stepIV-2:IV_4':P-id-1}
        \mathcal{P}_{0}(e_l \pino{l-j}{k-j}e_{l-h}) = \mathcal{P}_{0}\bigl(Q^l_{l-h+1}(e_l\pino{l-j}{k-j})Q^{l-h}_1(e_{l-h})\bigr) +\mathcal{P}_{0}\bigl(e_l\pino{l-j}{k-j}H^{l-h}_1(e_{l-h})\bigr).
    \end{equation}
    We focus our attention on the second term and observe that $H^l_{l-j+1}(e_l)$ is independent of $\sigma\bigl(\pino{l-h}{k-j}H^{l-h}_1(e_{l-h}),\sigf{0}\bigr)$, and hence of $\sigma\bigl(\pino{l-h}{k-j}H^{l-h}_1(e_{l-h}),\sigf{-1}\bigr)$. This allows us to replace $e_l$ by $Q^l_{l-j+1}(e_l)$ in the last expression, i.e.,
    \begin{equation}\label{eq:stepIV-2:IV_4':P-id-2}
        \mathcal{P}_{0}\bigl(e_l\pino{l-j}{k-j}H^{l-h}_1(e_{l-h})\bigr) = \mathcal{P}_{0}\bigl(Q^l_{l-j+1}(e_l)\pino{l-j}{k-j}H^{l-h}_1(e_{l-h})\bigr).
    \end{equation}
    Using again that $\pino{l-j}{k-j} = Q^{l-j}_{l-h+1}(\pino{l-j}{k-j}) + H^{l-j}_{l-h+1}(\pino{l-j}{k-j})$ and get 
    \begin{equation}\label{eq:stepIV-2:IV_4':P-id-3}
        \begin{split}
            \mathcal{P}_{0}\bigl(Q^l_{l-j+1}(e_l)\pino{l-j}{k-j}H^{l-h}_1(e_{l-h})\bigr) & = \mathcal{P}_{0}\bigl(Q^l_{l-j+1}(e_l)Q^{l-j}_{l-h+1}(\pino{l-j}{k-j})H^{l-h}_1(e_{l-h})\bigr) \\
            & + \mathcal{P}_{0}\bigl(Q^l_{l-j+1}(e_l)H^{l-j}_{l-h+1}(\pino{l-j}{k-j})H^{l-h}_1(e_{l-h})\bigr).
        \end{split}
    \end{equation}
Since again $H^l_{l-j+1}(e_l)$ is independent of $\sigma(H^{l-j}_{l-h+1}(\pino{l-j}{k-j})H^{l-h}_1(e_{l-h}),\sigf{0})$ and hence of $\sigma\bigl(H^{l-j}_{l-h+1}(\pino{l-j}{k-j})H^{l-h}_1(e_{l-h}),\sigf{-1}\bigr)$, thus we have 
    \begin{multline*}
        \mathcal{P}_{0}\bigl(H^l_{l-j+1}(e_l)H^{l-j}_{l-h+1}(\pino{l-j}{k-j})H^{l-h}_1(e_{l-h})\bigr)\\
        = \mathbb{E}\bigl(H^l_{l-j+1}(e_l)\bigr)\mathcal{P}_{0}\bigl(H^{l-j}_{l-h+1}(\pino{l-j}{k-j})H^{l-h}_1(e_{l-h})\bigr) = 0,
    \end{multline*}
    as $\mathbb{E}\bigl(H^l_{l-j+1}(e_l)\bigr) = 0$. Hence
    \begin{equation}\label{eq:stepIV-2:IV_4':P-id-4}
    \begin{split}
        \mathcal{P}_{0}\bigl(Q^l_{l-j+1}(e_l)H^{l-j}_{l-h+1}(\pino{l-j}{k-j})H^{l-h}_1(e_{l-h})\bigr) & = \mathcal{P}_{0}\bigl(e_lH^{l-j}_{l-h+1}(\pino{l-j}{k-j})H^{l-h}_1(e_{l-h})\bigr).
    \end{split}
    \end{equation}
    Similarly, since $H^l_1(e_l)H^{l-j}_{l-h+1}(\pino{l-j}{k-j})H^{l-h}_1(e_{l-h})$ is independent of $\sigf{0}$ and $\sigf{-1}$ we have 
    \begin{equation*}
    \begin{split}
        \mathcal{P}_{0}\bigl(H^l_1(e_l)H^{l-j}_{l-h+1}(\pino{l-j}{k-j})H^{l-h}_1(e_{l-h})\bigr) &= \mathbb{E}\bigl(H^l_1(e_l)H^{l-j}_{l-h+1}(\pino{l-j}{k-j})H^{l-h}_1(e_{l-h})\bigr)\\
        &- \mathbb{E}\bigl(H^l_1(e_l)H^{l-j}_{l-h+1}(\pino{l-j}{k-j})H^{l-h}_1(e_{l-h})\bigr) = 0.
    \end{split}
    \end{equation*}
    This means that we can replace $e_l$ by $Q^l_1(e_l)$ in $\mathcal{P}_{0}(e_lH^{l-j}_{l-h+1}(\pino{l-j}{k-j})H^{l-h}_1(e_{l-h}))$, leading to
    \begin{equation}\label{eq:stepIV-2:IV_4':P-id-5}
        \mathcal{P}_{0}\bigl(e_lH^{l-j}_{l-h+1}(\pino{l-j}{k-j})H^{l-h}_1(e_{l-h})\bigr) = \mathcal{P}_{0}\bigl(Q^l_1(e_l)H^{l-j}_{l-h+1}(\pino{l-j}{k-j})H^{l-h}_1(e_{l-h})\bigr).
    \end{equation}
    Combining \eqref{eq:stepIV-2:IV_4':P-id-1}, \eqref{eq:stepIV-2:IV_4':P-id-2}, \eqref{eq:stepIV-2:IV_4':P-id-3}, \eqref{eq:stepIV-2:IV_4':P-id-4} and \eqref{eq:stepIV-2:IV_4':P-id-5} we get 
    \begin{equation*}
        \begin{split}
            \mathcal{P}_{0}\bigl(e_l \pino{l-j}{k-j}e_{l-h}\bigr) & = \mathcal{P}_{0}\bigl(Q^l_{l-h+1}(e_l\pino{l-j}{k-j})Q^{l-h}_1(e_{l-h})\bigr) \\
            & + \mathcal{P}_{0}\bigl(Q^l_{l-j+1}(e_l)Q^{l-j}_{l-h+1}(\pino{l-j}{k-j})H^{l-h}_1(e_{l-h})\bigr) \\
            & +\mathcal{P}_{0}\bigl(Q^l_1(e_l)H^{l-j}_{l-h+1}(\pino{l-j}{k-j})H^{l-h}_1(e_{l-h})\bigr).
        \end{split}
    \end{equation*}

    Since the conditional expectation is a $L^p$ contraction, we may apply the triangle and Hölder's inequality to the effect of 
    \begin{equation}\label{eq:stepIV-2:IV_4':l>h:P-rate-est}
        \begin{split}
            \|\mathcal{P}_{0}(e_l \pino{l-j}{k-j}e_{l-h})\|_{\frac{4p}{3}} & \lesssim \|Q^l_{l-h+1}(e_l\pino{l-j}{k-j})\|_{2p}\|Q^{l-h}_1(e_{l-h})\|_{4p} \\
            & + \|Q^l_{l-j+1}(e_l)\|_{4p}\|Q^{l-j}_{l-h+1}(\pino{l-j}{k-j})\|_{4p}\|H^{l-h}_1(e_{l-h})\|_{4p} \\
            & +\|Q^l_1(e_l)\|_{4p}\|H^{l-j}_{l-h+1}(\pino{l-j}{k-j})\|_{4p}\|H^{l-h}_1(e_{l-h})\|_{4p} \\
            & \lesssim \|Q^l_{l-h+1}(e_l\pino{l-j}{k-j})\|_{2p}\|Q^{l-h}_1(e_{l-h})\|_{4p} \\
            & + \|Q^l_{l-j+1}(e_l)\|_{4p}\|Q^{l-j}_{l-h+1}(\pino{l-j}{k-j})\|_{4p} \\
            & +\|Q^l_1(e_l)\|_{4p}.
        \end{split}
    \end{equation}
    In the last inequality, we have again used the fact that the conditional expectation is a contraction in addition to the stationarity of the underlying processes and the fact that $\|\pino{0}{k-j}\|_{4p}$ is uniformly bounded in $k$ and $j$ by Lemma \ref{lemma:basic-props-AR(oo)}. 
    An application of Lemma \ref{lemma:Q} yields the following rates
    \begin{equation*}
        \begin{split}
            \|Q^l_{l-h+1}(e_l\pino{l-j}{k-j})\|_{2p} & \lesssim h^{-\frac{5}{2}} + (h-j)^{-\frac{5}{2}}, \\
            \|Q^{l-h}_{1}(e_{l-h})\|_{4p} &\lesssim (l-h)^{-\frac{5}{2}}, \\
            \|Q^l_{l-j+1}(e_l)\|_{4p} & \lesssim j^{-\frac{5}{2}} \\
            \|Q^{l-j}_{l-h+1}(\pino{l-j}{k-j})\|_{4p} & \lesssim (h-j)^{-\frac{5}{2}} \quad \text{and} \\
            \|Q^l_1(e_l)\|_{4p} &\lesssim l^{-\frac{5}{2}}.
        \end{split}
    \end{equation*}
    Plugging these rates into \eqref{eq:stepIV-2:IV_4':l>h:P-rate-est} and \eqref{eq:stepIV-2:IV_4'-est} (note that we have already dealt with the terms where $l=h$), we may estimate $IV_4'$ by $IV_4' \lesssim IV_4'(a) + IV_4'(b) +IV_4'(c)$, where
    \begin{equation*}
        \begin{split}
            IV_4'(a) &= \frac{1}{k} \sum_{j=1}^k \sum_{h=k+1}^{N-1} \sum_{l=h+1}^{h+j-1} \biggl(h^{-\frac{5}{2}} + (h-j)^{-\frac{5}{2}}\biggr)(l-h)^{-\frac{5}{2}}, \\
            IV_4'(b) &= \frac{1}{k} \sum_{j=1}^k \sum_{h=k+1}^{N-1} \sum_{l=h+1}^{h+j-1} j^{-\frac{5}{2}}(h-j)^{-\frac{5}{2}} \quad \text{and} \\
            IV_4'(c) & =\frac{1}{k} \sum_{j=1}^k \sum_{h=k+1}^{N-1} \sum_{l=h+1}^{h+j-1} l^{-\frac{5}{2}},
        \end{split}
    \end{equation*}
    all of which are of order $O(k^{-1/2})$.}{%
    The term $\|\mathcal{P}_{0}(e_{l}\pino{l-j}{k-j}e_{l-h})\|_{\frac{4p}{3}}$ can be treated by the same arguments that were used to establish \eqref{eq:stepIV:IV_2:l=j-rate} and \eqref{eq:stepIV:IV_2:l>j-rate}.
    Again, we want to treat the cases $l=h$ and $l>h$ separately.
    We just give the necessary identities and rates, that immediately follow from Lemma \ref{lemma:Q}.

    For the case $l=h$, we show 
    \begin{equation}\label{eq:stepIV-2:IV_4':P_0-first-step}
        \mathcal{P}_{0}(e_h \pino{h-j}{k-j}e_{0}) = \mathcal{P}_{0}\bigl(Q^h_{1}(e_h\pino{h-j}{k-j})e_{0}\bigr),
    \end{equation}
    which immediately yields
    \begin{equation*}
      \|\mathcal{P}_{0}(e_{h}\pino{h-j}{k-j}e_{0})\|_{\frac{4p}{3}} \lesssim \bigl\|Q^h_1(e_h\pino{h-j}{k-j})\bigr\|_{2p} \lesssim (h-j)^{-5/2} + h^{-5/2},
    \end{equation*}
    by Lemma \ref{lemma:basic-props-AR(oo)} and Lemma \ref{lemma:Q}.
    This gives
    \begin{equation*}
        \begin{split}
 \frac{1}{k} \sum_{j=1}^k \sum_{h=k+1}^{N-1} \biggl((h-j)^{-\frac{5}{2}} + h^{-\frac{5}{2}}\biggr) \lesssim k^{-\frac{1}{2}}.
        \end{split}
    \end{equation*}
    Here we have used, that $\sum_{m\geq h} m^\alpha \lesssim h^{\alpha +1}$ for $\alpha < -1$, and $\sum_{1\leq m \leq h} m^\beta \lesssim h^{\beta + 1}$ for $\beta >-1$.

The case $l>h$ is somewhat more involved. 
We first establish
\begin{equation*}
        \begin{split}
            \mathcal{P}_{0}\bigl(e_l \pino{l-j}{k-j}e_{l-h}\bigr) & = \mathcal{P}_{0}\bigl(Q^l_{l-h+1}(e_l\pino{l-j}{k-j})Q^{l-h}_1(e_{l-h})\bigr) \\
            & + \mathcal{P}_{0}\bigl(Q^l_{l-j+1}(e_l)Q^{l-j}_{l-h+1}(\pino{l-j}{k-j})H^{l-h}_1(e_{l-h})\bigr) \\
            & +\mathcal{P}_{0}\bigl(Q^l_1(e_l)H^{l-j}_{l-h+1}(\pino{l-j}{k-j})H^{l-h}_1(e_{l-h})\bigr).
        \end{split}
    \end{equation*}
    This immediately gives
\begin{equation}\label{eq:stepIV-2:IV_4':l>h:P-rate-est}
        \begin{split}
            \|\mathcal{P}_{0}(e_l \pino{l-j}{k-j}e_{l-h})\|_{\frac{4p}{3}}% & \lesssim \|Q^l_{l-h+1}(e_l\pino{l-j}{k-j})\|_{2p}\|Q^{l-h}_1(e_{l-h})\|_{4p} \\
           % & + \|Q^l_{l-j+1}(e_l)\|_{4p}\|Q^{l-j}_{l-h+1}(\pino{l-j}{k-j})\|_{4p}\|H^{l-h}_1(e_{l-h})\|_{4p} \\
           % & +\|Q^l_1(e_l)\|_{4p}\|H^{l-j}_{l-h+1}(\pino{l-j}{k-j})\|_{4p}\|H^{l-h}_1(e_{l-h})\|_{4p} \\
            & \lesssim \|Q^l_{l-h+1}(e_l\pino{l-j}{k-j})\|_{2p}\|Q^{l-h}_1(e_{l-h})\|_{4p} \\
            & + \|Q^l_{l-j+1}(e_l)\|_{4p}\|Q^{l-j}_{l-h+1}(\pino{l-j}{k-j})\|_{4p} \\
            & +\|Q^l_1(e_l)\|_{4p},
        \end{split}
    \end{equation}
    where the constants are uniformly bounded by Lemma \ref{lemma:basic-props-AR(oo)}.
    Using Lemma \ref{lemma:Q}, we get the following rates
    \begin{equation*}
        \begin{split}
            \|Q^l_{l-h+1}(e_l\pino{l-j}{k-j})\|_{2p} & \lesssim h^{-\frac{5}{2}} + (h-j)^{-\frac{5}{2}}, \\
            \|Q^{l-h}_{1}(e_{l-h})\|_{4p} &\lesssim (l-h)^{-\frac{5}{2}}, \\
            \|Q^l_{l-j+1}(e_l)\|_{4p} & \lesssim j^{-\frac{5}{2}}, \\
            \|Q^{l-j}_{l-h+1}(\pino{l-j}{k-j})\|_{4p} & \lesssim (h-j)^{-\frac{5}{2}}, \quad \text{and} \\
            \|Q^l_1(e_l)\|_{4p} &\lesssim l^{-\frac{5}{2}}.
        \end{split}
    \end{equation*}
    Plugging these rates into \eqref{eq:stepIV-2:IV_4'-est}, we can estimate $IV_{4}'$ by $IV_{4}'(a) + IV_{4}'(b) + IV_{4}'(c)$, where
\begin{equation*}
        \begin{split}
            IV_4'(a) &= \frac{1}{k} \sum_{j=1}^k \sum_{h=k+1}^{N-1} \sum_{l=h+1}^{h+j-1} \biggl(h^{-\frac{5}{2}} + (h-j)^{-\frac{5}{2}}\biggr)(l-h)^{-\frac{5}{2}}, \\
            IV_4'(b) &= \frac{1}{k} \sum_{j=1}^k \sum_{h=k+1}^{N-1} \sum_{l=h+1}^{h+j-1} j^{-\frac{5}{2}}(h-j)^{-\frac{5}{2}}, \quad \text{and} \\
            IV_4'(c) & =\frac{1}{k} \sum_{j=1}^k \sum_{h=k+1}^{N-1} \sum_{l=h+1}^{h+j-1} l^{-\frac{5}{2}},
        \end{split}
    \end{equation*}
    all of which are of order $O(k^{-1/2})$.

  }

  Now we move on to $IV_4''$. Using $\mathcal{P}_{t-l}(\I{t}{s}{k}{j}) \disteq \mathcal{P}_{0}(\I{l}{l-h}{k}{j})$, we may estimate
    \begin{equation}\label{eq:stepIV-2:IV_4''-basic-est}
      IV_4'' \lesssim \frac{1}{k}\sum_{j=1}^k \sum_{h=k+1}^{N-1} \sum_{l=h+j}^\infty \|\mathcal{P}_0(\I{l}{l-h}{k}{j})\|_p. 
    \end{equation}
    \cformat{main-lemma-stepIV2-3}{%
      We again distinguish between $l=h+j$ and $l>h+j$, and again, each case may be treated  by the same arguments used to establish \eqref{eq:stepIV:IV_2:l=j-rate} and 
    \eqref{eq:stepIV:IV_2:l>j-rate}.
    For the case $l=h+j$ we note that 
    \begin{equation*}
        \mathcal{P}_0\bigl(e_{h+j}\pino{h}{k-j}e_j\pino{0}{k-j}\bigr) =\mathcal{P}_0\bigl(Q^{h+j}_{j+1}(e_{h+j}\pino{h}{k-j})e_j\pino{0}{k-j}\bigr),  
    \end{equation*}
    as $H^{h+j}_{j+1}(e_{h+j}\pino{h}{k-j})$ is independent of $\sigma\bigl(e_j\pino{0}{k-j}, \sigf{0}\bigr)$, and hence
    \begin{equation*}
        \mathcal{P}_0\bigl(H^{h+j}_{j+1}(e_{h+j}\pino{h}{k-j})e_j\pino{0}{k-j}\bigr) = \mathbb{E}\bigl(H^{h+j}_{j+1}(e_{h+j}\pino{h}{k-j})\bigr) \mathcal{P}_0(e_j\pino{0}{k-j}) = 0,
    \end{equation*}
    since $\mathbb{E}\bigl(H^{h+j}_{j+1}(e_{h+j}\pino{h}{k-j})\bigr) = \mathbb{E}(e_{h+j}\pino{h}{k-j}) = 0$ by Lemma \ref{lemma:basic-props-AR(oo)}. Using Hölder's inequality and the fact, that the conditional expectation is a $L^p$ contraction, we get 
    \begin{equation*}
        \bigl\|\mathcal{P}_0\bigl(Q^{h+j}_{j+1}(e_{h+j}\pino{h}{k-j})e_j\pino{0}{k-j}\bigr)\bigr\|_p \leq 2 \bigl\|Q^{h+j}_{j+1}(e_{h+j}\pino{h}{k-j})\bigr\|_{2p}\|e_0\|_{4p}\|\pino{0}{k-j}\|_{4p}. 
    \end{equation*}
    Since $\|\pino{0}{k-j}\|_{4p}$ is uniformly bounded in $k$ and $j$ by Lemma \ref{lemma:basic-props-AR(oo)}, we get
    \begin{equation*}
        \bigl\|\mathcal{P}_0\bigl(Q^{h+j}_{j+1}(e_{h+j}\pino{h}{k-j})e_j\pino{0}{k-j}\bigr)\bigr\|_p \lesssim \bigl\|Q^{h+j}_{j+1}(e_{h+j}\pino{h}{k-j})\bigr\|_{2p}. 
    \end{equation*}
    By Lemma \ref{lemma:Q} and the subsequent remark, we arrive at the rate
    \begin{equation*}
        \bigl\|\mathcal{P}_0\bigl(Q^{h+j}_{j+1}(e_{h+j}\pino{h}{k-j})e_j\pino{0}{k-j}\bigr)\bigr\|_p \lesssim h^{-\frac{5}{2}} + (h-j)^{-\frac{5}{2}}. 
    \end{equation*}
    Plugging these estimates into \eqref{eq:stepIV-2:IV_4''-basic-est}, the right-hand side may be estimated by (we are only looking at $l=h+j$)
    \begin{equation*}
        \frac{1}{k}\sum_{j=1}^k \sum_{h=k+1}^\infty \biggl(h^{-\frac{5}{2}} + (h-j)^{-\frac{5}{2}}\biggr) = \sum_{h=k+1}^\infty h^{-\frac{5}{2}} + \frac{1}{k} 
 \sum_{j=1}^k \sum_{h=k+1-j}^\infty h^{-\frac{5}{2}}.
    \end{equation*}
    The first term is of order $O(k^{-1/2})$. For the second term, we may use $\sum_{h\geq m} h^{\alpha} \lesssim m^{\alpha + 1}$ for $\alpha < -1$ and $\sum_{1\leq h \leq m} h^\beta \lesssim m^{\beta+1}$ for $\beta >-1$. 
    This yields
    \begin{equation*}
        \frac{1}{k} 
 \sum_{j=1}^k \sum_{h=k+1-j}^\infty h^{-\frac{5}{2}} = \frac{1}{k} 
 \sum_{j=0}^{k-1} \sum_{h=j+1}^\infty h^{-\frac{5}{2}} \lesssim \frac{1}{k} \sum_{j=1}^k j^{-\frac{1}{2}} \lesssim k^{-\frac{1}{2}. }
    \end{equation*}

   Next, we move on to the case $l>h+j$. Looking at $\|\mathcal{P}_0(e_l\pino{l-j}{k-j}e_{l-h}\pino{l-h-j}{k-j})\|_p$ we note that $H^l_{l-h+1}(e_l\pino{l-j}{k-j})$ is independent of $\sigma(e_{l-h}\pino{l-h-j}{k-j}, \sigf{0})$ and independent of $\sigma(e_{l-h}\pino{l-h-j}{k-j}, \sigf{-1})$. As above, this implies
    \begin{multline*}
        \mathcal{P}_0\bigl(H^l_{l-h+1}(e_l\pino{l-j}{k-j})e_{l-h}\pino{l-h-j}{k-j}\bigr)\\
        = \mathbb{E}\bigl(H^l_{l-h+1}(e_l\pino{l-j}{k-j})\bigr)\mathcal{P}_0(e_l\pino{l-j}{k-j}e_{l-h}\pino{l-h-j}{k-j}) = 0,
    \end{multline*}
    as $\mathbb{E}\bigl(H^l_{l-h+1}(e_l\pino{l-j}{k-j})\bigr) = \mathbb{E}(e_l\pino{l-j}{k-j}) = 0$, and we obtain 
    \begin{equation*}
        \mathcal{P}_0(e_l\pino{l-j}{k-j}e_{l-h}\pino{l-h-j}{k-j}) = \mathcal{P}_0\bigl(Q^l_{l-h+1}(e_l\pino{l-j}{k-j})e_{l-h}\pino{l-h-j}{k-j}\bigr).
    \end{equation*}
    By expanding $e_{l-h}\pino{l-h-j}{k-j}$ into $e_{l-h}\pino{l-h-j}{k-j} = Q^{l-h}_1(e_{l-h}\pino{l-h-j}{k-j}) + H^{l-h}_1(e_{l-h}\pino{l-h-j}{k-j})$ we get
    \begin{equation*}
    \begin{split}
        \mathcal{P}_0\bigl(Q^l_{l-h+1}(e_l\pino{l-j}{k-j})e_{l-h}\pino{l-h-j}{k-j}\bigr) & = \mathcal{P}_0\bigl(Q^l_{l-h+1}(e_l\pino{l-j}{k-j})Q^{l-h}_1(e_{l-h}\pino{l-h-j}{k-j})\bigr) \\
        & + \mathcal{P}_0\bigl(Q^l_{l-h+1}(e_l\pino{l-j}{k-j})H^{l-h}_1(e_{l-h}\pino{l-h-j}{k-j})\bigr).
    \end{split}
    \end{equation*}
    Now since $l>h+j$, $H^l_{l-h+1}(e_l\pino{l-j}{k-j})H^{l-h}_1(e_{l-h}\pino{l-h-j}{k-j})$ is independent of $\sigf{0}$ and $\sigf{-1}$). Thus we have 
    \begin{equation*}
        \begin{split}
            \mathcal{P}_0\bigl(H^l_{l-h+1}(e_l\pino{l-j}{k-j})H^{l-h}_1(e_{l-h}\pino{l-h-j}{k-j})\bigr) & = \mathbb{E}\bigl(H^l_{l-h+1}(e_l\pino{l-j}{k-j})H^{l-h}_1(e_{l-h}\pino{l-h-j}{k-j})\bigr) \\
            & - \mathbb{E}\bigl(H^l_{l-h+1}(e_l\pino{l-j}{k-j})H^{l-h}_1(e_{l-h}\pino{l-h-j}{k-j})\bigr) = 0. 
        \end{split}
    \end{equation*}
    Hence we have 
    \begin{equation*}
        \mathcal{P}_0\bigl(Q^l_{l-h+1}(e_l\pino{l-j}{k-j})H^{l-h}_1(e_{l-h}\pino{l-h-j}{k-j})\bigr) = \mathcal{P}_0\bigl(e_l\pino{l-j}{k-j}H^{l-h}_1(e_{l-h}\pino{l-h-j}{k-j})\bigr).
    \end{equation*}
    Since $H^{l}_{l-j+1}(e_l)$ is independent of $\sigma(\pino{l-j}{k-j}H^{l-h}_1(e_{l-h}\pino{l-h-j}{k-j}), \sigf{0})$, we get 
    \begin{multline*}
        \mathcal{P}_0\bigl(H^l_{l-j+1}(e_l)\pino{l-j}{k-j}H^{l-h}_1(e_{l-h}\pino{l-h-j}{k-j})\bigr)\\
        = \mathbb{E}\bigl(H^l_{l-j+1}(e_l)\bigr)\mathcal{P}_0\bigl(\pino{l-j}{k-j}H^{l-h}_1(e_{l-h}\pino{l-h-j}{k-j})\bigr) = 0,
    \end{multline*}
    as $\mathbb{E}\bigl(H^l_{l-j+1}(e_l)\bigr) = \mathbb{E}(e_l) = 0$. Hence we may replace $e_l$ by $Q^l_{l-j+1}(e_l)$ in the last expression, that is,
    \begin{equation*}
        \mathcal{P}_0\bigl(e_l\pino{l-j}{k-j}H^{l-h}_1(e_{l-h}\pino{l-h-j}{k-j})\bigr) = \mathcal{P}_0\bigl(Q^l_{l-j+1}(e_l)\pino{l-j}{k-j}H^{l-h}_1(e_{l-h}\pino{l-h-j}{k-j})\bigr).
    \end{equation*}
    Now, writing $\pino{l-j}{k-j} = Q^{l-j}_{1}(\pino{l-j}{k-j}) + H^{l-j}_{1}(\pino{l-j}{k-j})$ we get 
    \begin{equation*}
        \begin{split}
            \mathcal{P}_0\bigl(Q^l_{l-j+1}(e_l)\pino{l-j}{k-j}H^{l-h}_1(e_{l-h}\pino{l-h-j}{k-j})\bigr) & = \mathcal{P}_0\bigl(Q^l_{l-j+1}(e_l)Q^{l-j}_{1}(\pino{l-j}{k-j})H^{l-h}_1(e_{l-h}\pino{l-h-j}{k-j})\bigr) \\
            & + \mathcal{P}_0\bigl(Q^l_{l-j+1}(e_l)H^{l-j}_{1}(\pino{l-j}{k-j})H^{l-h}_1(e_{l-h}\pino{l-h-j}{k-j})\bigr).
        \end{split}
    \end{equation*}
    Since $H^l_{l-j+1}(e_l)H^{l-j}_{1}(\pino{l-j}{k-j})H^{l-h}_1(e_{l-h}\pino{l-h-j}{k-j})$ is independent of $\sigf{0}$ and $\sigf{-1}$, we have 
    \begin{multline*}
        \mathcal{P}_0\bigl(H^l_{l-j+1}(e_l)H^{l-j}_{1}(\pino{l-j}{k-j})H^{l-h}_1(e_{l-h}\pino{l-h-j}{k-j})\bigr)\\ 
        = \mathbb{E}\bigl(H^l_{l-j+1}(e_l)H^{l-j}_{1}(\pino{l-j}{k-j})H^{l-h}_1(e_{l-h}\pino{l-h-j}{k-j})\bigr) \\
        - \mathbb{E}\bigl(H^l_{l-j+1}(e_l)H^{l-j}_{1}(\pino{l-j}{k-j})H^{l-h}_1(e_{l-h}\pino{l-h-j}{k-j})\bigr) = 0.
    \end{multline*}
    Hence we have 
    \begin{multline*}
    \mathcal{P}_0\bigl(Q^l_{l-j+1}(e_l)H^{l-j}_{1}(\pino{l-j}{k-j})H^{l-h}_1(e_{l-h}\pino{l-h-j}{k-j})\bigr)\\
    = \mathcal{P}_0\bigl(e_lH^{l-j}_{1}(\pino{l-j}{k-j})H^{l-h}_1(e_{l-h}\pino{l-h-j}{k-j})\bigr).
    \end{multline*}
    We may further replace $e_l$ with $Q^l_1(e_l)$ in this expression, as $H^l_1(e_l)H^{l-j}_{1}(\pino{l-j}{k-j})H^{l-h}_1(e_{l-h}\pino{l-h-j}{k-j})$ is independent of $\sigf{0}$, which implies 
    \begin{multline*}
        \mathcal{P}_0\bigl(H^l_1(e_l)H^{l-j}_{1}(\pino{l-j}{k-j})H^{l-h}_1(e_{l-h}\pino{l-h-j}{k-j})\bigr) \\
         = \mathbb{E}\bigl(H^l_1(e_l)H^{l-j}_{1}(\pino{l-j}{k-j})H^{l-h}_1(e_{l-h}\pino{l-h-j}{k-j})\bigr) \\
         - \mathbb{E}\bigl(H^l_1(e_l)H^{l-j}_{1}(\pino{l-j}{k-j})H^{l-h}_1(e_{l-h}\pino{l-h-j}{k-j})\bigr) = 0.
    \end{multline*}
    In total we have 
    \begin{equation*}
      \begin{split}
 \mathcal{P}_0\bigl(e_l\pino{l-j}{k-j}e_{l-h}\pino{l-h-j}{k-j}\bigr) & =\mathcal{P}_0\bigl(Q^l_{l-h+1}(e_l\pino{l-j}{k-j})Q^{l-h}_1(e_{l-h}\pino{l-h-j}{k-j})\bigr) \\
 & + \mathcal{P}_0\bigl(Q^l_{l-j+1}(e_l)Q^{l-j}_{1}(\pino{l-j}{k-j})H^{l-h}_1(e_{l-h}\pino{l-h-j}{k-j})\bigr) \\
 & + \mathcal{P}_0\bigl(Q^l_{1}(e_l)H^{l-j}_{1}(\pino{l-j}{k-j})H^{l-h}_1(e_{l-h}\pino{l-h-j}{k-j})\bigr).
      \end{split}
    \end{equation*}
    Now, using Hölder's inequality, the fact that the conditional expectation is a $L^{p}$ contraction, we obtain
    \begin{equation}\label{eq:IV_4'':l>h+j}
        \begin{split}
            \|\mathcal{P}_0(e_l\pino{l-j}{k-j}e_{l-h}\pino{l-h-j}{k-j})\|_p & \lesssim\|Q^l_{l-h+1}(e_l\pino{l-j}{k-j})\|_{2p}\|Q^{l-h}_1(e_{l-h}\pino{l-h-j}{k-j})\|_{2p} \\
 & + \|Q^l_{l-j+1}(e_l)\|_{4p}\|Q^{l-j}_{1}(\pino{l-j}{k-j})\|_{4p}\\
 & + \|Q^l_{1}(e_l)\|_{4p}.
        \end{split}
    \end{equation}
    Here we have also used that 
    \begin{equation*}
        \begin{split}
            \bigl\|H^{l-j}_{l-h+1}(\pino{l-j}{k-j})\bigr\|_{4p} &\leq \|\pino{0}{k-j}\|_{4p} \quad \text{and} \\
           \bigl\|H^{l-h}_1(e_{l-h}\pino{l-h-j}{k-j})\bigr\|_{2p} &\leq \|e_{0}\pino{0}{k-j}\|_{2p} \leq \|e_{0}\|_{4p} \|\pino{0}{k-j}\|_{4p}
        \end{split}
    \end{equation*}
    are uniformly bounded in $k$ and $j$ by Lemma \ref{lemma:basic-props-AR(oo)} and the stationarity of $(X_t)_{t\in\zz}$. By Lemma \ref{lemma:Q} and the subsequent discussion, we get the following rates
    \begin{equation*}
        \begin{split}
            \|Q^l_{l-h+1}(e_l\pino{l-j}{k-j})\|_{2p} & \lesssim h^{-\frac{5}{2}} + (h-j)^{-\frac{5}{2}}, \\
            \|Q^{l-h}_1(e_{l-h}\pino{l-h-j}{k-j})\|_{2p} & \lesssim (l-h)^{-\frac{5}{2}} + (l-h-j)^{-\frac{5}{2}},\\
            \|Q^l_{l-j+1}(e_l)\|_{4p} & \lesssim j^{-\frac{5}{2}},\\
            \|Q^{l-j}_{1}(\pino{l-j}{k-j})\|_{4p} & \lesssim (l-j)^{-\frac{5}{2}} \quad \text{and}\\
            \|Q^l_{1}(e_l)\|_{4p} & \lesssim l^{-\frac{5}{2}}.
        \end{split}
    \end{equation*}
    
    Plugging this and \eqref{eq:IV_4'':l>h+j} into \eqref{eq:stepIV-2:IV_4''-basic-est}, we may estimate $IV_4''$ by $IV_4'' \lesssim IV_4''(a) + IV_4''(b) + IV_4''(c)$, where
    \begin{equation*}
    \begin{split}
        IV_4''(a) &= \frac{1}{k}\sum_{j=1}^k \sum_{h=k+1}^{N-1}  \biggl(h^{-\frac{5}{2}} + (h-j)^{\frac{5}{2}}\biggr)\sum_{l=h+j+1}^\infty \biggl((l-h)^{-\frac{5}{2}} + (l-h-j)^{-\frac{5}{2}}\biggr), \\
        IV_4''(b) & = \frac{1}{k}\sum_{j=1}^k j^{-\frac{5}{2}}\biggl( \sum_{h=k+1}^{N-1} h^{-\frac{5}{2}} + \sum_{l=h+j+1}^\infty (l-j)^{-\frac{5}{2}}\biggr), \quad \text{and} \\
        IV_4''(c) &= \frac{1}{k}\sum_{j=1}^k \biggl(\sum_{h=k+1}^{N-1} (h+j)^{-\frac{5}{2}} +  \sum_{l=h+j+1}^\infty l^{-\frac{5}{2}}\biggr),
    \end{split}
    \end{equation*}
  all of which are of order $O(k^{-1/2})$.}{%
We again distinguish between $l=h+j$ and $l>h+j$, and each case may be treated by the same arguments used to establish \eqref{eq:stepIV:IV_2:l=j-rate} and 
    \eqref{eq:stepIV:IV_2:l>j-rate}.
    For the case $l=h+j$, we note that 
    \begin{equation*}
        \mathcal{P}_0\bigl(e_{h+j}\pino{h}{k-j}e_j\pino{0}{k-j}\bigr) =\mathcal{P}_0\bigl(Q^{h+j}_{j+1}(e_{h+j}\pino{h}{k-j})e_j\pino{0}{k-j}\bigr).
    \end{equation*}
    Using Lemma \ref{lemma:basic-props-AR(oo)} and Lemma \ref{lemma:Q}, we get
    \begin{equation*}
        \bigl\|\mathcal{P}_0\bigl(Q^{h+j}_{j+1}(e_{h+j}\pino{h}{k-j})e_j\pino{0}{k-j}\bigr)\bigr\|_p \lesssim \bigl\|Q^{h+j}_{j+1}(e_{h+j}\pino{h}{k-j})\bigr\|_{2p} \lesssim h^{-\frac{5}{2}} + (h-j)^{-\frac{5}{2}}. 
    \end{equation*}
    Plugging these estimates into \eqref{eq:stepIV-2:IV_4''-basic-est}, the right-hand side may be estimated by (we are only looking at $l=h+j$)
    \begin{equation*}
        \frac{1}{k}\sum_{j=1}^k \sum_{h=k+1}^\infty \biggl(h^{-\frac{5}{2}} + (h-j)^{-\frac{5}{2}}\biggr) = \sum_{h=k+1}^\infty h^{-\frac{5}{2}} + \frac{1}{k} 
 \sum_{j=1}^k \sum_{h=k+1-j}^\infty h^{-\frac{5}{2}} \lesssim k^{-\frac{1}{2}}.
    \end{equation*}
%    Clearly, the first term is of order $O(k^{-1/2})$. For the second term, we may use $\sum_{h\geq m} h^{\alpha} \lesssim m^{\alpha + 1}$ for $\alpha < -1$ and $\sum_{1\leq h \leq m} h^\beta \lesssim m^{\beta+1}$ for $\beta >-1$. 
%    This yields
%    \begin{equation*}
%        \frac{1}{k} 
% \sum_{j=1}^k \sum_{h=k+1-j}^\infty h^{-\frac{5}{2}} = \frac{1}{k} 
% \sum_{j=0}^{k-1} \sum_{h=j+1}^\infty h^{-\frac{5}{2}} \lesssim \frac{1}{k} \sum_{j=1}^k j^{-\frac{1}{2}} \lesssim k^{-\frac{1}{2}. }
%    \end{equation*}
%

   Next, we move on to the case $l>h+j$. 
   First, we show that 
   \begin{equation*}
      \begin{split}
 \mathcal{P}_0\bigl(e_l\pino{l-j}{k-j}e_{l-h}\pino{l-h-j}{k-j}\bigr) & =\mathcal{P}_0\bigl(Q^l_{l-h+1}(e_l\pino{l-j}{k-j})Q^{l-h}_1(e_{l-h}\pino{l-h-j}{k-j})\bigr) \\
 & + \mathcal{P}_0\bigl(Q^l_{l-j+1}(e_l)Q^{l-j}_{1}(\pino{l-j}{k-j})H^{l-h}_1(e_{l-h}\pino{l-h-j}{k-j})\bigr) \\
 & + \mathcal{P}_0\bigl(Q^l_{1}(e_l)H^{l-j}_{1}(\pino{l-j}{k-j})H^{l-h}_1(e_{l-h}\pino{l-h-j}{k-j})\bigr).
      \end{split}
    \end{equation*}
    By Hölder's inequality and Lemma \ref{lemma:basic-props-AR(oo)}, we have
    \begin{equation}
        \begin{split}
            \|\mathcal{P}_0(e_l\pino{l-j}{k-j}e_{l-h}\pino{l-h-j}{k-j})\|_p & \lesssim\|Q^l_{l-h+1}(e_l\pino{l-j}{k-j})\|_{2p}\|Q^{l-h}_1(e_{l-h}\pino{l-h-j}{k-j})\|_{2p} \\
 & + \|Q^l_{l-j+1}(e_l)\|_{4p}\|Q^{l-j}_{1}(\pino{l-j}{k-j})\|_{4p}\\
 & + \|Q^l_{1}(e_l)\|_{4p}.
        \end{split}
    \end{equation}
    By Lemma \ref{lemma:Q} and the subsequent remark, we get the following rates
    \begin{equation*}
        \begin{split}
            \|Q^l_{l-h+1}(e_l\pino{l-j}{k-j})\|_{2p} & \lesssim h^{-\frac{5}{2}} + (h-j)^{-\frac{5}{2}}, \\
            \|Q^{l-h}_1(e_{l-h}\pino{l-h-j}{k-j})\|_{2p} & \lesssim (l-h)^{-\frac{5}{2}} + (l-h-j)^{-\frac{5}{2}},\\
            \|Q^l_{l-j+1}(e_l)\|_{4p} & \lesssim j^{-\frac{5}{2}},\\
            \|Q^{l-j}_{1}(\pino{l-j}{k-j})\|_{4p} & \lesssim (l-j)^{-\frac{5}{2}}, \quad \text{and}\\
            \|Q^l_{1}(e_l)\|_{4p} & \lesssim l^{-\frac{5}{2}}.
        \end{split}
    \end{equation*}
    
    Plugging this into \eqref{eq:stepIV-2:IV_4''-basic-est}, we may estimate $IV_4''$ by $IV_4'' \lesssim IV_4''(a) + IV_4''(b) + IV_4''(c)$, where
    \begin{equation*}
    \begin{split}
        IV_4''(a) &= \frac{1}{k}\sum_{j=1}^k \sum_{h=k+1}^{N-1}  \biggl(h^{-\frac{5}{2}} + (h-j)^{\frac{5}{2}}\biggr)\sum_{l=h+j+1}^\infty \biggl((l-h)^{-\frac{5}{2}} + (l-h-j)^{-\frac{5}{2}}\biggr), \\
        IV_4''(b) & = \frac{1}{k}\sum_{j=1}^k j^{-\frac{5}{2}}\biggl( \sum_{h=k+1}^{N-1} h^{-\frac{5}{2}} + \sum_{l=h+j+1}^\infty (l-j)^{-\frac{5}{2}}\biggr), \quad \text{and} \\
        IV_4''(c) &= \frac{1}{k}\sum_{j=1}^k \biggl(\sum_{h=k+1}^{N-1} (h+j)^{-\frac{5}{2}} +  \sum_{l=h+j+1}^\infty l^{-\frac{5}{2}}\biggr),
    \end{split}
    \end{equation*}
  all of which are of order $k^{-1/2}$.
}
  \end{proof}

\bibliographystyle{plain}
\bibliography{main}

\begin{thebibliography}{10}

\bibitem{akaike1969fitting}
H.~Akaike.
\newblock Fitting autoregressive models for prediction.
\newblock {\em Ann. Inst. Statist. Math.}, 21(1):243 -- 247, 1969.

\bibitem{akaike:1974}
H.~Akaike.
\newblock A new look at the statistical model identification.
\newblock {\em IEEE Trans. Automat. Contr.}, AC-19:716 -- 723, 1974.

\bibitem{An:hannan:1982:Aos}
H-Z. An, C.~Zhao-Guo, and E.~J. Hannan.
\newblock {Autocorrelation, autoregression and autoregressive approximation}.
\newblock {\em Ann. Statist.}, 10(3):926 -- 936, 1982.

\bibitem{aue_etal_2009}
A.~Aue, S.~H{\"o}rmann, L.~Horv{\'a}th, and M.~Reimherr.
\newblock Break detection in the covariance structure of multivariate time series models.
\newblock {\em Ann. Statist.}, 37(6B):4046 -- 4087, 2009.

\bibitem{barron:1999ptrf}
A.~Barron, L.~Birg\'{e}, and P.~Massart.
\newblock Risk bounds for model selection via penalization.
\newblock {\em Probab. Theory Related Fields}, 113(3):301--413, 1999.

\bibitem{baxter62}
G.~Baxter.
\newblock An asymptotic result for the finite predictor.
\newblock {\em Math. Scand.}, 10:137 –-- 144, Jun. 1962.

\bibitem{benoist2016}
Y.~Benoist and J-F. Quint.
\newblock Central limit theorem for linear groups.
\newblock {\em Ann. Probab.}, 44(2):1308 -- 1340, 03 2016.

\bibitem{benoist:quint:book:2016}
Y.~Benoist and J-F. Quint.
\newblock {\em Random walks on reductive groups}, volume~62 of {\em Ergebnisse der Mathematik und ihrer Grenzgebiete. 3. Folge. A Series of Modern Surveys in Mathematics [Results in Mathematics and Related Areas. 3rd Series. A Series of Modern Surveys in Mathematics]}.
\newblock Springer, Cham, 2016.

\bibitem{berk}
K.~N. Berk.
\newblock Consistent autoregressive spectral estimates.
\newblock {\em Ann. Statist.}, 2(3):489 -- 502, 1974.

\bibitem{berkes_2008_letter}
I.~Berkes, S.~H{\"o}rmann, and L.~Horv{\'a}th.
\newblock The functional central limit theorem for a family of {GARCH} observations with applications.
\newblock {\em Statist. Probab. Lett.}, 78(16):2725 -- 2730, 2008.

\bibitem{berkes2014}
I.~Berkes, W.~Liu, and W.~B. Wu.
\newblock {K}oml\'{o}s-{M}ajor-{T}usn\'{a}dy approximation under dependence.
\newblock {\em Ann. Probab.}, 42(2):794 -- 817, 03 2014.

\bibitem{bhansali:1986}
R.~J. Bhansali.
\newblock {Asymptotically efficient selection of the order by the criterion autoregressive transfer function}.
\newblock {\em Ann. Statist.}, 14(1):315 -- 325, 1986.

\bibitem{bhansali1996asymptotically}
R.~J. Bhansali.
\newblock Asymptotically efficient autoregressive model selection for multistep prediction.
\newblock {\em Ann. Inst. Statist. Math.}, 48(3):577 -- 602, 1996.

\bibitem{billingsley_1968}
P.~Billingsley.
\newblock {\em Convergence of probability measures}.
\newblock John Wiley \& Sons Inc., New York, 1968.

\bibitem{bougerol_book_1985}
P.~Bougerol and J.~Lacroix.
\newblock {\em Products of random matrices with applications to {S}chr\"{o}dinger operators}, volume~8 of {\em Progress in Probability and Statistics}.
\newblock Birkh\"{a}user Boston, Inc., Boston, MA, 1985.

\bibitem{bradley:BJ:2002}
R.~C. Bradley.
\newblock On positive spectral density functions.
\newblock {\em Bernoulli}, 8(2):175 -- 193, 2002.

\bibitem{brillinger:book:1981}
D.~R. Brillinger.
\newblock {\em Time series}.
\newblock Holden-Day Series in Time Series Analysis. Holden-Day, Inc., Oakland, CA, second edition, 1981.
\newblock Data analysis and theory.

\bibitem{davis:brockwell:book}
P.~J. Brockwell and R.~A. Davis.
\newblock {\em Time Series: Theory and Methods: Theory and Methods}.
\newblock Springer Series in Statistics. Springer New York, 1991.

\bibitem{Candes:2006}
E.J. Cand\`es.
\newblock Modern statistical estimation via oracle inequalities.
\newblock {\em Acta Numer.}, 15:257--325, 2006.

\bibitem{ing:neg:mom:2021}
C-M. Chi, C-K. Ing, and S-H. Yu.
\newblock Negative moment bounds for stochastic regression models with deterministic trends and their applications to prediction problems.
\newblock {\em Statist. Sinica}, 31(Special online issue):2215 -- 2237, 2021.

\bibitem{cuny2017}
C.~Cuny, J.~Dedecker, and C.~Jan.
\newblock Limit theorems for the left random walk on $\operatorname{GL}_{d}(\mathbb{R})$.
\newblock {\em Ann. Inst. H. Poincaré Probab. Statist.}, 53(4):1839 -- 1865, 11 2017.

\bibitem{CUNY20181347}
C.~Cuny, J.~Dedecker, and F.~Merlev\`{e}de.
\newblock On the {K}oml\'{o}s, {M}ajor and {T}usn\'{a}dy strong approximation for some classes of random iterates.
\newblock {\em Stochastic Process. Appl.}, 128(4):1347 -- 1385, 2018.

\bibitem{cuny:positive:matrix:2023}
C.~Cuny, J.~Dedecker, and F.~Merlev\`{e}de.
\newblock Limit theorems for iid products of positive matrices, 2023.

\bibitem{cuny:2023:aop}
C.~Cuny, J.~Dedecker, F.~Merlev{\`e}de, and M.~Peligrad.
\newblock {Berry–Esseen type bounds for the left random walk on ${\mathrm{GL}_{d}}(\mathbb{R})$ under polynomial moment conditions}.
\newblock {\em Ann. Probab.}, 51(2):495 -- 523, 2023.

\bibitem{Politis:2021}
S.~Das and D.~N. Politis.
\newblock Predictive inference for locally stationary time series with an application to climate data.
\newblock {\em J. Amer. Statist. Assoc.}, 116(534):919 -- 934, 2021.

\bibitem{davisson:1965}
L.~D. Davisson.
\newblock The prediction error of stationary {G}aussian time series of unknown covariance.
\newblock {\em IEEE Trans. Inform. Theory}, IT-11:527 -- 532, 1965.

\bibitem{davisson:1966}
L.~D. Davisson.
\newblock A theory of adaptive filtering.
\newblock {\em IEEE Trans. Inform. Theory}, IT-12:97 -- 102, 1966.

\bibitem{diaconis_freedman_1999}
P.~Diaconis and D.~Freedman.
\newblock Iterated random functions.
\newblock {\em SIAM Rev.}, 41(1):45 -- 76, 1999.

\bibitem{zhou:AoS:2023}
X.~Ding and Z.~Zhou.
\newblock {Autoregressive approximations to nonstationary time series with inference and applications}.
\newblock {\em Ann. Statist.}, 51(3):1207 -- 1231, 2023.

\bibitem{Djellout:AoP:2004}
H.~Djellout, A.~Guillin, and L.~Wu.
\newblock {Transportation cost-information inequalities and applications to random dynamical systems and diffusions}.
\newblock {\em Ann. Probab.}, 32(3B):2702 -- 2732, 2004.

\bibitem{doukhan:winterberger:2008:spa}
P.~Doukhan and O.~Wintenberger.
\newblock Weakly dependent chains with infinite memory.
\newblock {\em Stochastic Process. Appl.}, 118(11):1997 -- 2013, 2008.

\bibitem{emery_schachermayer_2001}
M.~\'{E}mery and W.~Schachermayer.
\newblock On {V}ershik's standardness criterion and {T}sirelson's notion of cosiness.
\newblock In {\em S\'{e}minaire de {P}robabilit\'{e}s, {XXXV}}, volume 1755 of {\em Lecture Notes in Math.}, pages 265 -- 305. Springer, Berlin, 2001.

\bibitem{feldman_rudolph_1998}
J.~Feldman and D.J. Rudolph.
\newblock Standardness of sequences of $\sigma$-fields given by certain endomorphisms.
\newblock {\em Fund. Math.}, 157(2-3):175 -- 189, 1998.

\bibitem{garch:book:2010}
C.~Francq and J-M. Zako\"{\i}an.
\newblock {\em G{ARCH} models}.
\newblock John Wiley \& Sons, Ltd., Chichester, 2010.
\newblock Structure, statistical inference and financial applications.

\bibitem{fuller:1981:jasa}
W.~A. Fuller and D.~P. Hasza.
\newblock Correction: ``{P}roperties of predictors for autoregressive time series'' [{J}. {A}mer. {S}tatist. {A}ssoc. {\bf 76} (1981), no. 373, 155--161; {MR} 82c:62132].
\newblock {\em J. Amer. Statist. Assoc.}, 76(376):1023, 1981.

\bibitem{gosh:michailidis:AoS:2021}
S.~Ghosh, K.~Khare, and G.~Michailidis.
\newblock Strong selection consistency of {B}ayesian vector autoregressive models based on a pseudo-likelihood approach.
\newblock {\em Ann. Statist.}, 49(3):1267 -- 1299, 2021.

\bibitem{hannan-deistler-book}
E.~J. Hannan and M.~Deistler.
\newblock {\em The statistical theory of linear systems}.
\newblock SIAM, 2012.

\bibitem{hannan:quinn:1979:JrssB}
E.~J. Hannan and B.~G. Quinn.
\newblock The determination of the order of an autoregression.
\newblock {\em J. Roy. Statist. Soc. Ser. B}, 41(2):190 -- 195, 1979.

\bibitem{hurvich1989regression}
C.~M. Hurvich and C-L. Tsai.
\newblock Regression and time series model selection in small samples.
\newblock {\em Biometrika}, 76(2):297 -- 307, 1989.

\bibitem{ibraginov_1966}
I.~A. Ibragimov.
\newblock On the accuracy of approximation by the normal distribution of distribution functions of sums of independent random variables.
\newblock {\em Teor. Verojatnost. i Primenen}, 11:632 -- 655, 1966.

\bibitem{ing:2007:APE:AOS}
C-K. Ing.
\newblock Accumulated prediction errors, information criteria and optimal forecasting for autoregressive time series.
\newblock {\em Ann. Statist.}, 35(3):1238 -- 1277, 2007.

\bibitem{ing:2020:Aos}
C-K. Ing.
\newblock {Model selection for high-dimensional linear regression with dependent observations}.
\newblock {\em Ann. Statist.}, 48(4):1959 -- 1980, 2020.

\bibitem{ing-sin-yu-integrated}
C-K. Ing, C-Y. Sin, and S-H. Yu.
\newblock Model selection for integrated autoregressive processes of infinite order.
\newblock {\em J. Multivar. Anal.}, 106:57 -- 71, 2012.

\bibitem{ing:wei:JMVA:2003}
C-K. Ing and C-W. Wei.
\newblock On same-realization prediction in an infinite-order autoregressive process.
\newblock {\em J. Multivar. Anal.}, 85(1):130 -- 155, 2003.

\bibitem{ing-wei-main}
C-K. Ing and C-Z. Wei.
\newblock {Order selection for same-realization predictions in autoregressive processes}.
\newblock {\em Ann. Statist.}, 33(5):2423 -- 2474, 2005.

\bibitem{Jirak:2012:AoS}
M.~Jirak.
\newblock Simultaneous confidence bands for {Y}ule-{W}alker estimators and order selection.
\newblock {\em Ann. Statist.}, 40(1):494 -- 528, 2012.

\bibitem{jirak:2014:jmva}
M.~Jirak.
\newblock Simultaneous confidence bands for sequential autoregressive fitting.
\newblock {\em J. Multivar. Anal.}, 124:130 -- 149, 2014.

\bibitem{jirak_be_aop_2016}
M.~Jirak.
\newblock Berry-{E}sseen theorems under weak dependence.
\newblock {\em Ann. Probab.}, 44(3):2024 -- 2063, 2016.

\bibitem{jirak:EJP}
M.~Jirak.
\newblock Edgeworth expansions for volatility models.
\newblock {\em Electron. J. Probab.}, 2024+.

\bibitem{jirak:jems}
M.~Jirak.
\newblock Weak dependence and optimal quantitative self-normalized central limit theorems.
\newblock {\em J. Eur. Math. Soc.}, 2024+.

\bibitem{jirak:tams:2021}
M.~Jirak, W.~B. Wu, and O.~Zhao.
\newblock Sharp connections between {B}erry-{E}sseen characteristics and {E}dgeworth expansions for stationary processes.
\newblock {\em Trans. Amer. Math. Soc.}, 374(6):4129 -- 4183, 2021.

\bibitem{karagrigoriou1995}
A.~Karagrigoriou.
\newblock Asymptotic efficiency of model selection criteria: The nonzero mean gaussian $\text{AR}(\infty$) case.
\newblock {\em Commun. Stat. Theory Methods}, 24:911 -- 930, 01 1995.

\bibitem{karatzas_shreve_1991}
I.~Karatzas and S.~E. Shreve.
\newblock {\em Brownian motion and stochastic calculus}, volume 113 of {\em Graduate Texts in Mathematics}.
\newblock Springer-Verlag, New York, second edition, 1991.

\bibitem{kreiss:aos:2011}
J-P. Kreiss, E.~Paparoditis, and D.~N. Politis.
\newblock {On the range of validity of the autoregressive sieve bootstrap}.
\newblock {\em Ann. Statist.}, 39(4):2103 -- 2130, 2011.

\bibitem{kromer}
R.~E. Kromer.
\newblock Asymptotic properties of the autoregressive spectral estimator.
\newblock {\em Ph.D dissertation, Stanford University}, 1970.

\bibitem{karagrigoriou2001}
S.~Lee and A.~Karagrigoriou.
\newblock An asymptotically optimal selection of the order of a linear process.
\newblock {\em Sankhya A}, 63, 01 2001.

\bibitem{lim2021time}
B.~Lim and S.~Zohren.
\newblock Time-series forecasting with deep learning: a survey.
\newblock {\em Philos. Trans. R. Soc. A}, 379(2194):20200209, 2021.

\bibitem{WU-QF}
W.~Liu and W.~B. Wu.
\newblock Asymptotics of spectral density estimates.
\newblock {\em Econometric Theory}, 26(4):1218 -- 1245, 2010.

\bibitem{MP:survey}
F.~Merlev\`ede, M.~Peligrad, and S.~Utev.
\newblock Recent advances in invariance principles for stationary sequences.
\newblock {\em Probab. Surv.}, 3:1 -- 36, 2006.

\bibitem{parzen:book:1977}
E.~Parzen.
\newblock Multiple time series: determining the order of approximating autoregressive schemes.
\newblock In {\em Multivariate analysis, {IV} ({P}roc. {F}ourth {I}nternat. {S}ympos., {D}ayton, {O}hio, 1975)}, pages 283 -- 295. North-Holland, Amsterdam, 1977.

\bibitem{RISSANEN1978}
J.~Rissanen.
\newblock Modeling by shortest data description.
\newblock {\em Automatica}, 14(5):465 -- 471, 1978.

\bibitem{rosenblatt:book:2000}
M.~Rosenblatt.
\newblock {\em Gaussian and non-{G}aussian linear time series and random fields}.
\newblock Springer Series in Statistics. Springer-Verlag, New York, 2000.

\bibitem{schwarz:1978}
G.~Schwarz.
\newblock {Estimating the dimension of a model}.
\newblock {\em Ann. Statist.}, 6(2):461 -- 464, 1978.

\bibitem{shibata}
R.~Shibata.
\newblock {Asymptotically efficient selection of the order of the model for estimating parameters of linear process}.
\newblock {\em Ann. Statist.}, 8(1):147 -- 164, 1980.

\bibitem{shibata:1981:AoS:spec:density}
R.~Shibata.
\newblock An optimal autoregressive spectral estimate.
\newblock {\em Ann. Statist.}, 9(2):300 -- 306, 1981.

\bibitem{shibata:1981:biometrika}
R.~Shibata.
\newblock An optimal selection of regression variables.
\newblock {\em Biometrika}, 68(1):45 -- 54, 1981.

\bibitem{vershik_doc_transl}
A.~M. Vershik.
\newblock Theory of decreasing sequences of measurable partitions.
\newblock {\em Algebra i Analiz}, 6(4):1 -- 68, 1994.

\bibitem{wang-politis}
J.~Wang and D.~N. Politis.
\newblock Consistent autoregressive spectral estimates: Nonlinear time series and large autocovariance matrices.
\newblock {\em Journal of Time Series Analysis}, 42(5 -- 6):580 -- 596, 2021.

\bibitem{wu-physical-dependence}
W.~B. Wu.
\newblock Nonlinear system theory: Another look at dependence.
\newblock {\em Proc. Natl. Acad. Sci. U.S.A.}, 102(40):14150 -- 14154, 2005.

\bibitem{wu2011asymptotic}
W.~B. Wu.
\newblock Asymptotic theory for stationary processes.
\newblock {\em Stat. Interface}, 4(2):207 -- 226, 2011.

\bibitem{wu_shao_iterated_2004}
W.~B. Wu and X.~Shao.
\newblock Limit theorems for iterated random functions.
\newblock {\em J. Appl. Probab.}, 41(2):425 -- 436, 2004.

\bibitem{Wu-Shao}
W.~B. Wu and X.~Shao.
\newblock A limit theorem for quadratic forms and its applications.
\newblock {\em Econometric Theory}, 23(5):930 -- 951, 2007.

\bibitem{wu:EJS:2016}
W.~B. Wu and Y.~N. Wu.
\newblock Performance bounds for parameter estimates of high-dimensional linear models with correlated errors.
\newblock {\em Electron. J. Stat.}, 10(1):352 -- 379, 2016.

\bibitem{ion:hui:jems:2022}
H.~Xiao, I.~Grama, and Q.~Liu.
\newblock Berry-{E}sseen bound and precise moderate deviations for products of random matrices.
\newblock {\em J. Eur. Math. Soc. (JEMS)}, 24(8):2691 -- 2750, 2022.

\bibitem{Wu-Xiao-Cov-Matrix}
H.~Xiao and W.~B. Wu.
\newblock {Covariance matrix estimation for stationary time series}.
\newblock {\em Ann. Statist.}, 40(1):466 -- 493, 2012.

\end{thebibliography}

\end{document}